  \providecommand\BibTeX{{%
    \normalfont B\kern-0.5em{\scshape i\kern-0.25em b}\kern-0.8em\TeX}}}
\newcommand{\ignore}[1]{}
\newcommand{\N}{\mathbb N}
\newcommand{\Q}{\mathbb Q}
\newcommand{\Z}{\mathbb Z}
\newcommand{\bA}{{\mathfrak A}}
\newcommand{\bB}{{\mathfrak B}}
\newcommand{\bC}{{\mathfrak C}}
\newcommand{\bH}{{\mathfrak H}}
\newcommand{\bM}{{\mathfrak M}}
\newcommand{\bN}{{\mathfrak N}}
\DeclareMathOperator{\Can}{Can}
\DeclareMathOperator{\PC}{PC}
\DeclareMathOperator{\Cyc}{Cyc}
\DeclareMathOperator{\Imp}{Imp}
\DeclareMathOperator{\Aut}{Aut}
\DeclareMathOperator{\fPol}{fPol}
\DeclareMathOperator{\VCSP}{VCSP}
\DeclareMathOperator{\CSP}{CSP}
\DeclareMathOperator{\Pol}{Pol}
\DeclareMathOperator{\Sup}{Supp}
\DeclareMathOperator{\Opt}{Opt}
\DeclareMathOperator{\Id}{Id}
\DeclareMathOperator{\Feas}{Feas}
\DeclareMathOperator{\NAE}{NAE}
\DeclareMathOperator{\OIT}{OIT}
\DeclareMathOperator{\Age}{Age}
\newtheorem{theorem}{Theorem}[section]
\newtheorem{definition}[theorem]{Definition}
\newtheorem{proposition}[theorem]{Proposition}
\newtheorem{lemma}[theorem]{Lemma}
\newtheorem{corollary}[theorem]{Corollary}
\newtheorem{conjecture}[theorem]{Conjecture}
\newtheorem{expl}[theorem]{Example}
\newtheorem{remark}[theorem]{Remark}
\newtheorem{question}[theorem]{Question}
\newtheorem{claim}[theorem]{Claim}
\begin{document}

%%
%% The "title" command has an optional parameter,
%% allowing the author to define a "short title" to be used in page headers.
\title{The Complexity of Resilience Problems via Valued Constraint Satisfaction}

%%
%% The "author" command and its associated commands are used to define
%% the authors and their affiliations.
%% Of note is the shared affiliation of the first two authors, and the
%% "authornote" and "authornotemark" commands
%% used to denote shared contribution to the research.
\author{Manuel Bodirsky}
\email{manuel.bodirsky@tu-dresden.de}
\orcid{0000-0001-8228-3611}
\author{\v{Z}aneta Semani\v{s}inov\'{a}}
\email{zaneta.semanisinova@tu-dresden.de}
\orcid{0000-0001-8111-0671}
\affiliation{%
  \institution{Institut f\"{u}r Algebra, TU Dresden}
  \streetaddress{Zellescher Weg 12-14}
  \city{Dresden}
  \country{Germany}
  \postcode{01069}
}

\author{Carsten Lutz}
\orcid{0000-0002-8791-6702}
\affiliation{%
  \institution{Institut f\"{u}r Informatik, Leipzig University}
  \streetaddress{Augustusplatz 10}
  \city{Leipzig}
  \country{Germany}
  \postcode{04109}}
\email{clu@informatik.uni-leipzig.de}

%%
%% By default, the full list of authors will be used in the page
%% headers. Often, this list is too long, and will overlap
%% other information printed in the page headers. This command allows
%% the author to define a more concise list
%% of authors' names for this purpose.
\renewcommand{\shortauthors}{Manuel Bodirsky, \v{Z}aneta Semani\v{s}inov\'{a} and Carsten Lutz}

%%
%% The abstract is a short summary of the work to be presented in the
%% article.
\begin{abstract}
Valued constraint satisfaction problems (VCSPs) constitute a large class of computational optimization problems. It was shown recently that, over finite domains, every VCSP is in P or NP-complete, depending on the admitted cost functions. In this article, we study cost functions over countably infinite domains whose automorphisms form an oligomorphic permutation group. Our results include a hardness condition based on a generalization of pp-constructability as known from classical CSPs and a polynomial-time tractability condition based on the concept of fractional polymorphisms.  We then observe that the resilience problem for unions of conjunctive queries (UCQs) studied in database theory, under bag semantics, may be viewed as a special case of the VCSPs that we consider. We obtain a complexity dichotomy for the case of incidence-acyclic UCQs and exemplarily use our methods to determine the complexity of a conjunctive query that has been stated as an open problem in the literature. We conjecture that our hardness and tractability conditions match for resilience problems for UCQs. Further, we obtain a complete dichotomy for resilience problems for two-way regular path queries, under bag semantics.
\end{abstract}

%%
%% The code below is generated by the tool at http://dl.acm.org/ccs.cfm.
%% Please copy and paste the code instead of the example below.
%%

\begin{CCSXML}
<ccs2012>
   <concept>
       <concept_id>10003752.10003777.10003779</concept_id>
       <concept_desc>Theory of computation~Problems, reductions and completeness</concept_desc>
       <concept_significance>500</concept_significance>
       </concept>
   <concept>
       <concept_id>10003752.10003777.10003787</concept_id>
       <concept_desc>Theory of computation~Complexity theory and logic</concept_desc>
       <concept_significance>500</concept_significance>
       </concept>
   <concept>
       <concept_id>10003752.10010070.10010111.10011711</concept_id>
       <concept_desc>Theory of computation~Database query processing and optimization (theory)</concept_desc>
       <concept_significance>500</concept_significance>
       </concept>
 </ccs2012>
\end{CCSXML}

\ccsdesc[500]{Theory of computation~Problems, reductions and completeness}
\ccsdesc[500]{Theory of computation~Complexity theory and logic}
\ccsdesc[500]{Theory of computation~Database query processing and optimization (theory)}

%%
%% Keywords. The author(s) should pick words that accurately describe
%% the work being presented. Separate the keywords with commas.
\keywords{valued constraints, conjunctive queries, resilience, oligomorphic automorphism groups, computational complexity, pp-constructions, fractional polymorphisms, polynomial-time tractability}

%% A "teaser" image appears between the author and affiliation
%% information and the body of the document, and typically spans the
%% page.

%\received{20 February 2007}
%\received[revised]{12 March 2009}
%\received[accepted]{5 June 2009}

%%
%% This command processes the author and affiliation and title
%% information and builds the first part of the formatted document.
\maketitle

\section{Introduction}
 In data management, the \emph{resilience} of a query $\mu$ in a 
 relational database $\bA$ is the minimum number of tuples that need to be removed from $\bA$ to achieve that $\mu$ is false in $\bA$. This associates every fixed query $\mu$ with a natural decision
 problem:
given a database $\bA$ and a candidate resilience  $n \in \N$, decide whether the
resilience of $\mu$ in $\bA$ is at most $n$.
 Significant efforts have been invested into classifying the  
complexity of such resilience problems depending on the query~$\mu$, concentrating on the case that $\mu$ is a conjunctive query~\cite{Resilience,NewResilience, LatestResilience}. Notably,
research has identified several classes
of conjunctive queries for which the 
resilience problem is in polynomial time and others for which it is NP-complete.
A general classification, however, has remained open. 

Resilience problems have been considered 
 on set databases~\cite{Resilience,NewResilience} and, more recently, also on bag databases~\cite{LatestResilience}.
The latter means that every fact in the 
database is associated with a multiplicity,
that is, a tuple of constants may have multiple occurrences in the same relation.
Bag databases are of importance because they represent SQL databases
 more faithfully than set databases~\cite{bag-semantics}. Note that if the resilience problem of a query $\mu$ can be solved in polynomial time on bag databases, then it can be solved in polynomial time also on set databases. 
Regarding the converse, Makhija and Gatterbauer \cite{LatestResilience} identify a conjunctive query for which the resilience problem on bag databases is NP-hard whereas the resilience problem on set databases is in P.

In this article, we present a surprising link between the resilience problem for (unions of) conjunctive queries under bag semantics
and \emph{valued constraint satisfaction problems (VCSPs)}, which constitute 
a large class of computational optimization problems. In a VCSP, we are given a finite set of variables, a finite sum of cost functions on these variables, and a threshold $u$, and the task is to 
find an assignment to the variables so that the sum of the costs is at most $u$.
The computational complexity of such problems has been studied depending on the admitted cost functions, which 
we may view as a \emph{valued structure}. 
A complete classification has been obtained for valued structures with a finite domain, showing that the corresponding VCSPs are in P or NP-hard~\cite{KozikOchremiak15,KolmogorovKR17,BulatovFVConjecture,ZhukFVConjecture,Zhuk20}. There are also some  results about VCSPs of valued structures with infinite domains~\cite{BodirskyMaminoViola-Journal,ViolaZivny}.

We show that the resilience problem for every union of connected conjunctive queries
can be formulated as a VCSP for a valued structure with an \emph{oligomorphic automorphism group}, i.e., a structure with a countable domain that, for every fixed $k$, has only finitely many orbits of $k$-tuples under the action of the automorphism group. This property is important for classical CSPs (which can be seen as VCSPs where all cost functions take values in 
$\{0,\infty\}$) since it enables the use
and extension of some tools from finite-domain CSPs (see, e.g.,~\cite{Book}). 
The complexity classification for unions of general, not necessarily connected, conjunctive queries can be reduced to the connected case. 
In the important special case that every conjunctive query in the union is incidence-acyclic (meaning that it has an acyclic incidence graph), we even obtain a VCSP for a valued structure with a finite domain and consequently obtain a P versus NP-complete dichotomy from the known dichotomy for such VCSPs. In addition, we consider the resilience problem for
two-way regular path queries (2RPQs), a fundamental query language for graph databases that 
is orthogonal in expressive power to unions of conjunctive queries \cite{DBLP:journals/sigmod/CalvaneseGLV03,DBLP:conf/icdt/MartensT18}. We observe that the resilience problem for any 2RPQ can also be
translated into a VCSP for a valued structure with a finite domain, thus obtaining a P versus NP-complete dichotomy 
in this case.

The above results actually 
hold in the more general setting where some relations or tuples may be declared to be \emph{exogenous}, meaning that they may \emph{not} be removed from the database to make the query false. This is useful when the data in the database stems from different sources; it was also considered in~\cite{Resilience,NewResilience, LatestResilience}.

As a main contribution of this article, the novel connection between resilience 
and VCSPs leads us to initiating the systematic study of VCSPs of countably infinite valued structures whose automorphisms form an oligomorphic permutation group. In particular, we develop a notion of \emph{expressive power} which is based on \emph{primitive positive definitions} and other complexity-preserving operators, inspired by the techniques known from VCSPs over finite domains. We use the expressive power to obtain polynomial-time reductions between VCSPs and use
them as the basis for formulating  a hardness condition for infinite domain
VCSPs.  We  conjecture that for VCSPs that stem from resilience problems, this hardness condition is not only
sufficient but also necessary, unless P = NP. More
precisely, we conjecture that this is the case when the automorphism group of the valued structure is identical
to that of a structure which is a reduct of a countable finitely
bounded homogeneous structure.

We also present an algebraic condition for (infinite-domain) valued structures which implies that the induced VCSP is in P, based on the concept of \emph{fractional polymorphisms} which generalize classical polymorphisms, a common tool for proving tractability of CSPs (see, e.g.,~\cite{CohenCooperJeavonsVCSP}). 
To prove membership in P, we use a reduction to finite-domain VCSPs which can be solved by a linear programming relaxation technique. We conjecture that the resulting algorithm solves all resilience problems that are in P. 
We demonstrate the utility of our  tractability condition by
applying it to a concrete conjunctive query for which the computational complexity of resilience has been stated as an open problem in the literature~\cite{NewResilience} (Section~\ref{sect:expl}). 
 
\paragraph{Related Work.}
The study of resilience problems was initiated in \cite{Resilience} under set semantics. The authors study the class of self-join-free conjunctive queries, i.e., queries in which each relation symbol occurs at most once, and obtain a P versus NP-complete dichotomy for this class. In a subsequent article~\cite{NewResilience}, several results are obtained for conjunctive queries with self-joins of a specific form, while the authors also state a few open problems of similar nature that cannot be handled by their methods. In the subsequent \cite{LatestResilience}, Gatterbauer and Makhija present a unified approach to resilience problems based on integer linear programming that works also for queries with self-joins, both under bag semantics and under set semantics. The new complexity results in \cite{LatestResilience} again concern self-join-free queries. Our approach is independent from self-joins and hence allows to study conjunctive queries that were difficult to treat before. In the very recent article \cite{amarilli2024resilienceregularpathqueries}, Amarilli et al.\ study the resilience of (one-way) regular path queries under set and bag semantics. They identify several 
tractable classes and several intractable classes, but a full classification remains open.

We stress that VCSPs of countable valued structures with an oligomorphic automorphism group greatly surpass resilience problems. For example, many problems in the recently very active area of graph separation problems \cite{KPW22,KKPW21} such as \emph{(weighted) directed feedback arc set problem} and \emph{directed symmetric multicut problem} can be formulated as VCSPs of appropriate countable valued structures with an oligomorphic automorphism group. Several of these problems such as the \emph{multicut problem} and the \emph{coupled min cut problem} can even be formulated as VCSPs over finite domains.
 
\paragraph{Outline.} The article is organized from the general to the specific, starting with VCSPs in full generality (Section~\ref{sect:prelims}), then focusing on valued structures with an oligomorphic automorphism group (Section~\ref{sect:oligo}), for which our notion of expressive power (Section~\ref{sect:expr}) leads to polynomial-time reductions. Our general hardness condition, which also builds upon the notion of expressive power, is presented in Section~\ref{sect:gen-hard}. 
To study the expressive power and to formulate general polynomial-time tractability results, we introduce the concept of \emph{fractional polymorphisms} in Section~\ref{sect:fpol} (they are probability distributions over operations on the valued structure). 
We take inspiration from the theory of VCSPs for finite-domain valued structures, but apply some non-trivial modifications that are specific to the infinite-domain setting (because the considered probability distributions are over uncountable sets) making sure that our definitions specialize to the standard ones over finite domains.
We then present a general polynomial-time tractability result (Theorem~\ref{thm:tract}) which is phrased in terms of fractional polymorphisms. Section~\ref{sect:resilience} applies the general theory to resilience problems. We illustrate the power of our approach by settling the computational complexity of a resilience problem for a concrete conjunctive query from the literature (Section~\ref{sect:expl}). 
Section~\ref{sect:concl} closes with open problems for future research.

Most results of the present article have been announced in a conference paper published at LICS'24~\cite{Resilience-VCSPs}. Compared to the conference version, we generalize the tractability result, include a section on two-way regular path queries,  add several examples of queries for which we determine the complexity of the resilience problem by our methods, and provide fully detailed proofs.

\section{Preliminaries}
\label{sect:prelims}
The set $\{0,1,2,\dots\}$ of natural numbers is denoted by ${\mathbb N}$, the set of rational numbers is denoted by $\mathbb Q$, the set of non-negative rational numbers by ${\mathbb Q}_{\geq 0}$ and the set of positive rational numbers by $\Q_{>0}$. We use analogous notation for the set of real numbers $\mathbb{R}$ and the set of integers $\Z$.
We also need an additional value $\infty$; all
we need to know about $\infty$ is that
\begin{itemize}
\item $a < \infty$ for every $a \in {\mathbb Q}$,
\item $a + \infty = \infty + a = \infty$ for all $a \in {\mathbb Q} \cup \{\infty\}$, and 
\item $0 \cdot \infty = 
\infty \cdot 0 = 0$
and $a \cdot \infty =
\infty \cdot a = \infty$ for $a > 0$. 
 \end{itemize}
For an operation  $f \colon A^{\ell} \to A$ 
and $a^1 = (a_1^1, \dots, a_k^1), \dots, a^\ell = (a_1^\ell, \dots, a_k^\ell) \in A^k$, we
use the following notation for applying $f$ componentwise: 
\[ f(a^1, \dots, a^\ell) := (f(a^1_1, a^2_1, \dots, a^\ell_1), \ldots, f(a^1_k, a^2_k, \dots, a^\ell_k)).\]
%\red{If $a \in A^k$ and $b \in A^{\ell}$, we write $ab$ for the concatenation of $a$ and $b$ (which is an element of $A^{k+\ell}$).} 

\subsection{Valued Structures}
Let $C$ be a set and let $k \in {\mathbb N}$. 
A \emph{valued relation of arity $k$ over $C$}
is 
a function $R \colon C^k \to {\mathbb Q} \cup \{\infty\}$.  
We write ${\mathscr R}_C^{(k)}$ for the set of all valued relations over $C$ of arity $k$, and define 
$${\mathscr R}_C := \bigcup_{k \in {\mathbb N}} {\mathscr R}_C^{(k)}.$$
A valued relation is called \emph{finite-valued} if it takes values only in $\Q$.
The \emph{valued equality relation} $R_=$  is the binary valued relation defined over $C$ by $R_=(x,y) = 0$ if $x=y$ and $R_=(x,y) = \infty$ otherwise. The \emph{empty relation} $R_{\emptyset}$ is the unary valued relation defined over $C$ by $R_{\emptyset}(x) = \infty$ for all $x \in C$. 

A valued relation $R \in {\mathscr R}_C^{(k)}$ that only takes values from $\{0,\infty\}$ will be identified with the `crisp' relation 
$\{a \in C^k \mid R(a) = 0\}.$
For $R \in {\mathscr R}_C^{(k)}$ the \emph{feasibility relation of $R$} is defined as
$$ \Feas(R) := \{a \in C^k \mid R(a) < \infty\}.$$

A \emph{(relational) signature} $\tau$ is a set of \emph{relation symbols}, each of them equipped with an arity from ${\mathbb N}$. A \emph{valued $\tau$-structure} $\Gamma$ consists of a set $C$, which is also called the \emph{domain} of $\Gamma$, and a valued relation $R^{\Gamma} \in {\mathscr R}_C^{(k)}$ for each relation symbol $R \in \tau$ of arity $k$. A \emph{$\tau$-structure} in the usual sense may then be identified with a valued $\tau$-structure where all valued relations only take values from $\{0,\infty\}$.
All valued and relational structures considered in this article have a countable domain.

\begin{expl}\label{expl:vs-mc} 
Let $\tau = \{<\}$ be a relational signature with a single binary relation symbol $<$. 
Let $\Gamma_{<}$ be the valued $\tau$-structure with domain $\{0,1\}$ and where
${<}(x,y) = 0$ if $x < y$, and
${<}(x,y) = 1$ otherwise. 
\end{expl}

If $\sigma \subseteq \tau$ are relational signatures, $\Gamma$ is a valued $\tau$-structure and $\Delta$ is a valued $\sigma$-structure such that $R^{\Delta} = R^{\Gamma}$ for every $R \in \sigma$, then we call $\Delta$ a \emph{reduct} of $\Gamma$ and $\Gamma$ an \emph{expansion} of $\Delta$. By the convention introduced above, this terminology applies also to relational structures.

An \emph{atomic $\tau$-expression} is an expression
of the form $R(x_1,\dots,x_k)$ for $R \in \tau \cup \{R_\emptyset, R_= \}$ and (not necessarily distinct) variable symbols $x_1,\dots,x_k$. 
A \emph{$\tau$-expression} is an expression $\phi$ 
of the form 
$\sum_{i \leq m} \phi_i$
where $m \in {\mathbb N}$ 
and $\phi_i$ for $i \in \{1,\dots,m\}$ is 
an atomic $\tau$-expression.
Note that the same atomic $\tau$-expression might appear several times in the sum. 
We write $\phi(x_1,\dots,x_n)$ for a $\tau$-expression where all the variables  
are from the set $\{x_1,\dots,x_n\}$. 
If $\Gamma$ is a valued $\tau$-structure, then a $\tau$-expression $\phi(x_1,\dots,x_n)$ defines over $\Gamma$ a member of ${\mathscr R}_C^{(n)}$, which we denote by $\phi^{\Gamma}$. If $\phi$ is the empty sum then $\phi^{\Gamma}$ is constant~$0$.

\subsection{Valued Constraint Satisfaction} 
\label{sect:vcsp}
In this section we assume that
 $\Gamma$ is a fixed valued $\tau$-structure for a 
 \emph{finite} signature $\tau$. 
 The valued relations of $\Gamma$ are also called \emph{cost functions}. 
The \emph{valued constraint satisfaction problem for $\Gamma$}, denoted by \emph{$\VCSP(\Gamma)$}, is the computational
problem to decide for a given $\tau$-expression $\phi(x_1,\dots,x_n)$ 
and a given $u \in {\mathbb Q}$
whether there exists
$a \in C^n$ such that $\phi^{\Gamma}(a) \leq u$. 
We refer to $\phi(x_1,\dots,x_n)$ as an \emph{instance} of $\VCSP(\Gamma)$,
and to $u$ as the \emph{threshold}. 
Tuples $a \in C^n$ such that $\phi^{\Gamma}(a) \leq u$ are called a \emph{solution for $(\phi,u)$}.
The \emph{cost}  of $\phi$ (with respect to $\Gamma$) is defined to be
$$\inf_{a \in C^n} \phi^{\Gamma}(a).$$
In some contexts, it will be beneficial to consider only a given $\tau$-expression $\phi$ to be the input of $\VCSP(\Gamma)$ (rather than $\phi$ and the threshold $u$) and a tuple $a \in C^n$ will then be called a \emph{solution for $\phi$} if the cost of $\phi$ equals $\phi^{\Gamma}(a)$.
Note that in general there might not be any solution, because the infimum from the definition of the cost might not be attained. 
If there exists a tuple $a \in C^n$ such that $\phi^{\Gamma}(a) < \infty$ then $\phi$ is called \emph{satisfiable}.

\begin{remark}\label{rem:def-vcsp}
The definition of a VCSP above is a direct generalization of the standard definition of a VCSP of a finite-domain valued structure. We remark that in other works on infinite-domain VCSPs~\cite{SchneiderViola, ViolaThesis}, the definition of a VCSP is changed so that it asks whether there exists $t \in C^n$ such that $\phi^\Gamma(t) < u$; this prevents problems arising from the infimum of $\phi^\Gamma$ not being realized. Since 
in valued structures considered in this paper this situation does not occur (see Lemma~\ref{lem:cost}), we prefer the definition with $\leq$, which is more standard in the setting where optimization problems are modeled as decision problems with a threshold.
\end{remark}

Note that our setting also captures classical CSPs, which can be viewed as the VCSPs for valued structures $\Gamma$ that only contain cost functions that take value $0$ or $\infty$. In this case, we will sometimes write $\CSP(\Gamma)$ for $\VCSP(\Gamma)$. Below we give two examples of known optimization problems that can be formulated as valued constraint satisfaction problems.

\begin{expl}\label{expl:mc}
The problem $\VCSP(\Gamma_<)$ for the valued structure $\Gamma_<$ from Example~\ref{expl:vs-mc} models the \emph{directed max-cut} problem: given a finite directed graph $(V,E)$ (we do allow loops and multiple edges), partition the vertices $V$ into two classes $A$ and $B$ such that the number of edges from $A$ to $B$ is maximal. Maximising the number of edges from $A$ to $B$ amounts to minimising the number $e$ of edges within $A$, within $B$, and from $B$ to $A$. So when we associate $A$ to the preimage of $0$
and $B$ to the preimage of $1$, computing the number $e$ corresponds to finding the evaluation map $s \colon V \rightarrow \{0,1\}$  that minimizes the value $\sum_{(x,y) \in E} {<}(s(x),s(y))$,
which can be formulated as an instance of 
$\VCSP(\Gamma_<)$. Conversely, every instance of $\VCSP(\Gamma_<)$ corresponds to a directed max-cut instance.   
It is known that $\VCSP(\Gamma_{<})$ is NP-complete (even if we do not allow loops and multiple edges in the input)~\cite{GareyJohnson}. We mention that this problem can be viewed as a resilience problem in database theory as explained in Section~\ref{sect:resilience}.
\end{expl}

\begin{expl}\label{expl:min-cut}
Consider the valued structure $\Gamma_{\geq}$ with domain $\{0,1\}$ and the binary valued relation $\geq$ defined by ${\geq}(x,y)=0$ if $x \geq y$ and ${\geq}(x,y)=1$ otherwise. Similarly to Example~\ref{expl:mc}, $\VCSP(\Gamma_{\geq})$ models the directed min-cut problem, i.e., given a finite directed graph $(V,E)$, partition the vertices $V$ into two classes $A$ and $B$ such that the number of edges from $A$ to $B$ is minimal. The min-cut problem is solvable in polynomial time; see, e.g., \cite{max-flow}.
\end{expl}

\section{Oligomorphicity}
\label{sect:oligo}
Many facts about VCSPs for valued structures with a finite domain can be generalized to a large class of valued structures over an infinite domain,   defined in terms of automorphisms. 
We define automorphisms of valued structures as follows.

\begin{definition} 
Let $k \in {\mathbb N}$, 
let $R \in {\mathscr R}^{(k)}_C$, and let $\alpha$ be a permutation of $C$. Then $\alpha$ \emph{preserves} $R$ if for all $a \in C^k$
we have $R(\alpha(a)) = R(a)$. 
If $\Gamma$ is a valued structure with domain $C$, then 
an \emph{automorphism} of $\Gamma$ is a permutation of $C$ that preserves all valued relations of $R$. 
\end{definition}

The set of all automorphisms of $\Gamma$ is denoted by $\Aut(\Gamma)$, and forms a group with respect to composition. For two valued structures $\Gamma$ and $\Delta$ we often write $\Aut(\Delta) \subseteq \Aut(\Gamma)$ to indicate that $\Delta$ and $\Gamma$ are on the same domain and all automorphisms of $\Delta$ are automorphisms of $\Gamma$ (typically, $\Delta$ will be a relational structure).
Let $k \in {\mathbb N}$ a let $G$ be a permutation group on a set $C$.
An \emph{orbit of $k$-tuples} of 
%a permutation group
$G$ is a set of the form $\{ \alpha(a) \mid \alpha \in G \}$ for some $a \in C^k$ (as introduced in the preliminaries, $\alpha \in G$ acts on $C^k$ componentwise). 
A permutation group $G$ on a countable set is called \emph{oligomorphic} if for every $k \in {\mathbb N}$ there are finitely many orbits of $k$-tuples in $G$ \cite{Oligo}. 
Note that if $G$ and $H$ are permutation groups on the same set, $H \subseteq G$, and $H$ is oligomorphic, then $G$ is oligomorphic.
From now on, whenever we write that a structure has an oligomorphic automorphism group, we also imply that its domain is countable. 
Clearly, every valued structure with a finite domain has an oligomorphic automorphism group. A countable structure has an oligomorphic automorphism group if and only if it is \emph{$\omega$-categorical}, i.e., if all countable models of its first-order theory are isomorphic (see, e.g.~\cite[Theorem 6.3.1]{Hodges}). 

\begin{expl}\label{expl:MCC}
Let $\tau = \{E,N\}$ be a relational signature with two binary relation symbols $E$ and $N$. 
Let $\Gamma_{\text{LCC}}$ be the valued $\tau$-structure with domain ${\mathbb N}$ where
$E(x,y) = 0$ if $x = y$ and $E(x,y)=1$ otherwise, and where $N(x,y) = 0$ if $x \neq y$ and $N(x,y)=1$ otherwise.
Note that $\Aut(\Gamma_{\text{LCC}})$ is the full symmetric group on $\mathbb{N}$. This group is oligomorphic; for example, there are five orbits of triples represented by the tuples $(1,2,3)$, $(1,1,2)$, $(1,2,1)$, $(2,1,1)$ and $(1,1,1)$.

The problem of \emph{least correlation clustering with partial information}~\cite[Example 5]{ViolaThesis}
is equal to $\VCSP(\Gamma_{\text{LCC}})$.
It is a variant of the min-correlation clustering problem \cite{CorrelationClustering}
that does not require precisely one constraint between any two variables.
The problem is NP-complete in both settings \cite{GareyJohnson, ViolaThesis}.
\end{expl}

The following lemma shows that valued $\tau$-structures with an oligomorphic automorphism group always realize infima of $\tau$-expressions.
\begin{lemma}\label{lem:cost} 
Let $\Gamma$ be a valued structure with a countable domain $C$ and an oligomorphic automorphism group. Then 
for every instance $\phi(x_1,\dots,x_n)$ of $\VCSP(\Gamma)$ there exists $a \in C^n$ 
such that the cost of $\phi$ equals $\phi^{\Gamma}(a)$. 
\end{lemma}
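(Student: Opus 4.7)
The plan is to exploit the fact that $\phi^{\Gamma}$ is constant on orbits of $\Aut(\Gamma)$, so oligomorphicity forces its image to be finite, whence the infimum is attained.

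First, I would establish the following invariance property: for every $\tau$-expression $\phi(x_1,\dots,x_n)$, every tuple $a \in C^n$, and every $\alpha \in \Aut(\Gamma)$, we have $\phi^{\Gamma}(\alpha(a)) = \phi^{\Gamma}(a)$, where $\alpha(a)$ denotes coordinatewise application. This follows by a one-line induction on the structure of $\phi$: for atomic $\phi = R(x_{i_1},\dots,x_{i_k})$, the equality is exactly the definition of $\alpha$ preserving $R^{\Gamma}$; and a sum of $\Aut(\Gamma)$-invariant functions is itself $\Aut(\Gamma)$-invariant.

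Next, I would invoke oligomorphicity of $\Aut(\Gamma)$: there are only finitely many orbits $O_1, \dots, O_m$ of $n$-tuples under the action of $\Aut(\Gamma)$ on $C^n$. By the invariance property just established, $\phi^{\Gamma}$ takes a single value $v_i \in \Q \cup \{\infty\}$ on each orbit $O_i$. Consequently the image of $\phi^{\Gamma}$ is the finite set $\{v_1, \dots, v_m\} \subseteq \Q \cup \{\infty\}$.

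Finally, since this is a finite subset of the linearly ordered set $\Q \cup \{\infty\}$, the infimum $\inf_{a \in C^n} \phi^{\Gamma}(a)$ coincides with its minimum $v_{i_0}$ for some $i_0 \in \{1,\dots,m\}$. Picking any tuple $a \in O_{i_0}$ yields $\phi^{\Gamma}(a) = v_{i_0}$, which equals the cost of $\phi$. There is no real obstacle here beyond checking that automorphism-invariance passes through sums of atomic expressions; the rest is a direct consequence of the definition of oligomorphicity.
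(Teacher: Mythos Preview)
Your proposal is correct and follows exactly the same approach as the paper's proof, which simply observes that oligomorphicity implies there are only finitely many possible values of $\phi^{\Gamma}(a)$. You have filled in the details (automorphism-invariance of $\phi^{\Gamma}$ and the passage from finitely many orbits to finitely many values) that the paper leaves implicit.
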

\begin{proof}
The statement follows from the assumption that there are only finitely many orbits of $n$-tuples of $\Aut(\Gamma)$, because it implies that there are only finitely many possible values from ${\mathbb Q} \cup \{\infty\}$ for $\phi^{\Gamma}(a)$. 
\end{proof}

A first-order sentence is called \emph{universal} if it is of the form $\forall x_1,\dots,x_l.\; \psi$ where $\psi$ is quantifier-free. Every quantifier-free formula is equivalent to a formula in conjunctive normal form, so we generally assume that quantifier-free formulas are of this form.

Recall that a $\tau$-structure $\bA$ \emph{embeds} into a $\tau$ structure $\bB$ if there is an injective map from $A$ to $B$ that preserves all relations of $\bA$ and their complements; the corresponding map is called an \emph{embedding}. The \emph{age} of a $\tau$-structure is the class of all finite $\tau$-structures that embed into it.
A relational structure $\bB$ with a finite relational signature $\tau$ is called 
\begin{itemize}
    \item \emph{finitely bounded} if there exists a universal $\tau$-sentence $\phi$ such that a finite structure $\bA$
    is in the age of $\bB$ 
iff $\bA \models \phi$. 
    \item \emph{homogeneous} if every isomorphism between finite substructures of $\bB$ can be extended to an automorphism of~$\bB$.  
\end{itemize}
Note that every finite relational structure is finitely bounded, but it does not have to be homogeneous (and typically it is not).
%If $\tau' \subseteq \tau$, then a $\tau'$-structure $\bB'$ is called the \emph{reduct} of $\bB$ if $\bB$ and $\bB'$ have the same domain and $R^{\bB'}=R^{\bB}$ for every $R\in \tau'$.

Note that for every 
structure $\bB$ with a finite relational signature, for every $n$ there are only finitely many non-isomorphic substructures of $\bB$ of size $n$. Therefore, all countable homogeneous structures with a finite relational signature and all of their reducts have finitely many orbits of $k$-tuples for all $k \in \N$, and hence an oligomorphic automorphism group.

\begin{theorem}\label{thm:fb-NP}
Let $\Gamma$ be a countable valued structure
with finite signature such that there exists a finitely bounded 
homogeneous structure $\bB$ with $\Aut(\bB) \subseteq \Aut(\Gamma)$. Then $\VCSP(\Gamma)$ is in NP. 
\end{theorem}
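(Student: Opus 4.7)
The plan is to exhibit a polynomial-size certificate for the existence of a solution and a polynomial-time procedure to check it. Fix the input instance $\phi(x_1,\dots,x_n)$ and threshold $u \in \Q$, and let $\sigma$ denote the (finite) relational signature of $\bB$. By Lemma~\ref{lem:cost}, we know that if $(\phi,u)$ admits any solution, then some $a = (a_1,\dots,a_n) \in C^n$ attains the cost $\phi^{\Gamma}(a)$. Since $\Aut(\bB) \subseteq \Aut(\Gamma)$ and $\bB$ is homogeneous, two $n$-tuples in $C^n$ that induce isomorphic $\sigma$-substructures of $\bB$ lie in the same $\Aut(\bB)$-orbit and therefore also in the same $\Aut(\Gamma)$-orbit, so they give the same value of every cost function of $\Gamma$. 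Consequently, the cost of $\phi$ only depends on the isomorphism type of the $\sigma$-substructure of $\bB$ induced by the witness $a$.

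The certificate is then precisely such an isomorphism type: a $\sigma$-structure $\bA$ on the universe $\{x_1,\dots,x_n\}$, i.e.\ a choice, for each $R \in \sigma$ of arity $k$, of a subset of $\{x_1,\dots,x_n\}^k$. Since $\sigma$ is finite and the arities are bounded, the total number of bits needed is polynomial in $n$. The verification step has two parts. First, we must check that $\bA$ is actually realised in $\bB$, i.e.\ that $\bA \in \Age(\bB)$. Using that $\bB$ is finitely bounded, there is a fixed universal $\sigma$-sentence $\psi$ such that $\bA \in \Age(\bB)$ iff $\bA \models \psi$. Evaluating a fixed universal sentence on a structure of size $n$ requires only polynomially many atomic checks, so this step is in polynomial time.

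Second, we must compute $\phi^{\Gamma}(a)$ for any $a$ realising the guessed type, and verify that it is at most $u$. Since $\bB$ has a finite relational signature and is homogeneous, it has only finitely many orbits of $k$-tuples for each fixed $k$, and the same is therefore true for $\Aut(\Gamma)$. Hence each cost function $R^{\Gamma}$ of $\Gamma$ takes only finitely many values from $\Q \cup \{\infty\}$, all determined by the $\Aut(\bB)$-orbit of their argument (which in turn is determined by the isomorphism type of the induced substructure). Because $\Gamma$ is fixed, these finitely many values can be hardcoded as a lookup table. For each of the polynomially many atomic subexpressions $R(x_{i_1},\dots,x_{i_k})$ in $\phi$, we read off the induced substructure on $\{x_{i_1},\dots,x_{i_k}\}$ from $\bA$, look up the corresponding value, and sum; the arithmetic is polynomial-time since only finitely many distinct rationals occur as summands. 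Finally we compare the sum to $u$.

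The only delicate point is the correctness of the reduction from an orbit of $C^n$-tuples to a finite isomorphism type: this rests entirely on the homogeneity of $\bB$, which ensures that the $\Aut(\bB)$-orbit of a tuple is determined by its quantifier-free type, together with finite boundedness, which allows the realisability of such a type to be tested in polynomial time. I expect the main obstacle to be purely expository: keeping straight the two inclusions $\Aut(\bB) \subseteq \Aut(\Gamma)$ (used so that $\Aut(\bB)$-orbits refine $\Aut(\Gamma)$-orbits, which makes cost functions constant on the finer orbits) and $\Age(\bA) \subseteq \Age(\bB)$ (checkable by a fixed universal sentence).
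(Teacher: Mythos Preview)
Your proposal is correct and follows essentially the same approach as the paper: guess a $\sigma$-structure on the variable set, use finite boundedness to verify it lies in $\Age(\bB)$ in polynomial time, and use homogeneity together with $\Aut(\bB) \subseteq \Aut(\Gamma)$ to evaluate the cost from the guessed isomorphism type. If anything, you are slightly more explicit than the paper about the lookup-table computation of the cost.
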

\begin{proof}
Let $(\phi,u)$ be an instance of $\VCSP(\Gamma)$ with $n$ variables. Since $\Aut(\bB) \subseteq \Aut(\Gamma)$, every orbit of $n$-tuples of $\Aut(\Gamma)$
is determined by the substructure induced by $\bB$ on the elements of some tuple from the orbit. Note that two tuples $(a_1, \dots, a_n)$ and $(b_1, \dots, b_n)$ lie in the same orbit of $\Aut(\bB)$ if and only if the map that maps $a_i$ to $b_i$ for $i\in \{1,\dots,n\}$ is an isomorphism between the substructures induced by $\bB$ on $\{a_1, \dots, a_n\}$ and on $\{b_1, \dots, b_n\}$. Whether a given finite structure $\bA$ is in the age of a fixed finitely bounded structure $\bB$ can be decided in polynomial time: if $\phi$ is the universal $\tau$-sentence which describes the age of $\bB$,
it suffices 
to exhaustively check all possible instantiations of the variables of $\phi$ with elements of $A$ and verify whether $\phi$ is true in $\bA$ under the instantiation.
Hence, we may non-deterministically generate a structure $\bA$ with domain
$\{1,\dots,n\}$ from the age of $\bB$ 
and then verify in polynomial time whether the 
value $\phi^{\Gamma}(b_1, \dots, b_n)$ is at most $u$ for any tuple $(b_1,\dots,b_n) \in B^n$ 
such that $i \mapsto b_i$ is an embedding of $\bA$ into $\bB$.
\end{proof}

\section{Expressive Power} 
\label{sect:expr}
One of the fundamental concepts in the theory of constraint satisfaction is the concept of \emph{primitive positive definitions}, which is the fragment of first-order logic where only equality, existential quantification, and conjunction are allowed (in other words, negation, universal quantification, and disjunction are forbidden). 
The motivation for this concept is that relations with such a definition can be added to the structure without changing the complexity of the respective CSP. The natural generalization to \emph{valued} constraint satisfaction is the following notion of pp-expressibility; the acronym `pp' stands for primitive positive.

% \begin{definition}\label{def:expr}
% Let $\Gamma$ be a valued $\tau$-structure. 
% We say that $R \in {\mathscr R}^{(k)}_C$ can be
% \emph{expressed} by $\Gamma$ if
% there exists a $\tau$-expression \[\phi(x_1,\dots,x_k,y_1,\dots,y_n)\] such that
% for all $a \in C^k$ we have
% $$R(a) = \inf_{b \in C^n} \phi^{\Gamma}(a,b).$$
% \end{definition}

\begin{definition}
Let $A$ be a set and $R,R' \in {\mathscr R}_A$.
We say that $R'$ can be obtained from $R$ by 
\begin{itemize}
\item \emph{projecting} if $R'$ is of arity $k$, $R$ is of arity $k+n$, and for all $a \in A^k$ 
\[R'(a) = \inf_{b \in A^n} R(a,b).\]
\item \emph{non-negative scaling} 
 if there exists $a \in {\mathbb Q}_{\geq 0}$ such that $R' = a R$;
 \item  \emph{shifting} if there exists $a \in {\mathbb Q}$ such that $R' = R + a$. 
\end{itemize}
If $R$ is of arity $k$, then $\Opt(R)$ denotes the relation consisting of all minimal-value tuples of $R$, i.e.,
\[\Opt(R) := \{a \in \Feas(R) \mid R(a) \leq R(b) \text{ for every } b \in A^k\}.\]
\end{definition}

Note that $\inf_{b \in A^n} R(a,b)$ in item (1) might be irrational or $-\infty$. If this is the
case, then $\inf_{t \in A^n} R(s,t)$ does not express a valued relation, 
because valued relations must have weights from ${\mathbb Q} \cup \{\infty\}$.
However, if $R$ is preserved by all permutations of
an oligomorphic automorphism group, then Lemma~\ref{lem:cost}  guarantees
that the infimum is attained for some tuple $b$.

% Note that $\inf_{b \in C^n} \phi^{\Gamma}(a,b)$ might be irrational or $-\infty$. If this is the
% case in Definition~\ref{def:expr}, then $\phi$ does not witness that $R$ can be expressed in 
% $\Gamma$ since valued relations must have weights from ${\mathbb Q} \cup \{\infty\}$.
% If $\Gamma$ has an oligomorphic automorphism group, however, then Lemma~\ref{lem:cost}  guarantees
% %existence.
% \blue{that the infimum is attained for some tuple $b$.}
% We will further see in Lemma~\ref{lem:expr-reduce} that if $\Gamma$ has an oligomorphic automorphism group, then the addition of valued relations that are expressible by $\Gamma$ does not change the computational complexity of $\VCSP(\Gamma)$. 

If $\mathscr{S} \subseteq \mathscr{R}_A$, then an \emph{atomic expression over $\mathscr{S}$} is an atomic $\tau$-expression for $\tau :=\mathscr{S}$.
We say that $\mathscr{S}$ is \emph{closed under forming sums of atomic expressions} if it contains all valued relations defined by sums of atomic expressions over $\mathscr{S}$.

\begin{definition}[valued relational clone]\label{def:wrelclone}
Let $C$ be a set. A \emph{valued relational clone (over $C$)} is 
a subset of 
${\mathscr R}_C$ 
that 
is closed under forming sums of atomic expressions,
projecting, shifting, non-negative scaling, $\Feas$, and $\Opt$; we will refer to expressions formed this way as \emph{pp-expressions}.
For a valued structure $\Gamma$ with the domain $C$,  
we write 
$\langle \Gamma \rangle$ for the smallest relational clone that contains the valued relations of $\Gamma$. 
If $R \in \langle \Gamma \rangle$, we say that $\Gamma$ \emph{pp-expresses} $R$. 
\end{definition}

%More ways to derive new relations from existing ones that preserves the computational complexity of the original VCSP are introduced in the following definition. 

% \begin{definition}
% Let $R,R' \in {\mathscr R}_C$.
% We say that $R'$ can be obtained from $R$ by 
% \begin{itemize}
% \item \emph{non-negative scaling} 
%  if there exists $r \in {\mathbb Q}_{\geq 0}$ such that $R = r R'$;
%  \item  \emph{shifting} if there exists $s \in {\mathbb Q}$ such that $R = R' + s$. 
% \end{itemize}
% If $R$ is of arity $k$, then the relation that contains all minimal-value tuples of $R$ is 
% \[\Opt(R) := \{a \in \Feas(R) \mid R(a) \leq R(b) \text{ for every } b \in C^k\}.\]
% \end{definition}

% \begin{definition}[valued relational clone]\label{def:wrelclone}
% A \emph{valued relational clone (over $C$)} is 
% a subset of 
% ${\mathscr R}_C$ that 
% contains $R_{=}$ and $R_{\emptyset}$,
% %(from Example~\ref{expl:wrels}),
% and is closed under expressibility, shifting, and non-negative scaling, $\Feas$, and $\Opt$. 
% For a valued structure $\Gamma$ with domain~$C$,  
% we write 
% $\langle \Gamma \rangle$ for the smallest relational clone that contains the valued relations of $\Gamma$. 
% \end{definition}

The following example shows that neither the operator $\Opt$ nor the operator $\Feas$ is redundant in the definition above.

\begin{expl}\label{expl:opt-feas}
Consider the domain $C=\{0,1,2\}$ and the unary valued relation $R$ on $C$ defined by $R(0)=0$, $R(1)=1$ and $R(2)=\infty$. Then the relation $\Feas(R)$ cannot be obtained from $R$ by forming sums of atomic expressions, projecting, shifting, non-negative scaling and use of $\Opt$. Similarly, the relation $\Opt(R)$ cannot be obtained from $R$ by forming sums of atomic expressions, projecting, shifting, non-negative scaling and use of $\Feas$. 
\end{expl}

\begin{remark}\label{rem:aut}
Note that for every valued structure $\Gamma$ and $R\in \langle \Gamma \rangle$, every automorphism of $\Gamma$ is an automorphism of $R$. 
\end{remark}

The motivation for Definition~\ref{def:wrelclone} for valued CSPs stems from the following lemma, which shows that adding relations in $\langle\Gamma \rangle$ does not change the complexity of the $\VCSP$ up to polynomial-time reductions. For finite-domain valued structures this is proved  in~\cite{VCSP-Galois}, 
except for the operator $\Opt$, for which a proof can be found in~\cite[Theorem 5.13]{FullaZivny}. Parts of the proof have been generalized to infinite-domain valued structures without further assumptions; see, e.g. \cite{SchneiderViola} and \cite[Lemma 7.1.4]{ViolaThesis}. However, in these works the definition of VCSPs was changed to ask whether there is a solution of a cost strictly less than $u$, to circumvent problems about infima that are not realized; for structures with an oligomorphic automorphism group, this difference is unimportant as the valued relations attain only finitely many values.
Moreover,
in~\cite{SchneiderViola,ViolaThesis} the authors 
do not consider the operators $\Opt$ and $\Feas$. 
It is visible from Example \ref{expl:opt-feas} that neither the operator $\Opt$ nor the operator $\Feas$ can be simulated by the other ones already on finite domains, which is why they both appear in \cite{FullaZivny} ($\Feas$ was included implicitly by allowing to scale by $0$ and defining $0 \cdot \infty=\infty$). In this article we work with valued structures with an oligomorphic automorphism group so that the infima coming from projecting 
%values of expressions
are attained and hence
we can adapt the proof from the finite-domain case.
We provide a full proof covering all operators from Definition~\ref{def:wrelclone} for the convenience of the reader.

\begin{lemma}\label{lem:expr-reduce}
Let $\Gamma$ be a valued structure with an oligomorphic 
automorphism group and a finite signature. 
Suppose that $\Delta$ is a valued structure with a finite signature over the same domain $C$ such that every cost function of $\Delta$ is from $\langle \Gamma \rangle$. 
Then there is a polynomial-time reduction from
$\VCSP(\Delta)$ to $\VCSP(\Gamma)$. 
\end{lemma}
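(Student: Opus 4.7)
The plan is to proceed by induction on the derivation of the cost functions of $\Delta$ from the valued relations of $\Gamma$ together with $R_=$ and $R_\emptyset$ via the closure operators listed in Definition~\ref{def:wrelclone}. It suffices to prove a single inductive step: given a valued structure $\Gamma'$ over $C$ with finite signature whose cost functions all lie in $\langle \Gamma \rangle$, and a new cost function $R$ obtained from the valued relations of $\Gamma'$ by one of these operators, I would construct a polynomial-time reduction from $\VCSP(\Gamma' \cup \{R\})$ to $\VCSP(\Gamma')$. Composing such reductions along the derivation tree then yields the desired reduction from $\VCSP(\Delta)$ to $\VCSP(\Gamma)$. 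The base cases of adding $R_=$ or $R_\emptyset$ are immediate: identify the two arguments of each $R_=$-atom, and declare the instance infeasible whenever an $R_\emptyset$-atom appears.

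For the ``easy'' closure operators I would use an atom-by-atom replacement inside the input instance $(\phi,u)$, combined with a threshold adjustment. For expressibility with $R(\bar x) = \inf_{\bar y} \psi^{\Gamma'}(\bar x, \bar y)$, substitute $\psi(\bar x, \bar y)$ with a fresh tuple of variables $\bar y$ for each occurrence of $R(\bar x)$; Lemma~\ref{lem:cost} applied to $\psi$ guarantees that the infimum is attained, so costs match exactly and the threshold stays $u$. Shifting $R = R_0 + s$ is absorbed into the threshold by subtracting $s$ per replaced atom. Positive scaling, with $R_0 = (p/q) R$ for positive integers $p,q$, is handled by replicating every non-$R$ atom $p$ times, replacing each $R$-atom by $q$ copies of the corresponding $R_0$-atom, and setting the new threshold to $pu$; the total cost of the new instance then equals $p$ times the old cost. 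The degenerate case $r=0$ forces $R_0$ to be identically zero, in which case $R$-atoms can be dropped with no effect.

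The main obstacle is handling $\Feas$ and $\Opt$, both of which turn soft cost functions into $\{0,\infty\}$-valued hard constraints that must nevertheless be simulated by soft cost functions of $\Gamma'$. I would replace each atom $\Feas(R_0)(\bar x)$ by a copy of $R_0(\bar x)$ scaled by a small positive rational $\epsilon$, and each atom $\Opt(R_0)(\bar x)$ by $N$ copies of $R_0(\bar x)$ combined with a threshold shift of $-Nm$ per replaced atom, where $m = \min R_0$. The delicate point is the choice of $\epsilon$ (respectively $N$): the soft penalty must be small enough on feasible or optimal tuples not to tip the threshold, yet large enough on other tuples to rule them out, while staying polynomially bounded in the input. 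Here I would use that by Remark~\ref{rem:aut} and oligomorphicity of $\Aut(\Gamma)$, $R_0$ takes only finitely many values on $C^k$; in particular there is a positive gap $m'-m$ between the minimum $m$ of $R_0$ and its next-larger finite value $m'$, and the finite values of all relations of $\Gamma'$ admit a common denominator $q$. Consequently the finite achievable costs of any instance lie on the lattice $\frac{1}{q}\Z$, and choosing $\epsilon < 1/(qkM)$ (where $M$ is the largest finite value of $R_0$ and $k$ the number of $\Feas$-atoms replaced) together with the new threshold $\lfloor qu\rfloor/q + 1/(2q)$ preserves the ``$\leq u$'' test; an analogous lower bound $N > (u - L_{\min})/(m'-m)$, with $L_{\min}$ a trivial lower bound on the contributions of the remaining atoms, handles $\Opt$. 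Both parameters are polynomial in the input size, so the reductions run in polynomial time. This is precisely the step that adapts the finite-domain arguments in~\cite{VCSP-Galois} and~\cite[Theorem 5.13]{FullaZivny} to our infinite-domain setting, with oligomorphicity playing the role that finiteness of the domain plays there in bounding the values of cost functions.
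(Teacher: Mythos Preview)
Your approach is essentially the same as the paper's: both proceed by induction on the derivation, handling each closure operator via atom-by-atom replacement plus a threshold adjustment, and both exploit oligomorphicity in the $\Feas$ and $\Opt$ cases to turn the finiteness of the value set into a usable gap. One small presentation gap worth fixing: a $\tau$-expression cannot literally contain a summand ``$\epsilon R_0(\bar x)$'' for non-integer $\epsilon$, so your $\Feas$ reduction must be realised (as the paper does explicitly) by replicating all \emph{other} atoms an integer number of times and keeping one copy of $R_0$; relatedly, your bound on $\epsilon$ should use $\max_{\text{finite}}|R_0|$ rather than the largest finite value $M$, since $R_0$ may take negative values.
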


\begin{proof}
Let $\tau$ be the signature of $\Gamma$. It suffices to prove the statement for expansions of $\Gamma$ to signatures $\tau \cup \{R\}$ that extend $\tau$ with a single relation $R$, $R^{\Delta} \in \langle \Gamma \rangle$. 

If $R^{\Delta} = R_\emptyset$,
 then an instance $\phi$ of $\VCSP(\Delta)$ with threshold $u \in \Q$ is unsatisfiable if and only if $\phi$ contains the symbol $R$ or if it does not contain $R$ and is unsatisfiable viewed as an instance of $\VCSP(\Gamma)$. 
In the former case, 
$(\phi,u)$ is satisfiable in $\Delta$ if and only if $(R_\emptyset, u)$ is satisfiable in $\Gamma$, 
so this provides a correct reduction. 
In the latter case, for every $a \in C^n$ we have that $\phi^{\Delta}(a) = \phi^{\Gamma}(a)$; 
this provides a polynomial-time reduction. 

Now suppose that $R^{\Delta} = R_=$. Let $\psi(x_{i_1},\dots,x_{i_k})$ be obtained from an instance $\phi(x_1,\dots,x_n)$ of $\VCSP(\Delta)$ by identifying all variables $x_i$ and $x_j$ such that $\phi$ contains the summand $R(x_i,x_j)$. 
Then $\phi$ is satisfiable if and only if the instance 
$\psi$ is satisfiable, and $\inf_{a \in C^n} \phi^{\Delta}(a) = \inf_{b \in C^k} \psi^{\Gamma}(b)$;
Again, this provides a polynomial-time reduction. 

Next, consider that for some $\tau$-expression $\delta(y_1,\dots,y_l,z_1,\dots,z_k)$ we have  
$$R^{\Delta}(y_1, \dots, y_l)= \inf_{a \in C^k} \delta^{\Gamma}(y_1, \dots, y_l, a_1, \dots, a_k).$$ 
Let $\phi(x_1,\dots,x_n)$ be an instance of $\VCSP(\Delta)$. 
We replace each summand $R(y_1,\dots,y_{l})$ in $\phi$ by $\delta(y_1,\dots,y_l,z_1,\dots,z_k)$ where $z_1,\dots,z_k$ are new variables (different for each summand). After doing this for all summands that involve $R$, let $\theta(x_1,\dots,x_n,w_1,\dots,w_t)$ be the resulting $\tau$-expression.  For any $a \in C^{n}$ 
we have that
$$\phi(a_1,\dots,a_n) = \inf_{b \in C^t} \theta(a_1,\dots,a_{n},b)$$
and hence $\inf_{a \in C^n} \phi = \inf_{c \in C^{n+t}} \theta$; here we used the assumption that $\Aut(\Gamma)$ is oligomorphic (see Lemma~\ref{lem:cost}). 
Since we replace each summand by a $\tau$-expression whose size is constant (since $\Gamma$ is fixed and finite) the $\tau$-expression $\theta$ can be computed in polynomial time, which shows the statement. 

Suppose that $R^{\Delta}=rS^{\Gamma}+s$ where $r \in \Q_{\geq 0}, s \in \Q$. Let $p\in \Z_{\geq 0}$ and $q \in \Z_{>0}$ be coprime integers such that $p/q = r$. Let $(\phi, u)$ be an instance of $\VCSP(\Delta)$ where $\phi(x_1, \dots, x_n)=\sum_{i=1}^{\ell} \phi_i + \sum_{j=1}^k \psi_j$, the summands $\phi_i$ contain only symbols from $\tau$, and each $\psi_j$ involves the symbol $R$. Let $\psi_j'$ be the $\tau$-expression obtained from $\psi_j$ by replacing $R$ with $S$.
For $i \in \{1, \dots, \ell\}$ replace $\phi_i$ with $q$ copies of itself and for $j \in \{1, \dots, k\}$, replace $\psi_j$ with $p$ copies of $\psi_j'$; let $\phi'(x_1, \dots, x_n)$ be the resulting $\tau$-expression. Define $u':= q(u-ks)$. Then for every $a \in C^n$ the following are equivalent:
\begin{align*}
 \phi(a_1, \dots, a_n) & = \sum_{i=1}^{\ell} \phi_i+  \sum_{j=1}^k \left(\frac{p}{q} \psi'_j + s \right)  \leq u  \\
  \phi'(a_1, \dots, a_n) & = q\sum_{i=1}^{\ell} \phi_i+ p \sum_{j=1}^k \psi'_j  \leq qu-qks = u'
\end{align*}
Since $(\phi', u')$ can be computed from $(\phi, u)$ in polynomial time, this provides the desired reduction.

Now suppose that $R^{\Delta}=\Feas(S^{\Gamma})$ for some $S\in \tau$. Let $(\phi,u)$ be an instance of $\VCSP(\Delta)$, i.e., $\phi(x_1, \dots, x_n)=\sum_{i=1}^{\ell} \phi_i + \sum_{j=1}^k \psi_j$ where $\psi_j$, $j \in \{1, \dots, k\}$ are all the atomic expressions in $\phi$ that involve $R$. If $R^\Delta=R_{\emptyset}$, then the statement follows from the reduction for $R_\emptyset$. Therefore, suppose that this not the case. Since $\tau$ is finite and $\Aut(\Gamma)$ is oligomorphic, we may assume without loss of generality that all valued relations attain only non-negative values; otherwise we shift the values, which by the previous case does not affect the complexity up to polynomial-time reductions. Let $w$ be the maximum finite weight assigned by $S$. Note that there are only finitely many values that the $\ell$ atoms $\phi_i$ may take and therefore only finitely many values that $\sum_{i=1}^{\ell} \phi_i$ may take. Let $v$ be the smallest of these values such that $v>u$ and let $d = v-u$; if $v$ does not exist, let $d=1$. To simplify the notation, set $t = \lceil (kw)/d \rceil + 1$. Let $\psi_j'$ be the atomic expression resulting from $\psi_j$ by replacing the symbol $R$ by the symbol $S$. Let $\phi'$ be the $\tau$-expression obtained from $\phi$ by replacing each atom $\phi_i$ with $t$ copies of it and replacing every atom $\psi_j$ by $\psi_j'$. Let $(\phi', tu+kw)$ be the resulting instance of $\VCSP(\Gamma)$; note that it can be computed in polynomial time.

We claim that for every $a\in C^n$, the following are equivalent:
\begin{align}
 \phi(a_1, \dots, a_n) & = \sum_{i=1}^{\ell} \phi_i+  \sum_{j=1}^k \psi_j  \leq u   \label{eq:feas1}\\
  \phi'(a_1, \dots, a_n) & = t  \cdot \sum_{i=1}^{\ell} \phi_i+ \sum_{j=1}^k \psi'_j  \leq tu+kw \label{eq:feas2}
\end{align}
If \eqref{eq:feas1} holds, then by the definition of $\Feas$ we must have $\psi_j=0$ for every $j \in \{1, \dots, k\}$. Thus $\sum_{i=1}^{\ell} \phi_i \leq u$ and $\sum_{j=1}^k \psi'_j \leq kw$, which implies \eqref{eq:feas2}. Conversely, if \eqref{eq:feas2} holds, then  $\psi_j'$ is finite for every $j \in\{1, \dots, k\}$ and hence $\psi_j=0$. Moreover, \eqref{eq:feas2} implies
\[\sum_{i=1}^{\ell} \phi_i \leq u + \frac{kw}{t}.\] Note that if $v$ exists, then $u + (kw)/t < v$. Therefore (regardless of the existence of $v$), this implies $\sum_{i=1}^{\ell} \phi_i \leq u$, which together with what we have observed previously shows \eqref{eq:feas1}.

Finally, we consider the case that $R^{\Delta} = \Opt(S^{\Gamma})$ for some relation symbol $S \in \tau$.  
Similarly to the previous case, we may assume without loss of generality that 
the minimum weight of all valued relations in $\Delta$ equals $0$; otherwise, we subtract the smallest weight 
assigned to a tuple by some 
valued relation in $\Delta$. 
We may also assume that $S^{\Gamma}$ takes finite positive values, because otherwise $\Opt(S^{\Gamma}) = S^{\Gamma}$ and the statement is trivial. 
Let $m$ be the smallest positive weight assigned by $S^{\Gamma}$ and let $M$ be the largest finite weight assigned by any valued relation 
of $\Gamma$ (again we use that $\tau$
is finite and that $\Aut(\Gamma)$ is oligomorphic). Let $(\phi, u)$, where $\phi(x_1,\dots,x_n)= \sum_{i=1}^k\phi_i$, be an instance  of $\VCSP(\Delta)$. For $i \in \{1,\dots,k\}$, if $\phi_i$ involves the symbol $R$, then replace it by $k \cdot \lceil M/m \rceil + 1$ copies and replace $R$ by $S$. Let $\phi'$ be the resulting $\tau$-expression.
We claim that $a \in C^n$ is a solution to the instance $(\phi',\min(kM,u))$ of $\VCSP(\Gamma)$ if and only if it is the solution to $(\phi,u)$.

If $a \in C^n$ is such that
$\phi(a) \leq u$ then for every $i \in \{1,\dots,k\}$ such that $\phi_i$ involves $R$ we have $\phi_i(a) = 0$. In particular, the minimal value attained by $S^{\Gamma}$ equals $0$ by our assumption, and hence $\phi'(a) = \phi(a) \leq u$
and hence $\phi'(a) \leq \min(kM,u)$.
Now suppose that 
$\phi(a) > u$. Then 
$\phi'(a)>u \geq \min(kM,u)$ or
there exists an $i \in \{1,\dots,k\}$ such that $\phi_i(a) = \infty$.
If $\phi_i$ does not involve the symbol $R$, then $\phi'(a) = \infty$ as well. If $\phi_i$  involves the symbol $R$, then $\phi'(a) \geq (k \cdot \lceil M/m \rceil + 1) m > kM$. In any case, $\phi'(a)>\min(kM, u)$.
Since $\phi'$ can be computed from $\phi$ in polynomial time, this concludes the proof.
\end{proof}

The next two examples illustrate the use 
of Lemma~\ref{lem:expr-reduce} for obtaining hardness results. 

\begin{expl}\label{expl:mc-hard}
    Recall the structure $\Gamma_<$ from Example~\ref{expl:vs-mc}. We have seen in Example~\ref{expl:mc} that $\VCSP(\Gamma_<)$ is the directed max-cut problem. Consider the classical relation $\NAE$ on $\{0,1\}$ defined by
    \[\NAE := \{0,1\}^3 \setminus \{(0,0,0), (1,1,1)\}.\]
    Note that 
    \[\NAE(x,y,z) = \Opt \big ({<}(x,y)+{<}(y,z)+{<}(z,x) \big).\]
    Since $\CSP(\{0,1\}, \NAE)$ is known to be NP-hard (see, e.g., \cite{Book}), this provides an alternative proof of the NP-hardness of the directed max-cut problem via Lemma~\ref{lem:expr-reduce}.
\end{expl}

\begin{expl}\label{expl:mcc-hard}
We revisit the countably infinite valued structure $\Gamma_{\text{LCC}}$ from Example~\ref{expl:MCC}.  Recall that 
$\VCSP(\Gamma_{\text{LCC}})$ is the 
least correlation clustering problem with partial information 
and that $\Aut(\Gamma_{\text{LCC}})$ is oligomorphic. 
Let $\Gamma_{\text{EC}}$ be the relational structure with the same domain as 
$\Gamma_{\text{LCC}}$ and the relation $R := \{(x,y,z) \mid (x=y \wedge y \neq z) \vee (x \neq y \wedge y = z)\}$ (attaining values $0$ and $\infty$). 
Note that 
$$R(x,y,z) = \Opt(N(x,z) + N(x,z) + E(x,y) + E(y,z)).$$
This provides an alternative proof of NP-hardness of the 
least correlation clustering with partial information via Lemma~\ref{lem:expr-reduce}, 
because $\CSP(\Gamma_{\text{EC}})$ is known to be NP-hard~\cite{ecsps}.
 
Note that we can replace $N(x,z) + N(x,z)$ in the definition of $R$ by $\Opt(N)(x,z)$ and that $\Opt(N)$ is equal to the classical relation $\neq$ (attaining values $0$ and $\infty$). This shows that even $\VCSP(\N; E, \neq)$ is NP-hard.
\end{expl}

\section{Hardness from pp-Constructions}
\label{sect:gen-hard}
A universal-algebraic theory of VCSPs for finite valued structures has been developed in~\cite{KozikOchremiak15}, following the classical approach to CSPs which is based on the concepts of cores, addition of constants, and primitive positive interpretations. Subsequently, 
an important conceptual insight has been made for classical CSPs which states that every structure that can be interpreted in the expansion of the core of the structure by constants can also be obtained by taking a pp-power if we then consider structures up to homomorphic equivalence~\cite{wonderland}. 
We are not aware of any published reference that adapts this perspective to the algebraic theory of VCSPs, so we  develop (parts of) this approach here. As in~\cite{wonderland}, we immediately step from valued structures with a finite domain to the more general case of valued structures with an oligomorphic automorphism group. The results in this section are adaptations of the known results for classical relational structures or finite-domain valued structures.

\begin{definition}[pp-power]
Let $\Gamma$ be a valued structure with domain $C$ and let $d \in {\mathbb N}$. Then a ($d$-th) \emph{pp-power}
of $\Gamma$ is a valued structure $\Delta$ with domain 
$C^d$ such that for every valued relation $R$ of $\Delta$ of arity $k$ there exists a valued relation $S$ of arity $kd$ in $\langle \Gamma \rangle$ such that 
$$R((a^1_1,\dots,a^1_d),\dots,(a^k_1,\dots,a^k_d)) = S(a^1_1,\dots,a^1_d,\dots,a^k_1,\dots,a^k_d).$$ 
\end{definition}

The name `pp-power' comes from `primitive positive power', since for relational structures pp-expressibility is captured by primitive positive formulas. The following proposition shows that the VCSP of a pp-power reduces to the VCSP of the original structure.

\begin{proposition}\label{prop:pp-hard}
Let $\Gamma$ and $\Delta$ be valued structures such that $\Aut(\Gamma)$ is oligomorphic and $\Delta$ is a pp-power of $\Gamma$. Then $\Aut(\Delta)$ is oligomorphic and 
there is a polynomial-time reduction from 
$\VCSP(\Delta)$ to $\VCSP(\Gamma)$. 
\end{proposition}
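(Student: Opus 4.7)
The plan is to handle the two claims separately, tying the reduction to the expressive-power machinery already established in Lemma~\ref{lem:expr-reduce}.

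First, for oligomorphicity of $\Aut(\Delta)$, I would use the diagonal action: every $\alpha \in \Aut(\Gamma)$ lifts to a permutation $\alpha^{(d)} \in \Sym(C^d)$ defined by $\alpha^{(d)}(a_1,\dots,a_d) := (\alpha(a_1),\dots,\alpha(a_d))$. By Remark~\ref{rem:aut}, $\alpha$ preserves every $S \in \langle \Gamma \rangle$, and reading off the pp-power identity coordinate-by-coordinate immediately yields that $\alpha^{(d)}$ preserves every valued relation of $\Delta$, so the subgroup $G := \{\alpha^{(d)} \mid \alpha \in \Aut(\Gamma)\}$ is contained in $\Aut(\Delta)$. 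Orbits of $G$ on $k$-tuples of $C^d$ correspond bijectively to orbits of $\Aut(\Gamma)$ on $kd$-tuples of $C$, of which there are finitely many by hypothesis; hence $G$ is oligomorphic on $C^d$, and the containment $G \subseteq \Aut(\Delta)$ gives oligomorphicity of $\Aut(\Delta)$.

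For the reduction, let $\sigma$ be the (finite) signature of $\Delta$. For each $k$-ary $R \in \sigma$ fix a $kd$-ary cost function $S_R \in \langle \Gamma \rangle$ witnessing the pp-power definition, and let $\Gamma'$ be the valued structure over $C$ whose signature $\tau$ contains a fresh symbol $\hat S_R$ for each $R \in \sigma$ with $\hat S_R^{\Gamma'} := S_R$. Given an instance $(\phi,u)$ of $\VCSP(\Delta)$ with $n$ variables $x_1,\dots,x_n$, I would introduce $nd$ fresh variables $y_i^\ell$ (for $i \le n$, $\ell \le d$) and replace each summand $R(x_{i_1},\dots,x_{i_k})$ of $\phi$ by $\hat S_R(y_{i_1}^1,\dots,y_{i_1}^d,\dots,y_{i_k}^1,\dots,y_{i_k}^d)$; call the resulting $\tau$-expression $\phi'$. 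The pp-power identity yields $\phi'^{\Gamma'}(b) = \phi^{\Delta}(a)$ whenever $b_i^\ell = (a_i)_\ell$, so $(\phi,u)$ and $(\phi',u)$ are equivalent instances with the same cost. Since $d$ is a constant (depending only on $\Delta$) and $\sigma$ is finite, this transformation is computable in polynomial time.

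Finally, I would apply Lemma~\ref{lem:expr-reduce} to $\Gamma'$ and $\Gamma$: the signature $\tau$ is finite, every cost function of $\Gamma'$ lies in $\langle \Gamma \rangle$ by construction, and $\Aut(\Gamma)$ is oligomorphic by hypothesis, so the lemma provides a polynomial-time reduction from $\VCSP(\Gamma')$ to $\VCSP(\Gamma)$. Composing the splitting reduction with this one proves the statement. I do not expect a serious obstacle here; the only subtle point is to argue oligomorphicity against relations in $\langle \Gamma \rangle$ rather than against the valued relations of $\Gamma$ itself, which is exactly the content of Remark~\ref{rem:aut}.
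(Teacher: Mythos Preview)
Your proof is correct and follows essentially the same approach as the paper: oligomorphicity via the diagonal action (the paper compresses this to ``$\Aut(\Gamma) \subseteq \Aut(\Delta)$ by Remark~\ref{rem:aut}'', which you unpack more carefully), and the reduction via variable splitting combined with Lemma~\ref{lem:expr-reduce}. The only cosmetic difference is that the paper invokes Lemma~\ref{lem:expr-reduce} first (to assume the $S_R$ are already symbols of $\Gamma$) and then splits, whereas you split first into an intermediate $\Gamma'$ and then apply the lemma; both orderings are equally valid.
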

\begin{proof}
Let $d$ be the dimension of the pp-power
and let $\tau$ be the signature of $\Gamma$.
By Remark~\ref{rem:aut}, $\Aut(\Gamma) \subseteq \Aut(\Delta)$ and thus $\Aut(\Delta)$ is oligomorphic.
By Lemma~\ref{lem:expr-reduce}, we may suppose that for every valued relation $R$ of arity $k$ of $\Delta$ the valued relation $S \in \langle \Gamma \rangle$ of arity $dk$ from the definition of a pp-power equals $S^{\Gamma}$ for some $S \in \tau$. 
Let $(\phi,u)$ be an instance of $\VCSP(\Delta)$. 
For each variable $x$ of $\phi$ we introduce $d$ new variables $x_1,\dots,x_d$. 
For each summand $R(y^1,\dots,y^k)$ of $\phi$ we introduce a summand 
$S(y^1_1,\dots,y^1_d,\dots,y^k_1,\dots,y^k_d)$;
let $\psi$ be the resulting $\tau$-expression. 
It is now straightforward to verify that $(\phi,u)$ has a solution with respect to $\Delta$ if and only if $(\psi,u)$ has a solution with respect to $\Gamma$. 
\end{proof}

Heading towards the definition of pp-constructions, we introduce the important notion of fractional maps. To establish this notion and its basic properties, we need to work with notions from topology and measure theory, which we carefully introduce below at the required level of generality. If $C$ and $D$ are sets,
then we equip the space $C^D$ of functions from $D$ to $C$ with the topology of pointwise convergence, i.e., the product topology on the space $C^D$ where $C$ is a discrete topological space. 
In this topology, a basis of open sets is given by
$${\mathscr S}_{a,b} := \{f \in C^D \mid f(a)=b\}$$
for $a \in D^k$ and $b \in C^k$ for some $k \in {\mathbb N}$, and $f$ is applied componentwise.
For any topological space $T$, we denote by $\mathcal{B}(T)$
the Borel $\sigma$-algebra on $T$, i.e., the smallest subset of the powerset ${\mathcal P}(T)$ which contains all open sets and is closed under countable intersection and complement. We write $[0,1]$ for the set $\{x \in {\mathbb R} \mid 0 \leq x \leq 1\}$.

\begin{definition}[fractional map] \label{def:frac-map}
Let $C$ and $D$ be sets. A \emph{fractional map} from $D$ to $C$ is a probability distribution 
$$(C^D, \mathcal{B}(C^D),\omega \colon \mathcal{B}(C^D) \to [0,1]),$$
that is, $\omega(C^D) = 1$ and $\omega$ is countably additive: if $A_1,A_2,\dots \in \mathcal{B}(C^D)$ are disjoint, then $$\omega(\bigcup_{i \in {\mathbb N}} A_i) = \sum_{i \in {\mathbb N}} \omega(A_i).$$
\end{definition}
Note that every probability distribution is a \emph{measure}, i.e., it is a countably additive map $\omega$ from a $\sigma$-algebra to non-negative real numbers such that $\omega(\emptyset)=0$. If $f \in C^D$, we often write $\omega(f)$ instead of $\omega(\{f\})$. Note that $\{f\} \in \mathcal{B}(C^D)$ 
for every $f$. 
The set $[0,1]$ carries the topology inherited from  the standard topology on ${\mathbb R}$. We also view ${\mathbb R} \cup \{\infty\}$ as a topological space with a basis of open sets given by
all open intervals
$(a,b)$ for $a,b \in {\mathbb R}$, $a<b$ and additionally all sets of the form $\{x \in {\mathbb R} \mid x > a\} \cup \{\infty\}$.

A function $f \colon S \to T$ between two topological spaces is called \emph{(Borel-) measurable} if $f^{-1}(T') \in 
\mathcal{B}(S)$ for every $T' \in \mathcal{B}(T)$.
A \emph{(real-valued) random variable} is a measurable function $X \colon T \to {\mathbb R} \cup \{\infty\}$.
%i.e., pre-images of elements of $B({\mathbb R} \cup \{\infty\})$ under $X$ are in $B(T)$. 
If $X$ is a real-valued random variable, then the \emph{expected value of $X$ (with respect to a probability distribution $\omega$)} is denoted by $E_\omega[X]$ and is defined 
via the Lebesgue integral 
$$ E_\omega[X] := \int_T X d \omega.$$

Recall that the Lebesgue integral $\int_T X d \omega$ need not exist, in which case $E_\omega[X]$ is undefined; otherwise, the integral equals a real number, $\infty$, or $-\infty$. 
The definition and some properties of the Lebesgue integral, specialized to our setting, 
can be found in Appendix~\ref{sect:lebesgue}.
Also recall that the expected value is 
\begin{itemize}
    \item \emph{linear}, i.e., for every $a,b \in \mathbb{R}$ and random variables $X$, $Y$ such that $E_\omega[X]$ and $E_\omega[Y]$ exist and $aE_\omega[X]+bE_\omega[Y]$ is defined we have $$E_\omega[aX+bY]=aE_\omega[X]+bE_\omega[Y];$$ 
\item \emph{monotone}, i.e., if $X,Y$ are random variables such that $E_\omega[X]$ and $E_\omega[Y]$ exist and $X(f) \leq Y(f)$ for all $f \in T$, then $E_\omega[X] \leq E_\omega[Y]$.
\end{itemize}

Let $C$ and $D$ be sets. In the rest of the article, we will work exclusively on a topological space $C^D$ of maps $f \colon D \rightarrow C$ and the special case where $D = C^\ell$ 
%${\mathscr O}_C^{(\ell)}$
for some $\ell \in \N$.
%(i.e., $D = C^{\ell}$).
Note that if $C$ and $D$ are infinite, then these spaces are uncountable and hence there are probability distributions $\omega$ such that $\omega(A)=0$ for every $1$-element set $A$. Therefore, in these cases, $E_\omega[X]$ for a random variable $X$ might not be expressible as a sum.

\begin{definition}[fractional homomorphism]\label{def:frac-hom}
Let $\Gamma$ and $\Delta$ be valued $\tau$-structures with domains $C$ and $D$, respectively. A \emph{fractional homomorphism} from
$\Delta$ to $\Gamma$ is a 
fractional map from $D$ to $C$
 such that for every $R \in \tau$ of arity $k$ 
and every tuple $a \in D^k$,
the expected value $E_\omega[X]$
of the random variable $X \colon C^D \rightarrow \mathbb{R}\cup\{\infty\}$ given  by
\[f \mapsto R^{\Gamma}(f(a))\] 
exists and satisfies 
$$E_\omega[X]
\leq R^{\Delta}(a).$$
\end{definition}
Note that fractional homomorphisms between valued structures are a natural generalization of homomorphisms between relational structures: if $\Delta$ and $\Gamma$ are $0$-$\infty$-valued and $\omega(\{f\}) = 1$ for some $f \colon D \to C$, then the definition requires that $R^\Gamma(f(a)) \leq R^\Delta(a)$ for every $a$, equivalently, whenever $R^\Delta(a)=0$, then $R^\Gamma(f(a))=0$, which is equivalent to $f$ being a homomorphism.

The following lemma shows that if $\Aut(\Gamma)$ is oligomorphic, then the expected value from Definition~\ref{def:frac-hom} always exists. 
\begin{lemma}\label{lem:E-exists}
Let $C$ and $D$ be sets, $a \in D^k$, and $R \in {\mathscr R}_C^{(k)}$.
If $X \colon C^D \rightarrow \mathbb{R}\cup\{\infty\}$ is the random variable given by
\[f \mapsto R(f(a))\]
and $\Aut(C; R)$ is oligomorphic, then $E_\omega[X]$ exists and $E_\omega[X] > -\infty$.
\end{lemma}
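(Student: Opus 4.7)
The plan is to reduce the existence of $E_\omega[X]$ to the fact that $X$ takes only finitely many values, and then compute directly.

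The key observation is that $R$ takes only finitely many values on $C^k$. Indeed, since every $\alpha \in \Aut(C;R)$ preserves $R$, the function $R$ is constant on each orbit of $k$-tuples of $\Aut(C;R)$. Since $\Aut(C;R)$ is oligomorphic, there are only finitely many such orbits, so the image of $R$ is a finite subset $\{v_1, \dots, v_m\} \subseteq \mathbb{Q} \cup \{\infty\}$. Consequently, $X$ takes values only in $\{v_1, \dots, v_m\}$.

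Next, I would verify measurability of $X$. For each $i \in \{1,\dots,m\}$,
\[
X^{-1}(v_i) = \{ f \in C^D \mid f(a) \in R^{-1}(v_i) \} = \bigcup_{b \in R^{-1}(v_i)} {\mathscr S}_{a,b},
\]
which is a union of basic open sets in the topology of pointwise convergence on $C^D$, hence open, hence Borel. Since the image of $X$ is finite and the preimage of each image value is Borel, $X$ is measurable.

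Finally, I would verify the existence and lower bound of the expected value. Because $X$ is a simple function taking finitely many values, the Lebesgue integral exists provided at least one of $E_\omega[X^+]$ and $E_\omega[X^-]$ is finite. Let $v_{\min} := \min\{v_i \mid v_i \in \mathbb{Q}\}$ (with the convention $v_{\min}=0$ if all values of $R$ are $\infty$). Then $X^-(f) \leq \max(-v_{\min}, 0)$ for all $f$, so $E_\omega[X^-]$ is finite; hence $E_\omega[X]$ exists and equals
\[
E_\omega[X] = \sum_{i=1}^{m} v_i \cdot \omega\bigl(X^{-1}(v_i)\bigr),
\]
interpreted using the conventions on $\infty$. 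This sum is bounded below by $v_{\min}$, so $E_\omega[X] > -\infty$.

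The main (minor) obstacle is getting the measurability argument right: one must use that $X$ is a finite-valued simple function whose level sets are cylinders, which avoids any need to invoke more delicate integrability theorems from the appendix. Once the finiteness of the image of $R$ is established via oligomorphicity, the rest is routine.
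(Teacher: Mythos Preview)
Your proof is correct and follows essentially the same approach as the paper: both use oligomorphicity to conclude that $R$ takes only finitely many values, and hence that $\int X^- \, d\omega$ is finite (the paper sums over the finitely many orbits on which $R$ is negative, you bound $X^-$ by the constant $\max(-v_{\min},0)$). One minor terminological point: in the paper's appendix, simple functions are by definition $\mathbb{R}$-valued, so $X$ is not literally simple when $R$ attains $\infty$; but this does not affect your argument, since the bound on $X^-$ and the decomposition $E_\omega[X]=E_\omega[X^+]-E_\omega[X^-]$ hold regardless.
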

\begin{proof}
It is enough to show that $\int_{C^D} X^- d \omega \neq \infty$. Since $\Aut(C;R)$ is oligomorphic, there are only finitely many orbits of $k$-tuples in $\Aut(C;R)$. Let $O_1, \dots, O_m$ be all orbits of $k$-tuples of $\Aut(C; R)$ on which $R$ is negative. For every $i \in \{1, \dots, m\}$, let $b_i \in O_i$.
Then we obtain (see \eqref{eq:exp-sum} in Appendix~\ref{sect:lebesgue} for a detailed derivation of the first equality)
\begin{align*}
   \int_{C^D} X^- d \omega & =  \sum_{b \in C^k,R(b) < 0} -R(b) \omega({\mathscr S}_{a,b}) \\
   & = - \sum_{i=1}^m R(b_i) \sum_{b \in O_i} \omega({\mathscr S}_{a,b}) \\
   &= - \sum_{i=1}^m R(b_i) \; \omega \left( \bigcup_{b \in O_i}{\mathscr S}_{a,b} \right) \\
   & \leq -\sum_{i=1}^m R(b_i) < \infty.\qedhere 
\end{align*} 
\end{proof}

Let $T_1$ and $T_2$ be topological spaces. 
Then the product ${\mathcal B}(T_1) \otimes {\mathcal B}(T_2)$ is the  
smallest $\sigma$-algebra such that for $i\in\{1,2\}$, the projection $\pi_i \colon T_1 \times T_2 \to T_i$ given by
$\pi_i(U_1 \times U_2) := U_i$ is measurable.
For $i \in \{1,2\}$, let $\omega_i$ be 
a probability distribution on $T_i$. Then the probability distribution $\omega_1 \otimes \omega_2$ on $\mathcal{B}(T_1) \otimes \mathcal{B}(T_2)$ is the unique probability distribution that satisfies for all $U_1 \in \mathcal{B}(T_1)$ and $U_2 \in \mathcal{B}(T_2)$ that 
\[\omega_1 \otimes \omega_2(U_1 \times U_2) = \omega_1(U_1) \cdot \omega_2(U_2); \]
the existence and uniqueness of this measure
is folklore, see, e.g.,~\cite[Theorem 14.14]{ProbTheory-Klenke}.
A topological space $S$ is called \emph{second-countable} if it has a countable basis of open sets; note that if $C$ and $D$ are countable, then  the product space $C^D$ is second-countable. We provide the following statement with a proof for the convenience of the reader.

\begin{lemma}[{see, e.g.,~\cite[Volume II, Lemma 6.4.2]{BogachevBook} or~\cite[Lemma D.1.1]{GheysensThesis}}]\label{lem:borel-prod}
Let $S$ and $T$ be second-countable topological spaces. Then ${\mathcal B}(S) \otimes {\mathcal B}(T) = {\mathcal B}(S \times T)$.
\end{lemma}
\begin{proof}
The inclusion $\subseteq$ holds without the assumption of second-countability. To see this, note that ${\mathcal B}(S) \otimes {\mathcal B}(T)$ is the coarsest $\sigma$-algebra such that the projections $\pi_i$ are measurable whereas ${\mathcal B}(S \times T)$ is the Borel $\sigma$-algebra of the coarsest topology such that the projections are continuous. It is easy to see that if a map is continous with respect to a topology, then it is measurable over the Borel $\sigma$-algebra generated by this topology, hence we must have ${\mathcal B}(S) \otimes {\mathcal B}(T) \subseteq {\mathcal B}(S \times T)$.

To see the reverse inclusion, note that the product $S \times T$ is also second-countable with a countable basis $\mathscr{B}$ of open sets. Each element of $\mathscr B$ trivially belongs to ${\mathcal B}(S \times T)$: this is because the projections $\pi_i$ are measurable with respect to $\mathcal{B}(S) \otimes {\mathcal B}(T)$ and hence for every $S' \times T' \in \mathscr B$ the sets $S' \times T$ and $S \times T'$ are measurable and thus their intersection $S'\times T'$ is. Since $S \times T$ has a countable basis, every open set is a countable union of open sets and hence $\mathscr B$ generates $\mathcal{B}(S \times T)$. Therefore, we get ${\mathcal B}(S \times T) \subseteq {\mathcal B}(S) \otimes {\mathcal B}(T)$.
\end{proof} 

Also the following is considered to be known; again, we provide a proof for the convenience of the reader. 
\begin{lemma}\label{lem:comp}
Let $C_1, C_2, C_3$ be countable sets. The composition map $\circ \colon C_3^{C_2} \times C_2^{C_1} \to C_3^{C_1}$ is (Borel-)measurable.
\end{lemma}

\begin{proof}
Since we are working with Borel $\sigma$-algebras, 
it is enough to prove that $\circ$ is continuous. 
Therefore, it is enough to verify that preimages of basic open sets in $C_3^{C_1}$ are open. Let $a\in C_1^k$ and $c \in C_3^k$ for some $k \in \N$. Then
\[\circ^{-1} (\mathscr{S}_{a,c}) = \bigcup_{b \in C_2^k} (\mathscr{S}_{b,c} \times \mathscr{S}_{a,b}),\]
which is an open set in $C_3^{C_2} \times C_2^{C_1}$.
It follows that $\circ$ is continous and therefore measurable.
\end{proof}

\begin{definition}[composition of fractional maps]
Let $C_1, C_2, C_3$ be countable sets. Let $\omega_1$ be a fractional map from $C_1$ to $C_2$ and $\omega_2$ be a fractional map from $C_2$ to $C_3$. Then $\omega_2 \circ \omega_1 \colon \mathcal{B}(C_3^{C_1}) \to [0,1]$ is defined by 
\[\omega_2\circ \omega_1 (S) := \omega_2 \otimes \omega_1 (\{(g, f) \mid f \in C_2^{C_1}, g \in C_3^{C_2}, g \circ f \in S\}).\]
\end{definition}

Note that the set $\{(g, f) \mid f \in C_2^{C_1}, g \in C_3^{C_2}, g \circ f \in S\}$ is the
preimage of $S \in \mathcal{B}(C_3^{C_1})$ under $\circ$. The map $\circ \colon C_3^{C_2} \times C_2^{C_1} \to C_3^{C_1}$ is measurable by Lemma~\ref{lem:comp}, and therefore the preimage of $S$ under $\circ$ lies in  ${\mathcal B}(C_3^{C_2} \times C_2^{C_1})$. Since $C_3^{C_2}$ and $C_2^{C_1}$ are second-countable topological spaces, ${\mathcal B}(C_3^{C_2}) \otimes {\mathcal B}(C_2^{C_1}) = {\mathcal B}(C_3^{C_2} \times C_2^{C_1})$ by Lemma~\ref{lem:borel-prod}. Therefore, the set $\{(g, f) \mid f \in C_2^{C_1}, g \in C_3^{C_2}, g \circ f \in S\}$ lies in $ {\mathcal B}(C_3^{C_2}) \otimes {\mathcal B}(C_2^{C_1})$, and hence $\omega_2 \circ \omega_1$ is well-defined.
We proceed by proving several natural properties for the operation $\circ$. 

\begin{proposition}\label{prop:frac-comp}
Let $C_1, C_2, C_3$ be countable sets. Let $\omega_1$ be a fractional map from $C_1$ to $C_2$ and let $\omega_2$ be a fractional map from $C_2$ to $C_3$. Then the following statements hold.
\begin{enumerate}
    \item $\omega_2 \circ \omega_1$ is a fractional map from $C_1$ to $C_3$.
    \item For every $k \in \N$, $a \in C_1^k$, and $c \in C_3^k$, 
    \[\omega_2 \circ \omega_1 (\mathscr{S}_{a,c}) = \sum_{b \in C_2^k} \omega_1({\mathscr S}_{a,b}) \omega_2({\mathscr S}_{b,c}).\]
    \item The above property uniquely determines the fractional map $\omega_2 \circ \omega_1$.
    \item The operation $\circ$ of composition of fractional maps is associative.
    \item If $\Gamma_1, \Gamma_2$, and $\Gamma_3$ are valued $\tau$-structures, $\omega_1$ is a fractional homomorphism from $\Gamma_1$ to $\Gamma_2$, and $\omega_2$ is a fractional homomorphism from $\Gamma_2$ to $\Gamma_3$, then $\omega_2 \circ \omega_1$ is a fractional homomorphism from $\Gamma_1$ to $\Gamma_3$.
\end{enumerate} 
\end{proposition}

\begin{proof}
To prove item (1), note that 
$\omega_2 \circ \omega_1$ is the pushforward of $\omega_2 \otimes \omega_1$ by the measurable map $\circ \colon  C_3^{C_2} \times C_2^{C_1} \to C_3^{C_1}$; see, e.g.,~\cite[Exercise 1.4.38]{Tao2011Measure} for the definition.
This construction is known to yield a probability distribution (see again~\cite[Exercise 1.4.38]{Tao2011Measure}), in other words, a fractional map.

To show item (2), let $k \in \N$, $a \in C_1^k$, and $c \in C_3^k$. Then 
\begin{align*}
    \omega_2 \circ \omega_1 (\mathscr{S}_{a,c}) &=
    \omega_2 \otimes \omega_1(\{(g, f) \mid f \in C_2^{C_1}, g \in C_3^{C_2}, g \circ f \in \mathscr{S}_{a,c}\})
    \\ &=\omega_2 \otimes \omega_1(\{(g,f) \mid f \in C_2^{C_1}, g \in C_3^{C_2}, \exists b \in C_2^k: f(a)=b \wedge g(b) = c\}) \\
    &= \omega_2 \otimes \omega_1\left( \bigcup_{b \in C_2^k} \{(g, f) \mid f \in C_2^{C_1}, g \in C_3^{C_2}, f(a)=b \wedge g(b) = c\}  \right)  \\
    &= \sum_{b \in C_2^k} \omega_2 \otimes \omega_1\left( \mathscr{S}_{b,c} \times \mathscr{S}_{a,b} \right) \\
    &=\sum_{b \in C_2^k} \omega_2(\mathscr{S}_{b,c}) \cdot \omega_1(\mathscr{S}_{a,b}).
\end{align*}

Item (3) is a consequence of Dynkin's $\pi$-$\lambda$ theorem (see, e.g., \cite[Lemma 4.11]{aliprantis06}), because basic open sets form a $\pi$-system (a non-empty system closed under intersections). 

To show item (4), let $\omega_3$ be a fractional map from $C_3$ to a countable set $C_4$. Let $S \in \mathcal{B}(C_4^{C_1})$. We need to show $\omega_3 \circ (\omega_2 \circ \omega_1) (S) = (\omega_3 \circ \omega_2) \circ \omega_1(S)$. Note that it is enough to verify this property for basic open sets $S$, since then, again by Dynkin's $\pi$-$\lambda$ theorem, $\omega_3 \circ (\omega_2 \circ \omega_1) = (\omega_3 \circ \omega_2) \circ \omega_1$. Let $k \in \N$, $a \in C_1^k$ and $d \in C_4^k$. Then  
\begin{align*}
\omega_3 \circ(\omega_2 \circ \omega_1) (\mathscr{S}_{a,d}) &= \omega_3 \otimes (\omega_2 \circ \omega_1)\left(\left\{(h, h') \mid h \in C_4^{C_3}, h' \in C_3^{C_1}, h \circ h'(a) = d \right\}\right) \\
&= \omega_3 \otimes (\omega_2 \circ \omega_1) \left(\bigcup_{c \in C_3^k} \mathscr{S}_{c,d} \times \mathscr{S}_{a,c} \right) \\
&= \sum_{c \in C_3^k} \omega_3(\mathscr{S}_{c,d}) \cdot (\omega_2 \circ \omega_1)(\mathscr{S}_{a,c}) \\
&= \sum_{c \in C_3^k} \omega_3(\mathscr{S}_{c,d}) \cdot (\omega_2 \otimes \omega_1)\left( \left\{(g,f) \mid g \in C_3^{C_2}, f \in C_2^{C_1}, g \circ f(a)=c \right\} \right) \\
&= \sum_{c \in C_3^k} \omega_3(\mathscr{S}_{c,d}) \cdot (\omega_2 \otimes \omega_1) \left(\bigcup_{b \in C_2^k} \mathscr{S}_{b,c} \times \mathscr{S}_{a,b} \right) \\
&= \sum_{c \in C_3^k} \sum_{b \in C_2^k} \omega_3(\mathscr{S}_{c,d})  \cdot \omega_2(\mathscr{S}_{b,c}) \cdot \omega_1(\mathscr{S}_{a,b}) \\
&= \sum_{b \in C_2^k} \omega_1(\mathscr{S}_{a,b}) (\omega_3 \otimes \omega_2) \left(\bigcup_{c \in C_3^k} \mathscr{S}_{c,d} \times \mathscr{S}_{b,c}\right) \\
&= \dots = (\omega_3 \circ \omega_2) \circ \omega_1 (\mathscr{S}_{a,d})
\end{align*}
Note that the order of the summations can be exchanged by the discrete version of the Fubini-Tonelli theorem~\cite[Theorem 14.19]{ProbTheory-Klenke}, because the summands are non-negative. 

To prove item (5), let $R \in \tau$ be of arity $k$ and $a \in C_1^k$. Then by item (3) we have the following:
\begin{align*}
    E_{\omega_2 \circ \omega_1}[h \mapsto R(h(a))] &= \sum_{c \in C_3^k} \omega_2 \circ \omega_1(\mathscr{S}_{a,c}) \cdot R(c) \\
    &= \sum_{c \in C_3^k} R(c) \sum_{b\in C_2^k} \omega_2(\mathscr{S}_{b,c}) \cdot \omega_1(\mathscr{S}_{a,b}) \\
    &= \sum_{b\in C_2^k}\omega_1(\mathscr{S}_{a,b})  \sum_{c \in C_3^k} R(c)  \omega_2(\mathscr{S}_{b,c}) \\
    &= \sum_{b\in C_2^k}\omega_1(\mathscr{S}_{a,b}) E_{\omega_2}[g \mapsto R(g(b))] \\
    &\leq \sum_{b\in C_2^k}\omega_1(\mathscr{S}_{a,b}) \cdot R(b) \\
    &= E_{\omega_1}[f \mapsto R(f(a))] \\
    &\leq R(a),
\end{align*}
where the two inequalities follow from $\omega_1$ and $\omega_2$ being fractional homomorphisms.
The order of the summations can again be exchanged by the Fubini-Tonelli theorem.
It follows that $\omega_2 \circ \omega_1$ is a fractional homomorphism.
\end{proof}

The following property of fractional homomorphisms was shown for valued structures over finite domains in~\cite[Proposition 8.4]{Butti}.

\begin{proposition}
    \label{prop:frac-hom}
Let $\Gamma$ and $\Delta$ be valued $\tau$-structures 
with domains $C$ and $D$ and with a fractional homomorphism $\omega$ from $\Delta$ to $\Gamma$. Then the cost of every $\VCSP$ instance $\phi$ with respect to $\Gamma$ is at most the cost of $\phi$ with respect to $\Delta$. 
\end{proposition}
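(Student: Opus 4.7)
The plan is to reduce the claim to a pointwise statement: for every tuple $d = (d_1,\dots,d_n) \in D^n$, I show that $\inf_{c \in C^n} \phi^\Gamma(c) \leq \phi^\Delta(d)$. Taking the infimum over $d \in D^n$ on the right-hand side then yields the proposition, since the cost of $\phi$ with respect to $\Delta$ is $\inf_{d \in D^n} \phi^\Delta(d)$.

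To establish the pointwise bound for a fixed $d$, I would write $\phi = \sum_{i=1}^{m} \phi_i$ as a finite sum of atomic expressions $\phi_i = R_i(x_{j^i_1},\dots,x_{j^i_{k_i}})$ and introduce, for each $i$, the real-valued random variable $X_i \colon C^D \to \mathbb{R} \cup \{\infty\}$ given by $X_i(f) := R_i^\Gamma(f(d_{j^i_1}),\dots,f(d_{j^i_{k_i}}))$, together with $X := \sum_{i=1}^m X_i$, so that $X(f) = \phi^\Gamma(f(d_1),\dots,f(d_n))$. Measurability of the $X_i$ is automatic from the definition of the Borel $\sigma$-algebra on $C^D$ via the basis $\{{\mathscr S}_{a,b}\}$, and by Definition~\ref{def:frac-hom} each expectation $E_\omega[X_i]$ exists and satisfies $E_\omega[X_i] \leq \phi_i^\Delta(d)$.

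The remainder is a short calculation. Linearity of the Lebesgue integral on the finite sum $X = \sum_i X_i$ yields $E_\omega[X] \leq \sum_i \phi_i^\Delta(d) = \phi^\Delta(d)$, while monotonicity of the integral gives the elementary bound $\inf_f X(f) \leq E_\omega[X]$. Since $f \mapsto (f(d_1),\dots,f(d_n))$ is surjective onto $C^n$, one has $\inf_{f \in C^D} X(f) = \inf_{c \in C^n} \phi^\Gamma(c)$, and chaining these three inequalities completes the pointwise argument.

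The main obstacle is purely measure-theoretic: ensuring that linearity of expectation applies cleanly to the finite sum, i.e., that no ill-defined $\infty - \infty$ term arises among the $E_\omega[X_i]$. The only nontrivial case is $\phi^\Delta(d) < \infty$, in which the fractional-homomorphism inequality already provides the finite upper bound $E_\omega[X_i] \leq \phi_i^\Delta(d) < \infty$ for every $i$, so the remaining point is to secure $E_\omega[X_i] > -\infty$. This is handled by an argument mirroring the proof of Lemma~\ref{lem:E-exists}: partition $C^{k_i}$ into the finitely many orbits of $\Aut(\Gamma)$ on $k_i$-tuples on which $R_i^\Gamma$ is negative, use that $R_i^\Gamma$ is constant on each such orbit, and bound the integral of the negative part by a finite sum --- an argument that is available whenever $\Aut(\Gamma)$ is oligomorphic, the setting of interest in this paper.
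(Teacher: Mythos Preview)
Your argument follows the same route as the paper's: fix a tuple $d \in D^n$, apply the fractional-homomorphism inequality to each atomic summand, combine via linearity and monotonicity of expectation, and pass to the infimum over $d$. Two remarks are in order.

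First, the surjectivity of $f \mapsto (f(d_1),\dots,f(d_n))$ onto $C^n$ fails when the entries of $d$ are not pairwise distinct. This is harmless, since you only need $\inf_{c \in C^n}\phi^\Gamma(c) \le \inf_{f} X(f)$, which holds because the image of that map is contained in $C^n$.

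Second, your appeal to oligomorphicity is unnecessary, and the proposition indeed holds in the stated generality. In your nontrivial case $\phi^\Delta(d) < \infty$ you already have $E_\omega[X_i] \le \phi_i^\Delta(d) < \infty$ for every $i$, so no term equals $+\infty$ and the sum $\sum_i E_\omega[X_i]$ is automatically defined (it is finite or $-\infty$). The linearity property as stated in the paper then yields $E_\omega[X] = \sum_i E_\omega[X_i]$ regardless of whether some $E_\omega[X_i] = -\infty$; should that occur, the chain $\inf_c \phi^\Gamma(c) \le E_\omega[X] = -\infty \le \phi^\Delta(d)$ is trivial anyway. The paper handles the same issue by a slightly different case split---on whether $\sum_i E_\omega[X_i]$ is defined rather than on whether $\phi^\Delta(d)$ is finite---observing in the undefined case that some $E_\omega[X_i] = +\infty$ forces $\phi^\Delta(a^*) = \infty$. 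Neither argument requires oligomorphicity.
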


\begin{proof}
Let 
\[\phi(x_1,\dots,x_n) = \sum_{i=1}^m R_i(x_{j_1^i}, \dots, x_{j_{k_i}^i})\]
be a $\tau$-expression, where $j_1^i, \dots, j_{k_i}^i \in \{1, \dots, n\}$ for every $i \in \{1, \dots m\}$. To simplify the notation in the proof, if $v=(v_1, \dots, v_t)$ is a $t$-tuple of elements of some domain and $i_1,\dots, i_s \in \{1, \dots, t\}$, we will write $v_{i_1, \dots, i_s}$ for the tuple $(v_{i_1}, \dots, v_{i_s})$.

Let $\varepsilon > 0$. From the definition of infimum, there exists $a^*\in D^n$ such that 
\begin{equation}\label{eq:inf-tup}
    \phi^{\Delta}(a^*) \leq \inf_{a \in D^n} \phi^{\Delta}(a)+ \varepsilon/2
\end{equation} and $f^* \in C^D$ such that
\begin{equation}\label{eq:inf-op}
    \phi^{\Gamma}(f^*(a^*)) \leq \inf_{f\in C^D} \phi^{\Gamma} (f(a^*)) + \varepsilon/2.
\end{equation}

For every $i \in \{1, \dots, m\}$, $E_{\omega}[f \mapsto R_i^{\Gamma}(f(a^*)_{j_1^i, \dots, j_{k_i}^i})]$ exists  by the definition of a fractional homomorphism. Suppose first that $\sum_{i=1}^m E_{\omega}[f \mapsto R_i^{\Gamma}(f(a^*)_{j_1^i, \dots, j_{k_i}^i})]$ is defined.
Then by the monotonicity and linearity of $E_\omega$ and since $\omega$ is a fractional homomorphism we obtain 
\begin{align*}
    \inf_{b \in C^n} \phi^{\Gamma}(b) &\leq \phi^{\Gamma} (f^*(a^*)) \\
    & \leq \inf_{f \in C^D} \phi^{\Gamma} (f(a^*)) + \varepsilon/2  &&\text{(by \eqref{eq:inf-op}) }\\
    & \leq E_{\omega}[f \mapsto \phi^{\Gamma}(f(a^*))] + \varepsilon/2 \\
    &= \sum_{i=1}^m E_{\omega}[f \mapsto R_i^{\Gamma}(f(a^*)_{j_1^i, \dots, j_{k_i}^i})] + \varepsilon/2 \\
    & \leq \sum_{i=1}^m R_i^{\Delta}(a^*_{j_1^i, \dots, j_{k_i}^i}) + \varepsilon/2 \\
    &= \phi^{\Delta}(a^*) + \varepsilon/2  \\
    & \leq \inf_{a \in D^n} \phi^{\Delta}(a) + \varepsilon
    &&\text{(by \eqref{eq:inf-tup}).}
\end{align*}
Since $\varepsilon>0$ was chosen arbitrarily, it follows that the cost of $\phi$ with respect to $\Gamma$ is at most the cost of $\phi$ with respect to $\Delta$.

Suppose now that $\sum_{i=1}^m E_{\omega}[f \mapsto R_i^{\Gamma}(f(a^*)_{j_1^i, \dots, j_{k_i}^i})]$ is not defined. Then there exists $i \in \{1,\dots, m\}$ such that
\[E_\omega[f \mapsto R_i^\Gamma(f(a^*)_{j_1^i, \dots, j_{k_i}^i})] = \infty.\]
By the definition of a fractional homomorphism, this implies that $R_i^\Delta(a^*_{j_1^i, \dots, j_{k_i}^i})= \infty$ and hence $\sum_{i=1}^m R_i^{\Delta}(a^*_{j_1^i, \dots, j_{k_i}^i})=\infty$. Therefore, we obtain as above that \[\inf_{b \in C^n} \phi^{\Gamma}(b) \leq\inf_{a \in D^n} \phi^{\Delta}(a),\] which is what we wanted to prove. 
\end{proof}

\begin{remark}\label{rem:frac-hom-equiv-fin} 
For finite domains, the converse  of Proposition~\ref{prop:frac-hom} is true as well~\cite[Proposition 8.4]{Butti}. 
\end{remark}

We say that two relational $\tau$-structures $\bA$ and $\bB$ are \emph{homomorphically equivalent} if there is a homomorphism from $\bA$ to $\bB$ and from $\bB$ to $\bA$. 
We say that two valued $\tau$-structures  $\Gamma$ and $\Delta$ are \emph{fractionally homomorphically equivalent} if there exists a
fractional homomorphism from $\Gamma$ to $\Delta$ and from $\Delta$ to $\Gamma$. Clearly, fractional homomorphic equivalence is indeed an equivalence relation on valued structures of the same signature. 

\begin{corollary}\label{cor:hom-hard}
Let $\Gamma$ and $\Delta$ be valued $\tau$-structures with oligomorphic automorphism groups that are fractionally homomorphically equivalent.
Then $\VCSP(\Gamma)$ and $\VCSP(\Delta)$ are polynomial-time equivalent.  
\end{corollary}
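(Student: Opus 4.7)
The plan is to observe that the conclusion is an essentially immediate consequence of Proposition~\ref{prop:frac-hom} applied in both directions, combined with the fact (from Lemma~\ref{lem:cost}) that infima of $\tau$-expressions are attained in structures with oligomorphic automorphism groups. Since $\Gamma$ and $\Delta$ share the same signature $\tau$, every instance of $\VCSP(\Gamma)$ is syntactically also an instance of $\VCSP(\Delta)$ and vice versa, so the reduction we aim for will be the identity map on instances.

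The first step is to show that, for any $\tau$-expression $\phi(x_1,\dots,x_n)$, the cost of $\phi$ with respect to $\Gamma$ equals the cost of $\phi$ with respect to $\Delta$. Applying Proposition~\ref{prop:frac-hom} to the fractional homomorphism from $\Delta$ to $\Gamma$ gives
\[\inf_{a \in C^n} \phi^{\Gamma}(a) \leq \inf_{b \in D^n} \phi^{\Delta}(b),\]
and applying the same proposition to the fractional homomorphism from $\Gamma$ to $\Delta$ yields the reverse inequality. Hence the two costs coincide.

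The second step is to translate this equality of costs into an equivalence of decision problems. By Lemma~\ref{lem:cost}, since both $\Aut(\Gamma)$ and $\Aut(\Delta)$ are oligomorphic, there exist tuples realising the respective infima. Consequently, for any threshold $u \in \mathbb{Q}$, there exists $a \in C^n$ with $\phi^{\Gamma}(a) \leq u$ if and only if the cost of $\phi$ with respect to $\Gamma$ is at most $u$, which by the first step holds if and only if the cost of $\phi$ with respect to $\Delta$ is at most $u$, which again by Lemma~\ref{lem:cost} happens if and only if there exists $b \in D^n$ with $\phi^{\Delta}(b) \leq u$. Thus $(\phi,u)$ is a yes-instance of $\VCSP(\Gamma)$ if and only if it is a yes-instance of $\VCSP(\Delta)$, and the identity map provides the desired polynomial-time reduction in both directions.

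There is no real obstacle here; the only subtle point worth noting is that the equality of infima alone would not immediately suffice to transfer the threshold question, which is why the appeal to Lemma~\ref{lem:cost} (and hence to the oligomorphicity hypothesis) is essential for turning an inequality between infima into an equivalence of yes-instances.
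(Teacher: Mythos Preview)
Your proof is correct and follows essentially the same approach as the paper: apply Proposition~\ref{prop:frac-hom} in both directions to get equality of costs, then invoke Lemma~\ref{lem:cost} to conclude that the decision problems literally coincide (so the identity is the reduction). The paper states this more tersely, but the logical content is identical.
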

\begin{proof}
In fact, the two problems $\VCSP(\Gamma)$ and $\VCSP(\Delta)$ coincide. By Proposition~\ref{prop:frac-hom}, for every instance $\phi$, the values of $\phi$ with respect to $\Gamma$ and $\Delta$ are equal. By Lemma~\ref{lem:cost} , the cost is attained in both structures and hence every instance $\phi$ with a threshold $u$ has a solution with respect to $\Gamma$ if and only if it has a solution with respect to $\Delta$.
\end{proof}

\begin{remark}
If $\Gamma$ and $\Delta$ are classical relational $\tau$-structures that are homomorphically equivalent in the classical sense,
then they are fractionally homomorphically equivalent when we view them 
as valued structures: if $h_1$ is the homomorphism from $\Gamma$ to $\Delta$ and $h_2$ is the homomorphism from $\Delta$ to $\Gamma$, 
then this is witnessed by the fractional homomorphisms $\omega_1$ and $\omega_2$ such that $\omega_1(h_1) = \omega_2(h_2) = 1$. 
\end{remark}

For fractionally homomorphically equivalent valued structures with an oligomorphic automorphism group, we can prove the following result of similar flavor as Proposition~\ref{prop:frac-hom}.

\begin{proposition}
\label{prop:frac-hom-expr}
Let $\Gamma$ and $\Delta$ be valued $\tau$-structures with oligomorphic automorphism groups 
that are fractionally homomorphically equivalent.
Let $\omega$ be a fractional homomorphism from $\Delta$ to $\Gamma$.
Let $R \in \langle \Delta \rangle$ be of arity $k$ and $R' \in \langle \Gamma \rangle$ be the valued relation obtained when the pp-expression for $R$ in $\Delta$ is interpreted over $\Gamma$. Then for every $a \in D^k$,
\begin{equation}
    E_{\omega}[f \mapsto R'(f(a))] \leq R(a).
    \label{eq:frac-hom}
\end{equation}
\end{proposition}
\begin{proof}
Recall that by Lemma~\ref{lem:E-exists}, the expected values from \eqref{eq:frac-hom} exist.
By the definition of a fractional homomorphism, \eqref{eq:frac-hom} holds for every pair $(R,R')=(S^\Delta, S^\Gamma)$ where $S \in \tau$. Clearly, the same is true for $R =R_{\emptyset}$. To see that \eqref{eq:frac-hom} holds for $R = R_=$, let $a \in D^2$. Note that either 
$a_1 = a_2$ in which case
$f(a_1)=f(a_2)$ for every $f \in C^D$, and hence both sides of \eqref{eq:frac-hom} are equal to $0$,
or $a_1 \neq a_2$, in which case $R(a) = \infty$ and \eqref{eq:frac-hom} is again satisfied.

We will show that every valued relation $R$ obtained from a valued relation satisfying \eqref{eq:frac-hom} by an application of an operator from Definition~\ref{def:wrelclone} satisfies \eqref{eq:frac-hom};
the general statement then follows by induction. This is clear for valued relations $R$ obtained by non-negative scaling and addition of constants, since these operations preserve \eqref{eq:frac-hom} by the linearity of expectation. The assumption that $\Gamma$ and $\Delta$ are fractionally homomorphically equivalent (rather than the existence of $\omega$) is needed only for the operator $\Opt$. 

Let $\phi(x_1,\dots,x_k, y_1, \dots, y_n)$ be a  $\tau$-expression. Let $R$ be the $k$-ary valued relation defined by $R(x)= \inf_{y\in D^n} \phi^{\Delta}(x,y)$  for every $x\in D^k$ . Since $\phi$ is a $\tau$-expression, there are $R_i\in \tau$ such that 
\[\phi(x_1,\dots,x_k, y_1, \dots, y_n)=\sum_{i=1}^m R_i(x_{p_1^i}, \dots, x_{p_{k_i}^i},y_{q_1^i}, \dots, y_{q_{n_i}^i})\]
for some $k_i, n_i \in \N$, $p_1^i, \dots, p_{k_i}^i\in \{1, \dots, k\}$ and $q_1^i, \dots, q_{n_i}^i\in \{1, \dots, n\}$.
In this proof, if $v=(v_1, \dots, v_t)$ is a tuple and $i_1,\dots, i_s \in \{1, \dots, t\}$, we will write $v_{i_1, \dots, i_s}$ for the tuple $(v_{i_1}, \dots, v_{i_s})$ for short. 

Let $a \in D^k$. By the oligomorphicity of $\Aut(\Delta)$, there is $b \in D^n$ such that $R(a) = \phi^{\Delta}(a, b)$. Moreover, for every $f \in C^D$, 
\[ R'(f(a)) \leq \phi^{\Gamma}(f(a), f(b)).\]
By the linearity and monotonicity of expectation, we obtain
\begin{align*} 
E_\omega[f \mapsto R'(f(a))] 
&\leq E_\omega[f \mapsto \phi^{\Gamma}(f(a), f(b))]  \\
&= E_\omega[f \mapsto \sum_{i=1}^{m} R_i^{\Gamma}((f(a))_{p_1^i, \dots, p_{k_i}^i}, (f(b))_{q_1^i, \dots, q_{n_i}^i})]\\
&= \sum_{i=1}^{m} E_\omega[f \mapsto R_i^{\Gamma}((f(a))_{p_1^i, \dots, p_{k_i}^i}, (f(b))_{q_1^i, \dots, q_{n_i}^i})]. 
\end{align*}
Since $\omega$ is a fractional homomorphism, the last row of the inequality above is at most
\begin{align*}
\sum_{i=1}^{m}
R_i^{\Delta}(a_{p_1^i, \dots, p_{k_i}^i}, b_{q_1^i, \dots, q_{n_i}^i})
 = \phi^{\Delta}(a, b) = R(a).
\end{align*}
It follows that \eqref{eq:frac-hom} holds for $R$.

Next, we prove the statement for $R = \Feas(S^{\Delta})$ for some $S \in \tau$ of arity $k$. Let $a \in D^k$. 
If $R(a) = \infty$, then \eqref{eq:frac-hom} is trivially true. So suppose that $R(a) = 0$, i.e., $S^{\Delta}(a) < \infty$. Since $\omega$ is a fractional homomorphism, we have
\begin{align}
E_\omega[f \mapsto S^{\Gamma}(f(a))]
\leq S^{\Delta}(a) 
\end{align}
and hence the expected value on the left-hand side is finite as well.
By \eqref{eq:exp-sum} in Appendix~\ref{sect:lebesgue},
\begin{align}
E_\omega[f \mapsto S^{\Gamma}(f(a))] =  \sum_{b \in C^k} S^{\Gamma}(b) \omega({\mathscr S}_{a,b}),
\end{align}
which implies that $S^{\Gamma}(b)$ is finite unless $\omega({\mathscr S}_{a,b})=0$, and hence $R'(b)=0$. Consequently (again by \eqref{eq:exp-sum}),
\begin{equation*}
E_\omega[f \mapsto R'(f(a))]
= \sum_{b \in C^k} R'(b) \omega({\mathscr S}_{a,b}) = 0 = R(a). 
\end{equation*}
It follows that \eqref{eq:frac-hom} holds for $R$.

Finally, suppose that $R = \Opt(S^{\Delta})$. Let $a \in D^k$; note that we may again assume that $R(a)=0$ as we did in the previous case. This means that $S^{\Delta}(a) \leq S^{\Delta}(a')$ for every $a' \in D^k$. Let $c \in C^k$ be such that $S^{\Gamma}(c)$ is minimal; such a $c$ exists by the oligomorphicity of $\Aut(\Gamma)$.
By the monotonicity of expected value and since $\omega$ is a fractional homomorphism, we have
\begin{align}
S^{\Gamma}(c) \leq E_\omega[f \mapsto S^{\Gamma}(f(a))]=
\sum_{b \in C^k} S^{\Gamma}(b) \omega({\mathscr S}_{a,b}) \leq S^{\Delta}(a).
\label{eq:frac-hom-opt}
\end{align}
If $\omega'$ is a fractional homomorphism from $\Gamma$ to $\Delta$ (which exists by assumption) and since $a \in \Opt(S^\Delta)$, we also have
\begin{align*}
S^{\Delta}(a) \leq E_{\omega'}[g \mapsto S^{\Delta}(g(c))] \leq S^{\Gamma}(c).
\end{align*}
This implies that $S^\Delta(a) = S^{\Gamma}(c)$.
Since $\omega$ is a probability distribution, we obtain from \eqref{eq:frac-hom-opt} that $S^{\Gamma}(b) = S^{\Gamma}(c)$ unless $\omega({\mathscr S}_{a,b}) = 0$, and hence $R'(b) = \Opt(S^\Gamma)(b) = 0$. Therefore,  
\begin{align*}
E_\omega[f \mapsto R'(f(a))]
= \sum_{b \in C^k} R'(b) \omega({\mathscr S}_{a,b}) = 0 = R(a).
\end{align*}  
This concludes the proof.
\end{proof}

We now have all ingredients needed to introduce the notion of a pp-construction.

\begin{definition}[pp-construction]
Let $\Gamma, \Delta$ be valued structures. Then  $\Delta$ has a \emph{pp-construction} in $\Gamma$ if $\Delta$ 
is fractionally homomorphically equivalent to a structure $\Delta'$ which is a pp-power of $\Gamma$. 
\end{definition}

Combining Proposition~\ref{prop:pp-hard} and Corollary~\ref{cor:hom-hard} gives the 
following.
\begin{corollary}\label{cor:pp-constr-red}
Let $\Gamma$ and $\Delta$ be valued structures 
with finite signatures and oligomorphic automorphism groups such that $\Delta$ has a pp-construction in $\Gamma$. Then there is a  polynomial-time reduction from 
$\VCSP(\Delta)$ to $\VCSP(\Gamma)$.
\end{corollary}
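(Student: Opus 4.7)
The plan is to chain together the two results cited just above the corollary, with Proposition~\ref{prop:pp-hard} handling the pp-power part and Corollary~\ref{cor:hom-hard} handling the fractional-homomorphic-equivalence part, and then composing the two polynomial-time reductions.

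More concretely, I would unfold the definition of pp-construction: there exists a valued structure $\Delta'$ which is a pp-power of $\Gamma$ and which is fractionally homomorphically equivalent to $\Delta$. Since $\Delta$ and $\Delta'$ share the same signature $\tau$ (as fractional homomorphism is only defined between structures of equal signature), $\Delta'$ also has a finite signature. By Proposition~\ref{prop:pp-hard} applied to $\Gamma$ and its pp-power $\Delta'$, the automorphism group $\Aut(\Delta')$ is oligomorphic and there is a polynomial-time reduction from $\VCSP(\Delta')$ to $\VCSP(\Gamma)$. Separately, since $\Aut(\Delta)$ is oligomorphic by hypothesis and $\Aut(\Delta')$ is oligomorphic by the previous observation, Corollary~\ref{cor:hom-hard} applies and yields that $\VCSP(\Delta)$ and $\VCSP(\Delta')$ are polynomial-time equivalent; in particular, there is a polynomial-time reduction from $\VCSP(\Delta)$ to $\VCSP(\Delta')$. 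Composing the two reductions produces a polynomial-time reduction from $\VCSP(\Delta)$ to $\VCSP(\Gamma)$, as required.

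There is no real obstacle here, as the corollary is essentially bookkeeping on top of the two previously established results. The only point that deserves a moment of care is verifying that the oligomorphicity hypothesis of Corollary~\ref{cor:hom-hard} is actually available for the intermediate structure $\Delta'$; this is exactly what the first sentence of Proposition~\ref{prop:pp-hard} gives (it records that any pp-power of an oligomorphic structure is again oligomorphic). With that in hand, the proof reduces to one or two lines.
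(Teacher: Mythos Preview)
Your proposal is correct and matches the paper's approach exactly: the paper simply states that the corollary follows by combining Proposition~\ref{prop:pp-hard} and Corollary~\ref{cor:hom-hard}. Your added care in checking that $\Delta'$ inherits a finite signature from $\Delta$ and oligomorphicity from $\Gamma$ (via Proposition~\ref{prop:pp-hard}) is a welcome clarification of hypotheses that the paper leaves implicit.
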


Note that the hardness proofs in Examples~\ref{expl:mc-hard} and~\ref{expl:mcc-hard} are special cases of Corollary~\ref{cor:pp-constr-red}. A more involved example uses the
relational structure with a hard CSP introduced below and is shown in Example~\ref{expl:triad}.

Let $\OIT$ be the following relation 
$$ \OIT = \{(0,0,1),(0,1,0),(1,0,0) \}.$$
It is well-known (see, e.g.,~\cite{Book}) that $\CSP(\{0,1\};\OIT)$ is NP-complete.
Combining Corollary~\ref{cor:pp-constr-red} with
the NP-hardness of $\CSP(\{0,1\};\OIT)$ we obtain:
\begin{corollary}\label{cor:OIT}
Let $\Gamma$ be a valued structure with a finite signature and oligomorphic automorphism group such that 
$(\{0,1\};\OIT)$ 
has a pp-construction in $\Gamma$. Then $\VCSP(\Gamma)$ is NP-hard. 
\end{corollary}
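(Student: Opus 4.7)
The plan is to apply Corollary~\ref{cor:pp-constr-red} directly, using the known NP-hardness of $\CSP(\{0,1\};\OIT)$ as the source of hardness. First I would observe that the relational structure $(\{0,1\};\OIT)$ may be regarded as a valued structure whose single cost function takes values only in $\{0,\infty\}$; as such, it has a finite signature and, since its domain is finite, trivially an oligomorphic automorphism group. Moreover, as noted in Section~\ref{sect:vcsp}, for structures whose valued relations only take values in $\{0,\infty\}$ the problem $\VCSP$ coincides with the classical $\CSP$, so in particular $\VCSP(\{0,1\};\OIT) = \CSP(\{0,1\};\OIT)$.

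With this setup in place, the hypothesis of the corollary states that $(\{0,1\};\OIT)$ has a pp-construction in $\Gamma$, and both structures meet the finite-signature and oligomorphicity requirements of Corollary~\ref{cor:pp-constr-red}. Invoking that corollary yields a polynomial-time reduction from $\VCSP(\{0,1\};\OIT)$ to $\VCSP(\Gamma)$. Combined with the NP-hardness of $\CSP(\{0,1\};\OIT)$ cited in the excerpt, this immediately gives NP-hardness of $\VCSP(\Gamma)$.

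There is no substantial obstacle here — the statement is a direct consequence of Corollary~\ref{cor:pp-constr-red}. The only minor point worth making explicit is the identification of the crisp structure $(\{0,1\};\OIT)$ with its valued counterpart, so that Corollary~\ref{cor:pp-constr-red} is applicable; once this identification is made, the chain of reductions is immediate.
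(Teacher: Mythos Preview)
Your proposal is correct and matches the paper's own argument: the paper simply states that the corollary is obtained by combining Corollary~\ref{cor:pp-constr-red} with the NP-hardness of $\CSP(\{0,1\};\OIT)$, which is precisely what you do. The only thing you add is the (routine) explicit verification that $(\{0,1\};\OIT)$ satisfies the hypotheses of Corollary~\ref{cor:pp-constr-red}, which the paper leaves implicit.
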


Since Corollary~\ref{cor:OIT} is our typical tool to prove hardness, we are interested in pp-constructing relational structures in valued structures. If $\Gamma$ is a valued structure on a domain $C$, then we call the relational structure on the domain $C$ whose relations are all classical relations (i.e., with values in $\{0,\infty\}$) in $\langle \Gamma \rangle$ the \emph{underlying crisp structure of $\Gamma$}. The following proposition first appeared in \cite{tvcsp} and shows that for hardness proofs by Corollary~\ref{cor:OIT}, it is equivalent to check pp-constructability in the underlying crisp structure.

\begin{proposition}[{\cite[Proposition 3.2]{tvcsp}}]
\label{prop:pp-constr-rel}
Let $\Gamma$ be a~valued structure and let $\bB$ be a~relational structure, both with countable domains. Then $\Gamma$ pp-constructs $\bB$ if and only if the underlying crisp structure of $\Gamma$ pp-constructs $\bB$.
\end{proposition}

We finish this section by the following useful lemma.

\begin{lemma}\label{lem:pp-trans}
The relation of pp-constructability on the class of countable valued structures is transitive. 
\end{lemma}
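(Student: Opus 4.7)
The plan is to show transitivity as follows. Suppose $\Gamma_1$ pp-constructs $\Gamma_2$ via a $d_1$-pp-power $\Gamma_2^*$ of $\Gamma_1$ fractionally equivalent to $\Gamma_2$, and $\Gamma_2$ pp-constructs $\Gamma_3$ via a $d_2$-pp-power $\Gamma_3^*$ of $\Gamma_2$ fractionally equivalent to $\Gamma_3$; write $C_i$ and $C_i^*$ for the respective domains. I will construct a valued structure $\Gamma_3^{**}$ that is both a $d_2$-pp-power of $\Gamma_2^*$ (hence a $(d_1 d_2)$-pp-power of $\Gamma_1$, after identifying $(C_1^{d_1})^{d_2}$ with $C_1^{d_1 d_2}$) and fractionally equivalent to $\Gamma_3^*$. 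Transitivity then follows: $\Gamma_3$ is fractionally equivalent to $\Gamma_3^*$, which is fractionally equivalent to $\Gamma_3^{**}$, and by Lemma~\ref{lem:compose} these compose into a fractional equivalence between $\Gamma_3$ and $\Gamma_3^{**}$, which is a pp-power of $\Gamma_1$. That any $d_2$-pp-power of $\Gamma_2^*$ is automatically a $(d_1 d_2)$-pp-power of $\Gamma_1$ will follow from a routine induction on the valued relational clone: every $S \in \langle \Gamma_2^* \rangle$ of arity $\ell$, viewed as a valued relation on $C_1^{\ell d_1}$, lies in $\langle \Gamma_1 \rangle$, with the base case holding by definition of pp-power and each clone operation commuting with the reshaping.

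To construct $\Gamma_3^{**}$, I will fix fractional homomorphisms $\omega \colon \Gamma_2 \to \Gamma_2^*$ and $\omega' \colon \Gamma_2^* \to \Gamma_2$ witnessing the equivalence. For each relation symbol $R$ of $\Gamma_3^*$ of arity $k$, fix a syntactic construction of the witnessing relation $S_R \in \langle \Gamma_2 \rangle$ from base relations of $\Gamma_2$ using the clone operations, and let $S_R^* \in \langle \Gamma_2^* \rangle$ denote the relation obtained by applying the same construction in $\Gamma_2^*$ (replacing each base relation $T^{\Gamma_2}$ by $T^{\Gamma_2^*}$ throughout). Take $\Gamma_3^{**}$ with the signature of $\Gamma_3^*$ over the domain $(C_2^*)^{d_2}$, interpreting each $R$ via $S_R^*$; this is a $d_2$-pp-power of $\Gamma_2^*$ by construction. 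The candidate fractional homomorphism $\Omega \colon \Gamma_3^* \to \Gamma_3^{**}$ is the pushforward of $\omega$ along the map sending $f \colon C_2 \to C_2^*$ to its coordinatewise extension $(a_1, \dots, a_{d_2}) \mapsto (f(a_1), \dots, f(a_{d_2}))$; define $\Omega' \colon \Gamma_3^{**} \to \Gamma_3^*$ analogously from $\omega'$.

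Verifying that $\Omega$ is a fractional homomorphism will reduce, by unwinding the definitions of pp-power and $\Omega$, to a \emph{clone extension property}: for every $S \in \langle \Gamma_2 \rangle$ with its fixed syntactic construction and every suitable tuple $a$,
\[E_\omega\bigl[f \mapsto S^*(f(a))\bigr] \leq S(a),\]
together with the analogous statement for $\omega'$. I will prove this by simultaneous induction on the construction depth of $S$ in both directions. Base cases are immediate, shifting and non-negative scaling use linearity of expectation, expressibility $S(x) = \inf_y \phi(x, y)$ uses Lemma~\ref{lem:cost} to pick an attained minimizer $b^*$ and then bounds $S^*(f(a)) \leq \phi^*(f(a), f(b^*))$, and $\Feas(T)$ uses that the inductive hypothesis $E_\omega[T^*(f(a))] \leq T(a) < \infty$ forces $T^*(f(a))$ to be finite $\omega$-almost surely. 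The main obstacle will be the $\Opt$ case: applying the inductive hypothesis to $T$ in \emph{both} directions — this is exactly where the simultaneous induction is essential — gives $\inf T = \inf T^*$, so when $T(a) = \inf T$ we have $E_\omega[T^*(f(a))] \leq \inf T^*$; combined with the pointwise lower bound $T^*(f(a)) \geq \inf T^*$, this will force $T^*(f(a)) = \inf T^*$ for $\omega$-almost every $f$, whence $\Opt(T^*)(f(a)) = 0$ almost surely as well.
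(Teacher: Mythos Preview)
Your proposal follows the same strategy as the paper's proof: reduce to the case where $\Delta$ is a pp-power of $\Gamma$ and $\Gamma$ is fractionally equivalent to $\Gamma'$, reinterpret the defining clone constructions over $\Gamma'$ to obtain $\Delta'$, and lift the fractional homomorphisms coordinatewise. The paper simply asserts that the lifted maps are fractional homomorphisms; your inductive ``clone extension property'' makes this explicit, and your treatment of the $\Opt$ case (using the hypothesis in both directions simultaneously to pin down $\inf T = \inf T^*$, then forcing equality almost surely) is precisely the argument the paper elides.

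There is one genuine gap: in the expressibility step you invoke Lemma~\ref{lem:cost} to pick an attained minimizer $b^*$, but that lemma requires an oligomorphic automorphism group, which Lemma~\ref{lem:pp-trans} does not assume. Replace this with an $\epsilon$-argument: for $S(a)<\infty$ pick $b_\epsilon$ with $\phi(a, b_\epsilon) \leq S(a) + \epsilon$, so that $E_\omega[S^*(f(a))] \leq E_\omega[\phi^*(f(a), f(b_\epsilon))] \leq \phi(a, b_\epsilon) \leq S(a) + \epsilon$, and let $\epsilon \to 0$. A related subtlety, which the paper's sketch also glosses over, is your explicit claim that $S_R^* \in \langle \Gamma_2^* \rangle$: at an expressibility node the infimum computed over $\Gamma_2^*$ might a priori fall outside $\Q \cup \{\infty\}$, so the reinterpreted construction need not yield a valid valued relation and $\Gamma_3^{**}$ need not be a pp-power. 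In the oligomorphic case (which covers every use of the lemma in the paper) this is harmless since infima are attained at rational values, but as written your argument is incomplete in full generality.
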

\begin{proof}
Clearly, a pp-power of a pp-power is again a pp-power, and fractional homomorphic equivalence is transitive by Proposition~\ref{prop:frac-comp}.
We are therefore left to prove that if $\Gamma$ and $\Delta$ are valued structures such that $\Delta$ is a $d$-dimensional pp-power of $\Gamma$,
and if $\Gamma'$ is fractionally homomorphically equivalent to $\Gamma$ via fractional homomorphisms $\omega_1 \colon \Gamma \to \Gamma'$ and $\omega_2 \colon \Gamma' \to \Gamma$, then $\Delta$ also has a  pp-construction in $\Gamma'$.  

Let $C$ and $C'$ be the domains of $\Gamma$ and $\Gamma'$, respectively. Take the pp-expressions that define the valued relations of $\Delta$ over $\Gamma$, and interpret them over $\Gamma'$ instead of $\Gamma$;
let $\Delta'$ be the resulting valued structure. Note that $\Delta'$ is a $d$-dimensional pp-power of $\Gamma'$.
For a map $f \colon \Gamma \to \Gamma'$, let $\tilde f \colon \Delta \to \Delta'$ be given by 
 $(x_1,\dots,x_{d}) \mapsto (f(x_1),\dots,f(x_{d}))$. 
Then for all $S \in B((C')^{C})$
we define 
$$\tilde \omega_1 (\{\tilde f \mid f \in S\}) := \omega_1(S)$$
and 
\[\tilde \omega_1(\tilde S):=\tilde \omega_1 (\tilde{S} \cap \{\tilde f \mid f \in (C')^C\})\]
for all $\tilde S \in B \big(((C')^d)^{C^d} \big)$.
We argue that $\tilde \omega_1$ is a fractional homomorphism from $\Delta$ to $\Delta'$. To see this, let $R$ be a valued relation of $\Delta$ and $R'$ be the corresponding valued relation of $\Delta'$. If we view $R$ as an element of $\langle \Gamma \rangle$, then Proposition~\ref{prop:frac-hom-expr} applied to $\omega_1$ implies precisely that $\tilde \omega_1$ is a fractional homomorphism from $\Delta$ to $\Delta'$. 
Analogously we obtain from $\omega_2$ a fractional homomorphism $\tilde \omega_2$ from $\Delta'$ to $\Delta$. Therefore, $\Delta$ is fractionally homomorphically equivalent to $\Delta'$, which is a pp-power of $\Gamma'$. In other words, $\Delta$ has a pp-construction in $\Gamma'$.
\end{proof}

\section{Fractional Polymorphisms}

\label{sect:fpol}
In this section we introduce  \emph{fractional polymorphisms} of valued structures; they are an important tool for formulating tractability results and complexity classifications of VCSPs.
For valued structures with a finite domain, our definition specializes to the established notion of a fractional polymorphism which has been used 
to study the complexity of VCSPs for valued structures over finite domains (see, e.g.~\cite{ThapperZivny13}). 
Our approach is different from the one of Schneider and Viola~\cite{ViolaThesis,SchneiderViola} and Viola and \v{Z}ivn\'{y} \cite{ViolaZivny} in that we work with arbitrary probability spaces instead of distributions with finite support or countable additivity property.  
As we will see in Section~\ref{sect:tract}, fractional polymorphisms can be used to give sufficient conditions for tractability of $\VCSP$s of certain valued structures with oligomorphic automorphism groups. This justifies our more general notion of a fractional polymorphism, as it might provide a tractability proof for more problems.

Let ${\mathscr O}_C^{(\ell)}$ be the set of all operations $f \colon C^{\ell} \to C$ on a set $C$ of arity $\ell$.
We equip ${\mathscr O}_C^{(\ell)}$ with the topology of pointwise convergence, where $C$ is taken to be discrete. That is, the basic open sets are of the form
\begin{align}
{\mathscr S}_{a^1,\dots,a^{\ell}, b} := \{ f \in {\mathscr O}^{(\ell)}_C \mid f(a^1,\dots,a^{\ell}) = b \}
\label{eq:S}
\end{align}
where $a^1,\dots,a^{\ell}, b\in C^{k}$, for some $k \in {\mathbb N}$, and $f$ is applied componentwise. 
Let $${\mathscr O}_C := \bigcup_{\ell \in {\mathbb N}} {\mathscr O}_C^{(\ell)}.$$

\begin{definition}[fractional operation]
Let $\ell \in {\mathbb N}$. 
A \emph{fractional operation on $C$ of arity $\ell$} is a probability distribution 
$$\big({\mathscr O}_C^{(\ell)},B({\mathscr O}_C^{(\ell)}), \omega \colon B({\mathscr O}_C^{(\ell)}) \to [0,1] \big).$$ 
The set of all fractional operations on $C$ of arity $\ell$ is denoted by ${\mathscr F}^{(\ell)}_C$, and ${\mathscr F}_C := \bigcup_{\ell \in {\mathbb N}} {\mathscr F}^{(\ell)}_C$.
\end{definition}

If the reference to $C$ is clear, we occasionally omit the subscript~$C$. 
We often use $\omega$ for both the entire fractional operation and for the map $\omega \colon B({\mathscr O}_C^{(\ell)}) \to [0,1]$. Note that fractional operations are simply fractional maps where $D=C^\ell$ (see Definition~\ref{def:frac-map}).

\begin{definition}\label{def:pres}
A fractional operation $\omega \in {\mathscr F}_C^{(\ell)}$ \emph{improves}
a $k$-ary valued relation $R \in {\mathscr R}^{(k)}_C$ if for all $a^1,\dots,a^{\ell} \in C^k$ 
$$E := E_\omega[f \mapsto R(f(a^1,\dots,a^{\ell}))]$$ 
exists and 
\begin{align}
E \leq
\frac{1}{\ell} \sum_{j = 1}^{\ell} R(a^j). \label{eq:fpol}
\end{align}
\end{definition}

Note that~\eqref{eq:fpol} has the interpretation that 
the expected value of $R(f(a^1,\dots,a^\ell))$ is at most the average of the values $R(a^1),\dots$, $R(a^\ell)$. 
Also note that if $R$ is a classical relation improved by a fractional operation $\omega$
and $\omega(f) > 0$ for $f \in {\mathscr O}^{(\ell)}$, then $f$ must preserve $R$ in the usual sense.
It follows from Lemma~\ref{lem:E-exists} that if $\Aut(C; R)$ is oligomorphic, then $E_\omega[f \mapsto R(f(a^1, \dots, a^{\ell}))]$ always exists and is greater than $-\infty$.

\begin{definition}[fractional polymorphism]
If $\omega$ improves every valued relation in $\Gamma$, 
then $\omega$ is called a \emph{fractional polymorphism 
of $\Gamma$}; the set of all fractional polymorphisms of $\Gamma$ is denoted by $\fPol(\Gamma)$.
\end{definition}

\begin{remark}
   Our notion of fractional polymorphism coincides with the previously used notions of fractional polymorphisms with finite support \cite{SchneiderViola, ViolaThesis} or the countable additivity property \cite{ViolaZivny}, since in this case the expected value on the left-hand side of \eqref{eq:fpol} is equal to the weighted arithmetic mean.
\end{remark}

 \begin{remark}\label{rem:frac-pol-frac-hom}
A fractional polymorphism of arity $\ell$ of a valued structure $\Gamma$ might also be viewed as a fractional homomorphism from a specific $\ell$-th pp-power of $\Gamma$ to $\Gamma$, which we denote by $\Gamma^{\ell}$: 
if $C$ is the domain and $\tau$ the signature of $\Gamma$, 
then the domain of $\Gamma^{\ell}$ is $C^{\ell}$,
and for every $R \in \tau$ of arity $k$ we have
$$R^{\Gamma^{\ell}}((a^1_1,\dots,a^1_{\ell}),\dots,(a^k_1,\dots,a^k_{\ell})) 
:= \frac{1}{\ell} \sum_{i=1}^\ell R^{\Gamma}(a^1_i,\dots,a^{k}_i).$$
\end{remark} 

\begin{expl}\label{expl:id}
Let $\pi^\ell_i \in {\mathscr O}^{(\ell)}_C$ be the $i$-th projection of arity $\ell$, which is given by $\pi^\ell_i(x_1,\dots,x_\ell) = x_i$.
The fractional operation $\Id_\ell$ of arity $\ell$ such that $\Id_\ell(\pi^{\ell}_{i}) = \frac{1}{\ell}$ 
for every $i \in \{1,\dots,\ell\}$ is a fractional polymorphism of every valued structure with domain $C$. 
\end{expl}

\begin{expl}\label{expl:aut}
Let $\Gamma$ be a valued structure and $\alpha\in \Aut(\Gamma)$. The fractional operation $\omega \in \mathscr{F}_C^{(1)}$ defined by $\omega(\alpha)=1$ is a fractional polymorphism of $\Gamma$.
\end{expl}

Let ${\mathscr C} \subseteq {\mathscr F}_C$. 
We write ${\mathscr C}^{(\ell)}$ for 
${\mathscr C} \cap {\mathscr F}^{(\ell)}_C$
and $\Imp({\mathscr C})$ for the set of valued relations that are improved by every fractional operation in 
${\mathscr C}$.

\begin{lemma}
\label{lem:aut}
Let $R \in {\mathscr R}^{(k)}_C$ and let $\Gamma$ be a valued structure with domain $C$ and an automorphism $\alpha \in \Aut(\Gamma)$ which does not preserve~$R$. Then 
$R \notin \Imp(\fPol(\Gamma)^{(1)})$.
\end{lemma}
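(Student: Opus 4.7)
The plan is to prove the contrapositive by exhibiting a concrete unary fractional polymorphism of $\Gamma$ that fails to improve $R$. By Example~\ref{expl:aut}, since both $\alpha$ and $\alpha^{-1}$ lie in $\Aut(\Gamma)$, the Dirac-type fractional operations $\omega_\alpha, \omega_{\alpha^{-1}} \in {\mathscr F}_C^{(1)}$ defined by $\omega_\alpha(\alpha) = 1$ and $\omega_{\alpha^{-1}}(\alpha^{-1}) = 1$ both belong to $\fPol(\Gamma)^{(1)}$. The measure-theoretic side of this setup is unproblematic: the singleton $\{\alpha\}$ is the countable intersection $\bigcap_{c \in C} {\mathscr S}_{c, \alpha(c)}$ of basic open sets, hence lies in $B({\mathscr O}_C^{(1)})$, and similarly for $\{\alpha^{-1}\}$.

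Next I would observe that for any $a \in C^k$, since $\omega_\alpha$ is concentrated at $\alpha$, the expected value reduces via the definition of the Lebesgue integral to
\[ E_{\omega_\alpha}[f \mapsto R(f(a))] \;=\; R(\alpha(a)), \]
and analogously $E_{\omega_{\alpha^{-1}}}[f \mapsto R(f(a))] = R(\alpha^{-1}(a))$. Hence $\omega_\alpha$ improves $R$ iff $R(\alpha(a)) \leq R(a)$ for every $a \in C^k$, and $\omega_{\alpha^{-1}}$ improves $R$ iff $R(\alpha^{-1}(a)) \leq R(a)$ for every $a \in C^k$.

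Finally, since $\alpha$ does not preserve $R$, pick $a_0 \in C^k$ with $R(\alpha(a_0)) \neq R(a_0)$. In the case $R(\alpha(a_0)) > R(a_0)$, the inequality $E_{\omega_\alpha}[f \mapsto R(f(a_0))] = R(\alpha(a_0)) \leq R(a_0)$ fails, so $\omega_\alpha$ does not improve $R$. In the other case $R(\alpha(a_0)) < R(a_0)$, I would set $b_0 := \alpha(a_0)$ and observe that $R(\alpha^{-1}(b_0)) = R(a_0) > R(b_0)$, so this time $\omega_{\alpha^{-1}}$ does not improve $R$. Either way, $R$ is not improved by every element of $\fPol(\Gamma)^{(1)}$, proving $R \notin \Imp(\fPol(\Gamma)^{(1)})$. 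The only mildly delicate point is the bookkeeping with $\omega_{\alpha^{-1}}$ in the second case — a single application of $\omega_\alpha$ alone is not sufficient because a non-preserving $\alpha$ may happen to decrease $R$ at the chosen tuple — so the argument genuinely needs both automorphisms $\alpha$ and $\alpha^{-1}$.
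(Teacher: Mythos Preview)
Your proof is correct and follows exactly the same approach as the paper: exhibit the Dirac fractional polymorphisms $\omega_\alpha$ and $\omega_{\alpha^{-1}}$ and case-split on the sign of $R(\alpha(a_0)) - R(a_0)$. You are simply more explicit about measurability and the witnessing tuple in the second case, but the argument is identical.
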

\begin{proof}
Since $\alpha$ does not preserve $R$, there exists $a \in C^k$ such that $R(a) \neq R(\alpha(a))$. If $R(\alpha(a)) > R(a)$, then 
let $\omega \in {\mathscr F}_C^{(1)}$ be the fractional operation 
defined by $\omega(\alpha) = 1$. 
Then $\omega$ improves every valued relation of $\Gamma$ and does not improve $R$. If $R(\alpha(a)) < R(a)$, then the fractional polymorphism $\omega$ of $\Gamma$ given by $\omega(\alpha^{-1}) = 1$ does not improve $R$. 
\end{proof}

Parts of the arguments in the proof of the following lemma can be found in the proof of~\cite[Lemma 7.2.1]{ViolaThesis}; note that the author works with a more restrictive notion of fractional operation, so we cannot reuse her result. However, the arguments can be generalized to our notion of fractional polymorphism for all countable valued structures.

\begin{lemma}\label{lem:easy}
For every 
valued $\tau$-structure $\Gamma$ 
over a countable domain $C$
we have 
$\langle \Gamma \rangle \subseteq \Imp(\fPol(\Gamma))$.
\end{lemma}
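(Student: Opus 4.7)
The plan is to proceed by structural induction on $\langle \Gamma \rangle$, verifying that $\Imp(\fPol(\Gamma))$ contains $R_=$, $R_\emptyset$, and every cost function of $\Gamma$, and is closed under each of the five generating operations. Every cost function of $\Gamma$ lies in $\Imp(\fPol(\Gamma))$ by definition of $\fPol(\Gamma)$. For $R_\emptyset$ the right-hand side of \eqref{eq:fpol} is constantly $\infty$, and for $R_=$ componentwise application of $f$ preserves equal tuples while the right-hand side is $\infty$ as soon as some $a^j$ is off-diagonal. Closure under shifting by $s \in \Q$ and non-negative scaling by $r \in \Q_{\geq 0}$ follows from linearity of the expected value together with the conventions on $\infty$ and on $0 \cdot \infty$ stated in the preliminaries.

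For $\Feas$, if all $R(a^j)$ are finite, then $\omega$ improving $R$ gives $E_\omega[f \mapsto R(f(a^1,\dots,a^\ell))] < \infty$, which forces $R(f(a^1,\dots,a^\ell)) < \infty$ almost surely and hence $E_\omega[f \mapsto \Feas(R)(f(a^1,\dots,a^\ell))] = 0$; otherwise the right-hand side of \eqref{eq:fpol} for $\Feas(R)$ is $\infty$ and the inequality is vacuous. For $\Opt$, if every $a^j$ lies in $\Opt(R)$ so that $m := R(a^1) = \cdots = R(a^\ell)$ is the global minimum of $R$ (in particular $m \in \Q$), then $\omega$ improving $R$ yields $E_\omega[f \mapsto R(f(a^1,\dots,a^\ell))] \leq m$ while the pointwise bound $R \geq m$ yields the reverse inequality; equality forces $R(f(\dots)) = m$ almost surely, so $\Opt(R)(f(\dots)) = 0$ almost surely.

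The main step, and the one I expect to require most care, is closure under expressibility. Suppose $R(a) = \inf_{b \in C^n} \phi^\Gamma(a,b)$ where $\phi = \sum_{i=1}^m \phi_i$ is a $\tau$-expression with each $\phi_i$ atomic in a symbol of $\Gamma$; fix $\omega \in \fPol(\Gamma)^{(\ell)}$ and $a^1,\dots,a^\ell \in C^k$, and assume all $R(a^j) < \infty$ (otherwise \eqref{eq:fpol} is vacuous). Given $\varepsilon > 0$, pick witnesses $b^j \in C^n$ with $\phi^\Gamma(a^j,b^j) \leq R(a^j) + \varepsilon$, set $d^j := (a^j,b^j)$, and consider
\[ g(f) := R(f(a^1,\dots,a^\ell)), \qquad h(f) := \sum_{i=1}^m \phi_i^\Gamma(f(d^1,\dots,d^\ell)). \]
Since $R$ is an infimum and $f(b^1,\dots,b^\ell)$ is a valid witness, $g \leq h$ pointwise. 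Each atomic $\phi_i^\Gamma$ evaluates a cost function of $\Gamma$ on a selection of coordinates, so $\omega$ improves it; the finiteness of $\phi^\Gamma(a^j,b^j)$ forces every $E_\omega[(\phi_i^\Gamma(f(d^1,\dots,d^\ell)))^+]$ to be finite, and via $g \leq h$ also $E_\omega[g^+]$ is finite, so $E_\omega[g]$ exists. Linearity of expectation, together with $\omega$ improving each $\phi_i^\Gamma$, then yields
\[ E_\omega[g] \leq E_\omega[h] \leq \frac{1}{\ell}\sum_{j=1}^\ell \phi^\Gamma(a^j,b^j) \leq \frac{1}{\ell}\sum_{j=1}^\ell R(a^j) + \varepsilon, \]
and letting $\varepsilon \to 0$ gives \eqref{eq:fpol} for $R$. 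Two subtleties must be addressed along the way: measurability of $g$, which holds because $g$ takes only countably many values and each level set is a countable union of the basic sets in~\eqref{eq:S}; and existence of $E_\omega[g]$ in the edge case where some $R(a^j) = \infty$, which can be argued by choosing any fixed witnesses $b^j$ and invoking the already-established closure under $\Feas$ to control the positive part of $g$.
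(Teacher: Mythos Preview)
Your proof is correct and follows the same structural induction as the paper: the base cases $R_=$ and $R_\emptyset$, linearity of expectation for shifting and scaling, the $\varepsilon$-witness argument for expressibility, and closure under $\Feas$ and $\Opt$. The only visible difference is that you handle $\Feas$ and $\Opt$ via almost-sure reasoning whereas the paper expands the expectation as a countable sum over $C^k$ using~\eqref{eq:exp-sum}; you are also more explicit than the paper about measurability and the existence of $E_\omega[g]$ in the expressibility step, though your sketch of the edge case $R(a^j)=\infty$ via closure under $\Feas$ is not fully fleshed out (the paper does not isolate this case at all).
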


\begin{proof}
Let $\omega \in \fPol(\Gamma)^{(\ell)}$. 
By definition, $\omega$ improves every valued relation $R$ of $\Gamma$. 
It is clear that $\omega$ also preserves $\phi_{\emptyset}$. To see that $\omega$ preserves $R_=$, let $a^1,\dots,a^{\ell} \in C^2$. Note that either 
$a^i_1 = a^i_2$ for every $i \in \{1,\dots,\ell\}$, in which case
$f(a^1_1,\dots,a^{\ell}_1)=f(a^1_2,\dots,a^{\ell}_2)$ for every $f \in {\mathscr O}_C^{(\ell)}$, and hence 
$$ E_\omega[f \mapsto R_=(f(a^1,\dots,a^{\ell}))]
= 0 = \frac{1}{\ell} \sum_{j = 1}^{\ell} R_=(a^j),
$$
or $a^i_1 \neq a^i_2$ for some $i \in \{1,\dots,\ell\}$, in which case $\frac{1}{\ell} \sum_{j = 1}^{\ell} R_=(a^j) = \infty$ and the inequality 
in~(\ref{eq:fpol}) is again satisfied. 

The statement is also clear for valued relations
obtained from valued relations in $\Gamma$
by non-negative scaling and addition of constants, since these operations preserve the inequality in~(\ref{eq:fpol}) by the linearity of expectation. 

Let $\phi(x_1,\dots,x_k, y_1, \dots, y_n)$ be a  $\tau$-expression. We need to show that 
the fractional operation $\omega$ improves the $k$-ary valued relation $R$ defined for every $a\in C^k$ by $R(a)= \inf_{b\in C^n} \phi^{\Gamma}(a,b)$. Since $\phi$ is a $\tau$-expression, there are $R_i\in \tau$ such that 
\[\phi(x_1,\dots,x_k, y_1, \dots, y_n)=\sum_{i=1}^m R_i(x_{p_1^i}, \dots, x_{p_{k_i}^i},y_{q_1^i}, \dots, y_{q_{n_i}^i})\]
for some $k_i, n_i \in \N$, $p_1^i, \dots, p_{k_i}^i\in \{1, \dots, k\}$ and $q_1^i, \dots, q_{n_i}^i\in \{1, \dots, n\}$.

In this paragraph, if $v=(v_1, \dots, v_t) \in C^t$ and $i_1,\dots, i_s \in \{1, \dots, t\}$, we will write $v_{i_1, \dots, i_s}$ for the tuple $(v_{i_1}, \dots, v_{i_s})$ for short. Let $a^1, \dots, a^{\ell} \in C^k$. Let $\varepsilon > 0$ be a rational number. From the definition of an infimum, for every $j \in \{1, \dots, \ell\}$, there is $b^j\in C^n$ such that
\[ R(a^j) \leq \phi(a^j, b^j) < R(a^j)+ \varepsilon.\] Moreover, for every $f \in {\mathscr O}_C^{(\ell)}$, 
\[ R(f(a^1, \dots, a^{\ell})) \leq \phi(f(a^1, \dots, a^{\ell}), f(b^1, \dots, b^{\ell})).\]
By linearity and monotonicity of expectation, we obtain
\begin{align*} 
E_\omega[f \mapsto R(f(a^1,\dots,a^{\ell}))] 
&\leq E_\omega[f \mapsto \phi(f(a^1,\dots,a^{\ell}), f(b^1, \dots, b^{\ell}))]  \\
&= E_\omega[f \mapsto \sum_{i=1}^{m} R_i((f(a^1,\dots,a^{\ell}))_{p_1^i, \dots, p_{k_i}^i}, (f(b^1, \dots, b^{\ell}))_{q_1^i, \dots, q_{n_i}^i})]\\
&= \sum_{i=1}^{m} E_\omega[f \mapsto R_i((f(a^1,\dots,a^{\ell}))_{p_1^i, \dots, p_{k_i}^i}, (f(b^1, \dots, b^{\ell}))_{q_1^i, \dots, q_{n_i}^i})].  
\end{align*}
Since $\omega$ improves $R_i$ for every $i\in \{1, \dots, m\}$, the last row of the inequality above is at most
\begin{align*}
\sum_{i=1}^{m} \frac{1}{\ell} \sum_{j=1}^{\ell} 
R_i(a^j_{p_1^i, \dots, p_{k_i}^i}, b^j_{q_1^i, \dots, q_{n_i}^i})
&= \frac{1}{\ell} \sum_{j=1}^{\ell} \sum_{i=1}^{m}
R_i(a^j_{p_1^i, \dots, p_{k_i}^i}, b^j_{q_1^i, \dots, q_{n_i}^i}) \\
&= \frac{1}{\ell} \sum_{j=1}^{\ell} \phi(a^j, b^j) \\
&< \frac{1}{\ell} \sum_{j=1}^{\ell} R(a^j) + \varepsilon. 
\end{align*}
Since $\varepsilon$ was arbitrary, it follows that $\omega$ improves $R$.

 Finally, we prove that $\Imp(\fPol(\Gamma))$ is closed under $\Feas$ and $\Opt$. 
 Let $R \in \tau$ be of arity $k$ and define $S= \Feas(R)$ and $T = \Opt(R)$. We aim to show that $S, T \in \Imp(\fPol(\Gamma))$. Let $s^1,\dots,s^{\ell} \in C^k$.
 If $S(s^i) = \infty$ for some $i \in \{1,\dots,\ell\}$,
 then $\frac{1}{\ell} \sum_{j = 1}^{\ell} S(s^j) = \infty$ 
 and hence $\omega$ satisfies~\eqref{eq:fpol} (with $R$ replaced by $S$) for the tuples $s^1,\dots,s^{\ell}$.
So suppose that $S(s^i) = 0$ for all $i \in \{1,\dots,\ell\}$, i.e., $R(s^i)$ is finite for all $i$. Since $\omega$ improves $R$ it holds that
\begin{align}\label{eq:imp-R}
E_\omega[f \mapsto R(f(s^1,\dots,s^{\ell}))]
\leq \frac{1}{\ell} \sum_{j = 1}^{\ell} R(s^j) 
\end{align}
and hence the expected value on the left-hand side is finite as well.
By \eqref{eq:exp-sum} in Appendix~\ref{sect:lebesgue},
\begin{align}\label{eq:E-expr}
E_\omega[f \mapsto R(f(s^1,\dots,s^{\ell}))] =  \sum_{t \in C^k} R(t) \omega({\mathscr S}_{s^1,\dots,s^{\ell}, t}),
\end{align}
which implies that $R(t)$ is finite and $S(t)=0$ unless $\omega({\mathscr S}_{s^1,\dots,s^{\ell}, t})=0$. Consequently (again by \eqref{eq:exp-sum}),
\begin{equation*}
E_\omega[f \mapsto S(f(s^1,\dots,s^{\ell}))]
= \sum_{t \in C^k} S(t) \omega({\mathscr S}_{s^1,\dots,s^{\ell}, t}) = 0 = \frac{1}{\ell} \sum_{j = 1}^{\ell} S(s^j). 
\end{equation*}
It follows that $\omega$ improves $S$.

Moving to the valued relation $T$, we may again assume without loss of generality that $T(s^i)=0$ for every $i \in \{1, \dots, \ell\}$ as we did for $S$. This means that $c := R(s^1) = \cdots = R(s^\ell) \leq R(b)$ for every $b \in C^k$. 
Therefore, the right-hand side in \eqref{eq:imp-R} is equal to $c$ and by combining it with \eqref{eq:E-expr} we get 
\begin{align*}
    \sum_{t \in C^k} R(t) \omega({\mathscr S}_{s^1,\dots,s^{\ell}, t}) \leq c.
\end{align*}
Together with the assumption that $R(t) \geq c$ for all $t \in C^k$ and $\omega$ being a probability distribution we obtain that $R(t) = c$ and $T(t) = 0$ unless $\omega({\mathscr S}_{s^1,\dots,s^{\ell}, t}) = 0$, and hence  
\begin{align*}
E_\omega[f \mapsto T(f(s^1,\dots,s^{\ell}))]
= \sum_{t \in C^k} T(t) \omega({\mathscr S}_{s^1,\dots,s^{\ell}, t}) = 0 = \frac{1}{\ell} \sum_{j = 1}^{\ell} T(s^j).
\end{align*}  
This concludes the proof that $\omega$ improves $T$.
\end{proof}

The following example shows an application of Lemma~\ref{lem:easy}.

\begin{expl}
Let $<$ be the binary relation on $\{0,1\}$ and $\Gamma_<$ the valued structure from Example~\ref{expl:vs-mc}. By definition, $\Opt(<) \in \langle \Gamma_< \rangle$. Denote the minimum operation on $\{0,1\}$ by $\min$ and let $\omega$ be a binary fractional operation defined by $\omega(\min)=1$. Note that $\omega \in \fPol(\{0,1\}; \Opt(<))$. However, 
\[< \left(\min \left( \begin{pmatrix}0\\1\end{pmatrix} , \begin{pmatrix}0\\0\end{pmatrix} \right) \right)= \, {<}(0, 0) = 1,\]
while $(1/2) \cdot {<}(0,1) + (1/2) \cdot {<}(0,0)= 1/2$. This shows that $\omega$ does not improve $<$ and hence $< \; \not \in \langle (\{0,1\}; \Opt(<)) \rangle$ by Lemma~\ref{lem:easy}. 
\end{expl}

\section{Tractability via Canonical Fractional Polymorphisms} \label{sect:tract}

In this section we make use of a tractability result for finite-domain VCSPs of
Kolmogorov, Krokhin, and Rol\'{i}nek~\cite{KolmogorovKR17},  building on earlier work of Kolmogorov, Thapper, and 
\v{Z}ivn\'y~\cite{KolmogorovThapperZivny15,ThapperZivny13}. To exploit this result, the key ingredient is a polynomial-time reduction for VCSPs of valued structures with an oligomorphic automorphism group satisfying certain assumptions to VCSPs of finite-domain structures. This reduction is inspired by a similar reduction in the classical relational setting~\cite{BodMot-Unary}. We then generalize and adapt several statements for finite-domain valued structures to be able to use the complexity classification for finite-domain VCSPs.

\begin{definition}
An operation $f \colon C^{\ell} \to C$ for $\ell \geq 2$ is called 
\emph{cyclic}
if $$f(x_1,\dots,x_{\ell}) = f(x_2,\dots,x_{\ell},x_1)$$ for all $x_1,\dots,x_{\ell} \in C$.
Let $\Cyc_C^{(\ell)} \subseteq {\mathscr O}_C^{(\ell)}$ be the set of all operations on $C$ of arity $\ell$ that are cyclic.
\end{definition} 

If $G$ is a permutation group on a set $C$, then $\overline G$ denotes the closure of $G$ in the space
of functions from $C \to C$ with respect to the topology of pointwise convergence. Note that 
$\overline G$ might contain some operations that are not surjective, but if $G = \Aut(\bB)$ for some structure $\bB$, then all operations in $\overline G$ are still embeddings of $\bB$ into $\bB$ that preserve all first-order formulas. 

\begin{definition}
Let $G$ be a permutation group on the set $C$. 
An operation $f \colon C^{\ell} \to C$ is called \emph{pseudo cyclic with respect to $G$} if 
there are $e_1,e_2 \in \overline G$ such that for all $ x_1,\dots,x_{\ell} \in C$
$$e_1(f(x_1,\dots,x_{\ell})) = e_2(f(x_2,\dots,x_{\ell},x_1)).$$
Let $\PC_G^{(\ell)} \subseteq {\mathscr O}_C^{(\ell)}$ be the set of all operations on $C$ of arity $\ell$ that are pseudo cyclic with respect to~$G$. 
\end{definition} 

Note that $\PC_G^{(\ell)} \in B({\mathscr O}^{(\ell)}_C)$. Indeed, the complement of the set $\PC_G^{(\ell)}$ in ${\mathscr O}^{(\ell)}_C$ can be written as a countable union of sets of the form ${\mathscr S}_{a^1,\dots,a^{\ell},b}$ where for all 
$f \in {\mathscr O}_C^{(\ell)}$ 
the tuples $f(a^1,\dots,a^{\ell})$ and $f(a^2,\dots,a^{\ell},a^1)$ lie in different orbits with respect to $G$.

\begin{definition}
Let $G$ be a permutation group with domain $C$. 
An operation $f \colon C^{\ell} \to C$ for $\ell \geq 2$ is called \emph{canonical with respect to $G$} if for all $k \in {\mathbb N}$ and $a^1,\dots,a^{\ell} \in C^k$ the orbit of the $k$-tuple
$f(a^1,\dots,a^{\ell})$ only depends on the orbits of $a^1,\dots,a^{\ell}$ with respect to~$G$. Let $\Can_G^{(\ell)} \subseteq {\mathscr O}_C^{(\ell)}$ be the set of all operations on $C$ of arity $\ell$ that are canonical with respect to $G$. 
\end{definition} 

\begin{remark}\label{rem:can-act}
Note that if 
 $h$ is an operation over $C$ of arity $\ell$ which is canonical with respect to $G$,  then 
$h$ induces for every $k \in {\mathbb N}$ an operation $h^*$ of arity $\ell$ on the orbits of $k$-tuples
of $G$.
Note that if $h$ is pseudo cyclic with respect to $G$, then $h^*$ is cyclic.
\end{remark}

Note that $\Can_G^{(\ell)} \in B({\mathscr O}^{(\ell)}_C)$, since the complement is a countable union of sets of the form ${\mathscr S}_{a^1,\dots,a^{\ell},b} \cap {\mathscr S}_{c^1,\dots,c^{\ell},d}$ where for all $i \in \{1,\dots,\ell\}$ the tuples $a^i$ and $c^i$ lie in the same orbit with respect to~$G$, but $b$ and $d$ do not.

\begin{definition}
A fractional operation $\omega$ is called \emph{pseudo cyclic with respect to $G$} if for every 
$A \in B({\mathscr O}_C^{(\ell)})$ we have $\omega(A) = \omega(A \cap \PC^{(\ell)}_{G})$.
Canonicity with respect to $G$ and cyclicity for fractional operations
are defined analogously.
\end{definition}

We refer to Section~\ref{sect:inf-dual} 
for examples of concrete fractional polymorphisms of valued structures $\Gamma$ that are cyclic and canonical with respect to $\Aut(\Gamma)$. 
We may omit the specification 
`with respect to $G$' when $G$ is clear from the context.

We  prove below that 
canonical pseudo cyclic fractional polymorphisms imply polynomial-time tractability of the corresponding VCSP, by reducing to a tractable VCSP over a finite domain. Motivated by Theorem~\ref{thm:fb-NP} and the infinite-domain tractability conjecture~\cite[Conjecture 1.2]{BPP-projective-homomorphisms}, we state these results
for valued structures related to finitely bounded homogeneous structures.

\begin{definition}[$\Gamma^*_{\bB,m}$]\label{def:typetemplate}
Let $\Gamma$ be a valued structure with signature $\tau$ such that $\Aut(\Gamma)$ contains the automorphism group of a finitely bounded homogeneous structure $\bB$ with a finite relational signature.
Let $m$ be at least as large as the maximal arity of the relations of $\Gamma$.
Let $\Gamma^*_{\bB,m}$ be the following valued structure.
\begin{itemize}
\item The domain of $\Gamma^*_{\bB,m}$ is the set
of orbits of $m$-tuples of $\Aut(\bB)$. 
\item For every $R \in \tau$ of arity $k \leq m$ 
the signature of $\Gamma_{\bB,m}^*$ contains a unary relation symbol $R^*$, which denotes 
in $\Gamma_{\bB,m}^*$ the unary valued relation that returns on the orbit of an $m$-tuple $t=(t_1, \dots, t_m)$ the value of  
$R^{\Gamma}(t_1,\dots,t_k)$ (this is well-defined as the value is the same for all representatives $t$ of the orbit). 
\item For every $p \in \{1,\dots,m\}$ 
and $i,j \colon \{1,\dots,p\} \to \{1,\dots,m\}$ there exists a binary relation $C_{i,j}$ which returns $0$ for two orbits of $m$-tuples $O_1$ and $O_2$ if 
for every $s \in O_1$ and $t \in O_2$ we have that 
$(s_{i(1)},\dots,s_{i(p)})$
and $(t_{j(1)},\dots,t_{j(p)})$ 
 lie in the same orbit of $p$-tuples of $\Aut(\bB)$, and returns $\infty$ otherwise.
 \end{itemize}
\end{definition}

Note that $\Aut(\bB)$ 
has finitely many orbits of $k$-tuples for every $k \in \N$ and therefore $\Gamma^*_{\bB,m}$ has a finite domain.
The following reduction is inspired by a known reduction for CSPs from~\cite{BodMot-Unary}. 

\begin{theorem}\label{thm:red-fin}
Let $\Gamma$ be a valued structure with a finite signature such that $\Aut(\Gamma)$ contains the automorphism group of a finitely bounded
homogeneous structure $\bB$.
Let $r$ be the maximal arity of the relations of $\bB$ and the valued relations in $\Gamma$,
let $v$ be the maximal number of variables that appear in a single conjunct
of the universal sentence $\psi$ that describes the age 
of $\bB$,
and let $m \geq \max(r+1,v,3)$. 
Then there is a polynomial-time reduction from $\VCSP(\Gamma)$ to $\VCSP(\Gamma^*_{\bB,m})$.
\end{theorem}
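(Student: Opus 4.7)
The plan is to adapt the unary reduction for classical CSPs from \cite{BodMot-Unary} to the valued setting. Given an instance $\phi(x_1,\dots,x_n)$ of $\VCSP(\Gamma)$, I would construct an instance $\phi'$ of $\VCSP(\Gamma^*_m)$ as follows. For every $m$-tuple $T=(x_{j_1},\dots,x_{j_m})$ of (not necessarily distinct) variables of $\phi$, introduce a fresh variable $y_T$ of $\phi'$. For each atomic summand $R(x_{i_1},\dots,x_{i_k})$ of $\phi$, which satisfies $k \leq r < m$, pick some $m$-tuple $T$ whose first $k$ entries are $x_{i_1},\dots,x_{i_k}$ (padding the remaining entries arbitrarily) and add the summand $R^*(y_T)$ to $\phi'$. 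Finally, for every pair of $m$-tuples $T_1, T_2$ of variables, every $p \in \{1,\dots,m\}$, and every pair of maps $i,j \colon \{1,\dots,p\} \to \{1,\dots,m\}$ such that the tuples $((T_1)_{i(1)},\dots,(T_1)_{i(p)})$ and $((T_2)_{j(1)},\dots,(T_2)_{j(p)})$ coincide as tuples of variables, add the summand $C_{i,j}(y_{T_1}, y_{T_2})$. Since $m$ is a constant independent of the input, $\phi'$ has $O(n^m)$ variables and $O(n^{2m})$ summands, and hence can be computed from $\phi$ in polynomial time.

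The forward direction of the correctness argument is routine: from a solution $s \colon \{x_1,\dots,x_n\} \to C$ of $\phi$ of cost at most $u$, define $s'(y_T)$ to be the orbit of $(s(x_{j_1}),\dots,s(x_{j_m}))$ under $\Aut(\Gamma)$. The consistency constraints $C_{i,j}$ are then satisfied by construction, and each summand $R^*(y_T)$ evaluates to $R^{\Gamma}(s(x_{i_1}),\dots,s(x_{i_k}))$; thus $(\phi')^{\Gamma^*_m}(s') = \phi^{\Gamma}(s) \leq u$.

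The heart of the proof is the backward direction, which is where the assumptions that $\bB$ is homogeneous and finitely bounded are essential. Given a solution $s'$ of $\phi'$ of cost at most $u$, I would interpret each orbit $s'(y_T)$ as an isomorphism type of substructure on the entries of $T$, using homogeneity of $\bB$ to pass from orbits under $\Aut(\bB) = \Aut(\Gamma)$ to isomorphism types of substructures. The consistency constraints $C_{i,j}$ ensure that these types glue coherently across overlaps, yielding a structure $\bA$ in the signature of $\bB$ with domain $\{x_1,\dots,x_n\}$. To see that $\bA \in \Age(\bB)$, I would use the universal sentence $\psi$ describing the age of $\bB$: every conjunct of $\psi$ involves at most $v \leq m$ variables, so when instantiated in $\bA$ it refers to at most $v$ elements of $\{x_1,\dots,x_n\}$; these can always be completed to an $m$-tuple $T$, and the orbit $s'(y_T)$ is by definition realized by some actual $m$-tuple in $\bB$, so the substructure induced on the chosen variables embeds into $\bB$ and satisfies that conjunct. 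Hence $\bA \models \psi$, so $\bA$ embeds into $\bB$, and any embedding $e \colon \bA \to \bB$ yields an assignment $s(x_i) := e(x_i)$ with $\phi^{\Gamma}(s) = (\phi')^{\Gamma^*_m}(s') \leq u$.

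The main obstacle is ensuring that the consistency relations $C_{i,j}$, which are merely binary, are strong enough to enforce global coherence of the orbit data so that the $\bB$-signature structure $\bA$ is well-defined on all of $\{x_1,\dots,x_n\}$ and, crucially, that every $v$-element substructure of $\bA$ is actually witnessed by a single $m$-tuple $T$ whose orbit $s'(y_T)$ has been assigned. The condition $m \geq \max(r+1,v,3)$ is what allows all such local certificates to be collected inside a single $m$-tuple variable $y_T$, making the binary consistency constraints between overlapping $m$-tuples suffice.
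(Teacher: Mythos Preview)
Your overall reduction and forward direction match the paper's approach, and the high-level plan for the backward direction is right. But there is a genuine gap in the backward direction that you gloss over.

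You propose to build a $\sigma$-structure $\bA$ (where $\sigma$ is the signature of $\bB$) with domain $\{x_1,\dots,x_n\}$ by reading off relations from the orbits $s'(y_T)$. The problem is that the solution $s'$ may assign to $y_T$ an orbit of $m$-tuples with \emph{repeated entries}. If, say, $s'(y_{(x_1,x_2,\dots)})$ is an orbit whose tuples $t$ satisfy $t_1=t_2$, then the ``isomorphism type'' you read off is not a type on a two-element set, so your structure $\bA$ on the domain where $x_1 \neq x_2$ is not well-defined (or, if you force it, need not be a substructure of $\bB$). Your argument that each $\leq v$-element piece of $\bA$ embeds in $\bB$ because the corresponding orbit is realized in $\bB$ breaks here: the orbit is realized by a tuple with repetitions, which says nothing about the induced substructure on \emph{distinct} elements.

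The paper fixes this by first defining an equivalence relation $\sim$ on the variables, declaring $x_i \sim x_j$ when the orbit data forces them to be equal, and then building the structure $\bC$ on the equivalence classes. Verifying that $\sim$ is transitive uses $m \geq 3$. Verifying that the relations of $\bC$ are well-defined independently of the chosen class representatives (the paper's Claim~0) is the most technical step and is precisely where the bound $m \geq r+1$ is used: one needs an extra slot in the $m$-tuple to carry both the old and new representative of a class simultaneously and chain the compatibility constraints. Only after this does the argument ``check $\psi$ conjunct by conjunct using $m \geq v$'' go through on $\bC$, and the final equality of costs uses homogeneity of $\bB$ to turn the partial isomorphism (Claim~1) into an automorphism. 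You should not skip these steps; the ``gluing coherently'' sentence hides exactly the content that makes the reduction correct.
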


\begin{proof}
Let $\tau,\tau^*,\sigma$ be the signatures of $\Gamma$, $\Gamma^*_{\bB,m}$, and $\bB$, respectively.  Let $\phi$ be an instance of 
$\VCSP(\Gamma)$ with threshold $u$ and let $V$ be the variables of $\phi$.  
Create a variable $y({\bar x})$ for every 
$\bar x =(x_1, \dots, x_m) \in V^m$. For every summand $R(x_1,\dots,x_k)$ of $\phi$ and
we create a summand $R^*(y(x_1, \dots, x_k, \dots, x_k))$; this makes sense since $m\geq r$. 
For every $\bar{x}, \bar{x}' \in V^m$, $p \in \{1,\dots,m\}$, and 
$i,j \colon \{1,\dots,p\} \to \{1,\dots,m\}$, add the summand 
$C_{i,j}(y(\bar x),y(\bar x'))$
if $(x_{i(1)},\dots, x_{i(p)}) = (x'_{j(1)},\dots, x'_{j(p)})$; we will refer to these as \emph{compatibility constraints}.  
Let $\phi^*$ be the resulting $\tau^*$-expression. 
Clearly, $\phi^*$ can be computed from $\phi$ in polynomial time.

Suppose first that $(\phi,u)$ has a solution;
it will be notationally convenient to view the solution as a function $f$ from the variables of $\phi$ to the elements of $\Gamma$ (rather than a tuple). We claim that the map $f^*$ which maps $y(\bar x)$ to the orbit of $f(\bar x)$ in $\Aut(\bB)$ is a solution for $(\phi^*,u)$. And indeed, 
each of the summands involving a symbol $C_{i,j}$ evaluates to $0$,
and 
$(\phi^*)^{\Gamma^*_{\bB,m}}$ equals 
$\phi^{\Gamma}$.

Now suppose that $(\phi^*,u)$ has a solution $f^*$. 
To construct a solution $f$ to $(\phi,u)$, we first define an equivalence relation $\sim$
on $V$. For $x_1,x_2 \in V$, define $x_1 \sim x_2$
if a (equivalently: every) tuple $t$ in $f^*(y(x_1,x_2,\dots,x_2))$
satisfies $t_1=t_2$.
Clearly, $\sim$ is reflexive and symmetric.
To verify that $\sim$ is transitive, suppose that 
 $x_1 \sim x_2$ and $x_2 \sim x_3$. 
 In the following we use that $m \geq 3$. 
Let $i$ be the identity map on $\{1,2\}$, 
let $j \colon \{1,2\} \to \{2,3\}$ be given by $x \mapsto x+1$, 
and let $j' \colon \{1,2\} \to \{1,3\}$ be given by $j'(1) = 1$ and $j'(2) = 3$. Then $\phi^*$ contains the conjuncts 
\begin{align*}
    & C_{i, i}(y(x_1, x_2, x_2, \dots, x_2), y(x_1,x_2,x_3,\dots,x_3)),\\ & C_{i, j}(y(x_2,x_3,x_3,\dots,x_3),y(x_1,x_2,x_3,\dots,x_3)), \\ 
    & C_{i, j'}(y(x_1,x_3,x_3,\dots,x_3),y(x_1,x_2,x_3,\dots,x_3)). 
\end{align*}
Let $t$ be a tuple from $f^*(y(x_1,x_2,x_3,\dots,x_3))$. 
Then it follows from the conjuncts with the relation symbols $C_{i,i}$ and $C_{i,j}$ that $t_1=t_2$ and $t_2=t_3$, and therefore $t_1=t_3$. Thus we obtain from the conjunct with $C_{i, j'}$ that $x_1 \sim x_3$. 

%\medskip 
\paragraph{Claim 0.} For all equivalence classes $[x_1]_\sim,\dots,[x_m]_\sim$, tuple $t \in f^*(y(x_1,\dots,x_m))$, $S \in \sigma$ of arity $k$, and a map $j \colon \{1, \dots, k\} \rightarrow \{1, \dots, m\}$,
whether 
$\bB \models S(t_{j(1)},\dots,t_{j(k)})$ does not depend on the choice of the representatives $x_{1},\dots,x_{m}$. 
It suffices to show this statement if we choose another representative $x'_i$ for $[x_i]_\sim$ for some $i \in \{1, \dots, m\}$, because the general case then follows by induction. 

Suppose that for every $t \in f^*(y(x_1,\dots,x_m))$ we have $\bB \models S(t_{j(1)},\dots,t_{j(k)})$ and we have to show that for every tuple $t' \in f^*(y(x_1,\dots,x_{i-1},x_i',x_{i+1},\dots,x_m))$ 
holds $\bB \models S(t'_{j(1)},\dots,t'_{j(k)})$. 
If $i \notin \{j(1),\dots,j(k)\}$, 
then $\phi^*$ contains $$C_{j,j}(y(x_1,\dots,x_m),y(x_1,\dots,x_{i-1},x_i',x_{i+1},\dots,x_m))$$
and hence 
$\bB \models S(t'_{j(1)},\dots,t'_{j(k)})$. 
Suppose $i \in \{j(1),\dots,j(k)\}$; for the sake of notation we suppose that $i=j(1)$. 
By the definition of $\sim$, and since $x_{j(1)} \sim x'_{j(1)}$, every 
 $t'' \in f^*(y(x_{j(1)},x_{j(1)}',\dots,x_{j(1)}'))$ satisfies $t''_1 = t''_2$. 
Let $\tilde t$ be a tuple from 
$$f^*(y(x_{j(1)},\dots,x_{j(k)},x'_{j(1)},\dots,x'_{j(1)})).$$  
(Here we use that $m \geq r+1$.)
\begin{itemize}
    \item $\bB \models S(\tilde t_{1},\dots,\tilde t_{k})$, because we have a compatibility constraint in $\phi^*$ between variables
$y(x_1,\dots,x_m)$ and $y(x_{j(1)}$,
$\dots,x_{j(k)},x'_{j(1)},\dots,x'_{j(1)})$; 
\item $\tilde t_1 = \tilde t_{k+1}$ because of $x_{j(1)}\sim x_{j(1)}'$  and a compatibility constraint between 
$y(x_{j(1)},\dots,x_{j(k)},x'_{j(1)},\dots,x'_{j(1)})$ and $y(x_{j(1)},x_{j(1)}',\dots,x_{j(1)}')$ 
in $\phi^*$; 
\item hence, $\bB \models S(\tilde t_{k+1},\tilde t_2,\dots,\tilde t_k)$; 
\item $\bB \models S(t'_{j(1)},t'_{j(2)},\dots,t'_{j(k)})$ 
due to a compatibility constraint between $y(x_{j(1)},\dots,x_{j(k)}$,
$x'_{j(1)},\dots,x'_{j(1)})$ and $y(x_1,\dots,x_{i-1},x'_i,x_{i+1},\dots,x_m)$
in $\phi^*$:
namely, consider the map $j' \colon \{1, \dots, k\} \to \{1, \dots, m \}$ that coincides with the identity map except that $j'(1) := k+1$, then $
\phi^*$ contains 
\begin{align*}
    C_{j,j'} \big (& y(x_1,\dots,x_{i-1},x'_{i},x_{i+1},\dots,x_m), y(x_{j(1)},\dots,x_{j(k)},x'_{j(1)},\dots,x'_{j(1)}) \big ).
\end{align*}
\end{itemize}

This concludes the proof of Claim 0. 

\medskip 

Now we can define a structure $\bC$ in the signature $\sigma$
on the equivalence classes of $\sim$. 
If $S \in \sigma$ has arity $k$, $j_1,\dots,j_k \in \{1,\dots,m\}$, and 
$[x_1]_\sim,\dots,[x_{m}]_\sim$ are equivalence classes of $\sim$ such that the tuples $t$ in $f^*(y(x_1,\dots,x_m))$ satisfy $S^{\bB}(t_{j_1},\dots,t_{j_k})$ for some representatives $x_1, \dots x_m$ (equivalently, for all representatives, by Claim 0), then add $([x_{j_1}]_\sim,\dots,[x_{j_k}]_\sim)$ to $S^{\bC}$.
No other tuples are contained in the relations of $\bC$.

%\medskip 
\paragraph{Claim 1.} 
If $[x_1]_\sim,\dots,[x_m]_\sim$ are equivalence classes of $\sim$, and 
$t \in f^*(y(x_1,\dots,x_m))$, then $[x_i]_{\sim} \mapsto t_i$,
for $i \in \{1,\dots,m\}$, is an isomorphism
between a substructure of $\bC$ and a substructure of $\bB$ for any choice of representatives $x_1, \dots, x_m$. 
First note that $[x_i]_{\sim} = [x_j]_{\sim}$ if and only if $t_i = t_j$, so the map is well-defined and bijective. 
Let $S \in \sigma$ be of arity $k$ and 
$j \colon \{1, \dots, k\} \to \{1, \dots, m \}$. 
If $\bB \models S(t_{j(1)},\dots,t_{j(k)})$, 
then 
$\bC \models S([x_{j(1)}]_\sim,\dots,[x_{j(k)}]_\sim)$ by the definition of $\bC$. 
Conversely, suppose that 
$\bC \models S([x_{j(1)}]_\sim,\dots[x_{j(k)}]_\sim)$. 
By Claim 0 and the definition of $\bC$, there is $t' \in f^*(y(x_1,\dots,x_m))$ such that $\bB \models S(t'_{j(1)},\dots,t'_{j(k)})$. Since $f^*(y(x_1,\dots,x_m))$ is an orbit of $\Aut(\bB)$, we have $\bB \models S(t_{j(1)},\dots,t_{j(k)})$ as well.

%\medskip 
\paragraph{Claim 2.} $\bC$ embeds into $\bB$. 
It suffices to verify that $\bC$ satisfies each conjunct of the universal sentence $\psi$. 
Let $\psi'(x_1,\dots,x_q)$ be such a conjunct, and let $[c_1]_\sim,\dots,[c_q]_\sim$ be elements of $\bC$. Consider the orbit $f^*(y(c_1,\dots,c_q,\dots,c_q))$ of $\Aut(\Gamma)$; this makes sense since $m \geq v$. Let $t \in f^*(y(c_1,\dots,c_q,\dots,c_q))$.
Since $t_1, \dots, t_q$ are elements of $\bB$, the tuple $(t_1,\dots,t_q)$ satisfies $\psi'$.
Claim 1 then implies that $([c_1]_\sim,\dots,[c_q]_\sim)$ satisfies $\psi'$. 

\medskip

Let $e$ be an embedding of $\bC$ to $\bB$. For every $x \in V$, define $f(x)=e([x]_\sim)$. Note that for every summand $R(x_1, \dots, x_k)$ in $\phi$ and $t \in f^*(y(x_1, \dots, x_k, \dots, x_k))$, we have
\begin{align*}
R^*(f^*(y(x_1, \dots, x_k, \dots, x_k)))  = R(t_1, \dots, t_k) = R(e([x_1]_\sim), \dots, e([x_k]_\sim)) = R(f(x_1), \dots, f(x_k)),
\end{align*}
where the middle equality follows from $t_i \mapsto e([x_i]_\sim)$ being a partial isomorphism of $\bB$ by Claim 1 and 2, which by the homogeneity of $\bB$ extends to an automorphism of $\bB$ and therefore also an automorphism of $\Gamma$.
Since $f^*$ is a solution to $(\phi^*,u)$, it follows from the construction of $\phi^*$ that $f$ is a solution to $(\phi,u)$.
\end{proof}

Let $G$ be a permutation group that contains the automorphism group of 
a finitely bounded homogeneous structure $\bB$ 
of maximal arity 
at most $m$. 
A fractional operation $\omega$ over the domain $C$ of  
$\Gamma$ of arity $\ell$ which is canonical with respect to $G$ 
 induces a fractional operation $\omega^*$ on the orbits of $m$-tuples 
 of  $G$, given by
 $$\omega^*(A) := \omega \big ( \{f \in \Can^{(\ell)}_G \mid f^* \in A\} \big ),
 $$
for every subset $A$ of the set of operations of arity $\ell$ on the 
set of orbits of $m$-tuples of $G$ 
(all such subsets are measurable). Note that 
$\{f \in \Can^{(\ell)}_G \mid f^* \in A\}$ is a measurable subset of $\mathscr O_C^{(\ell)}$.
Also note that if $\omega$ is pseudo cyclic, then $\omega^*$ is cyclic.
Statements about the fractional polymorphisms of $\Gamma^*_{\bB,m}$ lift back to statements about the fractional polymorphisms of $\Gamma$ via the following useful lemma. 

\begin{lemma}\label{lem:lift}
Let $\Gamma$ be a valued structure with a finite signature 
such that $\Aut(\Gamma)$ contains the automorphism group $G$ of a finitely bounded homogeneous structure $\bB$
and let $m$ be as in Theorem~\ref{thm:red-fin}. Let 
$\chi \in \fPol(\Gamma^*_{\bB,m})$.
Then there exists $\omega \in \fPol(\Gamma)$ which is canonical with respect to $G$ such that $\omega^* = \chi$.  
\end{lemma}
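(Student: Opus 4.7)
The plan is a finitely supported construction. Since the domain $D$ of $\Gamma^*_m$ is finite, there are only finitely many $\ell$-ary operations on $D$, so $\nu$ is a discrete probability distribution on the finite set of cyclic operations on $D$. For each $g$ in the support of $\nu$ I would construct a single canonical lift $h_g \colon C^\ell \to C$ with $h_g^* = g$, and then set $\omega$ to be the finitely supported distribution given by $\omega(\{h_g\}) := \nu(\{g\})$.

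The construction of $h_g$ is the technical heart of the argument. Observe first that each $g$ in the support of $\nu$ must preserve the feasibility of every relation of $\Gamma^*_m$; in particular it preserves the crisp compatibility relations $C_{i,j}$, so $g$ respects the projection structure connecting orbits of $m$-tuples to orbits of shorter subtuples. I would enumerate $C$ as the union of a chain of finite subsets $F_1 \subseteq F_2 \subseteq \cdots$ with $\bigcup_n F_n = C$ and define $h_g$ by induction on $n$, maintaining at stage $n$ the invariant that for every $\ell$-tuple of $m$-tuples $\bar a^1, \dots, \bar a^\ell$ from $F_n$, the $m$-tuple obtained by applying $h_g$ componentwise lies in the orbit $g(O_1, \dots, O_\ell)$, where $O_j$ is the orbit of $\bar a^j$. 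To extend $h_g$ from $F_n$ to $F_{n+1}$ one needs to realize, over the image built so far, a finite substructure of prescribed $G$-type; its existence follows from the homogeneity of $\bB$ once its age-membership is checked, which in turn follows from $g$ preserving the $C_{i,j}$ together with the finite boundedness of $\bB$. Marginal canonicity on $m$-tuples is then forced by the invariant; canonicity on $k$-tuples for $k \neq m$ follows because $m \geq r$, so the orbit of a $k$-tuple of $\bB$ is determined by the orbits of its $m$-subtuples (for $k > m$) or by padding with a constant coordinate (for $k < m$).

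Given the lifts, set $\omega := \sum_g \nu(\{g\})\, \delta_{h_g}$. This is a probability distribution supported on $\Can_G^{(\ell)}$, so $\omega$ is canonical, and $\omega^*(\{g\}) = \omega(\{h_g\}) = \nu(\{g\})$, yielding $\omega^* = \nu$. To verify $\omega \in \fPol(\Gamma)$, let $R \in \tau$ be of arity $k$ and let $\bar a^1, \dots, \bar a^\ell \in C^k$. Pad each $\bar a^j$ to an $m$-tuple $\hat a^j := (a^j_1, \dots, a^j_k, a^j_k, \dots, a^j_k)$ and let $O_j$ be its $G$-orbit; by the definition of $R^*$ in $\Gamma^*_m$, $R^*(O_j) = R(\bar a^j)$. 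By canonicity of $h_g$ and $h_g^* = g$, the componentwise image $h_g(\hat a^1, \dots, \hat a^\ell)$ lies in $g(O_1, \dots, O_\ell)$, and its first $k$ coordinates equal $h_g(\bar a^1, \dots, \bar a^\ell)$, so $R(h_g(\bar a^1, \dots, \bar a^\ell)) = R^*(g(O_1, \dots, O_\ell))$. Therefore
\begin{align*}
E_\omega[f \mapsto R(f(\bar a^1, \dots, \bar a^\ell))]
&= \textstyle\sum_g \nu(\{g\})\, R^*(g(O_1, \dots, O_\ell)) \\
&= E_\nu[g \mapsto R^*(g(O_1, \dots, O_\ell))] \\
&\leq \tfrac{1}{\ell} \textstyle\sum_{j=1}^\ell R^*(O_j)
\;=\; \tfrac{1}{\ell} \textstyle\sum_{j=1}^\ell R(\bar a^j),
\end{align*}
where the inequality uses that $\nu \in \fPol(\Gamma^*_m)$ improves the unary relation $R^*$.

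The main obstacle is the construction of $h_g$, specifically showing that the inductive extension step never fails. One must verify that the finitely many orbit constraints imposed on each new output element (one for each collection of existing output $m$-tuples that involves it) are mutually consistent, which relies on $g$ preserving every $C_{i,j}$, and that their joint realization lies in the age of $\bB$, which relies on finite boundedness.
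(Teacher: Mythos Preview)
Your proposal is correct and follows essentially the same approach as the paper: both build $\omega$ as a finitely supported distribution by choosing, for each operation $g$ in the support of $\nu$, a single canonical lift $h_g$ with $h_g^*=g$, and then verify the improvement inequality for each $R$ by reducing it (via padding to $m$-tuples) to the improvement of $R^*$ by $\nu$. The one difference is that the paper outsources the existence of the canonical lift to a cited result (Lemma~4.9 in~\cite{BodMot-Unary}, equivalently Lemma~10.5.12 in~\cite{Book}), whereas you sketch that construction directly via an inductive extension along a chain of finite subsets, using that $g$ preserves the crisp relations $C_{i,j}$; your sketch is the standard argument behind the cited lemma, and your honest flagging of the consistency step in the extension is exactly the point that the citation absorbs.
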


\begin{proof}
%[Proof of Lemma \ref{lem:lift}]
Let $C$ be the domain of $\Gamma$, let $D$ be the domain of $\Gamma^*_{\bB,m}$, and let $\ell$ be the arity of $\chi$.
Suppose that 
$\chi(f) > 0$ for some operation $f$. 
Then there exists a function $g \colon C^{\ell}\to C$ which is canonical with respect to $G$ such that $g^* = f$ by Lemma 4.9 in~\cite{BodMot-Unary} (also see Lemma 10.5.12 in~\cite{Book}). 
For every such $f$, choose $g$ such that $g^*=f$ and  define $\omega(g) := \chi(f)$ 
and $\omega(h) := 0$ for all other $h \in {\mathscr O}_C^{(\ell)}$. Since the domain of $\Gamma^*_{\bB,m}$ is finite, this correctly defines a fractional operation $\omega$ of the same arity $\ell$ as $\chi$. It also improves every valued relation $R$ of $\Gamma$: if $R$ has arity $k$, and $a^1,\dots,a^{\ell} \in C^k$, then 
\begin{align*}
    E_\omega[g \mapsto R(g(a^1,\dots,a^{\ell}))] 
    &= \sum_{g \in {\mathscr O}_C^{(\ell)}} \omega(g) R(g(a^1,\dots,a^{\ell})) \\
    &= \sum_{f \in {\mathscr O}^{(\ell)}_D}  \chi(f) R^*(f(a^1,\dots,a^{\ell})_1,\dots,f(a^1,\dots,a^{\ell})_k,\dots,f(a^1,\dots,a^{\ell})_k) \\
       & \leq  \; \frac{1}{\ell} \sum_{j = 1}^{\ell} R^*(a^j_1,\dots,a^j_k,\dots,a^j_k) \\
    &=  \frac{1}{\ell} \sum_{j = 1}^{\ell} R(a^j_1,\dots,a^j_k).  
    \qedhere
\end{align*}
\end{proof}

We finish this section with a lemma which is useful for proving pseudo cyclicity.

\begin{lemma}\label{lem:pc}
Let $G$ be the automorphism group 
of a homogeneous structure $\bB$ with a relational signature of maximal arity at most $m$.
    If $\omega \in {\mathscr F}_C^{(\ell)}$ 
is canonical with respect to $G$ such that $\omega^*$ (defined on the orbits of $m$-tuples of $G$) is cyclic, then $\omega$ is pseudo cyclic with respect to $G$. 
\end{lemma}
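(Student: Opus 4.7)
The plan is to reduce to the individual-operation claim that every $f \in \Can_G^{(\ell)}$ whose induced operation $f^*$ on orbits of $m$-tuples is cyclic is itself pseudo cyclic with respect to $G$. This suffices: $\omega^*$ being cyclic means $\omega^*(\Cyc^{(\ell)}) = 1$, so by the definition of $\omega^*$ the set $\{f \in \Can_G^{(\ell)} : f^* \text{ is cyclic}\}$ has full $\omega$-measure, and once this set is contained in $\PC_G^{(\ell)}$ it follows that $\omega(\PC_G^{(\ell)}) = 1$, that is, $\omega$ is pseudo cyclic.

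So fix such an $f$, take $e_2 := \mathrm{id}$, and aim to produce $e_1 \in \overline{G}$ with $e_1 \circ f = f \circ \sigma$, where $\sigma$ denotes the cyclic shift on $\ell$-tuples. The key step is to lift the cyclicity of $f^*$ from orbits of $m$-tuples to orbits of $k$-tuples for every $k \in \N$. Since $\bB$ is homogeneous with signature of arity at most $m$, the orbit of a $k$-tuple under $G$ is determined by the orbits of all its $m$-subtuples (when $k < m$, by the orbit of the padded tuple $(a_1,\dots,a_k,a_k,\dots,a_k)$). Combined with the componentwise action of a canonical operation on orbits, this upgrades the hypothesis on $f^*$ to the following: for every $k \in \N$ and every $v^1,\dots,v^\ell \in C^k$, the tuples $f(v^1,\dots,v^\ell)$ and $f(v^2,\dots,v^\ell,v^1)$ lie in the same orbit of $G$ on $C^k$.

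Using this, I define a partial map $F$ on the image of $f$ by $F(f(v)) := f(\sigma(v))$. Well-definedness follows from lifted cyclicity at $k = 2$ applied to the columns $(v_i, v'_i)$: if $f(v) = f(v')$, then the pair $(f(\sigma(v)), f(\sigma(v')))$ lies in the diagonal orbit of $C^2$, forcing $f(\sigma(v)) = f(\sigma(v'))$. For any finite $\{v^1,\dots,v^t\} \subseteq C^\ell$, applying lifted cyclicity at $k = t$ to the columns $x^i := (v^1_i,\dots,v^t_i)$ shows that $(f(v^1),\dots,f(v^t))$ and $(F(f(v^1)),\dots,F(f(v^t)))$ are in the same $G$-orbit on $C^t$, so by homogeneity of $\bB$ every finite restriction of $F$ is a partial isomorphism of $\bB$. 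Since $\overline{G}$ coincides with the monoid of self-embeddings of $\bB$, a standard back-and-forth / compactness argument extends $F$ to some $e_1 \in \overline{G}$; then $e_1(f(x_1,\dots,x_\ell)) = f(x_2,\dots,x_\ell,x_1) = e_2(f(x_2,\dots,x_\ell,x_1))$, exhibiting $f$ as pseudo cyclic.

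The main obstacle I anticipate is the lifting step from $m$-tuples to arbitrary arities, which simultaneously leverages canonicity of $f$ and the bounded arity of the signature of $\bB$. The remaining ingredients --- well-definedness of $F$, the partial-isomorphism property of its finite restrictions, and the extension to $\overline{G}$ --- are standard consequences of Fra\"iss\'e-theoretic arguments for countable homogeneous structures.
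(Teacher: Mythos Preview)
Your proof is correct and follows essentially the same approach as the paper's: reduce to the individual-operation statement that any canonical $f$ whose induced action $f^*$ on orbits of $m$-tuples is cyclic must be pseudo cyclic, and then transfer this to the fractional level. The paper cites this individual-operation fact from the literature (Proposition~6.6 in~\cite{BPP-projective-homomorphisms}, Lemma~10.1.5 in~\cite{Book}) and carries out the measure-theoretic transfer via a chain of equalities on basic open sets; you instead prove the cited fact from scratch (the lifting from $m$-tuples to arbitrary arities, the well-definedness of $F$, and the extension to an element of $\overline{G}$) and perform the transfer more directly by observing that $\omega(\PC_G^{(\ell)})=1$ suffices. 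Both routes are equivalent; yours is more self-contained, while the paper's is shorter by outsourcing the operation-level lemma.
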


\begin{proof}
We use the fact that 
if $f$ is canonical with respect to $G$ such that $f^*$ (defined on the orbits of $m$-tuples) is cyclic, then
$f$ is pseudo cyclic
(see the proof of Proposition 6.6 in~\cite{BPP-projective-homomorphisms}; also see Lemma 10.1.5
in~\cite{Book}). 
Let $C$ be the domain of $\Gamma$
and let $a^1,\dots,a^\ell,b \in C^m$.
It suffices to show that  
$\omega(S_{a^1,\dots,a^{\ell},b} \cap \PC^{(\ell)}_G) = \omega(S_{a^1,\dots,a^{\ell},b}).$
Indeed, 
\begin{align*}
\omega(S_{a^1,\dots,a^{\ell},b}) &= 
\omega(S_{a^1,\dots,a^{\ell},b} \cap \Can_G^{(\ell)}) && \text{(canonicity of $\omega$)}\\
&= \omega^* \big (\{f^* \mid f \in S_{a^1,\dots,a^{\ell},b} \cap \Can_G^{(\ell)} \} \big )  && \text{(definition of $\omega^*$)}\\
&= \omega^* \big (\{f^* \mid f \in S_{a^1,\dots,a^{\ell},b} \cap \Can_G^{(\ell)}\} \cap \Cyc^{(\ell)}_C \big)  && \text{(by assumption)} \\
&= \omega^* \big (\{f^* \mid f \in S_{a^1,\dots,a^{\ell},b} \cap \Can_G^{(\ell)} \cap \PC^{(\ell)}_G\} \big ) && \text{(fact mentioned above and Remark~\ref{rem:can-act})} \\
&= \omega(S_{a^1,\dots,a^{\ell},b} \cap \Can_G^{(\ell)} \cap \PC^{(\ell)}_G) \\
&= \omega(S_{a^1,\dots,a^{\ell},b}  \cap \PC^{(\ell)}_G). &&  \qedhere 
\end{align*}
\end{proof}

\subsection{Fractional Polymorphims on Finite Domains}
For studying canonical operations, we can use known results about operations on finite domains.

\begin{definition}
Let $\omega$ be a fractional operation of arity $\ell$ on a finite domain $C$. Then the \emph{support of $\omega$} is the set
$$\Sup(\omega) = \{f \in {\mathscr O}^{(\ell)}_C \mid \omega(f) > 0\}.$$
If ${\mathscr F}$ is a set of fractional operations, then 
$$\Sup(\mathscr F) := \bigcup_{\omega \in {\mathscr F}} \Sup(\omega).$$
\end{definition}

Note that a fractional operation $\omega$ on a finite domain is determined by the values $\omega(f)$, $f \in \Sup(\omega)$, in contrast to fractional operations on infinite domains. 
Moreover, a fractional polymorphism $\omega$ of a valued structure with a finite domain is cyclic if and only if all operations in its support are cyclic, in accordance to the definitions from~\cite{KozikOchremiak15}.
An operation $f \colon C^4 \to C$ is called \emph{Siggers} if $f(a,r,e,a) = f(r,a,r,e)$ for all $a,r,e \in C$. 

\begin{lemma}\label{lem:h1-transfer}
Let $\Gamma$ and $\Delta$ be valued structures 
that are fractionally homomorphically equivalent. 
\begin{itemize}
    \item If $\Gamma$ has a cyclic fractional polymorphism, then $\Delta$ has a cyclic fractional polymorphism of the same arity.
    \item Suppose that the domains of $\Gamma$ and $\Delta$ are finite. If the set $\Sup(\fPol(\Gamma))$ contains a cyclic operation, then the set $\Sup(\fPol(\Delta))$ contains a cyclic operation of the same arity. 
\end{itemize}
\end{lemma}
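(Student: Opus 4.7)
The plan is to transport a cyclic fractional polymorphism of $\Gamma$ over to $\Delta$ by composing it with the fractional homomorphisms witnessing the equivalence. Write $C$ and $D$ for the finite domains of $\Gamma$ and $\Delta$, let $\omega_1$ be a fractional homomorphism from $\Gamma$ to $\Delta$ and $\omega_2$ a fractional homomorphism from $\Delta$ to $\Gamma$, and let $\omega \in \fPol(\Gamma)^{(\ell)}$ be cyclic. Since the domains are finite, all distributions involved have finite support and no measure-theoretic subtleties arise. I define a new $\ell$-ary fractional operation $\omega'$ on $D$ as the pushforward of the product distribution $\omega_1 \otimes \omega \otimes \omega_2$ along the map
$$(h,g,k) \;\mapsto\; \Bigl((d_1,\dots,d_\ell) \mapsto h\bigl(g(k(d_1),\dots,k(d_\ell))\bigr)\Bigr).$$

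Next I verify the required inequality \eqref{eq:fpol} for $\omega'$: given $R \in \tau$ of arity $m$ and $a^1,\dots,a^\ell \in D^m$, integrate first against $\omega_1$ (using the fractional-homomorphism inequality at the tuple $g(k(a^1),\dots,k(a^\ell)) \in C^m$), then against $\omega$ (using the fractional-polymorphism inequality at $k(a^1),\dots,k(a^\ell) \in C^m$), then termwise against $\omega_2$ (using the fractional-homomorphism inequality at each $a^j$). Combining these via linearity and monotonicity of expectation yields
$$E_{\omega'}\!\bigl[f \mapsto R^\Delta(f(a^1,\dots,a^\ell))\bigr] \;\leq\; \frac{1}{\ell}\sum_{j=1}^{\ell} R^\Delta(a^j).$$
For cyclicity of $\omega'$, observe that if $g$ is cyclic then so is every composite $h \circ g \circ \langle k,\dots,k\rangle$ in the support of $\omega'$, since
$$h\bigl(g(k(d_2),\dots,k(d_\ell),k(d_1))\bigr) \;=\; h\bigl(g(k(d_1),\dots,k(d_\ell))\bigr)$$
by the cyclicity of $g$; hence $\omega'$ is supported on cyclic operations.

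The second item follows from the same construction with essentially no change. If $g \in \Sup(\fPol(\Gamma))$ is cyclic of arity $\ell$, pick any $\omega \in \fPol(\Gamma)$ with $\omega(g) > 0$ and build $\omega' \in \fPol(\Delta)$ as above. Since $\omega_1$ and $\omega_2$ are probability distributions on nonempty finite sets, their supports are nonempty; choosing any $h \in \Sup(\omega_1)$ and $k \in \Sup(\omega_2)$, the composite $h \circ g \circ \langle k,\dots,k\rangle$ lies in $\Sup(\omega') \subseteq \Sup(\fPol(\Delta))$ and is cyclic. I expect the only mildly delicate point to be the bookkeeping of the $\infty$-arithmetic in the three-step chain of inequalities, which is immediate whenever some $R^\Delta(a^j) = \infty$ and otherwise follows from the linearity/monotonicity rules recorded in the preliminaries; the fact that the pushforward may collapse several triples $(h,g,k)$ onto the same operation is harmless because both the polymorphism inequality and the cyclicity check are read off directly from the joint distribution.
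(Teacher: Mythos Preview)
Your argument is correct and is essentially the same construction as the paper's: the paper also builds $\omega'$ by composing $\omega$ on both sides with the two fractional homomorphisms, but packages the verification via Remark~\ref{rem:frac-pol-frac-hom} (viewing $\omega$ as a fractional homomorphism $\Gamma^{\ell}\to\Gamma$), a componentwise lift of $\omega_2$ to $\Delta^{\ell}\to\Gamma^{\ell}$, and Lemma~\ref{lem:compose}, rather than unrolling the triple expectation directly as you do. The second item is handled identically in both proofs by picking elements of positive weight in each distribution.
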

\begin{proof}
Let $C$ be the domain of $\Gamma$ and let $D$ be the domain of $\Delta$. Let $\chi_1$ be a fractional homomorphism from $\Gamma$ to $\Delta$, and let $\chi_2$ be a fractional homomorphism from $\Delta$ to $\Gamma$.
Define $\chi_2'$ as the fractional homomorphism 
from $\Delta^{\ell}$ to $\Gamma^{\ell}$
as follows. 
If $f \colon D \to C$, then $f'$ denotes the map from $D^{\ell}$ to $C^{\ell}$ given by 
$(x_1,\dots,x_{\ell}) \mapsto (f(x_1),\dots,f(x_{\ell}))$. 
For a measurable set $S \subseteq (C^\ell)^{(D^\ell)}$, we define  $\chi_2'(S) := \chi_2(\{f \in C^D \mid f' \in S\})$; note that this defines a fractional operation. Since $\chi_2$ is a fractional homomorphism from $\Delta$ to $\Gamma$, $\chi_2'$ is a fractional homomorphism $\Delta^\ell$ to $\Gamma^\ell$. 

Suppose that $\omega$ is a fractional polymorphism of $\Gamma$ of arity $\ell$. 
Then $\omega' := \chi_1 \circ \omega \circ \chi_2'$ is a fractional homomorphism 
from $\Delta^{\ell}$ to $\Delta$ (see Proposition~\ref{prop:frac-comp}), 
and hence a fractional polymorphism of $\Delta$ (see Remark~\ref{rem:frac-pol-frac-hom}).
Note that if $\omega$ is cyclic, then $\omega'$ is cyclic; this shows that first statement of the lemma. 

Next, suppose that $C$ and $D$ are finite and  that there exists $\omega \in \fPol^{(\ell)}(\Gamma)$  such that $\Sup(\omega)$ contains a cyclic operation $g$ of arity $\ell$.  
Since the domain $C$ of $\Gamma$ is finite, there exists a function $f_1 \colon C \to D$ such that $\chi_1(f_1) > 0$ and a function $f_2 \colon D \to C$ such that $\chi_2(f_2)>0$. 
Note that $f_1 \circ g \circ f_2' \colon D^{\ell}\to D$ is cyclic since $g$ is cyclic, and that $\omega'(f_1 \circ g \circ f_2') > 0$. 
\end{proof}

The following definition is taken from~\cite{KozikOchremiak15}.
\begin{definition}[core]\label{def:core}
A valued structure $\Gamma$ over a finite domain is called a \emph{core} if all operations in  
$\Sup(\fPol(\Gamma))^{(1)}$ are injective. 
\end{definition}

We have been unable to find an explicit reference for the following proposition, but it should be considered to be known; 
we also present a proof as a guide to the literature. 

\begin{proposition}\label{prop:core}
Let $\Gamma$ be a valued structure with a finite domain. Then there exists a core valued structure $\Delta$ over a finite domain  which is fractionally homomorphically equivalent to $\Gamma$.
\end{proposition}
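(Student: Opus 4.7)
We proceed by induction on $|C|$. If $\Gamma$ is already a core, take $\Delta := \Gamma$. Otherwise, by Definition~\ref{def:core} there exist $\omega \in \fPol(\Gamma)^{(1)}$ and $g \in \Sup(\omega)$ with $g$ not injective, so $D := g(C)$ is a proper subset of $C$. Let $\Delta$ be the valued substructure of $\Gamma$ induced on $D$, i.e., each valued relation of $\Delta$ is the restriction of the corresponding valued relation of $\Gamma$ to tuples from $D^k$. The plan is to show that $\Gamma$ and $\Delta$ are fractionally homomorphically equivalent and then invoke the induction hypothesis on $\Delta$, whose domain has strictly smaller cardinality.

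To prove the equivalence, apply Remark~\ref{rem:frac-hom-equiv-fin}: over finite domains, it suffices to show that every VCSP instance $\phi$ with $n$ variables has the same cost with respect to $\Gamma$ and to $\Delta$. The inequality $\inf_{a \in D^n} \phi^\Delta(a) \geq \inf_{a \in C^n} \phi^\Gamma(a)$ is immediate since $D \subseteq C$ and the two values agree on $D^n$. For the reverse inequality, assume the $\Gamma$-cost is finite (else both costs are infinite) and choose a minimiser $b^* \in C^n$ with $u^* := \phi^\Gamma(b^*)$. Since $\omega$ is a unary fractional polymorphism of $\Gamma$, summing the defining improvement inequality over the summands of $\phi$ and using linearity of expectation yields $E_\omega[f \mapsto \phi^\Gamma(f(b^*))] \leq \phi^\Gamma(b^*) = u^*$. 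Over a finite domain, $\Sup(\omega)$ is finite and this expectation equals $\sum_{f \in \Sup(\omega)} \omega(f)\, \phi^\Gamma(f(b^*))$. By minimality of $b^*$ every $\phi^\Gamma(f(b^*)) \geq u^*$; combined with the upper bound $u^*$, this forces $\phi^\Gamma(f(b^*)) = u^*$ for every $f \in \Sup(\omega)$. In particular $\phi^\Gamma(g(b^*)) = u^*$, and since $g(b^*) \in D^n$ also $\phi^\Delta(g(b^*)) = u^*$, so the $\Delta$-cost is at most $u^*$ as desired.

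With the equivalence in hand, the induction hypothesis applied to $\Delta$ yields a core $\Delta'$ fractionally homomorphically equivalent to $\Delta$, and transitivity of fractional homomorphic equivalence via composition of fractional homomorphisms (Lemma~\ref{lem:compose}) gives that $\Delta'$ is fractionally homomorphically equivalent to $\Gamma$. The base case $|C| = 1$ is immediate since the only unary operation is the identity. The main subtlety is the averaging argument in the second step: it relies on finiteness of the domain so that $\omega$ is a discrete distribution with finite support, allowing the single expectation bound to propagate to pointwise equalities on $\Sup(\omega)$; for this to be valid one also needs to observe that all values $\phi^\Gamma(f(b^*))$ are finite, which follows because any $\infty$ value with positive weight would force the expectation to be infinite and contradict the upper bound $u^* < \infty$.
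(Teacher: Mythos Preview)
Your proof is correct and follows essentially the same route as the paper: pick a non-injective $g \in \Sup(\fPol(\Gamma)^{(1)})$, restrict $\Gamma$ to $D = g(C)$, show fractional homomorphic equivalence, and iterate on the smaller domain. The only difference is that where the paper cites Lemma~15 of \cite{KozikOchremiak15} together with Remark~\ref{rem:frac-hom-equiv-fin} to obtain the equivalence, you unfold this step and give the averaging argument directly (the expectation bound $E_\omega[f \mapsto \phi^\Gamma(f(b^*))] \leq u^*$ combined with pointwise minimality forcing $\phi^\Gamma(g(b^*)) = u^*$); this is exactly the content of the cited lemma, so your argument is a self-contained version of the same proof.
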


\begin{proof}
Let $C$ be the domain of $\Gamma$. 
If $\Gamma$ itself is a core then there is nothing to be shown, so we may assume that there exists a non-injective $f \in \Sup(\fPol^{(1)}(\Gamma))$. Since $C$ is finite, we have that $
D := f(C) \neq C$; let $\Delta$ be the valued structure with domain $D$ and the same signature as $\Gamma$ whose valued relations are obtained from the corresponding valued relations of $\Gamma$ by restriction to $D$. 
It then follows from Lemma 15 in~\cite{KozikOchremiak15-arxiv} 
in combination with Remark~\ref{rem:frac-hom-equiv-fin}
that $\Gamma$ and $\Delta$ are fractionally homomorphically equivalent. After applying this process finitely many times, we obtain a core valued structure that is fractionally homomorphically equivalent to $\Gamma$. 
\end{proof}

The following lemma is a variation of Proposition~17 from \cite{KozikOchremiak15}, which is phrased there only for valued structures $\Gamma$ that are cores 
and for idempotent cyclic operations.

\begin{lemma}\label{lem:cyclic-supp}
Let $\Gamma$ be a valued structure over a finite domain. Then $\Gamma$ has a cyclic fractional polymorphism if and only if $\Sup(\fPol(\Gamma))$ contains a cyclic operation. 
\end{lemma}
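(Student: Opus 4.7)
The forward direction is immediate: if $\omega \in \fPol(\Gamma)$ is cyclic, then by the discussion preceding Lemma~\ref{lem:h1-transfer}, every operation in the nonempty set $\Sup(\omega) \subseteq \Sup(\fPol(\Gamma))$ is cyclic.

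For the converse, suppose $g \in \Sup(\fPol(\Gamma))$ is cyclic of arity $\ell$. The plan is to reduce to Proposition~39 of \cite{KozikOchremiak15}, which produces a cyclic fractional polymorphism from an \emph{idempotent} cyclic operation in the support of a core. First, Proposition~\ref{prop:core} provides a core valued structure $\Delta$ fractionally homomorphically equivalent to $\Gamma$, and the second item of Lemma~\ref{lem:h1-transfer} yields a cyclic operation $g' \in \Sup(\fPol(\Delta))$ of arity $\ell$.

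Next, I would replace $g'$ by an idempotent cyclic operation in $\Sup(\fPol(\Delta))$. Consider the unary diagonal $\alpha(x) := g'(x,\dots,x)$. Integrating a witnessing $\omega \in \fPol(\Delta)^{(\ell)}$ along the diagonal (i.e.\ setting $\omega^d(f) := \sum_{h : h(x,\dots,x) = f(x)} \omega(h)$) produces a unary fractional polymorphism with $\alpha$ in its support, and since $\Delta$ is a core, $\alpha$ is a bijection. The operation $g'' := \alpha^{-1} \circ g'$ is then cyclic and idempotent, and reindexing $\omega$ via $h \mapsto \alpha^{-1} \circ h$ (which preserves the improvement inequality because $\alpha$ is in fact an automorphism of $\Delta$) gives a fractional polymorphism whose support contains $g''$. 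Applying Proposition~39 of~\cite{KozikOchremiak15} to $\Delta$ then delivers a cyclic fractional polymorphism of arity $\ell$, which the first item of Lemma~\ref{lem:h1-transfer} transports back to a cyclic fractional polymorphism of $\Gamma$.

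The main obstacle is justifying that the bijection $\alpha$ is an automorphism of $\Delta$, since this is what makes reindexing by $\alpha^{-1}$ preserve the fractional-polymorphism property. I would prove this by averaging: the uniform distribution on the finite group generated by $\Sup(\omega^d)$ is itself a unary fractional polymorphism, and summing its improvement inequality over the elements of any single orbit forces every cost function of $\Delta$ to be constant on that orbit. Hence every element of the group -- in particular $\alpha$ -- preserves every cost function, so is an automorphism. With this in hand, the remaining manipulations are routine.
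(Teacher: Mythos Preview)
Your overall strategy matches the paper's: pass to a core $\Delta$ via Proposition~\ref{prop:core}, use Lemma~\ref{lem:h1-transfer} to transport a cyclic operation into $\Sup(\fPol(\Delta))$, upgrade it to an idempotent one, invoke Proposition~39 of~\cite{KozikOchremiak15}, and transport the resulting cyclic fractional polymorphism back to $\Gamma$.

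The difference lies in the upgrade step. You go through showing that $\alpha$ is an automorphism of $\Delta$ and then reindex $\omega$ by $\alpha^{-1}$. The paper instead uses that $\Sup(\fPol(\Delta))$ is closed under composition (Lemma~\ref{lem:compose} together with Remark~\ref{rem:frac-pol-frac-hom}): the diagonal $\alpha$ of $g'$ lies in $\Sup(\fPol(\Delta))$; being a bijection on a finite set, $\alpha^{-1}$ is a power of $\alpha$ and hence also lies in $\Sup(\fPol(\Delta))$; and therefore so does $\alpha^{-1}\circ g'$. No automorphism claim is needed.

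Your automorphism argument has a gap: you assert that the uniform distribution on the group $G$ generated by $\Sup(\omega^d)$ is itself a fractional polymorphism, but this is not immediate from $\omega^d$ being one. It is true---for instance as a limit of Ces\`aro averages of the $n$-fold self-compositions of $\omega^d$, each of which is a fractional polymorphism---but it requires an argument. Alternatively, you can bypass the uniform distribution entirely: summing the improvement inequality for $\omega^d$ over a $G$-orbit already forces equality at every point, and then choosing $a^*$ maximising $R$ on that orbit gives $R(f(a^*))=R(a^*)$ for every $f\in\Sup(\omega^d)$, hence for every element of $G$, so $R$ is constant on the orbit. Either fix works, but the paper's clone-closure argument is the shorter route.
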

\begin{proof}
The forward implication is trivial. For the reverse implication, let $\Delta$ be a core valued structure over a finite domain that is homomorphically equivalent to $\Gamma$, which exists by Proposition~\ref{prop:core}. By Lemma~\ref{lem:h1-transfer}, $\Sup(\fPol(\Delta))$ contains a cyclic operation. Then $\Sup(\fPol(\Delta))$ contains even an idempotent cyclic operation: If $c \in \Sup(\fPol(\Delta))$ is cyclic, then the operation $c_0 \colon x \mapsto c(x, \dots, x)$ is in $\Sup(\fPol(\Delta))$ as well. Since $\Delta$ is a finite core, $c_0$ is bijective and therefore $c_0^{-1}$ (which is just a finite power of $c_0$) and the idempotent cyclic operation $c_0^{-1} \circ c$ lie in $\Sup(\fPol(\Delta))$.
By  Proposition~17 in~\cite{KozikOchremiak15}, $\Delta$ has a cyclic fractional polymorphism and by Lemma \ref{lem:h1-transfer}, $\Gamma$ also has one.
\end{proof}

The following outstanding result classifies the computational complexity of VCSPs for valued structures over finite domains; it does not appear in this form in the literature, but
we explain
how to derive it from results in~\cite{KozikOchremiak15,KolmogorovKR17,BulatovFVConjecture,ZhukFVConjecture,Zhuk20}. 
In the proof, if $\bC$ is a finite relational structure (understood also as a valued structure), we denote by $\Pol(\bC)$ the set $\Sup(\fPol(\bC))$; this notation is consistent with the literature since the set $\Sup(\fPol(\bC))$ coincides with the set of polymorphisms of $\bC$.

\begin{theorem}\label{thm:fin-vcsp-tract}
Let $\Gamma$ be a valued structure with a finite signature and a finite domain. If $(\{0,1\};\OIT)$ does not have a pp-construction in $\Gamma$, then $\Gamma$ has a fractional cyclic polymorphism and $\VCSP(\Gamma)$ is in P, and it is NP-hard otherwise. 
\end{theorem}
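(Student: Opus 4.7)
The plan is to prove the two directions separately, with hardness being essentially immediate from the earlier development and tractability requiring an assembly of results from the literature.

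For the hardness direction, suppose that $(\{0,1\};\OIT)$ has a pp-construction in $\Gamma$. Since $\Gamma$ has a finite domain, its automorphism group is trivially oligomorphic, and Corollary~\ref{cor:OIT} directly yields that $\VCSP(\Gamma)$ is NP-hard.

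For the tractability direction, suppose that $(\{0,1\};\OIT)$ does not have a pp-construction in $\Gamma$. First I would pass to a fractionally homomorphically equivalent core $\Delta$ using Proposition~\ref{prop:core}; by Corollary~\ref{cor:hom-hard}, $\VCSP(\Delta)$ is polynomial-time equivalent to $\VCSP(\Gamma)$, and by Lemma~\ref{lem:pp-trans} together with the fact that pp-constructions pass through fractional homomorphic equivalence, $(\{0,1\};\OIT)$ does not have a pp-construction in $\Delta$ either. Next, I would invoke the algebraic dichotomy for finite-domain VCSPs established in~\cite{KozikOchremiak15}, sharpened by the resolution of the classical CSP dichotomy in~\cite{BulatovFVConjecture,ZhukFVConjecture,Zhuk20}: for a finite-domain valued core, the non-pp-constructibility of $(\{0,1\};\OIT)$ is equivalent to the existence of a cyclic operation in $\Sup(\fPol(\Delta))$ (this uses the fact that pp-constructibility for valued cores reduces to pp-constructibility in the associated relational companion of feasibility and optimality relations, where the classical dichotomy applies). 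By Lemma~\ref{lem:cyclic-supp}, this yields a cyclic fractional polymorphism of $\Delta$, and by Lemma~\ref{lem:h1-transfer} also a cyclic fractional polymorphism of $\Gamma$ of the same arity.

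Finally, algorithmic tractability of $\VCSP(\Delta)$ follows from the basic LP relaxation theorem of Kolmogorov, Krokhin, and Rol\'inek~\cite{KolmogorovKR17} (building on~\cite{ThapperZivny13,KolmogorovThapperZivny15}), which guarantees that any finite-domain VCSP with a cyclic fractional polymorphism is solvable in polynomial time by the basic LP; since $\VCSP(\Gamma)$ is polynomial-time equivalent to $\VCSP(\Delta)$, membership in P transfers to $\Gamma$. The main obstacle is the careful citation work in the second paragraph: the literature phrases the algebraic dichotomy in terms of singleton expansions and idempotent Taylor/Siggers/cyclic polymorphisms on cores, whereas we need it in terms of pp-constructions of $(\{0,1\};\OIT)$ in the sense of Section~\ref{sect:gen-hard}. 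Bridging these formulations requires verifying that the notions of pp-construction used in~\cite{wonderland} and in~\cite{KozikOchremiak15} coincide on finite-domain cores with the notion from Section~\ref{sect:gen-hard}, so that the two sides of the dichotomy line up on the exact boundary condition stated in the theorem.
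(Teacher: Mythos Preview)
Your proposal is correct and follows essentially the same strategy as the paper. The one organisational difference worth noting: the paper does \emph{not} pass to a core. Instead it directly takes a classical relational structure $\bC$ on the domain of $\Gamma$ with $\Pol(\bC)=\Sup(\fPol(\Gamma))$ (this clone exists since $\Sup(\fPol(\Gamma))$ contains projections and is closed under composition), and uses \cite[Theorem~3.3]{FullaZivny} to conclude that every relation of $\bC$ lies in $\langle \Gamma\rangle$. Hence if $\Gamma$ does not pp-construct $(\{0,1\};\OIT)$ in the valued sense, neither does $\bC$, and therefore $\bC$ does not pp-construct it in the classical relational sense either; now \cite{wonderland} gives a cyclic operation in $\Pol(\bC)=\Sup(\fPol(\Gamma))$, and Lemma~\ref{lem:cyclic-supp} upgrades this to a cyclic fractional polymorphism of $\Gamma$ itself. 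This is exactly the ``bridging'' step you flagged as the main obstacle, and the paper's route handles it cleanly without the detour through Proposition~\ref{prop:core} and Lemma~\ref{lem:h1-transfer}. Your detour is harmless but redundant: after passing to the core $\Delta$ you would still need the same $\bC$-construction applied to $\Delta$ to connect valued pp-constructibility to the classical dichotomy in \cite{wonderland}.
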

\begin{proof}
If $(\{0,1\};\OIT)$ has a pp-construction in $\Gamma$, then the NP-hardness of $\VCSP(\Gamma)$ follows from 
Corollary~\ref{cor:pp-constr-red}.
So assume that $(\{0,1\};\OIT)$ does not have a pp-construction in $\Gamma$. 

Let $\bC$ be a classical relational structure
on the same domain as $\Gamma$ such that 
$\Pol(\bC)= \Sup(\fPol(\Gamma))$; it exists since $\Sup(\fPol(\Gamma))$ contains projections by Remark \ref{expl:id} and is closed under composition by Proposition~\ref{prop:frac-comp} and Remark~\ref{rem:frac-pol-frac-hom}. 
Note that therefore $\fPol(\Gamma) \subseteq \fPol(\bC)$ and since $\Gamma$ has a finite domain, \cite[Theorem 3.3]{FullaZivny} implies that every relation 
of $\bC$ lies in $\langle \Gamma \rangle$. Since $\Gamma$ does not pp-construct $(\{0,1\};\OIT)$, neither does $\bC$, and in particular, $\bC$ does not pp-construct $(\{0,1\};\OIT)$ in the classical relational setting (see \cite[Definition 3.4, Corollary 3.10]{wonderland}). Combining Theorems 1.4 and 1.8 from \cite{wonderland}, $\Pol(\bC)$ contains a cyclic operation. 

Since $\Sup(\fPol(\Gamma))$ contains a cyclic operation, by Lemma~\ref{lem:cyclic-supp}, 
$\Gamma$ has a cyclic fractional polymorphism.
Then 
Kolmogorov, Rol\'inek, and Krokhin \cite{KolmogorovKR17} prove that in this case $\CSP(\Gamma)$ can be reduced to the CSP of a finite relational structure with a cyclic polymorphism, or, equivalently, with a weak near-unanimity polymorphism \cite[Theorem 6.9.2]{Book}; such CSPs were shown to be in P by Bulatov~\cite{BulatovFVConjecture} and, independently, by Zhuk~\cite{ZhukFVConjecture}. 
\end{proof}

\begin{remark}\label{rem:bin-enc}
The polynomial-time tractability result in Theorem~\ref{thm:fin-vcsp-tract} also holds if the
multiplicities of the summands in the instances of $\VCSP(\Gamma)$ are given in binary, essentially because it builds on linear programming, which can cope with binary representations of coefficients.
\end{remark}

The problem of deciding for a given valued structure $\Gamma$ with finite domain and finite signature whether $\Gamma$ satisfies the condition given in the previous theorem can be solved in exponential time~\cite{Kolmogorov-Meta}. 
We also give a reformulation of the theorem above in terms of the underlying crisp structure of $\Gamma$. This is considered to be known, but it is the first time this fact appears in this form in the literature. 

\begin{corollary}\label{cor:underl-crisp}
Let $\Gamma$ be a valued structure with a finite signature and a finite domain. Let $\bC$ be the underlying crisp structure of $\Gamma$. If $(\{0,1\};\OIT)$ does not have a pp-construction in $\bC$, then $\Pol(\bC)$ contains a cyclic operation
and $\VCSP(\Gamma)$ is in P, and it is NP-hard otherwise. 
\end{corollary}

\begin{proof}
By Proposition~\ref{prop:pp-constr-rel}, $\bC$ pp-constructs $(\{0,1\};\OIT)$ if and only if $\Gamma$ does. Hence, if $\bC$ pp-constructs $(\{0,1\};\OIT)$, then $\VCSP(\Gamma)$ is NP-hard by Theorem~\ref{thm:fin-vcsp-tract}. Otherwise, $\Gamma$ does not pp-construct $(\{0,1\};\OIT)$ and again by Theorem~\ref{thm:fin-vcsp-tract}, $\Gamma$ has a cyclic fractional polymorphism and $\VCSP(\Gamma)$ is in P. In this case, $\bC$ has a cyclic fractional polymorphism by Lemma~\ref{lem:easy} and, by Lemma~\ref{lem:cyclic-supp}, $\Sup(\fPol(\bC)) = \Pol(\bC)$ contains a cyclic operation.
\end{proof}

The importance of Corollary~\ref{cor:underl-crisp} is that it shows that for a valued structure $\Gamma$ on a finite domain, the complexity of $\VCSP(\Gamma)$ is completely captured in the CSPs that can be encoded in $\VCSP(\Gamma)$ via pp-constructions (up to polynomial-time equivalence). 
We now state consequences of Theorem~\ref{thm:fin-vcsp-tract} for certain valued structures with an infinite domain.

\begin{proposition}\label{prop:black-box}
Let $\bB$ be a finitely bounded homogeneous structure 
and let $\Gamma$ be a valued structure with finite relational signature such that
$\Aut(\bB) \subseteq \Aut(\Gamma)$. Let $m$ be as in Theorem~\ref{thm:red-fin}.
Then the following are equivalent. 
\begin{enumerate}
\item $\fPol(\Gamma)$ contains a fractional operation which is canonical and pseudo cyclic with respect to $\Aut(\bB)$;  
\item $\fPol(\Gamma^*_{\bB,m})$ contains a cyclic fractional operation; 
\item $\Sup(\fPol(\Gamma^*_{\bB,m}))$ contains a cyclic operation.
\item $\Sup(\fPol(\Gamma^*_{\bB,m}))$ contains a Siggers operation.  
\end{enumerate}
\end{proposition}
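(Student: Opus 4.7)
The plan is to close the equivalences by a cycle $(1) \Rightarrow (2) \Rightarrow (3) \Rightarrow (2) \Rightarrow (1)$ together with the separate equivalence $(3) \Leftrightarrow (4)$. Most ingredients are already set up in the preceding material: Lemma~\ref{lem:lift}, Lemma~\ref{lem:pc}, and Lemma~\ref{lem:cyclic-supp} take care of three of the five needed arrows, so the real work lies in $(1) \Rightarrow (2)$ and in citing the correct finite-domain algebraic result for $(3) \Leftrightarrow (4)$.

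For $(1) \Rightarrow (2)$, I start with a canonical pseudo cyclic $\omega \in \fPol(\Gamma)$ of some arity $\ell$ and form the induced fractional operation $\omega^*$ on the (finite) domain of $\Gamma^*_m$, exactly as defined just before Lemma~\ref{lem:lift}; canonicity of $\omega$ guarantees that this is well-defined and gives a genuine probability distribution on $\ell$-ary operations on the orbit space. Cyclicity of $\omega^*$ follows from pseudo cyclicity of $\omega$ via Remark~\ref{rem:can-act}: each canonical pseudo cyclic $f$ in the ``support'' of $\omega$ (in the measure-theoretic sense) induces a cyclic $f^*$. I then have to verify $\omega^* \in \fPol(\Gamma^*_m)$. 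For the unary cost functions $R^*$, the inequality~\eqref{eq:fpol} for $\omega^*$ is obtained by evaluating \eqref{eq:fpol} for $\omega$ on representatives $(t_1,\dots,t_k)$ of the orbits in question (padded to $m$-tuples). For the binary crisp relations $C_{i,j}$, it suffices to check that every $g = f^*$ with $\omega^*(g)>0$ preserves $C_{i,j}$ in the usual relational sense: if $C_{i,j}(O_1^k,O_2^k)=0$ for $k=1,\dots,\ell$, then for chosen representatives $s^k\in O_1^k$, $t^k\in O_2^k$ the $p$-tuples $(s^k_{i(1)},\dots,s^k_{i(p)})$ and $(t^k_{j(1)},\dots,t^k_{j(p)})$ lie in a common orbit, so canonicity of $f$ applied to the two collections of $\ell$ $p$-tuples forces the $f$-image $p$-tuples to lie in a common orbit as well, giving $C_{i,j}(f^*(O_1^1,\dots,O_1^\ell),f^*(O_2^1,\dots,O_2^\ell))=0$.

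Implication $(2) \Rightarrow (3)$ is immediate from the remark following Lemma~\ref{lem:h1-transfer}: on a finite domain a fractional polymorphism is cyclic iff every operation in its support is cyclic, so the support contains a cyclic operation. Implication $(3) \Rightarrow (2)$ is a direct application of Lemma~\ref{lem:cyclic-supp} to the finite-domain valued structure $\Gamma^*_m$. For $(2) \Rightarrow (1)$, Lemma~\ref{lem:lift} provides a canonical $\omega \in \fPol(\Gamma)$ with $\omega^* = \nu$ for a given cyclic $\nu \in \fPol(\Gamma^*_m)$, and Lemma~\ref{lem:pc} then upgrades canonicity together with cyclicity of $\omega^*$ to pseudo cyclicity of $\omega$. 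Finally, $(3) \Leftrightarrow (4)$ follows from the classical characterisation for finite concrete clones: $\Sup(\fPol(\Gamma^*_m))$ is a clone (it contains projections by Example~\ref{expl:id} and is composition-closed, as already used in the proof of Theorem~\ref{thm:fin-vcsp-tract}), and for any finite concrete clone the existence of a cyclic operation is equivalent to the existence of a Siggers operation; this is part of the equivalent conditions established in~\cite{wonderland}. To be safe one first reduces to a core via Proposition~\ref{prop:core} and transfers Siggers/cyclic operations between $\Gamma^*_m$ and its core through the same argument as in Lemma~\ref{lem:h1-transfer} (the proof there goes through verbatim when ``cyclic'' is replaced by ``Siggers'', since composition with unary maps $f_1$ and the diagonal lift $f_2'$ preserves the Siggers identity).

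The main obstacle I expect is the bookkeeping in $(1) \Rightarrow (2)$, in particular showing that $\omega^*$ preserves the compatibility relations $C_{i,j}$. The subtlety is that $C_{i,j}$ is defined via orbits of $p$-subtuples drawn from representatives of orbits of $m$-tuples, and one has to keep the ``outer'' canonicity statement (acting on $\ell$-tuples of $m$-orbits) in synch with the ``inner'' orbit structure on $p$-subtuples. Once this has been checked carefully, all remaining implications are essentially off-the-shelf applications of the lemmas already proved in Sections~\ref{sect:fpol} and~\ref{sect:tract} together with the finite-domain theorems of~\cite{wonderland}.
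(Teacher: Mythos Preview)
Your proposal is correct and follows essentially the same route as the paper: $(1)\Rightarrow(2)$ by passing to $\omega^*$ and checking it improves both the $R^*$ and the $C_{i,j}$ (the paper is terser here, simply saying ``$\omega^*$ improves $C_{i,j}$ because $\omega$ is canonical''), $(2)\Leftrightarrow(3)$ via Lemma~\ref{lem:cyclic-supp}, and $(2)\Rightarrow(1)$ via Lemmas~\ref{lem:lift} and~\ref{lem:pc}. The only minor deviation is your citation for $(3)\Leftrightarrow(4)$: the paper invokes~\cite[Theorem 6.9.2]{Book} (based on~\cite[Theorem 4.1]{Cyclic}) directly for arbitrary finite clones, so your detour through cores and the transfer argument from Lemma~\ref{lem:h1-transfer} is not needed, though it is not wrong.
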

\begin{proof}
First, we prove that (1) implies (2). 
If $\omega$ is a fractional polymorphism of 
$\Gamma$, then
$\omega^*$ is a fractional polymorphism of 
$\Gamma^*_{\bB,m}$: 
the fractional operation $\omega^*$ improves $R^*$ because $\omega$ improves $R$,
and
$\omega^*$ improves $C_{i,j}$ for all $i,j$ because $\omega$
is canonical with respect to $\Aut(\bB)$. 
Finally, if $\omega$ is pseudo cyclic with respect to $\Aut(\bB)$, then $\omega^*$ is cyclic.

The implication from (2) to (1) is a consequence of Lemma~\ref{lem:lift} and Lemma~\ref{lem:pc}. 
The equivalence of (2) and (3) follows from Lemma~\ref{lem:cyclic-supp}. 
The equivalence of (3) and (4) is proved in \cite[Theorem 6.9.2]{Book}; the proof is based on \cite[Theorem 4.1]{Cyclic}.
\end{proof} 

Note that item (4) in the previous proposition can be decided algorithmically for a given valued structure $\Gamma^*_{\bB,m}$ (which has a finite domain
and finite signature) by testing all $4$-ary operations on $\Gamma^*_{\bB,m}$. 

\begin{theorem}\label{thm:tract}
If the conditions from Proposition~\ref{prop:black-box} 
hold, then $\VCSP(\Gamma)$ is in P. 
\end{theorem}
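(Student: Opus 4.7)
The plan is to chain together three already-established components: the polynomial-time reduction from $\VCSP(\Gamma)$ to the finite-domain VCSP of $\Gamma^*_m$ furnished by Theorem~\ref{thm:red-fin}, the equivalences in Proposition~\ref{prop:black-box}, and the tractability part of Theorem~\ref{thm:fin-vcsp-tract} for finite-domain VCSPs.

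First I would invoke Theorem~\ref{thm:red-fin}, whose hypotheses are satisfied here since $\Aut(\Gamma) = \Aut(\bB)$ for a finitely bounded homogeneous structure $\bB$ and $m$ is chosen large enough. This yields a polynomial-time reduction from $\VCSP(\Gamma)$ to $\VCSP(\Gamma^*_m)$. As polynomial-time reducibility is transitive, it remains to argue that $\VCSP(\Gamma^*_m)$ is in P; note that $\Gamma^*_m$ is a valued structure with a finite domain and a finite signature.

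Second, the hypothesis that the equivalent conditions of Proposition~\ref{prop:black-box} hold gives, via condition~(2), that $\fPol(\Gamma^*_m)$ contains a cyclic fractional operation. This is exactly the algebraic property that powers the tractability half of the proof of Theorem~\ref{thm:fin-vcsp-tract}: via the reduction of Kolmogorov, Rol\'{\i}nek, and Krokhin~\cite{KolmogorovKR17}, $\VCSP(\Gamma^*_m)$ is translated into a finite-domain CSP whose polymorphism clone contains a cyclic operation, and such CSPs are solvable in polynomial time by Bulatov~\cite{BulatovFVConjecture} and Zhuk~\cite{ZhukFVConjecture}. Composing with the reduction from the first step produces a polynomial-time algorithm for $\VCSP(\Gamma)$.

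Since all the heavy machinery is in place from earlier in the paper, there is no genuine obstacle to overcome; the theorem is essentially a matter of assembling the established components. The one point that needs care is to use condition~(2) of Proposition~\ref{prop:black-box} rather than condition~(1) as the direct input to the finite-domain tractability result, but the preceding lemmas and the proposition have been set up precisely so that passing between these formulations is immediate.
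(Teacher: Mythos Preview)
Your proposal is correct and follows essentially the same route as the paper's proof: reduce $\VCSP(\Gamma)$ to $\VCSP(\Gamma^*_m)$ via Theorem~\ref{thm:red-fin}, then use condition~(2) of Proposition~\ref{prop:black-box} together with the tractability direction of Theorem~\ref{thm:fin-vcsp-tract} to conclude that $\VCSP(\Gamma^*_m)$ is in P. The paper's version is terser but invokes the same three ingredients in the same way.
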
 
\begin{proof}
If $\Gamma^*_{\bB,m}$ has a cyclic fractional polymorphism 
of arity $\ell \geq 2$, then the polynomial-time tractability of $\VCSP(\Gamma^*_{\bB,m})$ follows from Theorem~\ref{thm:fin-vcsp-tract}.
For $m$ large enough, we may apply Theorem~\ref{thm:red-fin}
and obtain a polynomial-time reduction from $\VCSP(\Gamma)$ 
to $\VCSP(\Gamma^*_{\bB,m})$, which concludes the proof. 
\end{proof}

\begin{remark}\label{rem:bin-enc-2}
    The polynomial-time tractability result in Theorem~\ref{thm:tract} also holds if the multiplicities of the summands in the input instance of 
    $\VCSP(\Gamma)$ are given in binary; see Remark~\ref{rem:bin-enc}.
\end{remark}

\section{Application: Resilience}
\label{sect:resilience}

We introduce the resilience problem for
conjunctive queries and, more generally,
unions of conjunctive queries.
We generally work with Boolean queries, i.e., queries without free variables. 
A \emph{conjunctive query} is a primitive positive $\tau$-sentence and 
a \emph{union of conjunctive queries} is a (finite) disjunction of conjunctive
queries. Note that every existential positive sentence can be written as a union of conjunctive queries.

Let $\tau$ be a finite relational signature and $\mu$ a conjunctive query  over $\tau$. The input to the \emph{resilience problem for} $\mu$ consists of 
a finite $\tau$-structure $\bA$, called a ($\tau$-)\emph{database}\footnote{To be precise, a finite relational structure is not exactly the same as a database, because the latter may not contain elements that are not contained in any relation. This difference, however, is inessential for the problems studied in this article.}, and the 
task is to compute the number of tuples that have to be removed from relations of $\bA$ so that $\bA$ does \emph{not} satisfy $\mu$. This number is called the \emph{resilience} of $\bA$ (with respect to $\mu$). 
As usual, this can be turned into a decision problem where the input also contains a natural number $u \in {\mathbb N}$ and the question is whether the resilience is at most $u$. 
Clearly, $\bA$ does not satisfy $\mu$ if and only if its resilience is $0$. 

 A natural variation of the problem is that the input database is a \emph{bag database}, meaning that it may contain tuples with \emph{multiplicities}. 
Formally,
 a \emph{multiset relation} on a set $A$ of arity $k$ is a multiset with elements from $A^k$ and a \emph{bag database} $\bA$ over a relational signature $\tau$ consists of a finite domain $A$ and for every $R \in \tau$ of arity $k$, a multiset relation $R^\bA$ of arity $k$. A bag database $\bA$ satisfies a union of conjunctive queries $\mu$ if the relational structure obtained from $\bA$ by forgetting the multiplicities of tuples in its relations satisfies $\mu$.
In this article, we focus on bag databases whose relevance
has already been discussed in the introduction.
The reader might wonder why, unlike in~\cite{LatestResilience}, we restrict to study of resilience problems in bag semantics and do not consider set semantics. This will become apparent from the proof of Proposition~\ref{prop:connection}, see Remark~\ref{rem:set}.

The basic resilience problem defined above can
be generalized  by admitting the decoration of databases with a subsignature $\sigma \subseteq \tau$, in this way declaring all tuples in $R^{\bA}$, $R \in \sigma$, to be \emph{exogenous}. This means that we are not allowed to remove such tuples
from  $\bA$ to make $\mu$ false; the tuples in the other relations are then called \emph{endogenous}. For brevity, we also refer to the relations in $\sigma$ as being exogenous and those in $\tau \setminus \sigma$ as being endogenous. If not specified, then $\sigma=\emptyset$, i.e., all tuples are endogenous. As an alternative, one may also
declare individual tuples as being endogenous
or exogenous. Under bag semantics, however,
this case can be reduced to the one studied
here (see Remark~\ref{rem:exo}).
The resilience problem that we study is summarized
in Figure~\ref{fig:res-problem}. Note that 
for every $\mu$ 
this problem is in NP.
We remark that in Section~\ref{sect:rpqs}, we briefly deviate from our focus on unions of conjunctive queries and, by adapting our results from Section~\ref{sect:fin-dual}, we establish a complexity dichotomy for resilience problems for two-way regular path queries; all the necessary definitions will be given in Section~\ref{sect:rpqs}.

\begin{figure} 
\flushleft 
\begin{framed}
Fixed: a relational signature $\tau$, a subset $\sigma \subseteq \tau$, and a union $\mu$ of conjuctive queries over $\tau$. 

\textbf{Input:} A bag database $\bA$ in signature $\tau$ and $u \in \N$.

$m := $ minimal number of tuples to be removed from the relations in $\{R^{\bA} \mid R \in \tau \setminus \sigma \}$
so that $\bA \not \models \mu$.

\textbf{Output:} Is $m \leq u$?
\end{framed}
 \Description{A relational signature, its subset of exogenous relations and a union of conjunctive queries are fixed. Input is a bag database in the signature and a threshold. Let $m$ be the minimal number of tuples to be removed from the relations of the database that are not exogenous so that the database does not satisfy the union of queries. Output is the answer to ``Is $m$ at most the threshold?''.}
\caption{The resilience problem considered in this article.} 
\label{fig:res-problem} 
\end{figure}

The \emph{canonical database}
of a conjunctive query $\mu$ with relational signature $\tau$ is the $\tau$-structure $\bA$ whose domain are the variables of $\mu$ and where $a \in R^{\bA}$ for
$R \in \tau$ if and only if $\mu$ contains the conjunct $R(a)$. 
Conversely, the \emph{canonical query} of a relational $\tau$-structure
$\bA$ is the conjunctive query whose variable set is the domain $A$ of $\bA$, and which contains for every $R \in \tau$ and $a \in R^{\bA}$ the conjunct $R(a)$.

\begin{remark}
For every conjunctive query $\mu$, the resilience problem for $\mu$ parameterized by the threshold $u$ is fixed-parameter tractable (FPT, which we  refrain from defining here, see~\cite{ParametrizedAlgorithmsBook}). Indeed, there is
a parameterized reduction to $k$-Hitting Set parametrized by the threshold $u$, which is known to be FPT~\cite{ParametrizedAlgorithmsBook}.\footnote{We thank Peter Jonsson and George Osipov for suggesting this remark to us and letting us include it in this article.} The reduction is as follows. Let $\tau$ be the signature of $\mu$, let $k$ be the number of conjuncts of $\mu$, and let $\bA$ be an input database. Construct a $k$-uniform hypergraph $H$ where 
\begin{itemize}

\item
the vertices take the form $(R(a), i)$ with $R \in \tau$, $a \in R^{\bA}$, and $i \geq 1$ bounded by the multiplicity of $a$ in $R^{\bA}$,  and
\item every homomorphism $h$ from the canonical database of $\mu$ to the input database $\bA$ gives rise to
all hyperedges of the form $\{ (R_1(h(x^1)),i_1),\dots,(R_\ell(h(x^\ell)),i_\ell)\}$ where $R_1(x^1),\dots,R_\ell(x^\ell)$ are the atoms in $\mu$ and 
each $i_j$ is bounded by the multiplicity of $h(x^j)$ in $R^\bA$.
\end{itemize}
 Then $\mu$ has resilience at most $u$ with respect to $\bA$ if and only if there is a set of vertices of $H$ of size at most $u$ that intersects every hyperedge. 
Note that since $\mu$ is fixed, the reduction runs in polynomial time in the size of the database. 
\end{remark}

We next explain how to represent resilience problems as VCSPs using appropriately chosen valued structures with oligomorphic automorphism groups.

\begin{expl}\label{expl:fin-dual}
The following query is taken from Meliou, Gatterbauer, Moore, and Suciu~\cite{Meliou2010}; they show how to solve its resilience problem without multiplicities in polynomial time by a reduction to a max-flow problem.  
Let $\mu$ be the query $$\exists x,y,z \big ( R(x,y) \wedge S(y,z) \big ).$$
 Observe that a finite $\tau$-structure
 satisfies $\mu$ if and only if it does not have a homomorphism to the $\tau$-structure $\bB$
with domain $B = \{0,1\}$ and the relations
$R^{\bB} = \{(0,1),(1,1)\}$ and $S^{\bB} = \{(0,0),(0,1)\}$ (see Figure~\ref{fig:fin-dual}). We turn $\bB$ into the valued structure $\Gamma$ with domain $\{0,1\}$ where $R^{\Gamma}(0,1) = R^{\Gamma}(1,1) = 0 = S^{\Gamma}(0,0) = S^{\Gamma}(0,1)$ and $R^{\Gamma}$ and $S^{\Gamma}$ take value 1 otherwise.
Then $\VCSP(\Gamma)$ is precisely the resilience problem for $\mu$ (with multiplicities).
In Example~\ref{expl:fin-dual-rev} we reprove the result from \cite{LatestResilience} 
that even with multiplicities, the problem can be solved in polynomial time.
%(see Theorem~\ref{thm:tract}, Proposition~\ref{prop:connection}, and Example~\ref{expl:fin-dual-rev}). 
\end{expl}

\begin{figure}
    \centering
    \includegraphics[]{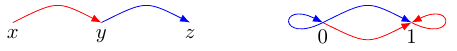}
    \caption{The query $\mu$ from Example~\ref{expl:fin-dual} (on the left) and the corresponding structure $\bB$ (on the right).}
    \label{fig:fin-dual}
     \Description{Arrow depiction of the query from the example and the dual structure.}
\end{figure}

\begin{expl}\label{expl:simple-inf}
Let $\mu$ be the conjunctive query 
$$\exists x,y,z (R(x,y) \wedge S(x,y,z)).$$
This query is \emph{linear} in the sense of Freire, Gatterbauer, Immerman, and Meliou and thus its resilience problem without multiplicities can be solved in polynomial time (Theorem 4.5 in~\cite{Meliou2010}; also see Fact 3.18 in~\cite{Resilience-Arxive}).
In Example~\ref{expl:simple-inf-rev} we
reprove the result from \cite{LatestResilience}
that this problem remains polynomial-time solvable with multiplicities.
%(see Theorem~\ref{thm:tract}, Proposition~\ref{prop:connection} and Example~\ref{expl:simple-inf-rev}). 
\end{expl}

\begin{remark}
Consider the computational problem of \emph{finding} a set of tuples to be removed from the input database $\bA$ so that $\bA \not \models \mu$. We observe that if the resilience problem (with or without multiplicities) for a union $\mu$ of conjunctive queries is in P, then this problem also is in P. To see this, let $u \in \N$ be threshold. 
If $u=0$, then no tuple needs to be found and we are done. Otherwise, 
for every tuple $t$ in a relation $R^{\bA}$, we remove all copies of $t$ from $R^{\bA}$ and test the resulting database with the threshold $u-m$, where $m$ is the multiplicity of $t$. If the modified instance is accepted, then $t$ is a correct tuple to be removed and we may proceed to find a solution of this modified instance, otherwise we go one step back and try to remove a different tuple.
\end{remark}

\subsection{Connectivity}
\label{sect:conn}
We show that when classifying the resilience problem for conjunctive queries, it suffices to consider queries that are connected. This was observed already in~\cite{Resilience} for conjunctive queries in the set semantics setting; we provide a proof in our setting for completeness.
A $\tau$-structure is  \emph{connected} if 
it cannot be written as the disjoint union of two $\tau$-structures with non-empty domains.

\begin{remark}\label{rem:db}
All terminology introduced for $\tau$-structures also applies to conjunctive queries (i.e., primitive positive formulas) with signature $\tau$: by definition, the query has the property if the canonical database has the property. 
\end{remark}

\begin{lemma}\label{lem:con-aux}
Let $\nu_1,\dots,\nu_k$ be conjunctive queries such that $\nu_i$ does not imply $\nu_j$ if $i \neq j$. 
Then the resilience problem for $\nu := \nu_1 \wedge \cdots \wedge \nu_k$ is NP-hard if the resilience problem for one of the $\nu_i$ is NP-hard.
Conversely, if the resilience problem is in P
for each $\nu_i$, then the resilience problem for $\nu$ is in P
as well. The same is true in the setting without multiplicities and/or exogeneous relations.
\end{lemma}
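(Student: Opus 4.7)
The plan is to separate the easy (upper-bound) direction from the NP-hardness reduction and to start by establishing the identity
\[
\mathrm{res}(\nu, \bA) \;=\; \min_{l \in \{1,\dots,k\}} \mathrm{res}(\nu_l, \bA)
\]
for every (bag or set) database $\bA$, where $\mathrm{res}(\cdot,\bA)$ denotes the resilience. The inequality ``$\leq$'' holds because falsifying any single $\nu_l$ already falsifies $\nu$, so any minimum-size removal set for some $\nu_l$ is a removal set for $\nu$. The converse holds because if removing tuples makes $\nu$ false, then some $\nu_l$ must become false, so the removal set is a witness of size at least $\mathrm{res}(\nu_l, \bA)$. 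Note that neither the non-implication hypothesis nor connectedness is used here. This identity immediately yields the in-P and in-NP parts: to decide $\mathrm{res}(\nu, \bA) \leq u$, I would run the (hypothetical) polynomial-time algorithm for the resilience of each $\nu_l$ in turn, or, for membership in NP, guess $l$ nondeterministically together with an NP-certificate witnessing $\mathrm{res}(\nu_l, \bA) \leq u$.

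For the NP-hardness direction, suppose the resilience problem for some $\nu_i$ is NP-hard. I would reduce it to the resilience problem for $\nu$ as follows. For each $l \neq i$, the hypothesis $\nu_l \not\Rightarrow \nu_i$ provides a finite database $\bG_l$ with $\bG_l \models \nu_l$ but $\bG_l \not\models \nu_i$; concretely, the canonical database of $\nu_l$ will work. Given an input instance $(\bA, u)$ of the resilience problem for $\nu_i$, I would form $\bA' := \bA \sqcup \bigsqcup_{l \neq i} \bG_l$, assigning each tuple of each $\bG_l$ the multiplicity $u{+}1$. Then for $l \neq i$, no removal of at most $u$ tuples from $\bA'$ can eliminate any tuple of $\bG_l$ completely, so $\bG_l \models \nu_l$ persists and hence $\mathrm{res}(\nu_l, \bA') > u$. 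On the other hand, $\mathrm{res}(\nu_i, \bA') = \mathrm{res}(\nu_i, \bA)$, because the added components $\bG_l$ do not satisfy $\nu_i$ and cannot be exploited by a homomorphism that witnesses $\bA' \models \nu_i$. Combined with the identity from the first step, this yields $\mathrm{res}(\nu, \bA') \leq u$ iff $\mathrm{res}(\nu_i, \bA) \leq u$, giving the required polynomial-time reduction.

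The set-database and exogenous-relation variants follow by minor modifications that I would spell out routinely: replace the multiplicity $u{+}1$ by $u{+}1$ disjoint isomorphic copies of $\bG_l$ in the set setting, and/or declare all tuples in the $\bG_l$'s to be exogenous if the subsignature $\sigma$ allows this. The one genuine subtlety, and the main obstacle I anticipate, is justifying the equality $\mathrm{res}(\nu_i, \bA') = \mathrm{res}(\nu_i, \bA)$ when $\nu_i$ is not connected: a homomorphism from the canonical database of $\nu_i$ into the disjoint union $\bA'$ might split its components across $\bA$ and the $\bG_l$'s. This forces me to invoke connectedness of $\nu_i$ (which is natural in the present ``Connectivity'' section and in the intended decomposition of $\nu$ into its connected components) so that any such homomorphism factors through a single connected component of $\bA'$, which by construction is either $\bA$ or some $\bG_l$, and hence the behaviour of $\nu_i$ on $\bA'$ reduces to its behaviour on $\bA$.
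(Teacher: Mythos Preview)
Your approach matches the paper's: the paper also forms $\bA'$ as the disjoint union of $\bA$ with copies of the canonical databases of the $\nu_j$ for $j\neq i$ (using $m$ disjoint copies, $m$ the number of tuples of $\bA$, rather than your multiplicity $u{+}1$), and for the upper bound likewise computes $\min_l \mathrm{res}(\nu_l,\bA)$. Your remark about needing $\nu_i$ to be connected for the equality $\mathrm{res}(\nu_i,\bA')=\mathrm{res}(\nu_i,\bA)$ is correct and in fact sharper than the paper, which asserts $\mathrm{res}(\nu_i,\bA)=\mathrm{res}(\nu,\bA')$ without addressing this point; the same connectedness issue is implicitly needed there as well.
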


\begin{proof}
We first present a polynomial-time reduction from the resilience problem of $\nu_i$, for some $i \in \{1,\dots,k\}$, to the resilience problem of $\nu$. 
Given an instance $\bA$ of the resilience problem for $\nu_i$, let $m$ be the total multiplicity of tuples in relations of $\bA$. 
Let $\bA'$ be the disjoint union of $\bA$ with $m$ copies of the canonical database of $\nu_j$ for every $j \in \{1,\dots,k\} \setminus \{i\}$. 
Observe that $\bA'$ can be computed in polynomial time in the size of $\bA$ and that the resilience of $\bA$
with respect to $\nu_i$ equals the resilience of 
$\bA'$ with respect to $\nu$. 

Conversely, if the resilience problem is in P for each $\nu_i$,
then also the resilience problem for $\nu$ is in P: 
given an instance $\bA$ of the resilience problem for $\nu$, 
we compute the resilience of $\bA_j$ with respect to $\nu_i$ for every $i \in \{1,\dots,k\}$, and the minimum of all the resulting values.
 
The same proof works in the setting without multiplicities. 
\end{proof}

\begin{corollary}\label{cor:con}
Let $\nu_1,\dots,\nu_k$ be conjunctive queries such that $\nu_i$ does not imply $\nu_j$ if $i \neq j$.
Let $\nu = \nu_1 \wedge \dots \wedge \nu_k$ and suppose that $\nu$ occurs in a union $\mu$ of conjunctive queries. For $i \in \{1, \dots, k\}$, let $\mu_i$ be the union of queries obtained by replacing $\nu$ by $\nu_i$ in $\mu$. Then the resilience problem for $\mu$ is NP-hard if the resilience problem for one of the $\mu_i$ is NP-hard.
Conversely, if the resilience problem is in P
for each $\mu_i$, then the resilience problem for $\mu$ is in P as well.
The same is true in the setting without 
 multiplicities and/or  exogeneous relations.
\end{corollary}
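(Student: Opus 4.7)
The plan is to lift the proof of Lemma~\ref{lem:con-aux} from a pure conjunction to the UCQ $\mu$, via the distributivity of disjunction over conjunction. Writing $\mu = \nu \vee \rho_1 \vee \cdots \vee \rho_t$ and setting $\rho := \rho_1 \vee \cdots \vee \rho_t$, we have
\[\mu \; \equiv \; (\nu_1 \wedge \cdots \wedge \nu_k) \vee \rho \; \equiv \; \bigwedge_{i=1}^k (\nu_i \vee \rho) \; = \; \mu_1 \wedge \cdots \wedge \mu_k.\]
Since the resilience of a query depends only on its logical equivalence class, this yields
\[\mathrm{res}(\mu;\bA) \; = \; \min_{i \in \{1,\dots,k\}} \mathrm{res}(\mu_i;\bA)\]
for every (bag) database $\bA$ and every designation of exogenous tuples: to falsify a conjunction it suffices to falsify some conjunct, and the cheapest choice attains the minimum.

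This identity is exactly what is needed for the tractability statements. If the resilience problem is in P for each $\mu_i$, then given an input $(\bA,u)$ we compute $\mathrm{res}(\mu_i;\bA)$ for every $i$ in polynomial time and compare the minimum against $u$. The NP case is analogous: a nondeterministic machine guesses an index $i$ and then runs the NP algorithm for the resilience problem of $\mu_i$ on $(\bA,u)$.

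For the NP-hardness direction, suppose that the resilience problem for $\mu_i$ is NP-hard for some $i$. I would mirror the reduction in the proof of Lemma~\ref{lem:con-aux}: given $(\bA,u)$ for the resilience problem of $\mu_i$, construct $\bA'$ by taking the disjoint union of $\bA$ with sufficiently many copies (with multiplicity exceeding $u$, or declared exogenous) of the canonical database of each $\nu_j$ with $j \neq i$. The pairwise non-implication assumption ensures that $\nu_i$ has no homomorphism into the canonical database of any $\nu_j$ with $j \neq i$, so the only witnesses for $\nu_i$ in $\bA'$ come from $\bA$; and the exogeneity/multiplicity ensures that $\nu_j$ cannot be falsified in $\bA'$ within budget $u$ for $j \neq i$. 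Combined with $\mu \equiv \bigwedge_j \mu_j$, any resilience witness of cost at most $u$ must then arise by falsifying exactly $\mu_i$, and its cost equals $\mathrm{res}(\mu_i;\bA)$. The main technical subtlety I expect to confront is to verify that adding canonical databases of the $\nu_j$'s does not accidentally render some other disjunct $\rho_s$ unfalsifiable; this is handled by noting that $\nu_j \not\models \rho_s$ (which, for instance, follows when the disjuncts of $\mu$ are pairwise non-implying) together with the connectivity of each $\nu_j$ granted by the preceding subsection (see Section~\ref{sect:conn}), which confines any homomorphism from $\rho_s$ into $\bA'$ to a single connected component. The very same argument goes through in the settings without multiplicities and/or without exogenous relations.
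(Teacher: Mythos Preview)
Your decomposition $\mu \equiv \mu_1 \wedge \cdots \wedge \mu_k$ via distributivity and the resulting identity $\mathrm{res}(\mu;\bA) = \min_i \mathrm{res}(\mu_i;\bA)$ is exactly what the paper's one-line ``Follows immediately from Lemma~\ref{lem:con-aux}'' is pointing at, and it makes the P and NP membership directions immediate.

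You are also right to worry about the hardness reduction, and the gap you flag is real: the corollary as stated does not assume $\nu_j \not\models \rho_s$ for the remaining disjuncts $\rho_s$, and without it the construction can break. If $\nu_j \models \rho_s$ for some $j\neq i$, the added copies of the canonical database of $\nu_j$ create new witnesses for $\rho_s$ and hence for $\mu_i = \nu_i \vee \rho$, so $\mathrm{res}(\mu_i;\bA')$ may exceed $\mathrm{res}(\mu_i;\bA)$; in the extreme case one gets $\mu \equiv \mu_j$ tractable while $\mu_i$ is NP-hard, contradicting the hardness claim outright. Your proposed fix---assume the disjuncts of $\mu$ are pairwise non-implying, which gives $\nu \not\models \rho_s$ and hence $\nu_j \not\models \rho_s$ since the canonical database of $\nu_j$ embeds in that of $\nu$---supplies precisely the missing hypothesis and is harmless for the paper's intended application (reducing arbitrary UCQs to unions of connected CQs). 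One small correction: what confines a homomorphism from $\rho_s$ into a single component of $\bA'$ is connectivity of $\rho_s$ (and of $\nu_i$), not of the $\nu_j$ you add; neither form of connectivity is actually asserted in the corollary, so this is part of the same implicit normalisation you are relying on.
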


\begin{proof}
Follows immediately from Lemma~\ref{lem:con-aux}.
\end{proof}

By applying Corollary~\ref{cor:con} finitely many times, we obtain that, when classifying the complexity of the resilience problem for unions of conjunctive
queries, we may restrict our attention to unions of connected conjunctive queries.

\subsection{Finite Duals} \label{sect:fin-dual}
If $\mu$ is a union of conjunctive queries with signature $\tau$, then a \emph{dual} of $\mu$ is a $\tau$-structure $\bB$ with the property that a finite $\tau$-structure $\bA$ has a homomorphism to $\bB$ if and only if $\bA$ does not satisfy $\mu$. 
The conjunctive query in Example~\ref{expl:fin-dual}, for instance, even has a \emph{finite} dual. 
There is an elegant characterization of the (unions of) conjunctive queries that have a finite dual.
To state it, we need some basic terminology from database theory. 

\begin{definition}
The \emph{incidence graph} of a relational $\tau$-structure $\bA$ is the bipartite undirected multigraph whose first color class is $A$, and whose second color class consists of expressions of the form $R(b)$ where $R \in \tau$ has arity $k$, $b \in A^k$, 
and $\bA \models R(b)$.
An edge $e_{a,i,R(b)}$ joins $a \in A$ with $R(b)$ 
if $b_i = a$. 
A structure is called \emph{incidence-acyclic} (also known as Berge-acyclic) if its incidence graph is acyclic, i.e., it contains no cycles (if two vertices are linked by two different edges, then they establish a cycle).
A structure is called a \emph{tree} if it is incidence-acyclic and connected in the sense defined in Section~\ref{sect:conn}.
\end{definition}

Note that every relational $\tau$-structure that is homomorphically equivalent to a tree is incidence-acyclic, but the reverse implication does not hold. However, the property of being homomorphically equivalent to a tree is weaker than being a tree. 
What follows is due to Ne\v{s}et\v{r}il and Tardif~\cite{NesetrilTardif};  see also~\cite{Foniok-Thesis,LLT}. 

\begin{theorem}\label{thm:NT}
A conjunctive query $\mu$ has a finite dual if and only if 
the canonical database of 
$\mu$ is homomorphically equivalent to a tree.
A union of conjunctive queries has a finite dual if and only if the canonical database for each of the conjunctive queries is homomorphically equivalent to a tree. 
\end{theorem}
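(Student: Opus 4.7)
The plan is to reduce both directions to the classical Ne\v{s}et\v{r}il--Tardif characterization of finite homomorphism dualities, and to handle unions through a product construction. By the Chandra--Merlin characterization, for a conjunctive query $\nu$ with canonical database $\bM$ one has $\bA \models \nu$ iff $\bM \to \bA$; hence a finite $\tau$-structure $\bD$ is a dual of $\nu$ iff for every finite $\bA$ we have $\bA \to \bD$ iff $\bM \not\to \bA$. In particular, this property depends only on the homomorphism class of $\bM$.

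For the single-query case, the \emph{if} direction reduces, by the hom-invariance above, to constructing a finite dual for any tree $\bT$. I would carry out the inductive Ne\v{s}et\v{r}il--Tardif construction: root $\bT$ at some vertex and process subtrees bottom-up, taking as elements of $\bD$ the types that enumerate how subtrees can fail to map into an image, with relations extracted from the incidence structure of $\bT$. Incidence acyclicity is precisely what makes these local types combine consistently, so that a homomorphism $\bA \to \bD$ exists iff $\bT \not\to \bA$. For the \emph{only if} direction, assume $\bD$ is a finite dual of $\bM$ and pass without loss of generality to the core of $\bM$. The identity map gives $\bD \to \bD$, which by duality forces $\bM \not\to \bD$. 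The task is then to show that the incidence graph of the core is acyclic and connected. The main obstacle, and the technical heart of the proof, is the Ne\v{s}et\v{r}il--Tardif density argument: if the core contained an incidence cycle, an iterated indicator/shift construction would produce an infinite family of structures $\bA_n$ with $\bM \to \bA_n$ (exploiting the cycle) and $\bA_n \to \bD$ (sitting below the dual), yielding the contradiction $\bM \to \bD$. Connectedness is handled by a similar argument using disjoint unions.

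For the union case, the \emph{if} direction follows from a product construction: given finite duals $\bD_1,\dots,\bD_k$ of $\mu_1,\dots,\mu_k$, the direct product $\bD := \bD_1 \times \cdots \times \bD_k$ is a finite dual of $\mu_1 \vee \cdots \vee \mu_k$, because $\bA \to \bD$ iff $\bA \to \bD_i$ for every $i$ iff no $\bM_i \to \bA$ iff $\bA \not\models \mu$. For the \emph{only if} direction, given a joint dual $\bD$, we again have $\bM_i \not\to \bD$ for every $i$ (via the identity on $\bD$). Applying the same density/cycle argument from the single-query case to each $\bM_i$ individually then shows that every $\bM_i$ has a tree core: any incidence cycle in a core would yield structures $\bA_n$ with $\bM_i \to \bA_n$ and $\bA_n \to \bD$, and hence $\bM_i \to \bD$, a contradiction. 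This extension to finite sets of obstructions is the content of the Larose--Loten--Tardif generalization.
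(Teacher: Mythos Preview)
The paper does not give its own proof of this theorem; it is quoted with attribution to Ne\v{s}et\v{r}il and Tardif (and to Foniok and to Larose--Loten--Tardif for the extension to finite families of obstructions) and then used as a black box. So there is nothing in the paper to compare your proposal against: what you have written is essentially a sketch of the proofs in those references, and in particular the product construction for the union case and the Chandra--Merlin reduction to homomorphism duality are the standard moves.

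One correction to your sketch of the \emph{only if} direction: you cannot hope to construct structures $\bA_n$ with both $\bM\to\bA_n$ and $\bA_n\to\bD$, because under the duality hypothesis these two conditions are each other's negation; composing them to get $\bM\to\bD$ is therefore not the real mechanism. The density argument runs the other way around: assuming the core of $\bM$ is not a tree, one produces (via the indicator/shift or sparse-incomparability machinery) a finite structure $\bC$ with $\bM\not\to\bC$ and $\bC\not\to\bD$, i.e., a direct witness that $\bD$ fails the defining equivalence of a dual. Equivalently, one shows that the only gaps in the homomorphism order of finite core structures have a tree on the lower side. Apart from this mis-statement, your outline matches the literature.
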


The theorem shows that in particular the query $\mu$ from
Example~\ref{expl:simple-inf} does not have a finite dual, since the query given there is not incidence-acyclic.
%and hence cannot be homomorphically equivalent to a tree. 
To construct valued structures from duals, we introduce the following notation.

\begin{definition}\label{def:valued-dual}
Let $\bB$ be a $\tau$-structure and $\sigma \subseteq \tau$. Define $\Gamma(\bB, \sigma)$ to be the valued $\tau$-structure on the same domain as $\bB$ such that
\begin{itemize}
    \item for each
    $R \in \tau \setminus \sigma$, $R^{\Gamma(\bB, \sigma)}(a) := 0$ if $a \in R^{\bB}$ and $R^{\Gamma(\bB, \sigma)}(a) := 1$ otherwise, and
    \item for each $R \in \sigma$, $R^{\Gamma(\bB, \sigma)}(a) := 0$ if $a \in R^{\bB}$ and $R^{\Gamma(\bB, \sigma)}(a) := \infty$ otherwise.
\end{itemize}
\end{definition}
Note that $\Aut(\bB) = \Aut(\Gamma(\bB, \sigma))$ for any $\tau$-structure $\bB$ and any $\sigma$. In the following result we use the correspondence between resilience problems for incidence-acyclic conjunctive queries and 
 valued CSPs introduced in the present article. 
 The result then follows from the P versus NP-complete dichotomy theorem
for valued CSPs over finite domains
stated in Theorem~\ref{thm:fin-vcsp-tract}. 

\begin{theorem}\label{thm:fin-dual} 
Let $\mu$ be
a union of incidence-acyclic conjunctive queries
with relational signature $\tau$ and let $\sigma \subseteq \tau$. Then the resilience problem for $\mu$ with exogenous relations from $\sigma$ is in P or NP-complete. Moreover, it is decidable whether the resilience problem for given $\mu$ and $\sigma$ 
is in P. 
If $\mu$ is
a union of queries each of which is
homomorphically equivalent to a tree
and $\bB$ is a finite dual of $\mu$ (which exists by Theorem~\ref{thm:NT}), then $\VCSP(\Gamma(\bB,\sigma))$ is polynomial-time equivalent to the resilience problem for $\mu$ with exogenous relations from $\sigma$.
\end{theorem}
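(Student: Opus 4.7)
My plan is first to reduce to the case that $\mu$ is a union of connected CQs using Corollary~\ref{cor:con}: every incidence-acyclic CQ is a disjoint union of trees, and we may pull each connected component out of the union by the standard argument, so that we end up in the setting of the ``moreover'' clause, namely a union of queries each homomorphically equivalent to a tree. Theorem~\ref{thm:NT} then furnishes a finite dual $\bB$. Granted the polynomial-time equivalence between the resilience problem and $\VCSP(\Gamma(\bB,\sigma))$ (established below), the dichotomy follows from Theorem~\ref{thm:fin-vcsp-tract} applied to the finite-domain valued structure $\Gamma(\bB,\sigma)$, once we observe that resilience is in NP (guess which endogenous tuples, counted with multiplicity, to remove and verify in polynomial time that the resulting database has no homomorphic image of the canonical database of any disjunct of $\mu$). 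Decidability of the tractability condition reduces to the decidability stated after Theorem~\ref{thm:fin-vcsp-tract}, together with the fact that the finite dual $\bB$ of a union of tree queries can be computed effectively from $\mu$.

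The heart of the proof is the correspondence between assignments in $\VCSP(\Gamma(\bB,\sigma))$ and choices of tuples to remove in the resilience problem. Given a bag database $\bA$ with threshold $u$, form the $\tau$-expression $\phi_\bA$ whose variables are the elements of $A$ and which contains, for every tuple $\bar a \in R^\bA$ with multiplicity $m$, exactly $m$ copies of the atom $R(\bar a)$. For any $h\colon A \to B$, the value $\phi_\bA^{\Gamma(\bB,\sigma)}(h)$ is $\infty$ if and only if some exogenous tuple of $\bA$ is not sent by $h$ into the corresponding relation of $\bB$ (since $R^{\Gamma(\bB,\sigma)}$ takes value $\infty$ on such tuples when $R \in \sigma$), and otherwise equals the total multiplicity of endogenous tuples $\bar a$ with $h(\bar a) \notin R^\bB$. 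Such an $h$ witnesses that removing precisely these endogenous tuples produces a database $\bA'$ with a homomorphism to $\bB$, hence $\bA' \not\models \mu$ by the defining property of the dual; conversely, any minimum set of endogenous tuples whose removal falsifies $\mu$ yields a homomorphism to $\bB$ of the same cost. Thus the resilience of $\bA$ equals the cost of $\phi_\bA$ with respect to $\Gamma(\bB,\sigma)$, yielding the reduction from resilience to VCSP with the same threshold $u$.

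The reverse reduction is dual: given an instance $(\phi,u)$ of $\VCSP(\Gamma(\bB,\sigma))$, build a bag database $\bA_\phi$ whose domain is the variable set of $\phi$ and in which the multiplicity of a tuple $\bar x$ in $R$ equals the number of occurrences of the atom $R(\bar x)$ in $\phi$; the same analysis shows that the resilience of $\bA_\phi$ equals the cost of $\phi$. Both reductions are polynomial-time under the natural unary encoding of multiplicities shared by the two problems.

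\textbf{Main obstacle.} I expect the most delicate step to be verifying the equivalence carefully at the boundary: ensuring that the $\infty$-costs on exogenous relations exactly model the ``cannot remove'' constraint, that $0/1$-costs on endogenous relations correctly count multiplicities of removed tuples, and that the dual property (originally about set databases) carries over to bag databases — which it does, because satisfaction of a UCQ depends only on the set-support of a bag database while the count of removed tuples is intrinsically multi-set. The remaining ingredients (the dichotomy, NP upper bound, and decidability) then assemble without further difficulty from Theorems~\ref{thm:NT} and~\ref{thm:fin-vcsp-tract}.
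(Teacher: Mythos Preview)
Your proposal is correct and follows essentially the same approach as the paper's proof: reduce to connected (tree) queries via Corollary~\ref{cor:con}, establish the polynomial-time equivalence between resilience and $\VCSP(\Gamma(\bB,\sigma))$ by the mutually inverse translations between bag databases and $\tau$-expressions, and then invoke Theorem~\ref{thm:fin-vcsp-tract} for the dichotomy and the decidability remark following it (together with effective computability of the Ne\v{s}et\v{r}il--Tardif dual) for the decidability claim. Your treatment is in fact slightly more explicit than the paper's in two places---you spell out the NP upper bound and the role of $\infty$-costs for exogenous relations---but the underlying argument is the same.
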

\begin{proof}
By virtue of Corollary~\ref{cor:con}, we may assume for the P versus NP-complete dichotomy that each of the
conjunctive queries in $\mu$ is connected and thus a tree. The same
is true also for the polynomial-time equivalence to a VCSP  since replacing a conjunctive query in a union with a homomorphically equivalent one does not affect the complexity of resilience. 
Define $\Gamma := \Gamma(\bB, \sigma)$.
We show that $\VCSP(\Gamma)$ is polynomial-time equivalent to the resilience problem for $\mu$ with exogenous relations from $\sigma$. 

Given a finite-domain bag database $\bA$ with signature $\tau$ and
with exogenous tuples from relations in $\sigma$,
let $\phi$ be the $\tau$-expression which contains for every $R\in \tau$ and for every tuple $a \in R^{\bA}$ the summand $R(a)$ with the same number of occurrences as is the multiplicity of $a$ in $R^{\bA}$.
Conversely, for every $\tau$-expression $\phi$ 
we can create a bag database $\bA$ with signature $\tau$ and exogenous relations from $\sigma$. The domain of $\bA$ is the set of variables of $\phi$ and for every $R \in \tau$ and 
$a \in R^{\bA}$ with multiplicity equal to the number of occurrences of the summand $R(a)$ in $\phi$. 
In both situations, the resilience of $\bA$ with respect to $\mu$ equals the cost of $\phi$ with respect to $\Gamma$.
This shows the final statement of the theorem. The first statement now follows from
Theorem~\ref{thm:fin-vcsp-tract}. 

Concerning the decidability of the tractability condition, first note that the finite dual of $\mu$, and hence also $\Gamma$, can be effectively computed from $\mu$ (e.g., the construction of the dual in~\cite{NesetrilTardif} is effective).
The existence of a fractional cyclic polymorphism for a given valued structure $\Gamma$ with finite domain and finite signature can be decided (in exponential time in the size of $\Gamma$; see~\cite{Kolmogorov-Meta}).
\end{proof}

Theorem~\ref{thm:fin-dual} can be combined with the tractability results for VCSPs from Section~\ref{sect:tract} that use fractional polymorphisms to obtain tractability results for concrete resilience problems. To illustrate fractional polymorphisms and how to find them, we revisit a known tractable resilience problem from~\cite{Meliou2010,Resilience-Arxive,Resilience,NewResilience}
and show that the corresponding valued structure has a fractional canonical pseudo cyclic polymorphism. 

\begin{expl}\label{expl:fin-dual-rev}
We  revisit Example~\ref{expl:fin-dual}. Consider again
 the conjunctive query
$$ \exists x,y,z (R(x,y) \wedge S(y,z)).$$
There is a finite dual $\bB$ of $\mu$ with domain $\{0,1\}$ which is finitely bounded homogeneous, as described in Example~\ref{expl:fin-dual}. That example also describes
a valued structure $\Gamma$ which is actually  $\Gamma(\bB,\emptyset)$.
Let $\omega$ be the fractional cyclic operation given by $\omega(\min) = \omega(\max) = \frac{1}{2}$.
The fractional operation $\omega$ improves both valued relations $R$ and $S$ (they are \emph{submodular}; see, e.g.,~\cite{KrokhinZivny17}) and hence is a cyclic fractional polymorphism of $\Gamma$.
Combining 
Theorems~\ref{thm:fin-vcsp-tract} and \ref{thm:fin-dual}, Example~\ref{expl:fin-dual-rev} reproves the results from \cite{Resilience} (without multiplicities) and \cite{LatestResilience} (with multiplicities) that the resilience problem for this query is in P.
\end{expl}
In the following example we generalize the approach from Example~\ref{expl:fin-dual-rev}; tractability for this class of resilience problems in set semantics was proved in~\cite{Resilience} and in bag semantics in~\cite{LatestResilience}.

\begin{expl} \label{expl:path}
Consider for $n \geq 1$ the conjunctive query 
 \[\mu := \exists x_0,\dots,x_n (R_1(x_0,x_1) \wedge \cdots \wedge R_n(x_{n-1},x_n)).\]
 Let $\tau=\{R_1, \dots, R_n\}$ be the signature of $\mu$.
 We describe a finite dual $\bB$ of $\mu$. The domain $B$ consists of $2^{n-1}$ elements $v_S$, which we index by a subset $S$ of $\{1, \dots, n-1\}$. 
Then the dual satisfies $R_i(v_S,v_T)$ for $S,T \subseteq \{1, \dots, n-1\}$ and $i \in \{1,\dots,n\}$ if and only if 
\begin{itemize}
    \item $i < n$ and $i \notin T$, or
    \item $i > 1$ and $(i-1) \in S$. 
\end{itemize}

To see that $\bB$ is indeed a dual of $\mu$, let $\bA$ be a finite relational $\tau$-structure that does not satisfy $\mu$, and let $a \in A$. Let $S_a$ be the set of all elements
$i \in \{1, \dots, n-1\}$ such that there exist elements $a_{i+1},\dots,a_n \in A$ such that 
$$\bA \models R_{i+1}(a,a_{i+1}) \wedge R_{i+2}(a_{i+1},a_{i+2}) \wedge \cdots \wedge R_n(a_{n-1},a_n).$$

We claim that $a \mapsto v_{S_a}$, $a \in A$, defines a homomorphism $h$ from $\bA$ to $\bB$.

Suppose that $a,b \in A$ are such that $R_j(h(a),h(b))$ does not hold in $\bB$ for some $j \in \{1, \dots, n\}$. 
%If $h(a) = v_S$ and $h(b)=v_T$, then this means that 
This implies that
\begin{itemize}
    \item $j = n$ and $(n-1) \notin S_a$, or
    \item $j = 1$ and $1 \in S_b$, or
    \item $j\in \{2, \dots, n-1\}$, $(j-1) \notin S_a$, and $j \in S_b$. 
\end{itemize}
In all three cases,
the definition of $S_a$ and $S_b$ and $\bA \not\models \mu$ imply that $R_j(a,b)$ does not hold in $\bA$.
This shows that $h$ is a homomorphism.

Now suppose that $\bA$ has a homomorphism to $\bB$. We have to show that $\bA$ does not satisfy $\mu$. It suffices to show that $\bB$ does not satisfy $\mu$. Suppose for contradiction that there are elements $v_{S_0},\dots,v_{S_n}$ such that $\bB$ satisfies
$R_1(v_{S_0},v_{S_1}) \wedge \cdots \wedge R_n(v_{S_{n-1}},v_{S_n})$. Then $R_n(v_{S_{n-1}},v_{S_n})$ implies that  
$(n-1) \in S_{n-1}$. 
By induction, we obtain from $R_i(v_{S_{i-1}},v_{S_i})$ that $(i-1) \in S_{i-1}$ for every $i \in \{2, \dots, n\}$. In particular, $1 \in S_1$.
However, note that $R_1(v_{S_0},v_{S_1})$ by definition implies that $1 \notin S_1$.
We reached a contradiction, hence, $\bB$ does not satisfy $\mu$. 

Let $f$ be the symmetric binary operation that maps $(v_S,v_T)$ to $v_{S \cup T}$, and let $g$ be the symmetric binary operation that maps
$(v_S,v_T)$ to $v_{S \cap T}$. 
Define the binary symmetric fractional operation $\omega$ by setting 
$\omega(f)=\omega(g)=\frac{1}{2}$.
We claim that $\omega$ is a fractional polymorphism of $\Gamma(\bB, \emptyset)$. 

Let $i \in \{1,\dots,n\}$. 
We have to show that for all $v_S,v_T,v_P,v_Q \in B$, it holds in $\Gamma$ that
\begin{align} R_i(v_S,v_T)+R_i(v_P,v_Q) & \geq R_i(f(v_S,v_P),f(v_T,v_Q)) + R_i(g(v_S,v_P),g(v_T,v_Q)) \label{eq:path}  \\
& = R_i(v_{S \cup P},v_{T \cup Q}) + R_i(v_{S \cap P},v_{T \cap Q}) \nonumber. 
\end{align}
First suppose that $1<i<n$.
If the left-hand side in \eqref{eq:path} is equal to $2$, then the disequality holds trivially.
Whenever the left-hand side in \eqref{eq:path} is equal to $1$, we must have $(i-1) \in S$, $(i-1) \in P$, $i \notin T$, or $i \notin Q$. Hence $(i-1) \in S \cup P$  or $i \notin T \cap Q$. In the first case, we have $R_i(v_{S \cup P},v_{T \cup Q}) = 0$, and in the second case we have $R_i(v_{S \cap P},v_{T \cap Q}) = 0$, and in both cases \eqref{eq:path} holds.
Finally, if the left-hand side in \eqref{eq:path} is $0$, then $R_i(v_S, v_T)=R_i(v_P, v_Q)=0$. Thus we have $(i-1) \in S$ or $i \notin T$ and at the same time $(i-1) \in P$ or $i \notin Q$. Therefore, we have $(i-1) \in S \cap P$, $i \notin T \cup Q$, or both $(i-1) \in S \cup P$ and $i \notin T \cap Q$. It follows that $R_i(v_{S \cup P},v_{T \cup Q})=R_i(v_{S \cap P},v_{T \cap Q})=0$.
The case that $ i = 1$ and the case that $i = n$ can be treated similarly.
Therefore, $\omega$ is a cyclic fractional polymorphism of $\Gamma(\bB, \emptyset)$, which by Theorem~\ref{thm:fin-dual} and Theorem~\ref{thm:fin-vcsp-tract} implies that the resilience problem for $\mu$ is in P.
\end{expl}

\subsection{Regular Path Queries}
\label{sect:rpqs}

We introduce two-way regular path queries
and show that they have finite duals. A \emph{two-way regular path query (2RPQ)} over a finite relational signature $\tau$ is a
regular expression over the alphabet $\Sigma_{\tau}$ that consists of
the
symbols $R$ and $R^-$ for every binary relation symbol
$R \in \tau$.  We assume that regular expressions are built up
from alphabet symbols, $\emptyset$, $\varepsilon$, $+$ (union), $;$
(concatenation), and $\cdot^*$ (Kleene star). As usual, we may omit
the concatenation symbol for brevity.

Every 2RPQ $\mu$ defines a regular language $L(\mu)$ over the alphabet~$\Sigma_{\tau}$. It also defines a database query of arity~2, as
follows.  A \emph{path} in a $\tau$-database
~$\mathfrak{A}$ is a sequence
$a_1P_1a_2P_2 \cdots P_{n-1}a_n$ with $a_1,\dots,a_n \in A$ and
$P_1,\dots,P_{n-1} \in \Sigma_{\tau}$ such that for
$1 \leq i < n$, one of the following holds: (i)~$P_i \in \tau$ and
$\mathfrak{A} \models P_i(a_i,a_{i+1})$ or~(ii)~$P_i=P^-$ with $P \in \tau$ and $\mathfrak{A} \models P(a_{i+1},a_i)$.
Note that we do not require $a_1,\dots,a_n$ to be distinct.
We call $a_1$ the \emph{source} of the path, $a_n$ the \emph{target},
and $P_1 \cdots P_{n-1}$ the \emph{label}.
A pair
$(a,b) \in A\times A$ is an \emph{answer} to a 2RPQ
$\mu$ if there is a path in $\mathfrak{A}$ that has source $a$, target $b$, and
some label $w \in L(\mu)$. 
We use $\mu(\mathfrak{A})$ to denote the set of all
answers to the 2RPQ $\mu$ on the database~$\mathfrak{A}$.

We consider a Boolean version of the resilience problem for 2RPQs.
For a 2RPQ $\mu$ over a signature $\tau$ and a set $\sigma \subseteq \tau$ of
exogenous relations, this problem is as follows: given a finite (bag) database $\bA$ and a $k \geq 1$, decide whether it suffices to remove at most $k$
tuples from $\bA$ to achieve that $\mu(\bA)=\emptyset$. It is folklore that given a
2RPQ $\mu$ and a database $\bA$, one can decide in polynomial time whether $\mu(\bA)=\emptyset$, e.g.\ by forming the product of $\bA$ and a finite two-way automaton
$A$ with $L(A)=L(\mu)$ and then checking whether some element with a final state in the
second component is reachable from any element with the initial state in the second
component. Consequently, for any 2RPQ $\mu$ with exogenous relations from any set $\sigma$ the resilience
problem is in NP by
a straightforward guess-and-check algorithm.

Duals for 2RPQs are defined in exactly the same way as for conjunctive queries, that is, if $\mu$ is a 2RPQ with signature~$\tau$, then a
    \emph{dual} of $\mu$ is a $\tau$-structure $\bB$ such
     that any finite $\tau$-structure $\bA$ has a homomorphism to $\bB$ if
     and only if $\bA \not \models \mu$. We now argue that every 2RPQ has a finite dual.
Note that the construction from~\cite{CalvaneseDegiocomoLenzeriniVardi}
is related, but does not produce duals in our
sense.
\begin{lemma}
  \label{lem:finitedual-2RPQ}
  Every  2RPQ has a finite dual.
\end{lemma}
\begin{proof}
Theorem 4.6 in \cite{OBDA} states that every simple Boolean monadic Datalog program has a finite dual.
It thus suffices to show that every 2RPQ can
 be expressed as such a program.

A Boolean monadic Datalog (MDLog)
program over a finite relational signature $\tau$ is a finite collection of rules of the forms
$$
\begin{array}{rcl}
R_1(\bar{x}_1) \wedge \cdots \wedge R_n(\bar{x}_n)  &\rightarrow& P(x)\\[1mm]
 \textsf{true}(x) &\rightarrow& P(x)\\[1mm]
R_1(\bar{x}_1) \wedge \cdots \wedge
R_n(\bar{x}_n)  &\rightarrow& \textsf{goal}()
\end{array}
$$
where each $R_i$ is a relation symbol from $\tau$ or a unary relation symbol that is not in $\tau$,  $P$ is a unary relation symbol that is not in $\tau$, and \textsf{goal} is a designated nullary relation symbol that is also not in $\tau$. Informally,
an MDLog program $\Pi$ is true on a database 
$\mathfrak{A}$ if, starting from $\mathfrak{A}$,
we can derive $\textsf{goal}()$ by applying the rules in $\Pi$. For details, see \cite{DBLP:books/aw/AbiteboulHV95}.

An MDLog program $\Pi$ is \emph{simple} if every rule body (which is a conjunctive query) is connected and
contains at most one atom that uses a relation symbol from $\tau$, and in that atom there are no
repeated occurrences of variables. 
  
Now let $\mu$ be a 2RPQ over a relational signature $\tau$.  For every subexpression $\nu$ of
$\mu$, we define an associated simple MDLog program $\Pi_\nu$ inductively as
follows:
  \begin{itemize}
   \item if $\nu=R$, then $\Pi_\nu$ contains the rule $S_\nu(x) \wedge
     R(x,y) \rightarrow E_\nu(y)$;

   \item if $\nu=R^-$, then $\Pi_\nu$ contains the rule $S_\nu(y) \wedge
     R(x,y) \rightarrow E_\nu(x)$;

  \item if $\nu=\nu_1;\nu_2$, then $\Pi_\nu$ contains the rules
    $S_\nu(x) \rightarrow S_{\nu_1}(x)$,
    $E_{\nu_1}(x) \rightarrow S_{\nu_2}(x)$, and $E_{\nu_2}(x) \rightarrow
    E_\nu(x)$;

  \item if $\nu=\nu_1 + \nu_2$, then $\Pi_\nu$ contains the rules
    $S_\nu(x) \rightarrow S_{\nu_i}(x)$ and
    $E_{\nu_i}(x) \rightarrow E_\nu(x)$ for all $i \in \{1,2\}$;

  \item if $\nu=\nu_1^*$, then $\pi_\nu$ contains the rules
    $S_\nu(x) \rightarrow E_\nu(x)$, $S_\nu(x) \rightarrow S_{\nu_1}(x)$,
    and $E_{\nu_1}(x) \rightarrow S_\nu(x)$.
    
  \end{itemize}
  We obtain from $\Pi_\mu$ a simple MDLog program that is equivalent to $\mu$ by adding the rules $\textsf{true}(x) \rightarrow S_\mu(x)$
  and $E_\mu(x) \rightarrow \textsf{goal}()$.
\end{proof}

If $\mu$ is
a 2RPQ and $\bB$ is the finite dual of $\mu$, then $\VCSP(\Gamma(\bB,\sigma))$ is polynomial-time equivalent to the Boolean resilience problem for $\mu$ with exogenous relations from $\sigma$. This can be proved in analogy
with the proof of Theorem~\ref{thm:fin-dual}. 
The following result thus follows from the P versus NP-complete dichotomy theorem
for valued VCSPs over finite domains
stated in Theorem~\ref{thm:fin-vcsp-tract}. 
\begin{theorem}
\label{thm:rpqbool}
Let $\mu$ be a 2RPQ over a relational signature $\tau$ and let $\sigma \subseteq \tau$. Then the resilience problem for $\mu$ with exogenous relations from $\sigma$ is in P or NP-complete. Moreover, it is decidable whether the resilience problem for a given 2RPQ $\mu$ and set of exogenous relations $\sigma$
is in P.
\end{theorem}

\begin{example}
    Consider the 2RPQ $\mu$ over the signature $\{R\}$
    given by $R(R R^{-})^* RR$. It is well-known (see Propositions 1.6 and 1.13 in~\cite{HNBook}) that the relational $\{R\}$-structure $\bB$  with domain 
    $\{0,1,2\}$ and the relation
    $R^{\bB} := \{(0,1),(1,2)\}$ is a dual of $\mu$. Let $\Gamma := \Gamma(\bB; \emptyset)$. Note that the crisp unary relation $\{0,1\}$ is pp-expressible in $\Gamma$ by
    $$\phi(x) := \Opt (\inf_{y} R(x,y)).$$
    Let $R'(x,y) := R(x,y) + \phi(x) + \phi(y)$ and note that $R' \in \langle \Gamma \rangle$. It is easy to see that the valued structure $(\{0,1,2\}; R')$ is fractionally homomorphically equivalent to the valued structure $\Gamma_{<}$ from Example~\ref{expl:vs-mc}. In other words, $\Gamma$ pp-constructs $\Gamma_{<}$. Since $\VCSP(\Gamma_{<})$ is NP-complete (Example~\ref{expl:mc-hard}), $\VCSP(\Gamma)$ is NP-complete by Lemma~\ref{cor:pp-constr-red} (every finite-domain VCSP is in NP). Therefore, the resilience problem for the 2RPQ $\mu$ is NP-complete as well.
\end{example}

\begin{example} \label{expl:rpq-tract}
    Consider the 2RPQ $\mu$ over the signature $\tau=\{R,S,T\}$
    given by $RS^*T$.
    By Lemma~\ref{lem:finitedual-2RPQ},
    it has a finite dual. Consider the relational structure $\bB$ with  domain $\{0,1\}$ and relations
    \begin{itemize}
        \item $R^{\bB} = \{(0,0),(1,0)\}$, 
       \item $S^{\bB} = \{(0,0),(1,0),(1,1)\}$, 
       \item $T^{\bB} = \{(1,0),(1,1)\}$
    \end{itemize}
    (see Figure~\ref{fig:rpq}).
    We argue that $\bB$ is a dual of $\mu$. Clearly, $\bB \not\models \mu$. Let $\bA$ be a finite $\tau$-structure
    with the domain $A$ such that $\bA \not \models \mu$. Let $\bA'$ be the directed graph with the vertex set $A$ and the edge relation $E$ where $E = R^\bA \cup S^{\bA} \cup T^{\bA}$. We define $h \colon A \to \{0,1\}$ by setting $h(a)=0$ if there exists $b,c \in A$ such that $(b,c) \in R^{\bA}$ and there is a directed path from $c$ to $a$ in $\bA'$,
    and $h(a)=1$ otherwise. It is straightforward to verify that $h$ is a homomorphism from $\bA$ to $\bB$.

Let $\omega$ be a binary fractional operation on $\{0,1\}$ defined by $\omega(\min)=\omega(\max) = 1/2$, where $\min$ and $\max$ are the binary minimum and maximum operations on $\{0,1\}$. It is easy to check that $\omega \in \fPol(\Gamma(\bB, \emptyset))$.
    Therefore, $\omega$ is a cyclic fractional polymorphism of $\Gamma(\bB, \emptyset)$, which by Theorem~\ref{thm:fin-vcsp-tract} implies that $\VCSP(\Gamma(\bB, \emptyset))$ and therefore
    the resilience problem for $\mu$ is in P.
\end{example}

\begin{figure}
    \centering
    \includegraphics[]{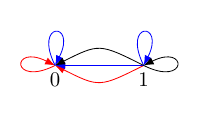}
    \caption{The dual structure $\bB$ from Example~\ref{expl:rpq-tract}.}
    \label{fig:rpq}
    \Description{Arrow depiction of the dual structure.}
\end{figure}

\subsection{Infinite Duals}
\label{sect:inf-dual}
Conjunctive queries might not have a finite dual (see Example~\ref{expl:simple-inf}), but unions of connected conjunctive queries always have a countably infinite dual. Cherlin, Shelah and Shi~\cite{CherlinShelahShi} showed that 
in this case we may even find a dual with an oligomorphic automorphism group (see Theorem~\ref{thm:css} below). This is the key insight to phrase resilience problems as VCSPs for valued structures with oligomorphic automorphism groups. The not necessarily connected case again 
reduces to the connected case by Corollary~\ref{cor:con}. 

In Theorem~\ref{thm:css} below we state a variant of a theorem of Cherlin, Shelah, and Shi~\cite{CherlinShelahShi} (also see~\cite{Hubicka-Nesetril-ForbiddenHomomorphisms,Hubicka-Nesetril,Book}). 
If $\bB$ is a structure, we write $\bB_{\text{pp}(m)}$ for the expansion of $\bB$ by all relations that can be defined with a connected primitive positive formula (see Remark~\ref{rem:db})
with at most $m$ variables, at least one free variable, and without equality. 
For a union of conjunctive queries $\mu$ over the signature $\tau$, we write $|\mu|$ for the maximum of the number of variables of each conjunctive query in $\mu$, the maximal arity of $\tau$, and 2. 

\begin{theorem}[{see, e.g.,~\cite[Proposition 4.3.8]{Book}}]
\label{thm:css}
For every union $\mu$ of connected conjunctive queries over a finite relational signature $\tau$ there exists a $\tau$-structure $\bB_{\mu}$ such that the following statements hold:
\begin{enumerate}
\item $(\bB_\mu)_{\text{pp}(|\mu|)}$ is homogeneous.
\item $\Age(\bB_{\text{pp}(|\mu|)})$ is the class of all substructures of structures of the form $\bA_{\text{pp}(|\mu|)}$ for a finite structure $\bA$ that satisfies $\neg \mu$.
\item A countable
$\tau$-structure $\bA$ satisfies $\neg \mu$ if and only if it embeds into $\bB_{\mu}$. 
\item $\bB_\mu$ is finitely bounded.
\item $\Aut(\bB_{\mu})$ is oligomorphic.  
\item $(\bB_\mu)_{\text{pp}(|\mu|)}$ is finitely bounded.
\end{enumerate}
\end{theorem}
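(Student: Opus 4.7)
The plan is to construct $\bB_{\mu}$ as the $\tau$-reduct of the Fraïssé limit of an amalgamation class $\mathcal{K}$ in the expanded signature of $(\bB_{\mu})_{\text{pp}(|\mu|)}$. Let $\mathrm{Forb}(\mu)$ denote the class of finite $\tau$-structures satisfying $\neg \mu$, and let $\mathcal{K}$ consist of all $\tau_{\text{pp}(|\mu|)}$-structures (up to isomorphism) that are substructures of $\bA_{\text{pp}(|\mu|)}$ for some $\bA \in \mathrm{Forb}(\mu)$. The goal is to verify that $\mathcal{K}$ is a Fraïssé class, apply Fraïssé's theorem to produce a countable homogeneous $\tau_{\text{pp}(|\mu|)}$-structure $\bC$, and set $\bB_{\mu}$ to be its $\tau$-reduct, so that $(\bB_{\mu})_{\text{pp}(|\mu|)} = \bC$ by construction.

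The crux is establishing the amalgamation property of $\mathcal{K}$. Naïve free amalgamation of $\tau$-reducts already fails: forbidding the path $\mu := \exists x,y,z \, (E(x,y) \wedge E(y,z))$, the free amalgam of two single edges sharing an endpoint produces a forbidden path, even though each factor lies in $\mathrm{Forb}(\mu)$. The role of expanding to $\tau_{\text{pp}(|\mu|)}$ is precisely to track, via the added relations, all potentially problematic configurations of size up to $|\mu|$ across the amalgamation seam. Given structures $\bA_0, \bA_1, \bA_2 \in \mathcal{K}$ and embeddings $\bA_0 \hookrightarrow \bA_i$, the plan is to build an amalgam by a Hubi\v{c}ka–Ne\v{s}et\v{r}il-style completion procedure: start with the disjoint union of $\bA_1, \bA_2$ identified along $\bA_0$, and then iteratively repair each potential homomorphic image of a disjunct's canonical database by either identifying elements forced by the pp-relations recorded in $\bA_1$ or $\bA_2$, or showing no such image exists. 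The variable bound $|\mu|$ ensures that every such potential witness is visible at the level of the pp-expansion, and the connectedness of each disjunct of $\mu$ ensures that repaired witnesses do not proliferate uncontrollably. The existence of such an amalgamation procedure is essentially the content of the Cherlin–Shelah–Shi theorem together with the Hubi\v{c}ka–Ne\v{s}et\v{r}il strengthening to the pp-expanded signature.

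With amalgamation in hand, items (1) and (2) follow directly from Fraïssé's theorem: $\bC := (\bB_{\mu})_{\text{pp}(|\mu|)}$ is the homogeneous Fraïssé limit of $\mathcal{K}$, so $\Age(\bC) = \mathcal{K}$. For item (3), the ``only if'' direction uses a standard back-and-forth argument: every finite substructure of $\bA_{\text{pp}(|\mu|)}$ lies in $\mathcal{K} = \Age(\bC)$, so homogeneity of $\bC$ lets us embed $\bA_{\text{pp}(|\mu|)}$ into $\bC$, giving an embedding $\bA \hookrightarrow \bB_{\mu}$. The ``if'' direction follows because $\bB_{\mu} \not\models \mu$ (otherwise some finite substructure of $\bB_{\mu}$ would contain a witness, contradicting $\Age(\bB_{\mu}) \subseteq \mathrm{Forb}(\mu)$), and existential sentences transfer downward from $\bB_{\mu}$ to any substructure. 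Item (4) follows from the finite universal axiomatisation of $\mathrm{Forb}(\mu)$ that forbids the canonical databases of the (finitely many) disjuncts of $\mu$. Item (5) follows from (1) because every countable homogeneous structure in a finite relational signature is $\omega$-categorical and therefore has an oligomorphic automorphism group; moreover $\Aut(\bB_{\mu}) = \Aut((\bB_{\mu})_{\text{pp}(|\mu|)})$ since pp-definable relations are automatically preserved by automorphisms of $\bB_{\mu}$. Item (6) combines item (4) with universal axioms recording the $\tau$-definitions of each of the finitely many pp-relation symbols of arity at most $|\mu|$, restricted to configurations of at most $|\mu|$ elements, together with the forbidden patterns from (4).

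The main obstacle is the amalgamation step. The difficulty is that simply expanding to the pp-signature does not automatically make free amalgamation work; one must genuinely design the amalgamation procedure so that no new homomorphic image of any disjunct's canonical database is introduced across the amalgamation interface, while preserving the pp-relations on each factor. This requires exploiting both the variable bound $|\mu|$ (so that all relevant configurations are encoded) and the connectedness of the disjuncts (so that the completion procedure terminates with a well-defined structure), and is the technical heart of the Cherlin–Shelah–Shi and Hubi\v{c}ka–Ne\v{s}et\v{r}il constructions. Once the amalgamation is set up correctly, all six items follow by the standard model-theoretic machinery outlined above.
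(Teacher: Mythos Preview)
Your overall strategy matches the paper's: invoke the Hubi\v{c}ka--Ne\v{s}et\v{r}il construction for the Cherlin--Shelah--Shi theorem to obtain $\bB_\mu$ with properties (1)--(3), and derive (4) and (5) as immediate consequences. The paper likewise cites these results and does not reprove amalgamation; your identification of it as the technical heart is correct.

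The gap is in item (6), which the paper explicitly singles out as the one property not in the literature and proves in detail. Your proposed axiomatisation---``combine (4) with universal axioms recording the $\tau$-definitions of the pp-relation symbols''---does not work. Structures in $\mathcal{K}$ are \emph{substructures} of pp-expansions $\bA_{\text{pp}(|\mu|)}$, so a tuple may satisfy a pp-relation $R_\phi$ without the existential witnesses for $\phi$ being present; conversely, the direction ``$R_\phi(\bar x) \to \phi(\bar x)$'' is not universal. Thus there is no straightforward universal sentence that pins down the relationship between the pp-symbols and their $\tau$-definitions. The paper's argument is different: let $\psi$ be the conjunction of $\neg\phi$ over all primitive positive $\sigma$-sentences $\phi$ whose $\tau$-unfolding $\phi'$ has at most $|\mu|$ variables and implies $\mu$. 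One direction is easy. For the other, given a finite $\sigma$-structure $\bA \models \psi$, unfold its canonical query to a $\tau$-structure $\bA'$, argue that $\bA' \models \neg\mu$ (else a minimal witness would contradict $\psi$), embed $\bA'_{\text{pp}(|\mu|)}$ into $(\bB_\mu)_{\text{pp}(|\mu|)}$ via property (2), and check that the restriction to $A$ is an embedding of $\bA$. This last step is where the work lies, and your sketch does not supply it.
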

\begin{proof}
The construction of a structure $\bB_{\mu}$ 
with the given properties 
follows from a proof of Hubi\v{c}ka and Ne\v{s}et\v{r}il~\cite{Hubicka-Nesetril,Hubicka-Nesetril-ForbiddenHomomorphisms} of the theorem of Cherlin, Shelah, and Shi~\cite{CherlinShelahShi}, and can be found in~\cite[Theorem 4.3.8]{Book}. 
Properties (1), (2) and property (3) restricted to finite structures $\bA$ are explicitly stated in~\cite[Theorem 4.3.8]{Book}. 
Property (3) restricted to finite structures clearly implies property (4). Property (5) holds because homogeneous structures with a finite relational signature have an oligomorphic automorphism group. 
Property~(3) for countable structures now follows from~\cite[Lemma 4.1.7]{Book}.

Since we are not aware of a reference for (6) in the literature, we present a proof here. Let $\sigma$ be the signature of $(\bB_\mu)_{\text{pp}(|\mu|)}$.
We claim that the following universal $\sigma$-sentence $\psi$ describes the structures in the age of
$(\bB_\mu)_{\text{pp}(|\mu|)}$. 
If $\phi$ is a $\sigma$-sentence, then $\phi'$ denotes the $\tau$-sentence obtained from $\phi$ by replacing every occurrence of $R(\bar x)$, for $R \in \sigma \setminus \tau$, 
by the primitive positive $\tau$-formula $\eta(\bar x)$ for which $R$ was introduced in $(\bB_\mu)_{\text{pp}(|\mu|)}$.
Then $\psi$
is a conjunction
of all $\sigma$-sentences $\neg \phi$ such that $\phi$ is primitive positive, 
$\phi'$ has at most $|\mu|$ variables, and $\phi'$ implies $\mu$. 
Clearly, there are finitely many conjuncts of this form.

Suppose that $\bA \in \Age(\bB_\mu)_{\text{pp}(|\mu|)}$.
Then $\bA$ satisfies each conjunct $\neg \phi$ of $\psi$, because otherwise $\bB_{\mu}$ satisfies $\phi'$, and thus satisfies $\mu$, contrary to our assumptions.

The interesting direction is that if a finite $\sigma$-structure $\bA$ satisfies $\psi$, then $\bA$ embeds into $(\bB_\mu)_{\text{pp}(|\mu|)}$. 
Let $\phi$ be the canonical query of $\bA$.
Let $\bA'$ be the canonical database of the $\tau$-formula $\phi'$. 
Suppose for contradiction that $\bA' \models \mu$. 
Let $\theta$ be a minimal subformula of $\phi'$ such that
the canonical database of $\theta$ models $\mu$. 
Then $\theta$ has at most $|\mu|$ variables and implies $\mu$, and hence $\neg \theta$ is a conjunct of of $\psi$ which is not satisfied by $\bA$, a contradiction to our assumptions.
Therefore, $\bA' \models \neg \mu$ and by Property (2), we have that $\bA'_{\text{pp}(|\mu|)}$
has an embedding $f$ into $(\bB_\mu)_{\text{pp}(|\mu|)}$.

We claim that the restriction of $f$ to the elements of $\bA$ is an embedding of $\bA$ into $(\bB_\mu)_{\text{pp}(|\mu|)}$. Clearly, if $\bA \models R(\bar x)$ for some relation $R$ that has been introduced for a primitive positive formula $\eta$, then $\bA'$ satisfies $\eta(\bar x)$, 
and hence $\bB_\mu \models \eta(f(\bar x))$, which in turn implies that
$(\bB_\mu)_{\text{pp}(|\mu|)} \models R(f(\bar x))$ as desired. 
Conversely, if $(\bB_\mu)_{\text{pp}(|\mu|)} \models R(f(\bar x))$, then
$\bA'_{\text{pp}(|\mu|)} \models R(\bar x)$, and hence $\bA' \models \eta(\bar x)$. 
This in turn implies that $\bA \models R(\bar x)$.
Since the restriction of $f$ and its inverse preserve the relations from $\tau$ trivially, we conclude that $\bA$ embeds into $(\bB_\mu)_{\text{pp}(|\mu|)}$.
\end{proof}

By Properties (1) and (6) of Theorem~\ref{thm:css}, $\bB_\mu$ is always a reduct of a finitely bounded homogeneous structure. For short, we write $\Gamma_\mu$ for $\Gamma(\bB_\mu, \emptyset)$ and $\Gamma_{\mu,\sigma}$ for $\Gamma(\bB_\mu, \sigma)$, see Definition~\ref{def:valued-dual}.
For some queries $\mu$, the structure $\bB_\mu$ can be replaced by a simpler structure $\bH_\mu$. This will be 
convenient for some examples that we consider later, because the structure $\bH_\mu$ is finitely bounded and homogeneous itself.
To define the respective class of queries, we need the following definition. The \emph{Gaifman graph} of a 
relational structure $\bA$ 
is the undirected graph with vertex set $A$ where $a,b \in A$ are adjacent if and only if $a \neq b$ and there exists a tuple in a relation of $\bA$ that contains both $a$ and $b$. The Gaifman graph of a conjunctive query is the Gaifman graph of the canonical database of that query.

\begin{theorem}\label{thm:freeAP}
    For every union $\mu$ of %connected
    conjunctive queries over a finite relational signature $\tau$ such that the Gaifman graph of each of the conjunctive queries in $\mu$  is complete, there exists a countable $\tau$-structure $\bH_\mu$ such that the following statements hold:
    \begin{enumerate}
        \item $\bH_\mu$ is homogeneous.
        \item $\Age(\bH_\mu)$ is the class of all finite structures $\bA$ that satisfy $\neg \mu$.
    \end{enumerate}
    Moreover, $\bH_\mu$ is finitely bounded, $\Aut(\bH_\mu)$ is oligomorphic, and a countable $\tau$-structure satisfies $\neg \mu$ if and only if it embeds into $\bH_\mu$.
\end{theorem}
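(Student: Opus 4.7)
The plan is to construct $\bC_\mu$ as the Fraïssé limit of the class
\[ \mathcal{C} := \{\bA \mid \bA \text{ is a finite } \tau\text{-structure with } \bA \models \neg\mu\}. \]
To apply Fraïssé's theorem, I would first verify that $\mathcal{C}$ is closed under isomorphism, contains the empty structure (each disjunct of $\mu$ has at least one variable, hence is not witnessed by the empty structure), and is essentially countable because $\tau$ is finite. The hereditary property is immediate because $\mu$ is existential positive: any witness of $\mu$ in a substructure $\bB$ of $\bA$ already witnesses $\mu$ in $\bA$. The joint embedding property will follow from amalgamation over the empty structure, so the key remaining step is the amalgamation property (AP). Once AP is verified, Fraïssé's theorem yields a countable homogeneous $\tau$-structure $\bC_\mu$ with $\Age(\bC_\mu) = \mathcal{C}$, which immediately gives (1) and (2).

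The main obstacle is AP, and this is where the completeness of the Gaifman graphs becomes crucial. Given $\bA_1, \bA_2 \in \mathcal{C}$ with common substructure $\bA_0$, I would form the \emph{free amalgam} $\bA$ with domain $A_1 \cup_{A_0} A_2$ and relations $R^\bA := R^{\bA_1} \cup R^{\bA_2}$, adding no tuples that connect $A_1 \setminus A_0$ to $A_2 \setminus A_0$. Suppose for contradiction that $\bA \models \nu$ for some disjunct $\nu$ of $\mu$, witnessed by a homomorphism $h$ from the canonical database of $\nu$ into $\bA$. Partition the variables of $\nu$ into $V_1, V_0, V_2$ according to whether $h(x)$ lies in $A_1 \setminus A_0$, in $A_0$, or in $A_2 \setminus A_0$. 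By construction every tuple of $\bA$ is contained entirely in $A_1$ or entirely in $A_2$, so no atom of $\nu$ can simultaneously contain a variable from $V_1$ and one from $V_2$. Since the Gaifman graph of $\nu$ is complete, every pair of its variables appears in a common atom, so $V_1 = \emptyset$ or $V_2 = \emptyset$. Without loss of generality $V_1 = \emptyset$; then $h$ maps into $A_2$, each atom maps to a tuple in $A_2^k \cap R^\bA \subseteq R^{\bA_2}$, and hence $\bA_2 \models \nu$, contradicting $\bA_2 \in \mathcal{C}$.

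With $\bC_\mu$ constructed, the remaining properties are routine. Finite boundedness is witnessed by the universal $\tau$-sentence $\neg \mu$ itself, since by (2) a finite $\tau$-structure lies in $\Age(\bC_\mu)$ iff it satisfies $\neg \mu$. Oligomorphicity of $\Aut(\bC_\mu)$ follows from the standard fact that every countable homogeneous structure in a finite relational signature is $\omega$-categorical (cf.\ Section~\ref{sect:oligo}). For the final universality statement, a countable $\bA$ satisfies $\neg \mu$ iff every finite substructure of $\bA$ does (since $\mu$ is existential), iff every such substructure lies in $\Age(\bC_\mu)$; a standard back-and-forth argument for Fraïssé limits, analogous to the one invoked via~\cite[Lemma 4.1.7]{Book} in the proof of Theorem~\ref{thm:css}, then produces an embedding of $\bA$ into $\bC_\mu$. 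The converse direction is immediate from $\bC_\mu \models \neg \mu$.
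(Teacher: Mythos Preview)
Your proposal is correct and follows essentially the same approach as the paper: verify that the class of finite models of $\neg\mu$ has the free amalgamation property using completeness of the Gaifman graphs, invoke Fra\"iss\'e's theorem, and then read off finite boundedness, oligomorphicity, and the embedding characterisation for countable structures. Your treatment is simply more detailed than the paper's (which compresses the amalgamation step to a single sentence and does not spell out the hereditary property or JEP).
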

\begin{proof}
    Let $\bA_1$ and $\bA_2$ be finite $\tau$-structures that satisfy $\neg \mu$ such that the substructure induced by $A_1 \cap A_2$ in $\bA_1$ and $\bA_2$ is the same. Since the Gaifman graph of each of the conjunctive queries in $\mu$ is complete, the union of the structures $\bA_1$ and $\bA_2$ satisfies $\neg \mu$ as well, because there are no relations between elements from $A_1 \setminus A_2$ and $A_2 \setminus A_1$ in the union. By Fra\"iss\'e's Theorem (see, e.g., \cite{Hodges})
    there is a countable homogeneous $\tau$-structure $\bH_\mu$ such that $\Age(\bH_\mu)$ is the class of all finite structures that satisfy $\neg \mu$; this shows that $\bH_\mu$ is finitely bounded. 
    Homogeneous structures with finite relational signature clearly have an oligomorphic automorphism group. 
    For the final statement, see~\cite[Lemma 4.1.7]{Book}.
\end{proof}

Note that $\bH_\mu$ is homomorphically equivalent to $\bB_\mu$ by \cite[Lemma 4.1.7]{Book}. Therefore, $\Gamma(\bH_\mu, \sigma)$ is homomorphically equivalent to $\Gamma_{\mu,\sigma}$ for any $\sigma \subseteq \tau$.

We now continue with general propositions that apply to all unions of conjunctive queries $\mu$. The following proposition follows straightforwardly from the definitions and provides a valued constraint satisfaction problem that is polynomial-time equivalent to the resilience problem for $\mu$, similar to Theorem~\ref{thm:fin-dual}. 

\begin{proposition}
\label{prop:connection}
The resilience problem for a 
union of connected conjunctive queries $\mu$
where the relations
from $\sigma \subseteq \tau$
are exogenous
is polynomial-time equivalent to 
$\VCSP(\Gamma(\bB,\sigma))$, for any dual $\bB$ of $\mu$; in particular, to  $\VCSP(\Gamma_{\mu,\sigma})$. 
\end{proposition}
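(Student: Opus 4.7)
The plan is to exhibit polynomial-time constructions turning bag databases into $\tau$-expressions and vice versa in such a way that the cost functional of $\Gamma(\bB,\sigma)$ coincides with the resilience measure. Given a bag database $\bA$, I would build the $\tau$-expression $\phi_{\bA}$ whose variables are the elements of $A$ and which contains the atom $R(\bar a)$ with multiplicity equal to that of $\bar a$ in $R^{\bA}$; the reverse translation $\phi \mapsto \bA_{\phi}$ is analogous. Both constructions are clearly polynomial time.

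Next I would unfold Definition~\ref{def:valued-dual}: for any $h \colon A \to B$, viewed as an assignment to the variables of $\phi_{\bA}$, linearity of the sum gives that $\phi_{\bA}^{\Gamma(\bB,\sigma)}(h)$ equals $\infty$ precisely when $h$ maps some exogenous tuple of $\bA$ outside the corresponding relation of $\bB$, and otherwise equals the total multiplicity of endogenous tuples that $h$ fails to preserve. Hence every finite-cost $h$ induces a homomorphism to $\bB$ from the sub-database $\bA \setminus S_h$ obtained by removing exactly the endogenous tuples that $h$ sends outside the corresponding relations of $\bB$, and $|S_h|$ equals the cost; conversely, any homomorphism $\bA \setminus S \to \bB$ whose extension to $A$ respects exogeneity yields an assignment of cost at most $|S|$.

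The main step then ties this to resilience via the dual property of $\bB$. Every dual satisfies $\bB \not\models \mu$, because every finite substructure of $\bB$ maps homomorphically to $\bB$ by inclusion and is therefore $\mu$-free, and $\mu$ being existential positive is already witnessed on finite substructures; consequently, by homomorphism-preservation of $\mu$, the existence of a homomorphism $\bA \setminus S \to \bB$ implies $\bA \setminus S \not\models \mu$, while the dual property supplies the converse direction for finite structures. Putting the pieces together, the resilience of $\bA$ with respect to $\mu$ under exogeneity $\sigma$ is at most $u$ iff $(\phi_{\bA}, u)$ is a yes-instance of $\VCSP(\Gamma(\bB,\sigma))$, and the symmetric construction via $\bA_{\phi}$ reduces the VCSP to the resilience problem.

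For the ``in particular'' clause, it suffices to verify that the structure $\bB_{\mu}$ from Theorem~\ref{thm:css} is a dual in this sense: by property~(3) there, a finite $\tau$-structure $\bA$ satisfies $\neg\mu$ iff it embeds into $\bB_\mu$, and an embedding is in particular a homomorphism, while the reverse direction is the generic dual argument above. The main care point throughout is to formulate the equivalence at the level of decision thresholds so that the possible non-attainment of the VCSP infimum when $\bB$ is infinite plays no role; apart from this, the proof is essentially bookkeeping about the translation between multiplicities and atom counts.
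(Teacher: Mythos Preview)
Your proposal is correct and follows essentially the same approach as the paper: the two translations $\bA \leftrightarrow \phi$ are identical to the paper's, and the core claim that the resilience of $\bA$ equals the cost of $\phi_{\bA}$ over $\Gamma(\bB,\sigma)$ is exactly what the paper asserts. In fact, you supply more justification than the paper does---the paper's proof simply states the equivalence without unfolding the dual property or the cost computation, whereas you spell out why a finite-cost assignment corresponds to a deletion set via the dual and why $\bB_\mu$ is indeed a dual (the paper takes the latter for granted). One minor wording issue: since $\bA \setminus S$ has the same domain $A$ as $\bA$, there is no ``extension to $A$'' needed for a homomorphism $\bA \setminus S \to \bB$; it is already a map on $A$, and exogeneity is automatically respected because $S$ contains only endogenous tuples.
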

\begin{proof}
Let $\bB$ be a dual of $\mu$.
For every  bag database $\bA$ over  signature $\tau$ and with exogenous relations from $\sigma$,
let $\phi$ 
be the 
$\tau$-expression 
obtained by adding atomic $\tau$-expressions $S(x_1,\dots,x_n)$ according to the multiplicity of the tuples $(x_1,\dots,x_n)$ in $S^{\bA}$ for all $S \in \tau$. 
Note that $\phi$ can be computed in polynomial time.
Then the resilience of $\bA$ with respect to $\mu$
is at most $u$ if and only if $(\phi,u)$ has a solution over 
$\Gamma(\bB, \sigma)$.

To prove a polynomial-time reduction in the other direction,
let $\phi$ be a $\tau$-expression. We construct a  bag database $\bA$ with signature $\tau$.
 The domain of $\bA$ are the variables that appear in $\phi$ and for every $S\in \tau$, we put a tuple $(x_1, \dots, x_n)$ in $S^{\bA}$ with a multiplicity
equal to the number of occurrences of $S(x_1, \dots x_n)$ 
as a summand  of $\phi$.
The relations $S^{\bA}$ with $S \in \sigma$ are exogenous in $\bA$, the remaining ones are endogenous. 
Again, $\bA$ can be computed in polynomial time and the resilience of $\bA$ with respect to $\mu$ is at most $u$ if and only if $(\phi,u)$ has a solution over $\Gamma(\bB, \sigma)$.
\end{proof}

\begin{remark} \label{rem:set}
The reason why we restrict to study of resilience problems in bag semantics and do not consider resilience problems in set semantics comes from the proof of Proposition~\ref{prop:connection}. To model resilience problems in set semantics, we would have to restrict the instances of the corresponding VCSP to instances without repeating summands. Little is known about the complexity of this variant of VCSPs and there is no algebraic theory describing the complexity of such problems~\cite{GeorgePersCom}. Consequently, the theory of constraint satisfaction problems does not provide tools to classify the complexity of resilience problems in set semantics; nevertheless, tractability results in bag semantics imply tractability results for the same queries in set semantics. 
\end{remark}

In~\cite{LatestResilience} one may find a seemingly more general notion of exogenous tuples, where in a single relation there might be both endogenous and exogenous tuples. However, one can show that classifying the complexity of resilience problems according to our original definition also entails a classification of this variant, using the operator $\Opt$ and a similar reduction as in Proposition~\ref{prop:connection}, as explained in the following remark.

\begin{remark} \label{rem:exo}
Consider a union $\mu$ of conjunctive queries with the signature $\tau$, let $\sigma \subseteq \tau$,  and let $\rho \subseteq \tau \setminus \sigma$. Suppose we would like to model the resilience problem for $\mu$ where the relations in $\sigma$ are exogenous and the relations in $\rho$ might contain both endogenous and exogenous tuples. Let $\bB$ be a dual of $\mu$ and $\Gamma$ be the expansion of $\Gamma(\bB, \sigma)$ where for every relational symbol $R \in \rho$, there is also a relation $(R^{x})^{\Gamma}=R^{\bB}$, i.e., a classical relation that takes values $0$ and $\infty$. The resilience problem for $\mu$ with exogenous tuples specified as above is polynomial-time equivalent to $\VCSP(\Gamma)$ by analogous reductions as in Proposition~\ref{prop:connection}.
Note that  $(R^{x})^{\Gamma}=\Opt\big (R^{\Gamma(\bB, \sigma)} \big)$ for every $R \in \rho$, and therefore by Lemma~\ref{lem:expr-reduce}, $\VCSP(\Gamma)$ is polynomial-time equivalent to $\VCSP(\Gamma(\bB,\sigma))$ and thus to the resilience problem for $\mu$ where the relations in $\sigma$ are exogenous and the relations in $\tau \setminus \sigma$ are purely endogenous. This justifies the restriction to our setting for exogenous tuples. Moreover, the same argument shows that if resilience of $\mu$ with all tuples endogenous is in P, then all variants of resilience of $\mu$ with exogenous tuples are in P as well.
\end{remark}

Similarly as in Example~\ref{expl:fin-dual-rev}, Proposition~\ref{prop:connection} can be combined with the tractability results for VCSPs from Section~\ref{sect:tract} that use fractional polymorphisms to prove tractability of resilience problems.
\begin{remark}\label{rem:bin-enc-3}
Note that if the multiplicities in the given database are stored in binary, then this corresponds to binary representations of multiplicities of summands in instances of the VCSP, and as we have mentioned earlier (Remark~\ref{rem:bin-enc} and Remark~\ref{rem:bin-enc-2}) many of the algorithmic results about VCSPs apply even in this setting.
\end{remark}

\begin{expl}\label{expl:simple-inf-rev}
We revisit Example~\ref{expl:simple-inf}. Consider the conjunctive query
$\exists x,y,z \, (R(x,y) \wedge S(x,y,z))$ over the signature $\tau = \{R,S\}$.
We observe that its resilience problem is in P. If $\bA$ is the input database, let $m$ be the multiplicity of $(a,b)$ in $R^\bA$ and let $m'$ be the sum of multiplicities of tuples of the form $(a,b,c)$ for some $c \in A$ in $S^\bA$. Then, if $m \leq m'$, we remove all copies of $(a,b)$ from $R^\bA$ and otherwise we remove all tuples of the form $(a,b,c)$ from $S^\bA$.

Note that the Gaifman graph of $\mu$ is complete; let $\bH_\mu$ be the structure from Theorem~\ref{thm:freeAP} and recall that it is finitely bounded and homogeneous. We construct a binary fractional polymorphism of $\Gamma(\bH_\mu, \emptyset)$ which is canonical and pseudo cyclic with respect to $\Aut(\Gamma(\bH_\mu, \emptyset)) = \Aut(\bH_\mu)$. 
Let $\bM$ be the $\tau$-structure with
domain $(H_\mu)^2$ 
and where 
\begin{itemize} 
\item $((b^1_1,b^2_1),(b^1_2,b^2_2)) \in R^{\bM}$ if 
$(b^1_1,b^1_2) \in R^{\bH_\mu}$ and 
$(b^2_1,b^2_2) \in R^{\bH_\mu}$ 
\item $((b^1_1,b^2_1),(b^1_2,b^2_2),(b^1_3,b^2_3)) \in S^{\bM}$ if
$(b^1_1,b^1_2,b^1_3) \in S^{\bH_\mu}$ or 
$(b^2_1,b^2_2,b^2_3) \in S^{\bH_\mu}$. 
\end{itemize}
Similarly, let $\bN$ be the $\tau$-structure with
domain $(H_\mu)^2$ 
and where 
\begin{itemize} 
\item $((b^1_1,b^2_1),(b^1_2,b^2_2)) \in R^{\bN}$ if 
$(b^1_1,b^1_2) \in R^{\bH_\mu}$ or 
$(b^2_1,b^2_2) \in R^{\bH_\mu}$,
\item $((b^1_1,b^2_1),(b^1_2,b^2_2),(b^1_3,b^2_3)) \in S^{\bN}$ if 
$(b^1_1,b^1_2,b^1_3) \in S^{\bH_\mu}$ and 
$(b^2_1,b^2_2,b^2_3) \in S^{\bH_\mu}$. 
\end{itemize}
Note that $\bM \not \models \mu$ and $\bN \not \models \mu$ and hence 
there are embeddings $f \colon \bM \to \bH_\mu$
and $g \colon \bN \to \bH_\mu$.
Clearly, both $f$ and $g$ regarded as operations on the set $H_\mu$ are pseudo cyclic (but in general not cyclic) and canonical with respect to $\Aut(\bH_\mu)$ (see Claim~6 in Proposition~\ref{prop:mu1} for a detailed argument of this type). Let $\omega$ be the fractional operation given by $\omega(f) = \frac{1}{2}$ and $\omega(g) = \frac{1}{2}$. 
Then $\omega$ is a binary fractional polymorphism of $\Gamma := \Gamma(\bH_\mu,\emptyset)$:
for $b^1,b^2 \in (H_\mu)^2$ we have
\begin{align}
\sum_{h \in {\mathscr O}^{(2)}} \omega(h) R^{\Gamma}(h(b^1,b^{2})) & = \frac{1}{2} R^{\Gamma}(f(b^1,b^2)) + \frac{1}{2} R^{\Gamma}(g(b^1,b^2)) \nonumber \\ 
& =
\frac{1}{2} \sum_{j = 1}^2 R^{\Gamma}(b^j). 
\label{eq:submod}
\end{align}
so $\omega$ improves $R$, and similarly we see that $\omega$ improves $S$. 

We proved that the corresponding valued structure has a binary canonical pseudo cyclic fractional polymorphism. By Theorem~\ref{thm:tract} and \ref{prop:connection}, this reproves the results from \cite{Resilience} (without multiplicities) and \cite{LatestResilience} (with multiplicities) that the resilience problem for this query is in P.
\end{expl}

We give another example of a conjunctive query with an infinite dual, which requires a ternary cyclic fractional polymorphism.

\begin{expl} \label{expl:double-edge}
Consider the conjunctive query     
\[\mu := \exists x,y \; (R(x,y) \wedge R(y,x))\]
and observe that its resilience problem is in P: if $\bA$ is the input database, and both $(a,b)$ and $(b,a)$ lie in $R^{\bA}$ with multiplicities $m$ and $m'$, respectively, we remove all copies of $(a,b)$ if $m \leq m'$ and otherwise we remove all copies of $(b,a)$.

Let $\bH_\mu$ be the homogeneous dual of $\mu$ that embeds every countable structure $\bA$ that does not satisfy $\mu$. Let let $\Gamma := \Gamma(\bH_\mu, \emptyset)$ and let $C$ be the domain of $\Gamma$.
We show that $\Gamma$ has a ternary canonical pseudo cyclic fractional polymorphism (with respect to $\Aut(\bH_\mu)$), which implies tractability of $\VCSP(\Gamma)$ (Theorem~\ref{thm:tract}) and hence of the resilience problem for $\mu$ (Proposition~\ref{prop:connection}).
To increase readability, we write
$R$ for $R^{\bH_\mu}$ and write
$\breve{R}$ for $\{(a,b) \mid (b,a) \in R^{\bH_\mu}\}$.
Note that since $\bH_\mu \models \neg \mu$, for every $a,b \in C$, we have precisely one of the following: $(a,b) \in R$, $(a,b) \in \breve{R}$, or $(a,b) \notin R \cup \breve{R}$.
Let $\bM$ be an $\{R\}$-structure with the domain $C^3$ such that 
\[\bM \models R((x,y,z), (u,v,w))\]
if and only if at least one of the following is true: 
\begin{enumerate}[(1)]
\item at least two of $(x,u), (y,v), (z,w)$ lie in $R$;
\item  $(x,u) \in R$, $(y,v) \in \breve{R}$, and $(z,w) \notin R \cup \breve{R}$;
\item 
$(x,u) \notin R \cup \breve{R}$,
$(y,v) \in R$, and $(z,w) \in \breve{R}$;
\item  $(x,u) \in \breve{R}$, $(y,v) \notin R \cup \breve{R}$, and $(z,w) \in R$.
\end{enumerate}
Note that items (2)-(4) are just cyclic shifts of the same condition.
It is straightforward to verify that $\bM \models \neg \mu$. For example, if $\bM \models R((x,y,z), (u,v,w))$ because of item (1), then at least two of $(u,x), (v,y), (w,z)$ lie in $\breve{R}$ and hence $\bM \models \neg R((u,v,w), (x,y,z))$ by definition. Therefore, there is an embedding $f$ of $\bM$ into $\bH_\mu$. By the definition of $\bM$, the operation $f$ is pseudo cyclic and canonical with respect to $\Aut(\bH_\mu)$
(this is easy to see, because $\bH_\mu$ is homogeneous; a detailed argument of this type can be found in Claim 6 in Proposition~\ref{prop:mu1}).
The idea is that $f$ has the behavior of a majority operation on orbits of pairs of distinct elements. 

Let $\bN$ be an $\{R\}$-structure with the domain $C^3$ such that 
\[\bN \models R((x,y,z), (u,v,w))\]
if and only if at least one of the following is true: 
\begin{enumerate}[(1)]
\setcounter{enumi}{4}
\item $(x,u), (y,v), (z,w) \in R$;
\item one of $(x,u), (y,v), (z,w)$ lies in $R$, and the remaining two lie in 
$\breve{R}$; 
\item one of $(x,u), (y,v), (z,w)$ lies in $R$, and the remaining two do not lie in $R \cup \breve{R}$; 
\item  $(x,u) \in \tilde{R}$, $(y,v) \in R$, and $(z,w) \notin R \cup \breve{R}$;
\item $(x,u) \notin R \cup \breve{R}$, $(y,v) \in \breve{R}$ and $(z,w) \in R$; 
\item  $(x,u) \in R$, $(y,v) \notin R \cup \breve{R}$ and $(z,w) \in \breve{R}$.
\end{enumerate}

Note that items (8)-(10) are just cyclic shifts of the same condition. Again one can verify that $\bN \models \neg \mu$, because $\bH_\mu \models \neg \mu$. Therefore, there is an embedding $g$ of $\bN$ into $\bH_\mu$. By the definition of $\bN$, the operation $g$ is pseudo cyclic and canonical with respect to $\Aut(\bH_\mu)$; it has the behavior of a minority operation on orbits of pairs of distinct elements. 

Let $\omega$ be the ternary fractional operation defined by $\omega(f)=2/3$ and $\omega(g)=1/3$. Note that $\omega$ is a pseudo cyclic and canonical ternary fractional operation on $C$. We show that $\omega \in \fPol(\Gamma)$.
Let $(x,u), (y,v), (z,w) \in C^2$. We want to verify that
\[E_\omega \left[h \mapsto R^\Gamma\left(h \left(\begin{pmatrix}x\\u\end{pmatrix},
\begin{pmatrix}y\\v\end{pmatrix}, \begin{pmatrix}z\\w\end{pmatrix} \right) \right) \right] \leq \frac{1}{3}(R^\Gamma(x,u)+R^\Gamma(y,v)+R^\Gamma(z,w)),\]
equivalently, 
\begin{align} \label{eq:maj_min}
2 R^\Gamma\left( f\left( \begin{pmatrix}x\\u\end{pmatrix}, \begin{pmatrix}y\\v\end{pmatrix}, \begin{pmatrix}z\\w\end{pmatrix} \right) \right) + 
R^\Gamma\left(g \left( \begin{pmatrix}x\\u\end{pmatrix}, \begin{pmatrix}y\\v\end{pmatrix}, \begin{pmatrix}z\\w\end{pmatrix} \right)\right)
\leq R^\Gamma(x,u)+R^\Gamma(y,v)+R^\Gamma(z,w).
\end{align}
We break into cases:
\begin{itemize}
    \item If $(x,u), (y,v), (z,w) \in R$, then by item (1) and (5) the left-hand side of \eqref{eq:maj_min} evaluates to 0 and hence \eqref{eq:maj_min} holds.
    \item If exactly two of $(x,y), (y,v), (z,w)$ lie in $R$, then by item (1) the left-hand side of \eqref{eq:maj_min} is at most 1 and hence \eqref{eq:maj_min} holds.
    \item If exactly one of $(x,y), (y,v), (z,w)$ lies in $R$, then precisely one of the conditions (2)-(4), (6)-(10) applies and therefore the left-hand side of \eqref{eq:maj_min} is at most 2. Therefore, \eqref{eq:maj_min} holds.
    \item If $(x,u), (y,v), (z,w) \notin R$, then \eqref{eq:maj_min} holds trivially since the left-hand side is always at most 3.
\end{itemize}
We conclude that $\omega \in \fPol(\Gamma)$.

We remark that there is no binary fractional polymorphism of $\Gamma$ that is canonical and pseudo cyclic with respect to $\Aut(\bH_\mu)$;  this is shown in detail in~\cite[Example 5.23]{ThesisZaneta}. 
\end{expl}

\subsection{The Resilience Tractability Conjecture}

In this section we present a conjecture which implies,  together with Corollary~\ref{cor:OIT} and Corollary~\ref{cor:con}, a P versus NP-complete dichotomy for resilience problems for finite unions of conjunctive queries. 

\begin{conjecture}\label{conj:tract}
Let $\mu$ be a union of connected conjunctive queries over the signature $\tau$, and let $\sigma \subseteq \tau$. If the structure $(\{0,1\};\OIT)$ 
has no pp-construction in $\Gamma_{\mu,\sigma}$,
then there exists a dual $\bB$ of $\mu$ such that $\Aut(\bB)$ contains an automorphism group of a finitely bounded homogeneous structure $\bA$ and 
$\Gamma := \Gamma(\bB, \sigma)$
has a fractional polymorphism of arity $\ell \geq 2$ which is canonical and pseudo cyclic with respect to $\Aut(\bA)$ 
(and in this case, $\VCSP(\Gamma)$ is in P by Theorem~\ref{thm:tract}). 
\end{conjecture}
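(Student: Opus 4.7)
The plan is to reduce Conjecture~\ref{conj:tract} to the finite-domain VCSP dichotomy (Theorem~\ref{thm:fin-vcsp-tract}) through the two-level machinery of Section~\ref{sect:tract}. Fix $\mu$, $\sigma \subseteq \tau$, and $\Gamma := \Gamma_{\mu,\sigma}$. By Theorem~\ref{thm:css}, $\Aut(\Gamma)$ equals the automorphism group of the finitely bounded homogeneous structure $(\bB_\mu)_{\text{pp}(|\mu|)}$, so we may choose $m$ as required by Theorem~\ref{thm:red-fin} and Proposition~\ref{prop:black-box} and form the finite-domain valued structure $\Gamma^*_m$. By Proposition~\ref{prop:black-box}, it suffices to exhibit a cyclic fractional polymorphism of $\Gamma^*_m$, and by Theorem~\ref{thm:fin-vcsp-tract} (applied to the finite-domain structure $\Gamma^*_m$) this holds exactly when $\Gamma^*_m$ does not pp-construct $(\{0,1\};\OIT)$. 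By Lemma~\ref{lem:pp-trans} (transitivity of pp-constructibility), the conjecture therefore reduces to the single key statement
\[
(\ast) \quad \Gamma \text{ pp-constructs } \Gamma^*_m.
\]
Indeed, granted $(\ast)$, any pp-construction of $(\{0,1\};\OIT)$ in $\Gamma^*_m$ would compose with it via Lemma~\ref{lem:pp-trans} to yield a pp-construction of $(\{0,1\};\OIT)$ in $\Gamma$, contradicting the hypothesis.

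The natural candidate for the witness in $(\ast)$ is the $m$-dimensional pp-power $\Delta$ of $\Gamma$ whose points $(a_1,\dots,a_m) \in C^m$ are tagged by the orbit of $(a_1,\dots,a_m)$ in $\Aut(\Gamma)$, with the encoding of each orbit given by the canonical atomic $\tau$-expression that describes it (which exists because $(\bB_\mu)_{\text{pp}(|\mu|)}$ is homogeneous with a finite relational signature), and with the valued relations $R^*$ and the compatibility relations $C_{i,j}$ of $\Gamma^*_m$ realised as pp-definable cost functions of $\Gamma$. The remaining task is to exhibit two fractional homomorphisms witnessing fractional homomorphic equivalence $\Delta \sim \Gamma^*_m$: the direction $\Delta \to \Gamma^*_m$ should be given by the obvious projection to orbits, whereas $\Gamma^*_m \to \Delta$ requires picking a representative of each orbit and averaging with the help of the $\Feas$ and $\Opt$ closure of $\langle \Gamma \rangle$ (see Lemma~\ref{lem:easy}).

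The main obstacle is that $(\ast)$ is the valued analogue of the Bodirsky--Pinsker infinite-domain tractability conjecture for CSPs over reducts of finitely bounded homogeneous structures, which is open already in the crisp case. What is known is that the classical conjecture has been confirmed for a number of well-studied families of templates (first-order reducts of $(\Q;<)$, of the random graph, of homogeneous graphs, of homogeneous equality structures, of several classes of posets, and so on), and in each of these cases the proof proceeds via a canonical-function analysis in the spirit of~\cite{BPP-projective-homomorphisms}. A partial resolution of Conjecture~\ref{conj:tract} within reach is therefore: for every $\mu$ whose Cherlin--Shelah--Shi dual $\bB_\mu$ lies in a family for which the classical conjecture is settled, extend the existing canonical-function machinery across the operators $\Feas$ and $\Opt$. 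The extension across $\Feas$ should be essentially free because $\Feas$ of a valued relation carries the same orbit information, whereas the extension across $\Opt$ is more delicate and is precisely where the cyclicity required of the fractional polymorphism interacts with the valued structure.

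Once $(\ast)$ is established, the rest is automatic. Theorem~\ref{thm:fin-vcsp-tract} produces a cyclic fractional polymorphism $\nu$ of $\Gamma^*_m$; Lemma~\ref{lem:lift} lifts $\nu$ to a canonical fractional polymorphism $\omega$ of $\Gamma$ with $\omega^* = \nu$; Lemma~\ref{lem:pc} then promotes cyclicity of $\omega^*$ to pseudo cyclicity of $\omega$, delivering the canonical pseudo cyclic fractional polymorphism of $\Gamma$ claimed by the conjecture. Tractability of $\VCSP(\Gamma)$, and hence of the resilience problem for $\mu$ with exogenous relations from $\sigma$ by Proposition~\ref{prop:connection}, then follows from Theorem~\ref{thm:tract}.
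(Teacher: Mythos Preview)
The statement is a \emph{conjecture}; the paper does not prove it and explicitly presents it as open. There is therefore no ``paper's own proof'' to compare against. What the paper does provide, immediately after the statement, is the remark that the analogous assertion is \emph{known to be false} for general valued structures whose automorphism group equals that of a reduct of a finitely bounded homogeneous structure (the counterexample is even a crisp CSP), and that the conjecture is made only for the special class $\Gamma_{\mu,\sigma}$ arising from resilience.

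Your reduction of the conjecture to the single statement $(\ast)$ ``$\Gamma$ pp-constructs $\Gamma^*_m$'' is formally correct: granted $(\ast)$, transitivity (Lemma~\ref{lem:pp-trans}) plus the finite-domain dichotomy (Theorem~\ref{thm:fin-vcsp-tract}) plus Proposition~\ref{prop:black-box} yield the conclusion exactly as you describe. But $(\ast)$ is not merely ``as hard as'' the conjecture --- it is at least as strong, and by the paper's remark it must \emph{fail} for some reducts of finitely bounded homogeneous structures (otherwise your argument would prove the general statement the paper says is false). Consequently any proof of $(\ast)$ must exploit features specific to $\Gamma_{\mu,\sigma}$, and your proposal does not isolate or use any such feature. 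The sketch of the fractional homomorphisms $\Delta \leftrightarrow \Gamma^*_m$ is also problematic: the compatibility relations $C_{i,j}$ encode orbit information, and there is no reason to expect orbits of $\Aut(\Gamma_{\mu,\sigma})$ to lie in $\langle \Gamma_{\mu,\sigma} \rangle$ unless the underlying dual is a model-complete core, which $\bB_\mu$ generally is not.

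In short: you have correctly identified a sufficient condition $(\ast)$ and the machinery that would finish the argument from it, but $(\ast)$ is itself an open problem that provably cannot hold in the generality of your sketch, and you give no argument tailored to the resilience setting. This is a proof \emph{strategy}, not a proof, and you acknowledge as much; the paper's position is the same.
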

Note that all duals $\bB$ of $\mu$ are homomorphically equivalent to $\bB_\mu$ (see Theorem~\ref{thm:css} for the definition of $\bB_\mu$), hence all valued structures $\Gamma(\bB, \sigma)$ are fractionally homomorphically equivalent to $\Gamma_{\mu,\sigma}$. Therefore, by the transitivity of pp-constructability,  $\Gamma_{\mu,\sigma}$ pp-constructs $(\{0,1\};\OIT)$ if and only if $\Gamma(\bB, \sigma)$ does. If P$\neq$NP, then there cannot be a union of queries $\mu$ such that $\Gamma_{\mu,\sigma}$ that pp-constructs $(\{0,1\};\OIT)$ and there is a dual $\bB$ of $\mu$ satisfying the assumptions from Conjecture~\ref{conj:tract}.
For all examples of resilience problems so far, we provided a dual $\bB$ that satisfies the condition from Conjecture~\ref{conj:tract} and thus proves the tractability of the resilience problem. In Example~\ref{expl:triad} below, we give an example of a resilience problem for which we prove hardness by pp-constructing $(\{0,1\};\OIT)$. 

Note that if $\bB$ is a relational structure on a finite domain $B$ and $\Gamma(\bB, \sigma)$ has a cyclic fractional polymorphism, then this polymorphism is trivially pseudo cyclic with respect to any group and it is canonical with respect to the trivial permutation group on $B$.  Let $\tau = \{R_b \mid b \in B\}$ be a relational signature where all symbols are unary. Note that the relational structure $\bA=(B; (R_b^{\bA})_{b \in B})$ where $R_b^{\bA} = \{b\}$ for every $b \in B$ has a trivial automorphism group and is homogeneous. It is also finitely bounded: a finite relational $\tau$-structure embeds into $\bA$ if and only if it satisfies
\[\forall x,y \left( \left( \bigvee_{b \in B} R_b(x) \right) \wedge \bigwedge_{b, c \in B, b \neq c} \big(\neg R_b(x) \vee \neg R_c (x) \big)  \wedge \bigwedge_{b \in B} (R_b(x) \wedge R_b(y) \Rightarrow x=y) \right).\]
Hence, every union of conjunctive queries each of which is homomorphically equivalent to a tree satisfies the condition from Conjecture~\ref{conj:tract} by Theorem~\ref{thm:fin-dual} and Theorem~\ref{thm:fin-vcsp-tract}.

Conjecture~\ref{conj:tract} is intentionally formulated only  for VCSPs that stem from resilience problems, because it is known to be false for the more general situation of VCSPs for valued structures $\Gamma$ that have the same automorphisms as a reduct of a finitely bounded homogeneous structure~\cite{Book} (Section 12.9.1; the counterexample is even a CSP). However, the structures $\bB_\mu$ from Theorem~\ref{thm:css} that allow to formulate
resilience problems as VCSPs are particularly well-behaved for the universal-algebraic approach and more specifically, for canonical operations (see, e.g.,~\cite{MMSNP,BodirskyBodorUIPJournal,MottetPinskerSmooth}), which is why we believe in the strong formulation of Conjecture~\ref{conj:tract}. 
See Conjecture~\ref{conj:VCSP} for a conjecture that could hold for VCSPs in the more general setting of reducts of finitely bounded homogeneous structures.

For the following conjunctive query $\mu$, the NP-hardness of the resilience problem without multiplicities was shown in~\cite{Resilience}; to illustrate our condition, we verify that the structure $(\{0,1\}; \OIT)$ has a pp-construction in $\Gamma_\mu$
and thus prove in a different way that the resilience problem (with multiplicities) for $\mu$ is NP-hard. 

\begin{expl}[Triangle query] \label{expl:triad}
 Let $\tau$ be the signature that consists of three binary relation symbols $R$, $S$, and $T$, and let $\mu$ be the conjunctive query 
$$\exists x,y,z \big (R(x,y) \wedge S(y,z) \wedge T(z,x) \big ).$$
The resilience problem without multiplicities for $\mu$  
is NP-complete~\cite{Resilience}, and hence $\VCSP(\Gamma_{\mu})$ is NP-hard (Proposition~\ref{prop:connection}). Since the Gaifman graph of $\mu$ is NP-complete,  the structure $\bH_\mu$ from Theorem~\ref{thm:freeAP} exists. Let $\Gamma:=\Gamma(\bH_\mu, \emptyset)$. 
We provide a pp-construction of $(\{0,1\};\OIT)$ in $\Gamma$, which also proves NP-hardness of $\VCSP(\Gamma)$ and hence the resilience problem of $\mu$ with multiplicities by Corollary~\ref{cor:OIT}. Since $\Gamma$ is homomorphically equivalent to $\Gamma_\mu$, this also provides a pp-construction of $(\{0,1\};\OIT)$ in $\Gamma_\mu$ (see Lemma~\ref{lem:pp-trans}).

Let $C$ be the domain of $\Gamma$. Denote the relations $\Opt(R)$, $\Opt(S)$, $\Opt(T)$ by $R^*$, $S^*$, $T^*$, respectively. In the following, for $U \in \{R,S,T\}$ and variables $x,y$ we write $2 U(x,y)$ for short instead of $U(x,y)+U(x,y)$.
Let $\phi(a,b,c,d,e,f,g,h,i)$ be the pp-expression
\begin{align}
& R(a,b) + 2 S(b,c) + 2 T(c,d) + 2 R(d,e)    
+ 2 S(e,f) + 2 T(f,g) + 2 R(g,h) + S(h,i) \label{eq:firstgroup}\\
+ \; & T^*(i,g) + S^*(h,f) + R^*(g,e) + T^*(f,d)  
+ S^*(e,c) + R^*(d,b) + T^*(c,a). \label{eq:secondgroup}
\end{align}
For an illustration of $\mu$ and $\phi$, see Figure~\ref{fig:triad}.
Note that $\phi$ can be viewed as $7$ non-overlapping copies of $\mu$ (if we consider the doubled constraints as two separate constraints) with some constraints forbidden to violate.

\begin{figure*}
    \centering 
    \includegraphics[width=\textwidth]{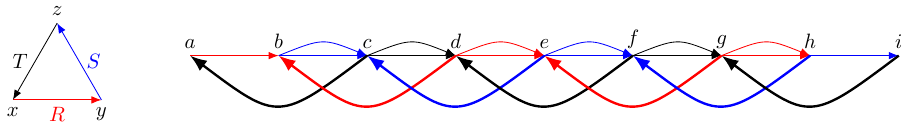}
    \caption{Example~\ref{expl:triad}, visualization of $\mu$ and $\phi$. The thick edges cannot be removed.}
    \label{fig:triad}
    \Description{Arrow depiction of the query and the pp-expression from the example, with thick arrows symbolizing the crisp constraints.}
\end{figure*}

In what follows, we say that an atomic $\tau$-expression 
holds if it evaluates to $0$ and an atomic $\tau$-expression is violated if it does not hold.
Since there are $7$ non-overlapping copies of $\mu$ in $\phi$, the cost of $\phi$ is at least $7$.
Every assignment where
\begin{itemize}
\item all atoms in~\eqref{eq:secondgroup} hold, 
and 
\item either 
every atom at even position or every atom at odd position in~\eqref{eq:firstgroup} holds,
\end{itemize}
evaluates $\phi$ to $7$ and hence is a solution to $\phi$.

Let $RT \in \langle \Gamma \rangle$ be given by
\[RT(a,b,f,g) := \Opt \inf_{c,d,e,h,i \in C} \phi.\]
Note that $RT(a,b,f,g)$ holds if and only if
\begin{itemize}
\item $R(a,b)$ holds and $T(f,g)$ does not hold, or
\item $T(f,g)$ holds and $R(a,b)$ does not hold,
\end{itemize}
where the reverse implication uses that $\bH_\mu$ is homogeneous and embeds all finite structures that do not satisfy $\mu$.
Define $RS\in \langle \Gamma \rangle$ by  
\[RS(a,b,h,i) := \Opt \inf_{c,d,e,f,g \in C} \phi.\]
Note that $RS(a,b,h,i)$
holds if and only if

\begin{itemize}
\item $R(a,b)$ holds and $S(h,i)$ does not hold, or
\item $S(h,i)$ holds and $R(a,b)$ does not hold. 
\end{itemize}
Next, we define the auxiliary relation $\widetilde{RS}(a,b,e,f)$ to be
\[ \Opt \inf_{c,d,g,h,i \in C} \phi. \]
Note that $\widetilde{RS}(a,b,e,f)$ holds if and only if
\begin{itemize}
\item both $R(a,b)$ and $S(e,f)$ hold, or
\item neither $R(a,b)$ and nor $S(e,f)$ holds. 
\end{itemize}
This allows us to define the relation
\[RR(u,v,x,y):= \inf_{w,z \in C} RS(u,v,w,z) + \widetilde{RS}(x,y,w,z)\] 
which holds if and only if 
\begin{itemize}
\item $R(u,v)$ holds and $R(x,y)$ does not hold, or
\item $R(x,y)$ holds and $R(u,v)$ does not hold. 
\end{itemize}
Define $M \in \langle \Gamma \rangle$ as
\begin{align*}
M(u,v,u',v',u'',v''):= \Opt \inf_{x,y,z \in C} \big (R(x,y) + S(y,z) + T(z,x) + \; RR(u,v,x,y) + RS(u',v',y,z) + RT(u'',v'',z,x) \big ).
\end{align*}
Note that $R(x,y)$, $S(y,z)$ and $T(z,x)$ cannot hold at the same time and therefore $(u,v,u',v',u'',v'') \in M$ if and only if exactly one of of 
$R(u,v)$, $R(u',v')$, and $R(u'',v'')$ holds. 
Let $\Delta$ be the pp-power of $(C;M)$
of dimension two with signature $\{\OIT\}$ such that $$\OIT^{\Delta}((u,v),(u',v'),(u'',v'')) := M(u,v,u',v',u'',v'').$$ 
Then $\Delta$ is homomorphically equivalent to 
$(\{0,1\};\OIT)$, witnessed by the homomorphism
from $\Delta$ to $(\{0,1\};\OIT)$ that maps
$(u,v)$ to $1$ if $R(u,v)$ and to $0$ otherwise,
and the homomorphism 
$(\{0,1\};\OIT) \to \Delta$ that maps $1$ to any pair of vertices $(u,v) \in R$
and $0$ to any pair of vertices $(u,v) \notin R$. Therefore, $\Gamma$ pp-constructs $(\{0,1\}; \OIT)$.
\end{expl}

We mention that another conjecture concerning a P vs.\ NP-complete complexity dichotomy for resilience problems appears in \cite[Conjecture 7.7]{LatestResilience}. The conjecture has a similar form as Conjecture~\ref{conj:tract} in the sense that it states that a sufficient hardness condition for resilience is also necessary. The relationship between our hardness condition from Corollary~\ref{cor:OIT} and the condition from \cite{LatestResilience} remains open.

\subsection{An example of formerly open complexity}\label{sect:expl}

We use our approach to settle the complexity of the resilience
problem for a conjunctive query that was mentioned as an open problem in~\cite{NewResilience} (Section 8.5):
\begin{align}
    \mu & := \exists x,y (S(x) \wedge R(x,y) \wedge R(y,x) \wedge R(y,y)) \label{eq:mu1}
\end{align}
Let $\tau = \{R,S\}$ be the signature of $\mu$. 
To study the complexity of resilience of $\mu$, it will be convenient to work with 
a dual which has different 
model-theoretic properties than the duals $\bB_\mu$ from Theorem~\ref{thm:css} and $\bH_\mu$ from Theorem~\ref{thm:freeAP}, namely a dual that is a model-complete core.

\begin{definition}\label{def:mc-core}
A structure
$\bB$ with an oligomorphic automorphism group is \emph{model-complete} if every embedding of $\bB$ into $\bB$ preserves all first-order formulas. It is a \emph{core} if every endomorphism is an embedding.
\end{definition}

Note that the definition of cores of valued structures with finite domain (Definition~\ref{def:core}) and the definition above specialize to the same concept for relational structures over finite domains. 
A structure with an oligomorphic automorphism group is a model-complete core if and only if for every $n \in {\mathbb N}$ every orbit of $n$-tuples 
can be defined with an existential positive formula~\cite{Book}.
Every countable structure $\bB$ is homomorphically equivalent to a model-complete core, which is unique up to isomorphism \cite{Cores-journal, Book}; we refer to this structure as the model-complete core of $\bB$.
The advantage of working with model-complete cores is that the structure is in a sense `minimal' and therefore easier to work with in concrete examples.\footnote{The model-complete core of $\bB_\mu$ would be a natural choice for the canonical dual of $\mu$ to work with instead of~$\bB_\mu$. However, proving that the model-complete core has a finitely bounded homogeneous expansion (so that, for example, Theorem~\ref{thm:fb-NP} applies) requires introducing further model-theoretical notions~\cite{MottetPinskerCores} which we want to avoid in this article.} 

\begin{figure}
    \centering
    \includegraphics[]{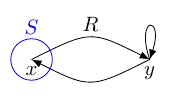}
    \caption{Visualization of the query $\mu$ from \eqref{eq:mu1}.}
    \label{fig:open}
    \Description{Arrow depiction of the query.}
\end{figure}

\begin{proposition}\label{prop:mu1}
There is a finitely bounded homogeneous dual $\bB$ of $\mu$ such that the valued $\tau$-structure 
$\Gamma := \Gamma(\bB, \emptyset)$ has a binary fractional polymorphism which is canonical and pseudo cyclic with respect to $\Aut(\bB)= \Aut(\Gamma)$. Hence, $\VCSP(\Gamma)$ and the resilience problem for $\mu$ are in P. The polynomial-time tractability result even holds for
resilience of $\mu$ with exogenous relations from any $\sigma \subseteq \tau$.
\end{proposition}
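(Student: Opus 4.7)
The plan is to take $\bB$ to be the structure $\bC_\mu$ provided by Theorem~\ref{thm:freeAP}; its hypothesis is met because the Gaifman graph of the canonical database of $\mu$, on the two variables $x,y$, is trivially complete. Theorem~\ref{thm:freeAP} then guarantees that $\bB$ is finitely bounded, homogeneous, has oligomorphic automorphism group, and has precisely the class of finite $\tau$-structures satisfying $\neg\mu$ as its age, so $\bB$ is the desired finitely bounded homogeneous dual of $\mu$.

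I would next identify the orbits of $\Aut(\bB)$. The orbits of single elements are three: $\mathbf{L}$ (loop, $\neg S$), $\mathbf{I}$ (no loop, $S$), and $\mathbf{O}$ (no loop, $\neg S$), the a priori fourth class ``loop and in $S$'' being excluded because such a vertex would witness $\mu$ via $x=y$. Orbits of pairs of distinct elements are then indexed by the two 1-types together with the pair $\bigl(R(a,b),R(b,a)\bigr)\in\{0,1\}^2$, subject to the single exclusion that a pair of types $\{\mathbf{I},\mathbf{L}\}$ cannot carry $R$-edges in both directions.

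The main step is to construct a binary canonical pseudo cyclic fractional polymorphism $\omega$ of $\Gamma=\Gamma(\bB,\emptyset)$. Following the strategy of Example~\ref{expl:simple-inf-rev}, I would define two auxiliary $\tau$-structures $\bM$ and $\bN$ on the domain $B^2$ that are invariant under coordinate swap, each satisfying $\neg\mu$, and embed them into $\bB$ via Theorem~\ref{thm:freeAP} to obtain canonical operations $f$ and $g$; the fractional polymorphism is then $\omega(f)=\omega(g)=\tfrac12$. The natural choice for $\bM$ is the coordinatewise AND-product $\bB\times\bB$, which avoids $\mu$ by projection to the first coordinate and supplies the ``AND''-behaviour needed whenever both input pairs carry an $R$-edge or both input vertices lie in $S$.

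The hard part will be the construction of $\bN$. A naive coordinatewise OR for both $R$ and $S$ fails: a ``split'' witness, in which $S(x_1)$, $R(y_1,y_1)$ and $R(x_1,y_1)$ are taken from the first coordinate while $R(y_2,x_2)$ is taken from the second, already satisfies $\mu$ in $\bN$ even though $\bB$ need not. I expect to resolve this by letting $S^{\bN}$ be coordinatewise OR while $R^{\bN}$ is a guarded OR that falls back to AND on precisely those orbit combinations that would otherwise create such split witnesses; the guard can be described by a universal condition preserved under substructures, so that $\bN$ is still in the age of $\bB$ by Theorem~\ref{thm:freeAP}(2) once $\bN\not\models\mu$ is verified. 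Canonicity of $f$ and $g$ is immediate because $\bM$ and $\bN$ are defined purely in terms of orbit data of $\bB$, and the coordinate-swap symmetry yields $\omega^*(a,b)=\omega^*(b,a)$, hence pseudo cyclicity by Lemma~\ref{lem:pc}. A finite case analysis over the combinations of $R$-costs on two input pairs and $S$-costs on two input vertices then verifies the improvement inequality of Definition~\ref{def:pres}. With $\omega$ in hand, Theorem~\ref{thm:tract} gives $\VCSP(\Gamma)\in\mathrm{P}$, Proposition~\ref{prop:connection} transfers this to the resilience problem for $\mu$ with all relations endogenous, and Remark~\ref{rem:exo} extends polynomial-time tractability to arbitrary $\sigma\subseteq\tau$.
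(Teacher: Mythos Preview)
Your overall strategy---build two symmetric $\tau$-structures on $B^2$, embed them into $\bB$ to obtain canonical operations $f,g$, and set $\omega(f)=\omega(g)=\tfrac12$---matches the paper's. However, the specific construction you sketch for $\bN$ cannot work, and this is the heart of the proof. If $R^{\bN}$ ever ``falls back to AND'' on an input pair where exactly one coordinate carries the $R$-edge (say $\bB\models R(x_1,x_2)$ and $\bB\not\models R(y_1,y_2)$), then neither $\bM$ (pure AND) nor $\bN$ has the edge, so the left side of the improvement inequality for $R$ is $\tfrac12(1+1)=1$ while the right side is $\tfrac12$. But the split witnesses you want the guard to block are exactly such mixed cases, so the guard is forced to close precisely where it destroys improvement. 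No choice of which atom of $\mu$ to guard avoids this.

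The paper resolves this differently in two respects. First, it takes $\bB$ to be the \emph{model-complete core} of $\bC_\mu$ rather than $\bC_\mu$ itself; this forces the universal sentences~\eqref{eq:total} and~\eqref{eq:double} (every two distinct elements are $R$-connected in at least one direction, and a stronger ``double edge or type constraint'' holds), drastically cutting down the orbit structure and making the case analysis in Claim~8 feasible. Second, the OR/AND split is \emph{not} ``$\bM$ does AND, $\bN$ does OR'': instead $\bM$ takes OR on $S$ and AND on $R$-loops, $\bN$ takes AND on $S$ and OR on $R$-loops, and for non-loop $R$-edges \emph{both} start from AND and then receive carefully chosen extra edges via the conditions~(A)--(D) and the closure under~\eqref{eq:double}. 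The improvement inequality for $R$ then goes through because whenever both $\bM$ and $\bN$ lack the edge, the structural properties~\eqref{eq:total},~\eqref{eq:double} force both input coordinates to lack it too. Your plan does not have access to these properties in $\bC_\mu$, and your division of labour between $\bM$ and $\bN$ is along the wrong axis.
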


The rest of this section is devoted to the proof of Proposition~\ref{prop:mu1}.
\begin{figure}
    \centering
    \includegraphics[]{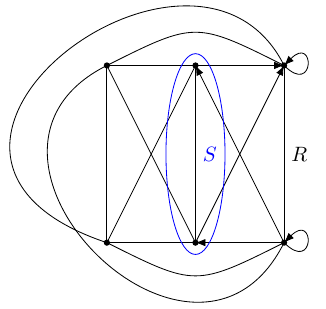}
    \caption{An illustration of a finite substructure of $\bB$ from the proof of Proposition~\ref{prop:mu1} that contains representatives for all orbits of pairs of $\Aut(\bB)$. Arrows are not drawn on undirected edges.}
    \label{fig:dual-mu1}
    \Description{Arrow depiction of all possible orbits of pairs of the automorphism group. Contains all possibilities of how relations S and R may be imposed on a pair of vertices.}
\end{figure}
Since the Gaifman graph of $\mu$ is a complete graph, there exists the structure $\bH_\mu$ as in  Theorem~\ref{thm:freeAP}.
Let $\bB$ be the model-complete core of $\bH_\mu$. Note that $\bB$ has the property that a countable structure $\bA$ maps homomorphically to $\bB$ if and only if $\bA \models \neg \mu$; in particular, $\bB$ is a dual of $\mu$ and $\bB \models \neg \mu$.
The structure $\bH_\mu$ is homogeneous, and it is known that the model-complete core of a homogeneous structure is again homogeneous (see Proposition 4.7.7 in~\cite{Book}), so $\bB$ is homogeneous. Let $\Gamma := \Gamma(\bB, \emptyset)$.

Note that 
\begin{align}
\bB & \models \forall x \big ( \neg S(x) \vee \neg R(x,x) \big ) \label{eq:parti} \\
  \text{ and } \bB & \models \forall x,y \big (x = y \vee R(x,y) \vee R(y,x) \big ).
    \label{eq:total}
\end{align}
    To see~\eqref{eq:total}, suppose for contradiction that $\bB$ contains distinct elements $x,y$ such that neither $(x,y)$ nor $(y,x)$ is in $R^{\bB}$. 
Let $\bB'$ be the structure obtained from $\bB$ by adding $(x,y)$ to $R^{\bB}$. Then $\bB' \models \neg \mu$ as well, and hence there is a homomorphism from $\bB'$ to $\bB$ by the properties of $\bB$. This homomorphism is also an endomorphism of $\bB$ which is not an embedding,
a contradiction to the assumption that $\bB$ is a model-complete core. 

Also observe that 
\begin{equation} \bB \models \forall x,y  \big(x= y \vee (R(x,y) \wedge R(y,x)) \vee  (S(x) \wedge R(y,y)) \vee (R(x,x) \wedge S(y)) \big ). \label{eq:double}
\end{equation}
Suppose for contradiction that \eqref{eq:double} does not hold and let $x$ and $y$ be witnesses, in particular, $x \neq y$.
%for some distinct $x$ and $y$. 
Then $\neg S(x) \vee \neg R(y,y)$ 
and $\neg R(x,x) \vee \neg S(y)$, i.e.,
$\neg S(x) \wedge \neg R(x,x)$, 
or $\neg S(x) \wedge \neg S(y)$, 
or $\neg R(y,y) \wedge \neg R(x,x)$, 
or $\neg R(y,y) \wedge \neg S(y)$.
In each of these cases 
we may add both $R$-edges between the distinct elements $x$ and $y$ to $\bB$ and obtain a structure not satisfying $\mu$, which leads to a contradiction as above.

For an illustration of a finite substructure of $\bB$ which contains a representative for every orbit of pairs in $\Aut(\bB)$; see Figure~\ref{fig:dual-mu1}.
A homomorphism is called \emph{strong} 
if it also preserves the complements of all relations (note that an injective strong homomorphism is an embedding).

\begin{claim}\label{cl:1}
For every finite $\tau$-structure $\bA$ that satisfies $\neg \mu$ and the sentences in~\eqref{eq:total} and \eqref{eq:double}, there exists a strong homomorphism to $\bB$. 
\end{claim}

\begin{proof}
We first observe that $\bB$ embeds 
the relational $\tau$-structure $\bA'$ on the domain $A' = \N \cup \{a\} \cup \{b\}$, where $a,b \notin \N$ are distinct, $S^{\bA'} = \{a\}$ and 
\[R^{\bA'} = \{(b,b)\} \cup \{(x,y) \mid x, y \in A', x \neq y\} \setminus \{(a,b)\}  .\]
To see this, first note that
there exists a homomorphism $h \colon \bA' \to \bB$, because 
$\bA' \models \neg \mu$.
If $h$ is not an embedding, then $h$ is not a strong homomorphism or it is not injective. Note that if $\bB \models S(h(x))$ for $x \in \N \cup \{b\}$, then $\bB \models \mu$, because $h$ is a homomorphism. Similarly we get $\bB \models \mu$ if $\bB \models R(h(a), h(b))$ or $\bB \models R(h(x),h(x))$ for $x \in \N \cup \{a\}$. Finally, if $h(x)=h(y)$ for $x,y \in A'$, we can again verify that the substructure induced by $h(A')$ in $\bB$ satisfies $\mu$. In each of these cases, we get a contradiction with $\bB$ being a dual of $\mu$. 
Therefore, $\bA'$ embeds into $\bB$. 
In particular, there are infinitely many $x \in B$ such that $\bB \models \neg S(x) \wedge \neg R(x,x)$ and by \eqref{eq:double}, for every $y \in B$, $x \neq y$, we have $\bB \models R(x,y) \wedge R(y,x)$.

To prove the claim, let $\bA$ be a finite structure that satisfies $\neg \mu$ and the sentences in~\eqref{eq:total} and \eqref{eq:double}. For a homomorphism $h$ from $\bA$ to $\bB$, let 
\[s(h) := |\{x \in A \mid \bA \models \neg S(x) \wedge \bB \models S(h(x)) \} | \]
and 
\[r(h) := |\{(x,y) \in A^2 \mid \bA \models \neg R(x,y) \wedge \bB \models R(h(x), h(y)) \}|.\]
Let $h$ be a homomorphism from $\bA$ to $\bB$, which exists since $\bA \models \neg \mu$. 
If $s(h)+r(h)=0$, then $h$ is a strong homomorphism and there is nothing to prove. Suppose therefore $s(h)+r(h)>0$, equivalently, $s(h)>0$ or $r(h)>0$. We construct a homomorphism $h'$ such that $r(h')+s(h')<r(h)+s(h)$. Since $r(h)+s(h)$ is finite, by applying this construction finitely many times, we obtain a strong homomorphism from $\bA$ to $\bB$.

First suppose that $s(h)>0$. Then there exists $a \in A \setminus S^{\bA}$ such that $h(a) \in S^{\bB}$. By \eqref{eq:parti}, $\bB \not \models R(h(a),h(a))$ and hence $\bA \not \models R(a,a)$. Pick $b \in B \setminus h(A)$ such that $\bB \models \neg S(b) \wedge \neg R(b,b)$ (recall that there are infinitely many such $b \in B$) and define
\[
h'(x):=
\begin{cases}
b \text{ if }x=a,\\
h(x) \text{ otherwise}.
\end{cases}
\]
Observe that $h'$ is a homomorphism, $s(h')<s(h)$ and $r(h')=r(h)$.

Second suppose that $r(h)>0$. Then there exists $(x,y) \in A^2 \setminus R^{\bA}$ such that $(h(x),h(y)) \in R^{\bB}$. If $x=y$, the argument is similar as in the case $s(h)>0$. Finally, if $x \neq y$, then $\bA \models (S(x) \wedge R(y,y)) \vee (R(x,x) \wedge S(y))$, because $\bA$ satisfies the sentence in \eqref{eq:double}. Since $\bA$ satisfies the sentence in \eqref{eq:total}, $\bA \models R(y,x)$. Since $h$ is a homomorphism, we have
\begin{align*}
\bB \models R(h(x), h(y)) \wedge R(h(y), h(x)) 
\wedge ((S(h(x)) \wedge R(h(y),h(y))) \vee (R(h(x),h(x)) \wedge S(h(y)))),
\end{align*}
which contradicts $\bB \not \models \mu$. 
\end{proof}

\begin{claim}\label{cl:2}
Every finite $\tau$-structure $\bA$ that satisfies $\neg \mu$ and the universal sentences in~\eqref{eq:total} and \eqref{eq:double} embeds into $\bB$. In particular, $\bB$ is finitely bounded.
\end{claim}

\begin{proof}
Let $\bA$ be such a structure. By Theorem~\ref{thm:freeAP}, there is an embedding $e$ of $\bA$ into $\bH_\mu$. 
Since $\bH_\mu$ is homogeneous and embeds every finite $\tau$-structure that satisfies $\neg \mu$, there exists a finite substructure $\bA'$ of $\bH_\mu$ satisfying the sentences in \eqref{eq:total} and \eqref{eq:double} such that $e(\bA)$ is a substructure of $\bA'$ and for all distinct $a, b \in A$ there exists $s \in S^{\bA'}$ such that $\bH_\mu \models R(e(a),s) \wedge R(s,e(b))$. By Claim~\ref{cl:1}, there is a strong homomorphism $h$ from $\bA'$ to $\bB$.

We claim that $h \circ e$ is injective and therefore an embedding of $\bA$ into $\bB$. Suppose there exist distinct $a,b \in A$ such that $h(e(a))=h(e(b))$. Since $e(\bA)$ satisfies the sentence in~\eqref{eq:total},
we have $\bB \models R(h(e(a)), h(e(a)))$.
Let $s \in S^{\bA'}$ be such that $\bH_\mu \models R(e(a),s) \wedge R(s,e(b))$. Hence,
\begin{align*}
\bB \models S(h(s)) \wedge R(h(e(a)), h(s))
\wedge R(h(s), h(e(a))) \wedge R(h(e(a)), h(e(a))),
\end{align*}
a contradiction to $\bB \not \models \mu$. It follows that $h \circ e$ is an embedding of $\bA$ into $\bB$.
\end{proof}

We define two $\{R,S\}$-structures $\bM,\bN$ with domain $B^2$ as follows. 
 For all $x_1,x_2,y_1,y_2,x,y \in B$ define 
 \begin{align}
 \bM,\bN & \models R \big ((x_1,y_1),(x_2,y_2) \big) \label{eq:pres-R}
 & \text{ if } \bB & \models R(x_1,x_2) \wedge R(y_1,y_2), \\
 \bM,\bN & \models S \big ((x,y) \big) & \text{ if } \bB & \models S(x) \wedge S(y) \label{eq:pres-S} 
 \\
 \bM & \models S \big ((x,y) \big ) & \text{ if } \bB & \models S(x) \vee S(y) 
 \label{eq:M-max} \\
    \bN & \models R \big ((x,y),(x,y) \big ) & \text{ if } 
 \bB & \models R(x,x) \vee R(y,y) .
 \label{eq:N-max}
\end{align}
Add pairs of distinct elements to $R^{\bM}$ and $R^{\bN}$ such that both $\bM$ and $\bN$ satisfy the sentence in~\eqref{eq:double} (note that no addition of elements to $S^{\bM}$ and $S^{\bN}$ is needed).
Finally, add $((x_1,y_1),(x_2,y_2))$ to $R^{\bM}$
and $((x_2,y_2),(x_1,y_1))$ to $R^{\bN}$ 
 \label{eq:M-key}
 if at least one 
 of the following cases holds:
\begin{enumerate}
\item [(A)] 
$\bB \models S(x_1) \wedge R(x_1,x_2) \wedge R(x_2,x_2) \wedge R(y_2, y_2) \wedge R(y_2,y_1) \wedge S(y_1)$, 
\item [(B)]
$\bB \models R(x_1,x_1) \wedge R(x_1,x_2) \wedge S(x_2) \wedge y_1 = y_2 \wedge R(y_1,y_2)$, 
\item [(C)] 
$\bB \models S(y_1) \wedge R(y_1,y_2) \wedge R(y_2,y_2) \wedge R(x_2,x_2) \wedge R(x_2,x_1) \wedge S(x_1)$,
\item [(D)] $\bB \models R(y_1,y_1) \wedge R(y_1,y_2) \wedge S(y_2) \wedge x_1 = x_2 \wedge R(x_1,x_2)$. 
\end{enumerate}
Conditions (A) and (B) are illustrated in Figure~\ref{fig:mu1-ABCD}; conditions (C) and (D) are obtained from (A) and (B) by replacing $x$ by $y$. Note that for $(x_1, y_1)=(x_2,y_2)$, none of the conditions (A)-(D) is ever satisfied. No other atomic formulas hold on $\bM$ and $\bN$. Note that both $\bM$ and $\bN$ satisfy the property stated for $\bB$ in~\eqref{eq:parti}.

\begin{figure*}
    \centering
    \includegraphics[]{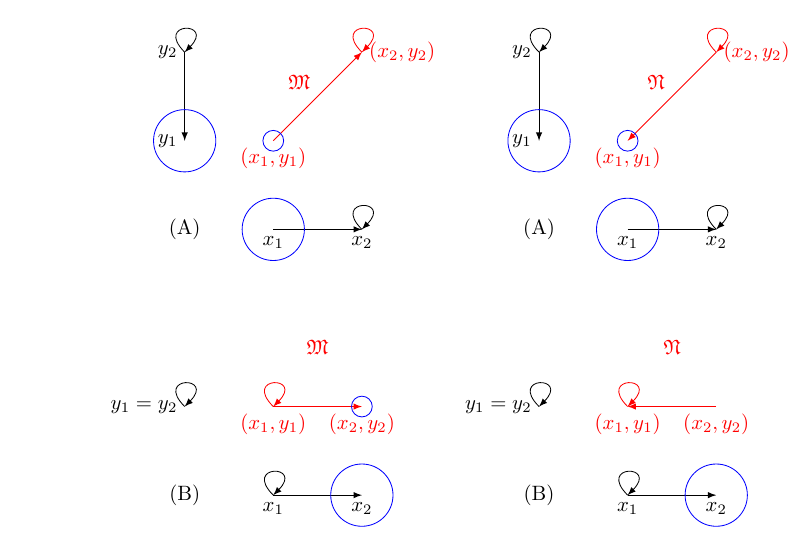}
    \caption{An illustration of the conditions (A) and (B) in $\bM$ and $\bN$.}
    \label{fig:mu1-ABCD}
    \Description{Arrow depiction of conditions (A)-(D), always showing the conditions for (x1, x2) on the horizontal axis, the conditions for (y1, y2) on the vertical axis and the resulting situation in M or N. }
\end{figure*}

\begin{claim} \label{cl:3}
$\bM$ and $\bN$ satisfy the sentence in \eqref{eq:total}.
\end{claim}

\begin{proof}
We prove the statement for $\bM$; the proof for $\bN$ is similar. Let $(x_1,y_1), (x_2, y_2) \in B$ be such that $(x_1, y_1) \neq (x_2, y_2)$ and $\bM \models 
\neg R((x_2, y_2), (x_1, y_1))$. Since $\bM$ satisfies the sentence in \eqref{eq:double}, we must have either that $\bM \models S(x_1, y_1) \wedge R((x_2, y_2), (x_2, y_2))$ or that $\bM \models S(x_2, y_2) \wedge R((x_1, y_1), (x_1, y_1))$. Suppose the former is true; the other case is treated analogously. Then $\bB \models R(x_2, x_2) \wedge R(y_2, y_2)$ and $\bB \models S(x_1) \vee S(y_1)$. If $\bB \models S(x_1)$, then $x_1 \neq x_2$ and by \eqref{eq:total} we have $\bB \models R(x_1,x_2) \vee R(x_2, x_1)$. By \eqref{eq:total} and \eqref{eq:double} for $(y_1,y_2)$, we obtain that $\bM \models R((x_1, y_1), (x_2, y_2))$ by \eqref{eq:pres-R} or one of the conditions (A)-(D). The argument if $\bB \models S(y_1)$ is similar with $x$ and $y$ switched.
\end{proof}

\begin{claim} \label{cl:4}
$\bM$ and $\bN$ satisfy $\neg \mu$. Let
$x_1,x_2,y_1,y_2 \in B$.
\end{claim}

\begin{proof}
Suppose for contradiction that
\begin{align*}
\bM \models S(x_1,y_1) \wedge R((x_1,y_1),(x_2,y_2))
\wedge R((x_2,y_2),(x_1,y_1)) \wedge R((x_2,y_2),(x_2,y_2)).  
\end{align*}
By the definition of $\bM$, we have $\bB \models R(x_2,x_2) \wedge R(y_2,y_2)$ and $\bB \models S(x_1) \vee S(y_1)$.
Assume that $\bB \models S(x_1)$; the case $\bB \models S(y_1)$ is analogous.

By the assumption, $\bM \models R((x_1, y_1),(x_2,y_2))$. Then, by the definition of $\bM$, one of the conditions \eqref{eq:pres-R}, (A)-(D) holds, or
\[\bM \models \neg \big (S(x_1,y_1) \wedge R((x_2,y_2),(x_2,y_2)) \big) \]
(recall that $((x_1,y_1),(x_2,y_2))$ might have been added to $R^{\bM}$ so that $\bM$ satisfies the sentence in \eqref{eq:double}). The last option is false by the assumption and by \eqref{eq:parti}, $\bB \models \neg S(x_2) \wedge \neg S(y_2)$, and hence neither (B) nor (D) holds.  Therefore, one of the conditions \eqref{eq:pres-R}, (A), or (C) holds for $((x_1,y_1), (x_2,y_2))$. Similarly, we obtain that one of the conditions \eqref{eq:pres-R} or (B) holds for $((x_2,y_2),(x_1,y_1))$, since $\bM \models R((x_2, y_2),(x_1,y_1))$ (to exclude (D) we use the assumption that $\bB \models S(x_1)$ and hence $x_1 \neq x_2$). This yields six cases and in each of them
we must have that $\bB \models R(x_1,x_2) \wedge R(x_2,x_1)$ or $\bB \models S(y_1) \wedge R(y_1, y_2) \wedge R(y_2,y_1)$. Since $\bB \models S(x_1) \wedge R(x_2,x_2) \wedge R(y_2,y_2)$, this contradicts $\bB \models \neg \mu$. 
Since $(x_1,y_1),(x_2,y_2) \in M$ were chosen arbitrarily, this shows that $\bM \models \neg \mu$.
The argument for $\bN$ is similar. 
\end{proof}

\begin{claim} \label{cl:5}
There is an embedding $f$ of $\bM$ into $\bB$ and an embedding $g$ of $\bN$ into $\bB$.
\end{claim}

\begin{proof}
We show the claim for $\bM$; the proof for $\bN$ is analogous. By \cite[Lemma 4.1.7]{Book}, it is enough to show that every finite substructure of $\bM$ embeds into $\bB$. By the definition of $\bM$ and Claims ~\ref{cl:3} and~\ref{cl:4}, every finite substructure $\bM$ satisfies \eqref{eq:total}, \eqref{eq:double} and $\neg \mu$ and hence, by Claim~\ref{cl:2}, it embeds into $\bB$.
\end{proof}

Let $\omega$ be the fractional operation over $B$ defined by $\omega(f) = \frac{1}{2}$ and $\omega(g) = \frac{1}{2}$. 

\begin{claim} \label{cl:6}
$\omega$ is pseudo cyclic and canonical with respect to the group $\Aut(\bB)=\Aut(\Gamma)$.
\end{claim}

\begin{proof}
Note that since $\bB$ is homogeneous in a finite relational signature, two $k$-tuples of elements of $\bB$ lie in the same orbit if and only if they satisfy the same atomic formulas. 
Therefore, the canonicity of $f$ and $g$ with respect to $\Aut(\bB)$ follows from the definition of $\bM$ and $\bN$:
for $(a,b) \in B^2$, whether $\bB \models S(f(a,b))$ only depends on whether $\bM \models S(a,b)$ by Claim~\ref{cl:5}, which depends only on the atomic formulas that hold on $a$ and on $b$ in $\bB$. An analogous statement is true for atomic formulas of the form $R(x,y)$ and $x=y$.
Therefore, $f$ is canonical.
The argument for the canonicity of $g$ is analogous.

To see that $f$ and $g$ are pseudo cyclic, we show that $f^*$ and $g^*$ defined on $2$-orbits (using the terminology of Remark~\ref{rem:can-act}) are cyclic. By the definition of $f^*$, we need to show that for any $a_1, a_2, b_1, b_2 \in B$, the two pairs $(f(a_1, b_1), f(a_2,b_2))$ and $(f(b_1, a_1), f(b_2,a_2))$ satisfy the same atomic formulas. For the formulas of the form $S(x)$ and $R(x,y)$, 
this can be seen from Claim~\ref{cl:5} and the definition of $\bM$ and $\bN$, 
since each of the conditions~\eqref{eq:pres-R},\eqref{eq:pres-S},\eqref{eq:M-max},\eqref{eq:N-max},\eqref{eq:double} and the union of (A), (B), (C), (D) is symmetric with respect to exchanging $x$ and $y$. For the atomic formulas of the form $x=y$, this follows from the injectivity of $f$. This shows that $f^*$ is cyclic; the argument for $g^*$ is the same. Hence, the pseudo-cyclicity of $f$ and $g$ is a consequence of Lemma~\ref{lem:pc} for $m = 2$.
\end{proof}

\begin{claim} \label{cl:7}
$\omega$ improves $S$.
\end{claim}

\begin{proof}
By the definition of $\bM$ and $\bN$ and Claim~\ref{cl:5}, we have for all $x,y \in B$ 
$$\omega(f)S^{\Gamma}(f(x,y)) + \omega(g)S^{\Gamma}(g(x,y)) = \frac{1}{2} (S^{\Gamma}(x) + S^{\Gamma}(y)).$$
\end{proof}

\begin{claim} \label{cl:8}
$\omega$ improves $R$.
\end{claim}

\begin{proof}
Let $x_1,y_1,x_2,y_2 \in B$. 
We have to verify that 
\begin{align}
\omega(f) R^{\Gamma}(f(x_1,y_1),f(x_2,y_2)) + \omega(g) R^{\Gamma}(g(x_1,y_1),g(x_2,y_2)) \leq \frac{1}{2} (R^{\Gamma}(x_1,x_2) + R^{\Gamma}(y_1,y_2)).
    \label{eq:pres-expl}
\end{align}

We distinguish four cases. 
\begin{itemize}
    \item $\bM,\bN \models R((x_1,y_1),(x_2,y_2))$. 
Then Inequality~\eqref{eq:pres-expl} holds since the left-hand side is zero, and the right-hand side is non-negative (each valued relation in $\Gamma$ is non-negative). 
\item $\bM,\bN \models \neg R((x_1,y_1),(x_2,y_2))$.
We need to show that $\bB \models \neg R(x_1,x_2) \wedge \neg R(y_1,y_2)$. This is clear if $(x_1,y_1)=(x_2,y_2)$ by the definition of $\bN$. Suppose therefore that $(x_1,y_1)\neq(x_2,y_2)$.
Since $\bM$ satisfies the sentence in~\eqref{eq:double}, we have $\bM 
\models S(x_1, y_1) \wedge R((x_2, y_2), (x_2,y_2))$ or $\bM 
\models S(x_2, y_2) \wedge R((x_1, y_1), (x_1,y_1))$. Suppose that $\bM \models S(x_1, y_1) \wedge R((x_2, y_2), (x_2,y_2))$; the other case is analogous.
Since $\bN$ satisfies the sentence in~\eqref{eq:double} as well, this implies that $\bN \models S(x_1, y_1) \wedge R((x_2, y_2), (x_2,y_2))$; note that if $\bN \models S(x_2, y_2) \wedge R((x_1, y_1), (x_1,y_1))$, we would get a contradiction with~\eqref{eq:parti}.
By the definition of $\bM$ and $\bN$, we have $\bB \models S(x_1) \wedge S(y_1) \wedge R(x_2, x_2) \wedge R(y_2,y_2)$, in particular, by~\eqref{eq:parti}, $x_1 \neq x_2$ and $y_1 \neq y_2$. By~\eqref{eq:total}, there is an $R$-edge in $\bB$ between $x_1$ and $x_2$ and between $y_1$ and $y_2$. By the condition (A) and (C) for $\bM$ and for $\bN$ with $1$ and $2$ switched, we see that $\bM,\bN \models \neg R((x_1,y_1),(x_2,y_2))$ implies $\bB \models \neg R(x_1,x_2) \wedge \neg R(y_1,y_2)$.
Therefore, both sides of the inequality evaluate to $1$.
\item $\bM \models \neg R((x_1,y_1),(x_2,y_2))$ and $\bN \models R((x_1,y_1),(x_2,y_2))$. By Claim~\ref{cl:5}, the left-hand side evaluates to $\frac{1}{2}$. By \eqref{eq:pres-R}, we have $\bB \models \neg R(x_1, x_2)$ or $\bB \models \neg R(y_1, y_2)$. Therefore, the right-hand side of \eqref{eq:pres-expl} is at least $\frac{1}{2}$ and the inequality holds. 
\item $\bM \models R((x_1,y_1),(x_2,y_2))$ and $\bN \models \neg R((x_1,y_1),(x_2,y_2))$. 
Similar to the previous case. 
\end{itemize}

This exhausts all cases and concludes the proof of Claim~\ref{cl:8}. 
\end{proof}

It follows that $\omega$ is a binary fractional polymorphism of $\Gamma$ which is canonical and pseudo cyclic with respect to $\Aut(\bB)= \Aut(\Gamma)$. Polynomial-time tractability of $\VCSP(\Gamma)$ follows 
 by Theorem~\ref{thm:tract}
 and \ref{prop:connection}. 
 The final statement follows from Remark~\ref{rem:exo}. This finishes the proof of Proposition~\ref{prop:mu1}.

\section{Conclusion and Future Work}
\label{sect:concl}

We formulated a general hardness condition for VCSPs of valued structures with an oligomorphic automorphism group and  a new  polynomial-time tractability result. We use the latter to 
resolve the resilience problem for a conjunctive query whose complexity was left open in the literature and conjecture that our conditions exactly capture
the hard and easy resilience problems for conjunctive queries (under bag semantics), respectively. In fact, a full classification of resilience problems for conjunctive queries based on our approach seems feasible, but requires further research, as discussed in the following.

We have proved that if $\Gamma$ is a valued structure with an oligomorphic automorphism group and $R$ is a valued relation in the smallest valued relational clone that contains the valued relations of $\Gamma$, then
$R$ is preserved by all fractional polymorphisms of $\Gamma$ (Lemma~\ref{lem:easy}). We do not know whether the converse is true. It is known to hold for 
the special cases
of finite-domain valued structures~\cite{CohenCooperJeavonsVCSP,FullaZivny}
and for classical relational structures with $0$-$\infty$ valued relations (CSP setting) having an oligomorphic automorphism group~\cite{BodirskyNesetril}.

\begin{question}\label{ques:pres}
    Let $\Gamma$ be a valued structure with an oligomorphic automorphism group. Is it true that $R \in \langle \Gamma \rangle$ if and only if $R \in \Imp(\fPol(\Gamma))$? 
\end{question}

A natural attempt to positively answer Question~\ref{ques:pres} would be to combine the proof strategy for finite-domain valued structures from~\cite{CohenCooperJeavonsVCSP,FullaZivny}
with the one for relational structures with oligomorphic automorphism group from~\cite{BodirskyNesetril}. However, since non-improving of $R$ is not a closed condition, 
the compactness argument from~\cite{BodirskyNesetril} cannot be used to construct an operation from $\fPol(\Gamma)$ that does not improve $R$.
A positive answer to Question~\ref{ques:pres} would imply that the computational complexity of VCSPs for valued structures $\Gamma$ with an oligomorphic automorphism group, and in particular the complexity of resilience problems, is fully determined by the fractional polymorphisms of $\Gamma$.

Note that in all examples that arise from resilience problems that we considered so far, it was sufficient to work with fractional polymorphisms $\omega$ 
that are
\emph{finitary}, i.e., there are finitely many operations $f_1,\dots,f_k \in {\mathscr O}_C$ such that $\sum_{i \in \{1,\dots,k\}} \omega(f_i) = 1$. 
It is therefore possible that all fractional polymorphisms relevant for resilience have discrete probability distributions.
This motivates the following question.

\begin{question}\label{question:int-vs-fin-supp}
\begin{itemize}
\item[]%
\item Does our notion of pp-constructability change if we restrict to finitary fractional homomorphisms $\omega$?
\item Is there a valued structure $\Gamma$ with an oligomorphic automorphism group and a valued relation $R$ such that $R$ is
not improved by all fractional polymorphism of $\Gamma$, but \emph{is} 
improved by all finitary fractional polymorphisms $\omega$? 
\item In particular, 
are these statements 
true if we restrict to valued $\tau$-structures $\Gamma$ that arise from resilience problems as described in Proposition~\ref{prop:connection}?
\end{itemize}
\end{question}

In the following, we formulate a common generalization of the complexity-theoretic implications of Conjecture~\ref{conj:tract} and the infinite-domain tractability conjecture~\cite[Conjecture 1.2]{BPP-projective-homomorphisms} that concerns a full complexity classification of VCSPs for valued structures from reducts of finitely bounded homogeneous structures.
\begin{conjecture}\label{conj:VCSP}
Let $\Gamma$ be a valued structure with finite signature such that $\Aut(\Gamma) = \Aut(\bB)$
for some reduct $\bB$ of a countable finitely bounded homogeneous structure. If $(\{0,1\};\OIT)$ has no pp-construction in $\Gamma$, then $\VCSP(\Gamma)$ is in P (otherwise, we already know that $\VCSP(\Gamma)$ is NP-complete by Theorem~\ref{thm:fb-NP} and Corollary~\ref{cor:OIT}). 
\end{conjecture}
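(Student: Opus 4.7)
The plan is to follow the template established by Theorem~\ref{thm:tract}, combined with an adaptation of the algebraic-Ramsey-theoretic approach to infinite-domain CSPs. Since the conjecture subsumes the Bodirsky--Pinsker conjecture (by restricting to cost functions valued in $\{0,\infty\}$), any complete proof must also resolve that open problem, so I only sketch a strategy conditional on its usual ingredients.

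First, I would reduce to a convenient normal form for $\Gamma$. In analogy with Definition~\ref{def:mc-core} and Proposition~\ref{prop:core}, one should develop a theory of model-complete cores for valued structures with oligomorphic automorphism group, using the fact that unary fractional polymorphisms are, essentially, convex combinations of endomorphisms. The goal is to replace $\Gamma$ by a fractionally homomorphically equivalent valued structure whose automorphism group is still the automorphism group of a reduct of a countable finitely bounded homogeneous structure $\bB$, and for which the standard ``adding constants'' manoeuvre yields idempotence-like properties without leaving this class. By Corollary~\ref{cor:hom-hard} this preserves the complexity of $\VCSP(\Gamma)$.

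Second, by the Hubi\v{c}ka--Ne\v{s}et\v{r}il theorem (and conjectured extensions), fix a finitely bounded homogeneous Ramsey expansion $\bB^+$ of $\bB$, and let $m$ be as in Theorem~\ref{thm:red-fin} with respect to $\bB^+$. The crucial step is the following implication: if $(\{0,1\};\OIT)$ has no pp-construction in $\Gamma$, then $\Gamma$ admits a fractional polymorphism that is canonical with respect to $\Aut(\bB^+)$ and whose induced fractional operation on the orbits of $m$-tuples of $\Aut(\bB^+)$ is cyclic. For classical CSPs this is exactly what ``smooth approximations'' or topological Birkhoff-type theorems deliver; in our setting the same strategy should yield a canonical cyclic fractional polymorphism of $\Gamma^*_m$ (now defined relative to $\Aut(\bB^+)$ rather than $\Aut(\bB)$), starting from the fact that $\Gamma^*_m$ inherits the non-pp-constructibility of $(\{0,1\};\OIT)$ via Lemma~\ref{lem:pp-trans} and the canonicity machinery, and then invoking Theorem~\ref{thm:fin-vcsp-tract} on the finite domain.

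Third, lift this cyclic fractional polymorphism of $\Gamma^*_m$ back to a canonical pseudo-cyclic fractional polymorphism of $\Gamma$ using (a straightforward generalisation of) Lemma~\ref{lem:lift} and Lemma~\ref{lem:pc}. Once this is achieved, the proof is completed by applying Theorem~\ref{thm:red-fin} to reduce $\VCSP(\Gamma)$ to $\VCSP(\Gamma^*_m)$, which is in P by the tractability branch of Theorem~\ref{thm:fin-vcsp-tract}; this is precisely the content of Theorem~\ref{thm:tract}, extended from $\Aut(\Gamma)=\Aut(\bB)$ for homogeneous $\bB$ to $\Aut(\Gamma)=\Aut(\bB)$ for a reduct $\bB$ of a homogeneous structure via the Ramsey expansion $\bB^+$.

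The main obstacle is the second step: distilling a canonical pseudo-cyclic fractional polymorphism out of the mere absence of a pp-construction of $(\{0,1\};\OIT)$. This already implies the Bodirsky--Pinsker conjecture for CSPs, which is currently known only for specific classes such as first-order reducts of unary structures and some homogeneous graphs. A secondary difficulty, peculiar to the valued setting, is that fractional polymorphisms are probability distributions over uncountable function spaces, so the compactness arguments typically used to produce limits of approximating polymorphisms must be recast in measure-theoretic terms (using, e.g., weak-$*$ limits of probability measures on the Polish space ${\mathscr O}_C^{(\ell)}$), while remaining compatible with the canonicity-preserving Ramsey machinery.
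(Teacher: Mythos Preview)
This statement is Conjecture~\ref{conj:VCSP}, not a theorem; the paper offers no proof. It appears in the concluding section as an open problem, accompanied only by the remark that one ``might hope to prove this conjecture under the assumption of the infinite-domain tractability conjecture.'' So there is nothing to compare your attempt against: what you have written is a research programme, and you are explicit that it is conditional on (at least) the Bodirsky--Pinsker conjecture.

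Your outline is the natural one and matches the spirit of the paper's one-line suggestion. Two points deserve emphasis. First, the paper itself warns (in the paragraph following Conjecture~\ref{conj:tract}) that the stronger algebraic statement---existence of a fractional polymorphism canonical and pseudo cyclic with respect to $\Aut(\Gamma)$---is already \emph{false} for CSPs in the generality of reducts of finitely bounded homogeneous structures (the counterexample is in~\cite{Book}, Section~12.9.1). So your step~2 cannot target canonicity with respect to $\Aut(\Gamma)$; you correctly pass to a Ramsey expansion $\bB^+$, but note that the existence of a finitely bounded homogeneous Ramsey expansion for every such $\bB$ is itself open, adding another layer of conditionality beyond Bodirsky--Pinsker. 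Second, Theorem~\ref{thm:red-fin} and Lemmas~\ref{lem:lift} and~\ref{lem:pc} are stated for $\Aut(\Gamma)=\Aut(\bB)$ with $\bB$ homogeneous, whereas here $\Aut(\Gamma)$ may strictly contain $\Aut(\bB^+)$. Extending the reduction so that $\Gamma^*_m$ is built from $\Aut(\bB^+)$-orbits while still faithfully encoding $\VCSP(\Gamma)$ is plausible but not quite the ``straightforward generalisation'' you suggest: one must argue that the extra $\Aut(\bB^+)$-orbit relations added to make the reduction work do not create a pp-construction of $(\{0,1\};\OIT)$ that was absent in $\Gamma$, or else handle this via the model-complete core step you propose in step~1.
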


One might hope to prove this conjecture  under the assumption of the infinite-domain tractability conjecture. Recall that also the finite-domain VCSP classification was first proven conditionally on the finite-domain tractability conjecture~\cite{KolmogorovKR17,KozikOchremiak15}, which was only confirmed later~\cite{ZhukFVConjecture,BulatovFVConjecture}. Note that Conjecture~\ref{conj:VCSP} does not generalize Conjecture~\ref{conj:tract}, because it does not provide an algebraic condition for tractability. However, Conjecture~\ref{conj:VCSP} would imply a P vs. NP-complete dichotomy for resilience problems. 

We also believe that the `meta-problem' of deciding whether for a given conjunctive query  the resilience problem is in P is decidable. This might follow from a positive answer to Conjecture~\ref{conj:tract} if an appropriate $\bB$ can be found effectively, because then $\Gamma^*_{\bB,m}$ can be computed effectively and Item 4 of Proposition~\ref{prop:black-box} 
for the finite-domain valued structure $\Gamma^*_{\bB,m}$ can be decided using linear programming~\cite{Kolmogorov-Meta}.

%%
%% The acknowledgments section is defined using the "acks" environment
%% (and NOT an unnumbered section). This ensures the proper
%% identification of the section in the article metadata, and the
%% consistent spelling of the heading.
\begin{acks}
The first two authors have been funded by the European Research Council (Project POCOCOP, ERC Synergy Grant 101071674) and by the DFG (Project FinHom, Grant 467967530). Views and opinions expressed are however those of the authors only and do not necessarily reflect those of the European Union or the European Research Council Executive Agency. Neither the European Union nor the granting authority can be held responsible for them. The third author was supported by the DFG project LU 1417/3-1 QTEC. This research was funded in whole or in part by the Austrian Science Fund (FWF) 10.55776/ESP6949724.

The authors thank Antoine Mottet for pointing out a gap in the definition of composition of fractional maps, and Friedrich Martin Schneider for suggesting an elegant formalization of this  composition.
\end{acks}

%%
%% The next two lines define the bibliography style to be used, and
%% the bibliography file.
\bibliographystyle{ACM-Reference-Format}
\bibliography{global}

%%% -*-BibTeX-*-
%%% Do NOT edit. File created by BibTeX with style
%%% ACM-Reference-Format-Journals [18-Jan-2012].

\def\cprime{$'$} \def\cprime{$'$}
\begin{thebibliography}{66}

%%% ====================================================================
%%% NOTE TO THE USER: you can override these defaults by providing
%%% customized versions of any of these macros before the \bibliography
%%% command.  Each of them MUST provide its own final punctuation,
%%% except for \shownote{}, \showDOI{}, and \showURL{}.  The latter two
%%% do not use final punctuation, in order to avoid confusing it with
%%% the Web address.
%%%
%%% To suppress output of a particular field, define its macro to expand
%%% to an empty string, or better, \unskip, like this:
%%%
%%% \newcommand{\showDOI}[1]{\unskip}   % LaTeX syntax
%%%
%%% \def \showDOI #1{\unskip}           % plain TeX syntax
%%%
%%% ====================================================================

\ifx \showCODEN    \undefined \def \showCODEN     #1{\unskip}     \fi
\ifx \showDOI      \undefined \def \showDOI       #1{#1}\fi
\ifx \showISBNx    \undefined \def \showISBNx     #1{\unskip}     \fi
\ifx \showISBNxiii \undefined \def \showISBNxiii  #1{\unskip}     \fi
\ifx \showISSN     \undefined \def \showISSN      #1{\unskip}     \fi
\ifx \showLCCN     \undefined \def \showLCCN      #1{\unskip}     \fi
\ifx \shownote     \undefined \def \shownote      #1{#1}          \fi
\ifx \showarticletitle \undefined \def \showarticletitle #1{#1}   \fi
\ifx \showURL      \undefined \def \showURL       {\relax}        \fi
% The following commands are used for tagged output and should be
% invisible to TeX
\providecommand\bibfield[2]{#2}
\providecommand\bibinfo[2]{#2}
\providecommand\natexlab[1]{#1}
\providecommand\showeprint[2][]{arXiv:#2}

\bibitem[Abiteboul et~al\mbox{.}(1995)]%
        {DBLP:books/aw/AbiteboulHV95}
\bibfield{author}{\bibinfo{person}{Serge Abiteboul}, \bibinfo{person}{Richard Hull}, {and} \bibinfo{person}{Victor Vianu}.} \bibinfo{year}{1995}\natexlab{}.
\newblock \bibinfo{booktitle}{\emph{Foundations of Databases}}.
\newblock \bibinfo{publisher}{Addison-Wesley}.
\newblock
\showISBNx{0-201-53771-0}
\urldef\tempurl%
\url{http://webdam.inria.fr/Alice/}
\showURL{%
\tempurl}


\bibitem[Aliprantis and Border(2006)]%
        {aliprantis06}
\bibfield{author}{\bibinfo{person}{Charalambos~D. Aliprantis} {and} \bibinfo{person}{Kim~C. Border}.} \bibinfo{year}{2006}\natexlab{}.
\newblock \bibinfo{booktitle}{\emph{Infinite Dimensional Analysis: a Hitchhiker's Guide}}.
\newblock \bibinfo{publisher}{Springer}, \bibinfo{address}{Berlin; London}.
\newblock


\bibitem[Amarilli et~al\mbox{.}(2024)]%
        {amarilli2024resilienceregularpathqueries}
\bibfield{author}{\bibinfo{person}{Antoine Amarilli}, \bibinfo{person}{Wolfgang Gatterbauer}, \bibinfo{person}{Neha Makhija}, {and} \bibinfo{person}{Mika\"el Monet}.} \bibinfo{year}{2024}\natexlab{}.
\newblock \bibinfo{title}{Resilience for Regular Path Queries: Towards a Complexity Classification}.
\newblock
\newblock
\showeprint[arxiv]{2412.09411}~[cs.DB]
\urldef\tempurl%
\url{https://arxiv.org/abs/2412.09411}
\showURL{%
\tempurl}


\bibitem[Bansal et~al\mbox{.}(2004)]%
        {CorrelationClustering}
\bibfield{author}{\bibinfo{person}{Nikhil Bansal}, \bibinfo{person}{Avrim Blum}, {and} \bibinfo{person}{Shuchi Chawla}.} \bibinfo{year}{2004}\natexlab{}.
\newblock \showarticletitle{Correlation clustering}.
\newblock \bibinfo{journal}{\emph{Machine Learning}} \bibinfo{volume}{56}, \bibinfo{number}{1-3} (\bibinfo{year}{2004}), \bibinfo{pages}{89--113}.
\newblock


\bibitem[Barto and Kozik(2012)]%
        {Cyclic}
\bibfield{author}{\bibinfo{person}{Libor Barto} {and} \bibinfo{person}{Marcin Kozik}.} \bibinfo{year}{2012}\natexlab{}.
\newblock \showarticletitle{Absorbing subalgebras, cyclic terms and the constraint satisfaction problem}.
\newblock \bibinfo{journal}{\emph{Logical Methods in Computer Science}} \bibinfo{volume}{8/1}, \bibinfo{number}{07} (\bibinfo{year}{2012}), \bibinfo{pages}{1--26}.
\newblock


\bibitem[Barto et~al\mbox{.}(2018)]%
        {wonderland}
\bibfield{author}{\bibinfo{person}{Libor Barto}, \bibinfo{person}{Jakub Opr\v{s}al}, {and} \bibinfo{person}{Michael Pinsker}.} \bibinfo{year}{2018}\natexlab{}.
\newblock \showarticletitle{The wonderland of reflections}.
\newblock \bibinfo{journal}{\emph{Israel Journal of Mathematics}} \bibinfo{volume}{223}, \bibinfo{number}{1} (\bibinfo{year}{2018}), \bibinfo{pages}{363--398}.
\newblock


\bibitem[Bienvenu et~al\mbox{.}(2014)]%
        {OBDA}
\bibfield{author}{\bibinfo{person}{Meghyn Bienvenu}, \bibinfo{person}{Balder ten Cate}, \bibinfo{person}{Carsten Lutz}, {and} \bibinfo{person}{Frank Wolter}.} \bibinfo{year}{2014}\natexlab{}.
\newblock \showarticletitle{Ontology-Based Data Access: A Study through Disjunctive {D}atalog, {CSP}, and {MMSNP}}.
\newblock \bibinfo{journal}{\emph{ACM Transactions of Database Systems}} \bibinfo{volume}{39}, \bibinfo{number}{4} (\bibinfo{year}{2014}), \bibinfo{pages}{33}.
\newblock


\bibitem[Bodirsky(2007)]%
        {Cores-journal}
\bibfield{author}{\bibinfo{person}{Manuel Bodirsky}.} \bibinfo{year}{2007}\natexlab{}.
\newblock \showarticletitle{Cores of countably categorical structures}.
\newblock \bibinfo{journal}{\emph{Logical Methods in Computer Science ({LMCS})}} \bibinfo{volume}{3}, \bibinfo{number}{1} (\bibinfo{year}{2007}), \bibinfo{pages}{1--16}.
\newblock


\bibitem[Bodirsky(2021)]%
        {Book}
\bibfield{author}{\bibinfo{person}{Manuel Bodirsky}.} \bibinfo{year}{2021}\natexlab{}.
\newblock \bibinfo{booktitle}{\emph{Complexity of Infinite-Domain Constraint Satisfaction}}.
\newblock \bibinfo{publisher}{Cambridge University Press}.
\newblock
\urldef\tempurl%
\url{https://doi.org/10.1017/9781107337534}
\showDOI{\tempurl}


\bibitem[Bodirsky and Bodor(2024)]%
        {BodirskyBodorUIPJournal}
\bibfield{author}{\bibinfo{person}{Manuel Bodirsky} {and} \bibinfo{person}{Bertalan Bodor}.} \bibinfo{year}{2024}\natexlab{}.
\newblock \showarticletitle{A Complexity Dichotomy in Spatial Reasoning via Ramsey Theory}.
\newblock \bibinfo{journal}{\emph{ACM Trans. Comput. Theory}} \bibinfo{volume}{16}, \bibinfo{number}{2}, Article \bibinfo{articleno}{10} (\bibinfo{date}{June} \bibinfo{year}{2024}), \bibinfo{numpages}{39}~pages.
\newblock
\showISSN{1942-3454}
\urldef\tempurl%
\url{https://doi.org/10.1145/3649445}
\showDOI{\tempurl}


\bibitem[Bodirsky et~al\mbox{.}(2024a)]%
        {tvcsp}
\bibfield{author}{\bibinfo{person}{Manuel Bodirsky}, \bibinfo{person}{\'Edouard Bonnet}, {and} \bibinfo{person}{\v{Z}aneta Semani\v{s}inov\'{a}}.} \bibinfo{year}{2024}\natexlab{a}.
\newblock \bibinfo{title}{Temporal Valued Constraint Satisfaction Problems}.
\newblock
\newblock
\showeprint[arxiv]{2409.07285}~[math.LO]
\urldef\tempurl%
\url{https://arxiv.org/abs/2409.07285}
\showURL{%
\tempurl}


\bibitem[Bodirsky and K\'ara(2008)]%
        {ecsps}
\bibfield{author}{\bibinfo{person}{Manuel Bodirsky} {and} \bibinfo{person}{Jan K\'ara}.} \bibinfo{year}{2008}\natexlab{}.
\newblock \showarticletitle{The Complexity of Equality Constraint Languages}.
\newblock \bibinfo{journal}{\emph{Theory of Computing Systems}} \bibinfo{volume}{3}, \bibinfo{number}{2} (\bibinfo{year}{2008}), \bibinfo{pages}{136--158}.
\newblock
\urldef\tempurl%
\url{https://doi.org/10.1007/s00224-007-9083-9}
\showDOI{\tempurl}
\newblock
\shownote{A conference version appeared in the proceedings of Computer Science Russia {(CSR'06)}}.


\bibitem[Bodirsky et~al\mbox{.}(2018)]%
        {MMSNP}
\bibfield{author}{\bibinfo{person}{Manuel Bodirsky}, \bibinfo{person}{Florent Madelaine}, {and} \bibinfo{person}{Antoine Mottet}.} \bibinfo{year}{2018}\natexlab{}.
\newblock \showarticletitle{{A universal-algebraic proof of the complexity dichotomy for Monotone Monadic SNP}}. In \bibinfo{booktitle}{\emph{Proceedings of the Symposium on Logic in Computer Science (LICS)}}. \bibinfo{pages}{105--114}.
\newblock
\newblock
\shownote{Preprint arXiv:1802.03255}.


\bibitem[Bodirsky et~al\mbox{.}(2022)]%
        {BodirskyMaminoViola-Journal}
\bibfield{author}{\bibinfo{person}{Manuel Bodirsky}, \bibinfo{person}{Marcello Mamino}, {and} \bibinfo{person}{Caterina Viola}.} \bibinfo{year}{2022}\natexlab{}.
\newblock \showarticletitle{Piecewise Linear Valued {CSP}s Solvable by Linear Programming Relaxation}.
\newblock \bibinfo{journal}{\emph{ACM Transactions of Computational Logic}} \bibinfo{volume}{23}, \bibinfo{number}{1} (\bibinfo{year}{2022}), \bibinfo{pages}{1--35}.
\newblock
\urldef\tempurl%
\url{https://doi.org/10.1145/3488721}
\showDOI{\tempurl}
\newblock
\shownote{Preprint arXiv:1912.09298}.


\bibitem[Bodirsky and Mottet(2018)]%
        {BodMot-Unary}
\bibfield{author}{\bibinfo{person}{Manuel Bodirsky} {and} \bibinfo{person}{Antoine Mottet}.} \bibinfo{year}{2018}\natexlab{}.
\newblock \showarticletitle{A Dichotomy for First-Order Reducts of Unary Structures}.
\newblock \bibinfo{journal}{\emph{Logical Methods in Computer Science}} \bibinfo{volume}{14}, \bibinfo{number}{2} (\bibinfo{year}{2018}).
\newblock
\urldef\tempurl%
\url{https://doi.org/10.23638/LMCS-14(2:13)2018}
\showURL{%
\tempurl}


\bibitem[Bodirsky and Ne\v{s}et\v{r}il(2003)]%
        {BodirskyNesetril}
\bibfield{author}{\bibinfo{person}{Manuel Bodirsky} {and} \bibinfo{person}{Jaroslav Ne\v{s}et\v{r}il}.} \bibinfo{year}{2003}\natexlab{}.
\newblock \showarticletitle{Constraint Satisfaction with Countable Homogeneous Templates}. In \bibinfo{booktitle}{\emph{Proceedings of Computer Science Logic {(CSL)}}}. \bibinfo{address}{Vienna}, \bibinfo{pages}{44--57}.
\newblock


\bibitem[Bodirsky et~al\mbox{.}(2021)]%
        {BPP-projective-homomorphisms}
\bibfield{author}{\bibinfo{person}{Manuel Bodirsky}, \bibinfo{person}{Michael Pinsker}, {and} \bibinfo{person}{Andr\'{a}s Pongr\'acz}.} \bibinfo{year}{2021}\natexlab{}.
\newblock \showarticletitle{Projective clone homomorphisms}.
\newblock \bibinfo{journal}{\emph{Journal of Symbolic Logic}} \bibinfo{volume}{86}, \bibinfo{number}{1} (\bibinfo{year}{2021}), \bibinfo{pages}{148--161}.
\newblock


\bibitem[Bodirsky et~al\mbox{.}(2024b)]%
        {Resilience-VCSPs}
\bibfield{author}{\bibinfo{person}{Manuel Bodirsky}, \bibinfo{person}{{\v{Z}}aneta Semani\v{s}inov\'{a}}, {and} \bibinfo{person}{Carsten Lutz}.} \bibinfo{year}{2024}\natexlab{b}.
\newblock \showarticletitle{The Complexity of Resilience Problems via Valued Constraint Satisfaction Problems}. In \bibinfo{booktitle}{\emph{Proceedings of the 39th Annual ACM/IEEE Symposium on Logic in Computer Science}} (Tallinn, Estonia) \emph{(\bibinfo{series}{LICS '24})}. \bibinfo{publisher}{Association for Computing Machinery}, \bibinfo{address}{New York, NY, USA}, Article \bibinfo{articleno}{14}, \bibinfo{numpages}{14}~pages.
\newblock
\showISBNx{9798400706608}
\urldef\tempurl%
\url{https://doi.org/10.1145/3661814.3662071}
\showDOI{\tempurl}


\bibitem[Bogachev(2007)]%
        {BogachevBook}
\bibfield{author}{\bibinfo{person}{Vladimir~I. Bogachev}.} \bibinfo{year}{2007}\natexlab{}.
\newblock \bibinfo{booktitle}{\emph{Measure Theory, Vol. I, II}}.
\newblock \bibinfo{publisher}{Springer-Verlag}, \bibinfo{address}{Berlin}.
\newblock
\showISBNx{978-3-540-34512-1}


\bibitem[Bulatov(2017)]%
        {BulatovFVConjecture}
\bibfield{author}{\bibinfo{person}{Andrei~A. Bulatov}.} \bibinfo{year}{2017}\natexlab{}.
\newblock \showarticletitle{A Dichotomy Theorem for Nonuniform {CSP}s}. In \bibinfo{booktitle}{\emph{58th {IEEE} Annual Symposium on Foundations of Computer Science, {FOCS} 2017, {B}erkeley, {CA}, {USA}, {O}ctober 15-17}}. \bibinfo{pages}{319--330}.
\newblock


\bibitem[Butti(2022)]%
        {Butti}
\bibfield{author}{\bibinfo{person}{Silvia Butti}.} \bibinfo{year}{2022}\natexlab{}.
\newblock \emph{\bibinfo{title}{Symmetries in Constraint Satisfaction: Weisfeiler-Leman Invariance and Promise Problems}}.
\newblock \bibinfo{thesistype}{Ph.\,D. Dissertation}. \bibinfo{school}{Universitat Pompeu Fabra, Barcelona}.
\newblock


\bibitem[Calvanese et~al\mbox{.}(2000)]%
        {CalvaneseDegiocomoLenzeriniVardi}
\bibfield{author}{\bibinfo{person}{Diego Calvanese}, \bibinfo{person}{Giuseppe~De Giacomo}, \bibinfo{person}{Maurizio Lenzerini}, {and} \bibinfo{person}{Moshe~Y. Vardi}.} \bibinfo{year}{2000}\natexlab{}.
\newblock \showarticletitle{View-Based Query Processing and Constraint Satisfaction}. In \bibinfo{booktitle}{\emph{15th Annual {IEEE} Symposium on Logic in Computer Science, Santa Barbara, California, USA, June 26-29, 2000}}. \bibinfo{pages}{361--371}.
\newblock


\bibitem[Calvanese et~al\mbox{.}(2003)]%
        {DBLP:journals/sigmod/CalvaneseGLV03}
\bibfield{author}{\bibinfo{person}{Diego Calvanese}, \bibinfo{person}{Giuseppe~De Giacomo}, \bibinfo{person}{Maurizio Lenzerini}, {and} \bibinfo{person}{Moshe~Y. Vardi}.} \bibinfo{year}{2003}\natexlab{}.
\newblock \showarticletitle{Reasoning on regular path queries}.
\newblock \bibinfo{journal}{\emph{{SIGMOD} Rec.}} \bibinfo{volume}{32}, \bibinfo{number}{4} (\bibinfo{year}{2003}), \bibinfo{pages}{83--92}.
\newblock
\urldef\tempurl%
\url{https://doi.org/10.1145/959060.959076}
\showDOI{\tempurl}


\bibitem[Cameron(1990)]%
        {Oligo}
\bibfield{author}{\bibinfo{person}{Peter~J. Cameron}.} \bibinfo{year}{1990}\natexlab{}.
\newblock \bibinfo{booktitle}{\emph{Oligomorphic permutation groups}}.
\newblock \bibinfo{publisher}{Cambridge University Press}, \bibinfo{address}{Cambridge}.
\newblock


\bibitem[Chaudhuri and Vardi(1993)]%
        {bag-semantics}
\bibfield{author}{\bibinfo{person}{Surajit Chaudhuri} {and} \bibinfo{person}{Moshe~Y. Vardi}.} \bibinfo{year}{1993}\natexlab{}.
\newblock \showarticletitle{Optimization of Real Conjunctive Queries}. In \bibinfo{booktitle}{\emph{Proceedings of the Twelfth ACM SIGACT-SIGMOD-SIGART Symposium on Principles of Database Systems}} (Washington, D.C., USA) \emph{(\bibinfo{series}{PODS '93})}. \bibinfo{publisher}{Association for Computing Machinery}, \bibinfo{address}{New York, NY, USA}, \bibinfo{pages}{59--70}.
\newblock
\showISBNx{0897915933}
\urldef\tempurl%
\url{https://doi.org/10.1145/153850.153856}
\showDOI{\tempurl}


\bibitem[Cherlin et~al\mbox{.}(1999)]%
        {CherlinShelahShi}
\bibfield{author}{\bibinfo{person}{Gregory Cherlin}, \bibinfo{person}{Saharon Shelah}, {and} \bibinfo{person}{Niangdong Shi}.} \bibinfo{year}{1999}\natexlab{}.
\newblock \showarticletitle{Universal graphs with forbidden subgraphs and algebraic closure}.
\newblock \bibinfo{journal}{\emph{Advances in Applied Mathematics}}  \bibinfo{volume}{22} (\bibinfo{year}{1999}), \bibinfo{pages}{454--491}.
\newblock


\bibitem[Cohen et~al\mbox{.}(2013)]%
        {VCSP-Galois}
\bibfield{author}{\bibinfo{person}{David~A. Cohen}, \bibinfo{person}{Martin~C. Cooper}, \bibinfo{person}{P{\'{a}}id{\'{\i}} Creed}, \bibinfo{person}{Peter~G. Jeavons}, {and} \bibinfo{person}{Stanislav \v{Z}ivn\'y}.} \bibinfo{year}{2013}\natexlab{}.
\newblock \showarticletitle{An Algebraic Theory of Complexity for Discrete Optimization}.
\newblock \bibinfo{journal}{\emph{{SIAM} J. Comput.}} \bibinfo{volume}{42}, \bibinfo{number}{5} (\bibinfo{year}{2013}), \bibinfo{pages}{1915--1939}.
\newblock


\bibitem[Cohen et~al\mbox{.}(2006)]%
        {CohenCooperJeavonsVCSP}
\bibfield{author}{\bibinfo{person}{David~A. Cohen}, \bibinfo{person}{Martin~C. Cooper}, {and} \bibinfo{person}{Peter Jeavons}.} \bibinfo{year}{2006}\natexlab{}.
\newblock \showarticletitle{An Algebraic Characterisation of Complexity for Valued Constraints}. In \bibinfo{booktitle}{\emph{Principles and Practice of Constraint Programming - {CP} 2006, 12th International Conference, {CP} 2006, Nantes, France, September 25-29, 2006, Proceedings}}. \bibinfo{pages}{107--121}.
\newblock
\urldef\tempurl%
\url{https://doi.org/10.1007/11889205\_10}
\showDOI{\tempurl}


\bibitem[Cygan et~al\mbox{.}(2015)]%
        {ParametrizedAlgorithmsBook}
\bibfield{author}{\bibinfo{person}{Marek Cygan}, \bibinfo{person}{Fedor~V. Fomin}, \bibinfo{person}{Lukasz Kowalik}, \bibinfo{person}{Daniel Lokshtanov}, \bibinfo{person}{Daniel Marx}, \bibinfo{person}{Marcin Pilipczuk}, \bibinfo{person}{Michal Pilipczuk}, {and} \bibinfo{person}{Saket Saurabh}.} \bibinfo{year}{2015}\natexlab{}.
\newblock \bibinfo{booktitle}{\emph{Parameterized Algorithms} (\bibinfo{edition}{1st} ed.)}.
\newblock \bibinfo{publisher}{Springer Publishing Company, Incorporated}.
\newblock
\showISBNx{3319212745}


\bibitem[Foniok(2007)]%
        {Foniok-Thesis}
\bibfield{author}{\bibinfo{person}{Jan Foniok}.} \bibinfo{year}{2007}\natexlab{}.
\newblock \bibinfo{title}{Homomorphisms and Structural Properties of Relational Systems}.
\newblock \bibinfo{howpublished}{Doctoral Thesis, Charles University Prague, Faculty of Mathematics and Physics}.
\newblock


\bibitem[Freire et~al\mbox{.}(2015a)]%
        {Resilience-Arxive}
\bibfield{author}{\bibinfo{person}{Cibele Freire}, \bibinfo{person}{Wolfgang Gatterbauer}, \bibinfo{person}{Neil Immerman}, {and} \bibinfo{person}{Alexandra Meliou}.} \bibinfo{year}{2015}\natexlab{a}.
\newblock \showarticletitle{A Characterization of the Complexity of Resilience and Responsibility for Self-join-free Conjunctive Queries}.
\newblock \bibinfo{journal}{\emph{CoRR}}  \bibinfo{volume}{abs/1507.00674} (\bibinfo{year}{2015}).
\newblock
\showeprint[arXiv]{1507.00674}
\urldef\tempurl%
\url{http://arxiv.org/abs/1507.00674}
\showURL{%
\tempurl}


\bibitem[Freire et~al\mbox{.}(2015b)]%
        {Resilience}
\bibfield{author}{\bibinfo{person}{Cibele Freire}, \bibinfo{person}{Wolfgang Gatterbauer}, \bibinfo{person}{Neil Immerman}, {and} \bibinfo{person}{Alexandra Meliou}.} \bibinfo{year}{2015}\natexlab{b}.
\newblock \showarticletitle{The Complexity of Resilience and Responsibility for Self-Join-Free Conjunctive Queries}.
\newblock \bibinfo{journal}{\emph{Proc. {VLDB} Endow.}} \bibinfo{volume}{9}, \bibinfo{number}{3} (\bibinfo{year}{2015}), \bibinfo{pages}{180--191}.
\newblock
\urldef\tempurl%
\url{https://doi.org/10.14778/2850583.2850592}
\showDOI{\tempurl}


\bibitem[Freire et~al\mbox{.}(2020)]%
        {NewResilience}
\bibfield{author}{\bibinfo{person}{Cibele Freire}, \bibinfo{person}{Wolfgang Gatterbauer}, \bibinfo{person}{Neil Immerman}, {and} \bibinfo{person}{Alexandra Meliou}.} \bibinfo{year}{2020}\natexlab{}.
\newblock \showarticletitle{New Results for the Complexity of Resilience for Binary Conjunctive Queries with Self-Joins}. In \bibinfo{booktitle}{\emph{Proceedings of the 39th {ACM} {SIGMOD-SIGACT-SIGAI} Symposium on Principles of Database Systems, {PODS} 2020, Portland, OR, USA, June 14-19, 2020}}. \bibinfo{pages}{271--284}.
\newblock
\urldef\tempurl%
\url{https://doi.org/10.1145/3375395.3387647}
\showDOI{\tempurl}


\bibitem[Fulla and \v{Z}ivn\'y(2016)]%
        {FullaZivny}
\bibfield{author}{\bibinfo{person}{Peter Fulla} {and} \bibinfo{person}{Stanislav \v{Z}ivn\'y}.} \bibinfo{year}{2016}\natexlab{}.
\newblock \showarticletitle{A {G}alois Connection for Weighted (Relational) Clones of Infinite Size}.
\newblock \bibinfo{journal}{\emph{{TOCT}}} \bibinfo{volume}{8}, \bibinfo{number}{3} (\bibinfo{year}{2016}), \bibinfo{pages}{9:1--9:21}.
\newblock


\bibitem[Garey and Johnson(1978)]%
        {GareyJohnson}
\bibfield{author}{\bibinfo{person}{Michael Garey} {and} \bibinfo{person}{David Johnson}.} \bibinfo{year}{1978}\natexlab{}.
\newblock \bibinfo{booktitle}{\emph{A guide to {NP}-completeness}}.
\newblock \bibinfo{publisher}{CSLI Press}, \bibinfo{address}{Stanford}.
\newblock


\bibitem[Gheysens(2017)]%
        {GheysensThesis}
\bibfield{author}{\bibinfo{person}{Maxime Gheysens}.} \bibinfo{year}{2017}\natexlab{}.
\newblock \emph{\bibinfo{title}{Representing groups against all odds}}.
\newblock PhD Thesis. \bibinfo{school}{\'{E}cole Polytechnique F\'{e}d\'{e}rale de Lausanne}.
\newblock
\urldef\tempurl%
\url{https://doi.org/10.5075/epfl-thesis-7823}
\showDOI{\tempurl}


\bibitem[Goldberg and Tarjan(1988)]%
        {max-flow}
\bibfield{author}{\bibinfo{person}{Andrew~V. Goldberg} {and} \bibinfo{person}{Robert~E. Tarjan}.} \bibinfo{year}{1988}\natexlab{}.
\newblock \showarticletitle{A new approach to the maximum-flow problem}.
\newblock \bibinfo{journal}{\emph{J. Assoc. Comput. Mach.}} \bibinfo{volume}{35}, \bibinfo{number}{4} (\bibinfo{year}{1988}), \bibinfo{pages}{921--940}.
\newblock
\showISSN{0004-5411}
\urldef\tempurl%
\url{https://doi.org/10.1145/48014.61051}
\showDOI{\tempurl}


\bibitem[Hell and Ne\v{s}et\v{r}il(2004)]%
        {HNBook}
\bibfield{author}{\bibinfo{person}{Pavol Hell} {and} \bibinfo{person}{Jaroslav Ne\v{s}et\v{r}il}.} \bibinfo{year}{2004}\natexlab{}.
\newblock \bibinfo{booktitle}{\emph{Graphs and Homomorphisms}}.
\newblock \bibinfo{publisher}{Oxford University Press}, \bibinfo{address}{Oxford}.
\newblock


\bibitem[Hodges(1997)]%
        {Hodges}
\bibfield{author}{\bibinfo{person}{Wilfrid Hodges}.} \bibinfo{year}{1997}\natexlab{}.
\newblock \bibinfo{booktitle}{\emph{A shorter model theory}}.
\newblock \bibinfo{publisher}{Cambridge University Press}, \bibinfo{address}{Cambridge}.
\newblock


\bibitem[Hubi\v{c}ka and Ne\v{s}et\v{r}il(2015)]%
        {Hubicka-Nesetril-ForbiddenHomomorphisms}
\bibfield{author}{\bibinfo{person}{Jan Hubi\v{c}ka} {and} \bibinfo{person}{Jaroslav Ne\v{s}et\v{r}il}.} \bibinfo{year}{2015}\natexlab{}.
\newblock \showarticletitle{Universal Structures with Forbidden Homomorphisms}.
\newblock In \bibinfo{booktitle}{\emph{Logic Without Borders - Essays on Set Theory, Model Theory, Philosophical Logic and Philosophy of Mathematics}}. \bibinfo{publisher}{De Gruyter}, \bibinfo{pages}{241--264}.
\newblock
\newblock
\shownote{arXiv:0907.4079}.


\bibitem[Hubi\v{c}ka and Ne\v{s}et\v{r}il(2016)]%
        {Hubicka-Nesetril}
\bibfield{author}{\bibinfo{person}{Jan Hubi\v{c}ka} {and} \bibinfo{person}{Jaroslav Ne\v{s}et\v{r}il}.} \bibinfo{year}{2016}\natexlab{}.
\newblock \showarticletitle{Homomorphism and Embedding Universal Structures for Restricted Classes}.
\newblock \bibinfo{journal}{\emph{Multiple-Valued Logic and Soft Computing}} \bibinfo{volume}{27}, \bibinfo{number}{2-3} (\bibinfo{year}{2016}), \bibinfo{pages}{229--253}.
\newblock
\newblock
\shownote{arXiv:0909.4939}.


\bibitem[Kim et~al\mbox{.}(2021)]%
        {KKPW21}
\bibfield{author}{\bibinfo{person}{Eun~Jung Kim}, \bibinfo{person}{Stefan Kratsch}, \bibinfo{person}{Marcin Pilipczuk}, {and} \bibinfo{person}{Magnus Wahlstr{\"{o}}m}.} \bibinfo{year}{2021}\natexlab{}.
\newblock \showarticletitle{Solving hard cut problems via flow-augmentation}. In \bibinfo{booktitle}{\emph{Proceedings of the 2021 {ACM-SIAM} Symposium on Discrete Algorithms, {SODA} 2021, Virtual Conference, January 10 - 13, 2021}}. \bibinfo{pages}{149--168}.
\newblock


\bibitem[Kim et~al\mbox{.}(2022)]%
        {KPW22}
\bibfield{author}{\bibinfo{person}{Eun~Jung Kim}, \bibinfo{person}{Stefan Kratsch}, \bibinfo{person}{Marcin Pilipczuk}, {and} \bibinfo{person}{Magnus Wahlstr{\"{o}}m}.} \bibinfo{year}{2022}\natexlab{}.
\newblock \showarticletitle{Directed flow-augmentation}. In \bibinfo{booktitle}{\emph{{STOC} '22: 54th Annual {ACM} {SIGACT} Symposium on Theory of Computing, Rome, Italy, June 20 - 24, 2022}}. \bibinfo{pages}{938--947}.
\newblock


\bibitem[Klenke(2020)]%
        {ProbTheory-Klenke}
\bibfield{author}{\bibinfo{person}{Achim Klenke}.} \bibinfo{year}{2020}\natexlab{}.
\newblock \bibinfo{booktitle}{\emph{Probability theory} (\bibinfo{edition}{3} ed.)}.
\newblock \bibinfo{publisher}{Springer Nature}, \bibinfo{address}{Cham, Switzerland}.
\newblock


\bibitem[Kolmogorov(2019)]%
        {Kolmogorov-Meta}
\bibfield{author}{\bibinfo{person}{Vladimir Kolmogorov}.} \bibinfo{year}{2019}\natexlab{}.
\newblock \showarticletitle{{Testing the Complexity of a Valued CSP Language}}. In \bibinfo{booktitle}{\emph{46th International Colloquium on Automata, Languages, and Programming (ICALP 2019)}} \emph{(\bibinfo{series}{Leibniz International Proceedings in Informatics (LIPIcs)}, Vol.~\bibinfo{volume}{132})}. \bibinfo{publisher}{Schloss Dagstuhl -- Leibniz-Zentrum f{\"u}r Informatik}, \bibinfo{address}{Dagstuhl, Germany}, \bibinfo{pages}{77:1--77:12}.
\newblock
\showISBNx{978-3-95977-109-2}
\showISSN{1868-8969}
\urldef\tempurl%
\url{https://doi.org/10.4230/LIPIcs.ICALP.2019.77}
\showDOI{\tempurl}


\bibitem[Kolmogorov et~al\mbox{.}(2017)]%
        {KolmogorovKR17}
\bibfield{author}{\bibinfo{person}{Vladimir Kolmogorov}, \bibinfo{person}{Andrei~A. Krokhin}, {and} \bibinfo{person}{Michal Rol{\'{\i}}nek}.} \bibinfo{year}{2017}\natexlab{}.
\newblock \showarticletitle{The Complexity of General-Valued {CSP}s}.
\newblock \bibinfo{journal}{\emph{{SIAM} J. Comput.}} \bibinfo{volume}{46}, \bibinfo{number}{3} (\bibinfo{year}{2017}), \bibinfo{pages}{1087--1110}.
\newblock
\urldef\tempurl%
\url{https://doi.org/10.1137/16M1091836}
\showDOI{\tempurl}


\bibitem[Kolmogorov et~al\mbox{.}(2015)]%
        {KolmogorovThapperZivny15}
\bibfield{author}{\bibinfo{person}{Vladimir Kolmogorov}, \bibinfo{person}{Johan Thapper}, {and} \bibinfo{person}{Stanislav \v{Z}ivn\'y}.} \bibinfo{year}{2015}\natexlab{}.
\newblock \showarticletitle{The Power of Linear Programming for General-Valued {CSP}s}.
\newblock \bibinfo{journal}{\emph{{SIAM} J. Comput.}} \bibinfo{volume}{44}, \bibinfo{number}{1} (\bibinfo{year}{2015}), \bibinfo{pages}{1--36}.
\newblock


\bibitem[Kozik and Ochremiak(2015a)]%
        {KozikOchremiak15}
\bibfield{author}{\bibinfo{person}{Marcin Kozik} {and} \bibinfo{person}{Joanna Ochremiak}.} \bibinfo{year}{2015}\natexlab{a}.
\newblock \showarticletitle{Algebraic Properties of Valued Constraint Satisfaction Problem}. In \bibinfo{booktitle}{\emph{Automata, Languages, and Programming - 42nd International Colloquium, {ICALP} 2015, Kyoto, Japan, July 6-10, 2015, Proceedings, Part {I}}}. \bibinfo{pages}{846--858}.
\newblock


\bibitem[Kozik and Ochremiak(2015b)]%
        {KozikOchremiak15-arxiv}
\bibfield{author}{\bibinfo{person}{Marcin Kozik} {and} \bibinfo{person}{Joanna Ochremiak}.} \bibinfo{year}{2015}\natexlab{b}.
\newblock \bibinfo{title}{Algebraic Properties of Valued Constraint Satisfaction Problem}.
\newblock
\newblock
\showeprint[arxiv]{1403.0476v3}~[cs.CC]
\urldef\tempurl%
\url{https://arxiv.org/abs/1403.0476v3}
\showURL{%
\tempurl}


\bibitem[Krokhin and \v{Z}ivn\'{y}(2017)]%
        {KrokhinZivny17}
\bibfield{author}{\bibinfo{person}{Andrei Krokhin} {and} \bibinfo{person}{Stanislav \v{Z}ivn\'{y}}.} \bibinfo{year}{2017}\natexlab{}.
\newblock \showarticletitle{{The Complexity of Valued CSPs}}.
\newblock In \bibinfo{booktitle}{\emph{The Constraint Satisfaction Problem: Complexity and Approximability}}, \bibfield{editor}{\bibinfo{person}{Andrei Krokhin} {and} \bibinfo{person}{Stanislav Zivny}} (Eds.). \bibinfo{series}{Dagstuhl Follow-Ups}, Vol.~\bibinfo{volume}{7}. \bibinfo{publisher}{Schloss Dagstuhl--Leibniz-Zentrum fuer Informatik}, \bibinfo{address}{Dagstuhl, Germany}, \bibinfo{pages}{233--266}.
\newblock
\showISBNx{978-3-95977-003-3}
\showISSN{1868-8977}
\urldef\tempurl%
\url{https://doi.org/10.4230/DFU.Vol7.15301.233}
\showDOI{\tempurl}


\bibitem[Larose et~al\mbox{.}(2007)]%
        {LLT}
\bibfield{author}{\bibinfo{person}{Benoit Larose}, \bibinfo{person}{Cynthia Loten}, {and} \bibinfo{person}{Claude Tardif}.} \bibinfo{year}{2007}\natexlab{}.
\newblock \showarticletitle{A Characterisation of First-Order Constraint Satisfaction Problems}.
\newblock \bibinfo{journal}{\emph{Logical Methods in Computer Science}} \bibinfo{volume}{3}, \bibinfo{number}{4:6} (\bibinfo{year}{2007}).
\newblock


\bibitem[Makhija and Gatterbauer(2023)]%
        {LatestResilience}
\bibfield{author}{\bibinfo{person}{Neha Makhija} {and} \bibinfo{person}{Wolfgang Gatterbauer}.} \bibinfo{year}{2023}\natexlab{}.
\newblock \showarticletitle{A Unified Approach for Resilience and Causal Responsibility with Integer Linear Programming (ILP) and LP Relaxations}.
\newblock \bibinfo{journal}{\emph{Proc. ACM Manag. Data}} \bibinfo{volume}{1}, \bibinfo{number}{4}, Article \bibinfo{articleno}{228} (\bibinfo{date}{dec} \bibinfo{year}{2023}), \bibinfo{numpages}{27}~pages.
\newblock
\urldef\tempurl%
\url{https://doi.org/10.1145/3626715}
\showDOI{\tempurl}


\bibitem[Martens and Trautner(2018)]%
        {DBLP:conf/icdt/MartensT18}
\bibfield{author}{\bibinfo{person}{Wim Martens} {and} \bibinfo{person}{Tina Trautner}.} \bibinfo{year}{2018}\natexlab{}.
\newblock \showarticletitle{Evaluation and Enumeration Problems for Regular Path Queries}. In \bibinfo{booktitle}{\emph{21st International Conference on Database Theory, {ICDT 2018}}} \emph{(\bibinfo{series}{LIPIcs}, Vol.~\bibinfo{volume}{98})}. \bibinfo{publisher}{Schloss Dagstuhl - Leibniz-Zentrum f{\"{u}}r Informatik}, \bibinfo{pages}{19:1--19:21}.
\newblock
\urldef\tempurl%
\url{https://doi.org/10.4230/LIPICS.ICDT.2018.19}
\showDOI{\tempurl}


\bibitem[Meliou et~al\mbox{.}(2010)]%
        {Meliou2010}
\bibfield{author}{\bibinfo{person}{Alexandra Meliou}, \bibinfo{person}{Wolfgang Gatterbauer}, \bibinfo{person}{Katherine~F. Moore}, {and} \bibinfo{person}{Dan Suciu}.} \bibinfo{year}{2010}\natexlab{}.
\newblock \showarticletitle{The Complexity of Causality and Responsibility for Query Answers and non-Answers}.
\newblock \bibinfo{journal}{\emph{Proc. {VLDB} Endow.}} \bibinfo{volume}{4}, \bibinfo{number}{1} (\bibinfo{year}{2010}), \bibinfo{pages}{34--45}.
\newblock
\urldef\tempurl%
\url{https://doi.org/10.14778/1880172.1880176}
\showDOI{\tempurl}


\bibitem[Mottet and Pinsker(2021)]%
        {MottetPinskerCores}
\bibfield{author}{\bibinfo{person}{Antoine Mottet} {and} \bibinfo{person}{Michael Pinsker}.} \bibinfo{year}{2021}\natexlab{}.
\newblock \showarticletitle{Cores over {Ramsey} structures}.
\newblock \bibinfo{journal}{\emph{Journal of Symbolic Logic}} \bibinfo{volume}{86}, \bibinfo{number}{1} (\bibinfo{year}{2021}), \bibinfo{pages}{352--361}.
\newblock


\bibitem[Mottet and Pinsker(2024)]%
        {MottetPinskerSmooth}
\bibfield{author}{\bibinfo{person}{Antoine Mottet} {and} \bibinfo{person}{Michael Pinsker}.} \bibinfo{year}{2024}\natexlab{}.
\newblock \showarticletitle{Smooth approximations: An algebraic approach to CSPs over finitely bounded homogeneous structures}.
\newblock \bibinfo{journal}{\emph{J. ACM}} \bibinfo{volume}{71}, \bibinfo{number}{5}, Article \bibinfo{articleno}{36} (\bibinfo{date}{Oct.} \bibinfo{year}{2024}), \bibinfo{numpages}{47}~pages.
\newblock
\urldef\tempurl%
\url{https://doi.org/10.1145/3689207}
\showDOI{\tempurl}


\bibitem[Ne\v{s}et\v{r}il and Tardif(2000)]%
        {NesetrilTardif}
\bibfield{author}{\bibinfo{person}{Jaroslav Ne\v{s}et\v{r}il} {and} \bibinfo{person}{Claude Tardif}.} \bibinfo{year}{2000}\natexlab{}.
\newblock \showarticletitle{Duality theorems for finite structures}.
\newblock \bibinfo{journal}{\emph{Journal of Combinatorial Theory, Series B}}  \bibinfo{volume}{80} (\bibinfo{year}{2000}), \bibinfo{pages}{80--97}.
\newblock


\bibitem[Osipov(2025)]%
        {GeorgePersCom}
\bibfield{author}{\bibinfo{person}{George Osipov}.} \bibinfo{year}{2025}\natexlab{}.
\newblock \bibinfo{howpublished}{Personal communication}.
\newblock


\bibitem[Schneider and Viola(2022)]%
        {SchneiderViola}
\bibfield{author}{\bibinfo{person}{Friedrich~Martin Schneider} {and} \bibinfo{person}{Caterina Viola}.} \bibinfo{year}{2022}\natexlab{}.
\newblock \showarticletitle{An Application of {F}arkas' Lemma to Finite-Valued Constraint Satisfaction Problems over Infinite Domains}.
\newblock \bibinfo{journal}{\emph{CoRR}}  \bibinfo{volume}{abs/2208.04912} (\bibinfo{year}{2022}).
\newblock
\urldef\tempurl%
\url{https://doi.org/10.48550/arXiv.2208.04912}
\showDOI{\tempurl}
\showeprint[arXiv]{2208.04912}


\bibitem[Tao(2011)]%
        {Tao2011Measure}
\bibfield{author}{\bibinfo{person}{Terence Tao}.} \bibinfo{year}{2011}\natexlab{}.
\newblock \bibinfo{booktitle}{\emph{An Introduction to Measure Theory}}. \bibinfo{series}{Graduate Studies in Mathematics}, Vol.~\bibinfo{volume}{126}.
\newblock \bibinfo{publisher}{American Mathematical Society}.
\newblock
\showISBNx{9780821869192}


\bibitem[Thapper and \v{Z}ivn\'y(2013)]%
        {ThapperZivny13}
\bibfield{author}{\bibinfo{person}{Johan Thapper} {and} \bibinfo{person}{Stanislav \v{Z}ivn\'y}.} \bibinfo{year}{2013}\natexlab{}.
\newblock \showarticletitle{The complexity of finite-valued {CSP}s}. In \bibinfo{booktitle}{\emph{Proceedings of the Symposium on Theory of Computing Conference ({STOC}), Palo Alto, CA, USA, June 1-4, 2013}}. \bibinfo{pages}{695--704}.
\newblock


\bibitem[Viola(2020)]%
        {ViolaThesis}
\bibfield{author}{\bibinfo{person}{Caterina Viola}.} \bibinfo{year}{2020}\natexlab{}.
\newblock \emph{\bibinfo{title}{Valued Constraint Satisfaction Problems over Infinite Domains}}.
\newblock \bibinfo{thesistype}{Ph.\,D. Dissertation}. \bibinfo{school}{TU Dresden}.
\newblock


\bibitem[Viola and \v{Z}ivn{\'{y}}(2021)]%
        {ViolaZivny}
\bibfield{author}{\bibinfo{person}{Caterina Viola} {and} \bibinfo{person}{Stanislav \v{Z}ivn{\'{y}}}.} \bibinfo{year}{2021}\natexlab{}.
\newblock \showarticletitle{The Combined Basic {LP} and Affine {IP} Relaxation for Promise {VCSP}s on Infinite Domains}.
\newblock \bibinfo{journal}{\emph{{ACM} Trans. Algorithms}} \bibinfo{volume}{17}, \bibinfo{number}{3} (\bibinfo{year}{2021}), \bibinfo{pages}{21:1--21:23}.
\newblock
\urldef\tempurl%
\url{https://doi.org/10.1145/3458041}
\showDOI{\tempurl}


\bibitem[\v{Z}aneta Semani\v{s}inov\'{a}(2025)]%
        {ThesisZaneta}
\bibfield{author}{\bibinfo{person}{\v{Z}aneta Semani\v{s}inov\'{a}}.} \bibinfo{year}{2025}\natexlab{}.
\newblock \emph{\bibinfo{title}{{V}alued {C}onstraint {S}atisfaction in {S}tructures with an {O}ligomorphic {A}utomorphism {G}roup}}.
\newblock Dissertation. \bibinfo{school}{TU Dresden}, \bibinfo{address}{Dresden, Germany}.
\newblock
\newblock
\shownote{Available at \url{ https://nbn-resolving.org/urn:nbn:de:bsz:14-qucosa2-974737}}.


\bibitem[Zhuk(2017)]%
        {ZhukFVConjecture}
\bibfield{author}{\bibinfo{person}{Dmitriy Zhuk}.} \bibinfo{year}{2017}\natexlab{}.
\newblock \showarticletitle{A Proof of {CSP} Dichotomy Conjecture}. In \bibinfo{booktitle}{\emph{58th {IEEE} Annual Symposium on Foundations of Computer Science, {FOCS} 2017, {B}erkeley, {CA}, {USA}, {O}ctober 15-17}}. \bibinfo{pages}{331--342}.
\newblock
\newblock
\shownote{https://arxiv.org/abs/1704.01914.}.


\bibitem[Zhuk(2020)]%
        {Zhuk20}
\bibfield{author}{\bibinfo{person}{Dmitriy Zhuk}.} \bibinfo{year}{2020}\natexlab{}.
\newblock \showarticletitle{A Proof of the {CSP} Dichotomy Conjecture}.
\newblock \bibinfo{journal}{\emph{J. {ACM}}} \bibinfo{volume}{67}, \bibinfo{number}{5} (\bibinfo{year}{2020}), \bibinfo{pages}{30:1--30:78}.
\newblock
\urldef\tempurl%
\url{https://doi.org/10.1145/3402029}
\showDOI{\tempurl}


\end{thebibliography}

%%
%% If your work has an appendix, this is the place to put it.
\appendix
\section{The Lebesgue Integral} \label{sect:lebesgue}
It will be convenient to use an additional value $-\infty$ that has the usual properties:
\begin{itemize}
    \item $-\infty < a$ for every $a \in \mathbb{R} \cup \{\infty\}$,
    \item $a +(-\infty) = (-\infty) + a= - \infty$ for every $a \in \mathbb{R}$,
    \item $a \cdot \infty = \infty \cdot a = -\infty$ for $a<0$,
    \item $0\cdot (-\infty) = (-\infty)\cdot 0=0$.
    \item $a\cdot (-\infty)=(-\infty) \cdot a = -\infty$ for $a > 0$ and $a \cdot (-\infty)= (-\infty)\cdot a = \infty$ for $a < 0$.
\end{itemize}
The sum of $\infty$ and $-\infty$ is undefined.

Let $C$ and $D$ be sets. We define the Lebesgue integration over the space $C^D$ of all functions from $D$ to $C$. We usually (but not always) work with the special case $D = C^{\ell}$, i.e. the space is $\mathscr O_C^{(\ell)}$ for some set $C$ and $\ell\in \N$.

To define the Lebesgue integral, 
we need the definition of a \emph{simple function}:
this is a function $Y \colon C^D \to {\mathbb R}$ given by
$$\sum_{k=1}^n a_k 1_{S_k}$$
where $n \in \N$, $S_1,S_2,\dots$ are disjoint elements of $B(C^D)$, $a_k \in {\mathbb R}$, and $1_S \colon C^D \to \{0,1\}$ denotes the indicator function for $S \subseteq C^D$.
If $Y$ is a such a simple function, 
then the Lebesgue integral is defined as follows:
$$\int_{C^D} Y d \omega := \sum_{k =1}^n a_k \omega(S_k).$$
If $X$ and $Y$ are two random variables, then we write $X \leq Y$ if $X(f) \leq Y(f)$ for every $f \in C^D$. We say that $X$ is \emph{non-negative} if $0 \leq X$. 
If $X$ is a non-negative measurable function, then the Lebesgue integral is defined as 
$$\int_{C^D} X d \omega := \sup \left\{ \int_{C^D} Y d \omega \mid 0 \leq Y \leq X, Y \text{ simple} \right\}.$$
For an arbitrary measurable function $X$, we write $X = X^+ - X^-$, 
where $$X^+(x) := \begin{cases} X(x) & \text{if } X(x) > 0 \\
0 & \text{otherwise} \end{cases}$$
and 
$$X^-(x) := \begin{cases} -X(x) & \text{if } X(x) < 0 \\
0 & \text{otherwise.} \end{cases}$$
Then both $X^+$ and $X^-$ are measurable, and both 
$\int_{C^D} X^- d \omega$ 
and 
$\int_{C^D} X^+ d \omega$ take values in ${\mathbb R}_{\geq 0} \cup \{\infty\}$. 
If both take value $\infty$, then the integral is undefined (see Remark~\ref{rem:undef}). 
Otherwise, define 
$$\int_{C^D} X d \omega := \int_{C^D} X^+ d \omega - \int_{C^D} X^- d \omega.$$
In particular, note that for $X \geq 0$ the integral is always defined.

Let $\omega$ be a fractional map from $D$ to $C$, let $R \in {\mathscr R}^{(k)}_{C}$ be a valued relation, and let $s \in D^k$. Then $X \colon C^D \rightarrow {\mathbb R} \cup \{\infty\}$ given by 
$$ f \mapsto R(f(s))$$
is a random variable: 
if $(a,b)$ is a basic open subset of ${\mathbb R} \cup \{\infty\}$, then
\begin{align*}
    X^{-1}((a,b)) 
    & = \{f \in C^D \mid R(f(s)) \in (a,b) \} \\
    & = \bigcup_{t \in C^k,R(t) \in (a,b)}{\mathscr S}_{s,t}
\end{align*}
is a union of basic open sets in $C^D$, hence open. The argument for the other basic open sets in ${\mathbb R} \cup \{\infty\}$ is similar. 

If the set $C$ is countable and $X$ is as above, we may express $E_{\omega}[X]$ as a sum, which is useful in proofs in Sections~\ref{sect:gen-hard} and~\ref{sect:fpol}.
If $E_{\omega}[X]$ exists,
then
\begin{align}
E_{\omega}[X] & = \int_{C^D} X^+ d \omega - \int_{C^D} X^- d \omega \nonumber \\
& = \sup \left \{ \int_{C^D} Y d \omega \mid 0 \leq Y \leq X^+, Y \text{ simple} \right \} - \sup \left \{ \int_{C^D} Y d \omega \mid 0 \leq Y \leq X^-, Y \text{ simple} \right \} \nonumber\\
& = \sum_{t \in C^k,R(t) \geq 0} R(t) \omega(
{\mathscr S}_{s,t})   
+ \sum_{t \in C^k,R(t) < 0} R(t) \omega(
{\mathscr S}_{s,t}) \nonumber \\
& = \sum_{t \in C^k} R(t) \omega(
{\mathscr S}_{s,t}).
\label{eq:exp-sum}
\end{align}

\begin{remark}\label{rem:undef}
The Lebesgue integral 
$$\int_{{\mathscr O}^{(\ell)}_C} X d \omega = \int_{{\mathscr O}^{(\ell)}_C} X^+ d \omega - \int_{{\mathscr O}^{(\ell)}_C} X^- d \omega $$
need not exist: 
e.g., consider $C = {\mathbb N}$, $k = \ell = 1$, and $R(x) = -2^x$ if $x \in {\mathbb N}$ is even and $R(x) = 2^x$ otherwise. Let $s \in C$ and define 
$X \colon {\mathscr O}^{(1)}_C \to {\mathbb R} \cup \{\infty\}$ by 
$$ f \mapsto R(f(s)).$$

Let $\omega$ be a unary fractional operation such that for every $t \in C$
we have 
$\omega({\mathscr S}_{s,t}) = \frac{1}{2^{t+1}}$. Then 
\begin{align*}
\int_{{\mathscr O}^{(\ell)}_C} X^+ d \omega & = \sup \{ \int_{{\mathscr O}^{(\ell)}_C} Y d \omega \mid 0 \leq Y \leq X^+, Y \text{ simple} \} \\
& = \sum_{t \in C,R(t) \geq 0} R(t) \omega(
{\mathscr S}_{s,t}) \\
& = \sum_{t \in C, R(t) \geq 0} \frac{1}{2} = \infty
\end{align*}
and, similarly, $\int_{{\mathscr O}^{(\ell)}_C} X^- d \omega
= \infty$. 
\end{remark}

\end{document}